\documentclass{article}[11pt]
\usepackage{graphicx}
\usepackage{amsfonts}
\usepackage{amsmath}
\usepackage{amssymb}
\usepackage{amsthm}
\usepackage{stmaryrd}
\usepackage{hyperref}
\usepackage{cite}
\usepackage[all]{xy}
\usepackage[utf8]{inputenc}
\usepackage{tikz-cd}
\usepackage{tikz}

\usepackage[letterpaper, margin=1in]{geometry}

\theoremstyle{plain}
\newtheorem{thm}{Theorem}[section]
\newtheorem{prop}[thm]{Proposition}
\newtheorem{corr}[thm]{Corollary}
\newtheorem{lem}[thm]{Lemma}

\theoremstyle{remark}
\newtheorem{rem}[thm]{Remark}
\newtheorem{quest}{Question}

\theoremstyle{definition}
\newtheorem{defn}[thm]{Definition}
\newtheorem{ex}[thm]{Example}

\newcommand{\inv}{^{-1}}
\newcommand{\units}{^{(0)}}
\newcommand{\ra}{\rightarrow}

\newcommand{\calA}{\mathcal A}
\newcommand{\calG}{\mathcal G}
\newcommand{\calH}{\mathcal H}
\newcommand{\calK}{\mathcal K}
\newcommand{\calL}{\mathcal L}

\newcommand{\ca}{\mathcal}

\newcommand{\cst}{$C^*$}

\newcommand{\kk}{\mathrm{KK}}
\newcommand{\ind}{\mathrm{Ind}}
\newcommand{\infl}{\mathrm{Inf}}

\newcommand{\lgl}{\langle}
\newcommand{\rgl}{\rangle}
\newcommand{\Hom}{\mathrm{Hom}}
\newcommand{\End}{\mathrm{End}}
\newcommand{\supp}{\mathrm{supp}}

\newcommand{\inter}{\mathrm{int}}
\newcommand{\cl}{\mathrm{cl}}
\newcommand{\lch}{locally compact Hausdorff }

\title{Roe algebras and coarse index maps for spaces with proper actions of \'etale groupoids. I}
\author{Kai Mao}
\date{}

\begin{document}

\maketitle

\begin{abstract}
    This is the first in a series of papers extending Roe algebras and coarse index theory to the groupoid-equivariant setting. We introduce some techniques to develop a framework of Roe algebras and their K-theory for spaces equipped with proper actions of \'etale groupoids. For a fixed \'etale groupoid $\mathcal G$ and $\mathcal G$-$C^*$-algebra $A$, we construct a functor $KC_*(-;\mathcal G,A)$ from the category of locally compact Hausdorff proper $\mathcal G$-spaces and equivariant proper continuous maps to the category of graded abelian groups, which provides a natural receptacle for an equivariant coarse index map. We study the existence of universal modules, an analog of ample modules in non-equivariant setting, and develop a decomposition technique to establish their existence. As an application, we prove that groupoid simplicial complexes satisfying suitable hypotheses admit universal modules.
\end{abstract}

\tableofcontents

\section{Introduction}

The aim of this series of papers is to generalize the framework of Roe algebras and coarse index maps from the non-equivariant and group action settings (see \cite{Roe96}, \cite{higson2000analytic}, \cite{willett2020higher}, etc.) to the groupoid action setting. This introduction is divided into several parts. In each part, we first recall and deconstruct the classical theory of Roe algebras for proper metric spaces and then present its analog for spaces equipped with proper actions of \'etale groupoids. Let $X$ be a second countable proper metric space, $\calG$ be a locally compact Hausdorff \'etale groupoid, $A$ be a $\calG$-\cst-algebra and $Z, Z_1,Z_2, Z_3$ be \lch proper $\calG$-spaces.

\subsection*{Roe algebras and K-theory}

Roe algebras arose from the study of index theory on noncompact manifolds. Elliptic operators on such manifolds need not be Fredholm. For suitable operators, notably Dirac-type operators, one can nevertheless define coarse index classes in the \(K\)-theory of Roe algebras, thereby retaining information about the manifolds’ large-scale geometry. See \cite{Roe96}.

\subsubsection*{Non-equivariant case}
We begin by reviewing the basic elements of the classical theory of Roe algebras, the details can be found in \cite{higson2000analytic} or \cite{willett2020higher}, etc. The proper metric space $X$ is equipped with the coarse structure defined by its metric: a subset $E$ of $X\times X$ is called controlled if $\sup_{(x,y)\in E}d(x,y)<\infty$. Roughly speaking, $E$ is not too far from the diagonal.

We say that $(H,\pi)$ is an $X$-module if $H$ is a separable Hilbert space and $\pi:C_0(X)\ra B(H)$ is a non-degenerate representation. For two $X$-modules $(H_1,\pi_1)$ and $(H_2,\pi_2)$, we can define the support of an operator $T\in B(H_1,H_2)$ as the closed subset of $X\times X$ consisting of all points $(y,x)$ such that for all open neighborhoods $U$ of $x$ and $V$ of $y$, there exists $f\in C_0(U)$ and $g\in C_0(V)$ such that
\[\pi_2(g)\circ T\circ \pi_1(f)\neq 0.\]
We can think of the support of an operator $T$ as a measure of the obstruction to $T$ intertwining $\pi_1$ and $\pi_2$. For example, if for any $f\in C_0(X)$, $T\pi_1(f)=\pi_2(f)T$, then $\supp(T)$ is contained in the diagonal. We say that $T$ is locally compact if for any $f\in C_0(Z)$, $T\pi_1(f)$ and $\pi_2(f)T$ are compact operators.

For an $X$-module $(H,\pi)$, we define the Roe algebra $C^*(H,\pi)$ as the norm closure of the *-subalgebra of locally compact operators with controlled support. For two $X$-modules $(H_1,\pi_1)$ and $(H_2,\pi_2)$, if there is an isometry $V:H_1\ra H_2$ with controlled support, then it gives a well-defined *-homomorphism
\[Ad_V:C^*(H_1,\pi_1)\ra C^*(H_2,\pi_2), T\mapsto VTV^*.\]
Moreover, the induced map in K-theory $(Ad_V)_*:K_*(C^*(H_1,\pi_1))\ra K_*(C^*(H_2,\pi_2))$ is independent of the choice of $V$.

\subsubsection*{Groupoid-equivariant case}
In this setting, we say that $(E,\pi)$ is a $Z,\calG,A$-module if $E$ is a countably generated $\calG$-Hilbert $A$-module and $\pi:C_0(Z)\ra \calL(E)$ is a non-degenerate $\calG$-equivariant *-homomorphism. We use the term \emph{operator} for a bounded $A$-linear map between two Hilbert $A$-modules.

For $Z_i,\calG,A$-modules $(E_i,\pi_i)$ ($i=1,2$) and an operator $T:E_1\ra E_2$ (which is not necessarily adjointable), we can similarly define the support of $T$ as a closed subset of $Z_2\times_{\calG\units}Z_1$ consisting of points $(z_2,z_1)$ such that for all open neighborhoods $U$ of $z_1$ and $V$ of $z_2$, there exist $f\in C_0(U)$ and $g\in C_0(V)$ such that
\[\pi_2(g)T\pi_1(f)\neq 0.\]
We say that $T$ is locally compact if for any $f\in C_0(Z_1)$ and $g\in C_0(Z_2)$, $T\pi_1(f)$ and $\pi_2(g)T$ are compact operators.

We consider the following extended ``coarse structure'': a subset $F$ of $Z_2\times_{\calG\units} Z_1$ is called controlled if it is contained in some $\calG$-compact subset of $Z_2\times_{\calG\units} Z_1$. A first difference arises here: if $Z$ is not $\calG$-compact, this coarse structure is not unital. To address this, we introduce a weaker coarse structure. We say that a subset $F$ of $Z\times_{\calG}Z$ is weakly controlled if it is contained in some $\calG$-invariant closed subset $F'$ of $Z\times_{\calG\units}Z$, such that for $i=1,2$, the restriction of the projection map $Z_2\times_{\calG\units}Z_1\ra Z_i$ to $F'$ is a proper map. 

For an operator, we say that it is controlled (resp. weakly controlled), if its support is controlled (resp. weakly controlled). Thus, a $\calG$-equivariant properly supported operator is weakly controlled.

A useful fact is that, for $F_2\subseteq Z_3\times_{\calG\units}Z_2$, $F_1\subseteq Z_2\times_{\calG\units}Z_1$, if one is controlled and the other is weakly controlled, then
\[F_2\circ F_1=\{(z_3,z_1)\in Z\times_{\calG\units}Z: \exists z_2\in Z, (z_3,z_2)\in F_2, (z_2,z_1)\in F_1\}\]
is controlled. This implies that a composition of a controlled operator and a weakly controlled operator is controlled.

A bounded operator between Hilbert spaces is always adjointable, but this does not hold for Hilbert modules. We therefore do not assume adjointability in general. Nevertheless, we obtain the following result.

\begin{thm}\label{main result properly supported locally compact implies adjointable}
    Let $(E_i,\pi_i)$ be a $Z_i,\calG,A$-module for $i=1,2$. If $T:E_1\ra E_2$ is a properly supported locally compact operator, then $T$ is adjointable.
\end{thm}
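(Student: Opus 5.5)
The plan is to build the adjoint directly on a dense submodule of $E_2$ and then extend it by continuity. Local compactness supplies the adjoints needed on that dense piece, and a norm estimate controls the extension. The key observation is that for every $g\in C_0(Z_2)$ the operator $\pi_2(g)T$ is, by local compactness, a compact operator $E_1\ra E_2$. Compact operators on Hilbert modules are norm limits of finite-rank operators $\theta_{\eta,\xi}$, so they are adjointable. Hence $(\pi_2(g)T)^*:E_2\ra E_1$ exists for every $g$.

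Let $E_2^0=\pi_2(C_0(Z_2))E_2$, the linear span of the elements $\pi_2(g)\xi$. By non-degeneracy of $\pi_2$ it is dense in $E_2$. For $\eta=\sum_k\pi_2(g_k)\xi_k\in E_2^0$ and any $x\in E_1$ we compute
\[\langle Tx,\eta\rangle=\sum_k\langle \pi_2(\bar g_k)Tx,\xi_k\rangle=\Big\langle x,\sum_k(\pi_2(\bar g_k)T)^*\xi_k\Big\rangle.\]
So we define $S\eta=\sum_k(\pi_2(\bar g_k)T)^*\xi_k$. This is well defined: if two such expressions represent the same $\eta$, their difference $d$ satisfies $\langle x,d\rangle=0$ for all $x\in E_1$, and taking $x=d$ gives $d=0$. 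The same observation shows that $S$ is $A$-linear, and by construction $\langle Tx,\eta\rangle=\langle x,S\eta\rangle$ for all $x\in E_1$ and $\eta\in E_2^0$.

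Next we bound $S$ on $E_2^0$. Taking $x=S\eta$ and using Cauchy--Schwarz for Hilbert modules,
\[\|S\eta\|^2=\|\langle S\eta,S\eta\rangle\|=\|\langle TS\eta,\eta\rangle\|\le\|T\|\,\|S\eta\|\,\|\eta\|,\]
so $\|S\eta\|\le\|T\|\|\eta\|$. Thus $S$ extends uniquely to a bounded $A$-linear map $E_2\ra E_1$. For fixed $x$, both sides of the adjoint identity are continuous in $\eta$, so the identity passes to the closure. This gives $T^*=S$, and $T$ is adjointable.

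The main point to be careful about is the well-definedness and boundedness on the dense span. Both rely only on the nondegeneracy of the $A$-valued inner product and on module Cauchy--Schwarz, and it is there that one must avoid tacitly using any Hilbert-space Riesz representation. Notably, this argument uses only the half of local compactness saying $\pi_2(g)T$ is compact, together with non-degeneracy of $\pi_2$. The proper-support hypothesis appears not to be needed for adjointability itself. If one prefers to use it, it gives a variant route: for $g\in C_c(Z_2)$ choose $f\in C_c(Z_1)$ with $\pi_2(g)T=\pi_2(g)T\pi_1(f)$, and obtain the adjoints from compactness of $T\pi_1(f)$.
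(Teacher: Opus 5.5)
Your argument is correct, and it takes a genuinely different route from the paper's. The paper (Proposition~\ref{properly supported+locally compact is adjointable}) uses the \emph{other} half of local compactness. For compactly supported $f\in E_2$ it sets $\hat T f=(T\pi_1(h))^*f$, where $h\in C_c(Z_1)$ equals $1$ on $S\inv\circ\supp_{\pi_2}(f)$ and $S=\supp_{\pi_2,\pi_1}(T)$. Proper support is exactly what makes this set compact, so that such an $h$ exists. The support calculus (Proposition~\ref{property of support of operator}, Lemma~\ref{empty support must be 0}) then gives independence of the choice of $h$.

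You instead use compactness of $\pi_2(g)T$, so the formula $T^*(\pi_2(g)\xi)=(\pi_2(\bar g)T)^*\xi$ needs no localization at all. Well-definedness follows from nondegeneracy of the $A$-valued inner product, and boundedness from the module Cauchy--Schwarz estimate. This buys you a strictly stronger statement: proper support is superfluous, since only $\pi_2(C_0(Z_2))T\subseteq\calK(E_1,E_2)$ and non-degeneracy of $\pi_2$ are used. It also avoids the support machinery entirely.

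What the paper's choice buys is that its half, $T\pi_1(f)\in\calK$, is the one that is automatic for composites $V\circ T$ in the construction of $Ad_V$. For your half, $\pi_3(h)VT\in\calK$ needs proper support of $V$ via Lemma~\ref{properly supported operator transit compact support}, so proper support re-enters there. For the paper's half, proper support genuinely cannot be dropped. Take trivial $\calG$, $A=C[0,1]$, $E_1=C_0((0,1])$ over $Z_1=(0,1]$, and $E_2=A$ over a point. Then the inclusion $T$ has every $T\pi_1(f)$ of rank one, yet $T$ is not adjointable (and not properly supported).
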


In particular, if we define $C^*_\calG(E,\pi)$ as the norm closure of the subspace consisting of all operators that are $\calG$-equivariant, locally compact and controlled, then $C^*_\calG(E,\pi)$ is a *-subalgebra of $\calL(E)$, the \cst-algebra of adjointable operators.

Some technical difficulties arise when defining the maps $Ad_V$ for non-adjointable isometries $V$. Such isometries need to be considered in the groupoid-equivariant setting, for reasons explained below. As motivation, recall that even if an isometry $V:E\ra F$ between two Hilbert modules is not adjointable, it still induces a *-homomorphism between the \cst-algebras of compact operators
\[Ad_V:\calK(E)\ra \calK(F),\quad \theta_{e,f}\mapsto \theta_{Ve,Vf}.\]
Equivalently, we can write the map as $Ad_V(T)=(V\circ (V\circ T)^*)^*$, since the compactness of $T$ implies that $VT$ is compact and hence adjointable; in turn, $V(VT)^*$ is also compact and adjointable. The same construction applies to Roe algebras and properly supported isometries by theorem \ref{main result properly supported locally compact implies adjointable}: if $V$ is a properly supported isometry and $T$ is a locally compact controlled operator, then $VT$ is also locally compact and controlled, and hence adjointable. We can then apply the same argument once again.

\begin{thm}\label{thm main 1.2}
    Let $\calG$ be a locally compact Hausdorff \'etale groupoid, $A$ be a $\calG$-\cst-algebra and $Z_1,Z_2,Z_3$ be locally compact Hausdorff proper $\calG$-spaces. Let $(E_i,\pi_i)$ be a $Z_i,\calG,A$-module for $i=1,2,3$.
    \begin{enumerate}
        \item Let $V:E_1\ra E_2$ be a $\calG$-equivariant properly supported isometry (not necessarily adjointable). Then the map
        \[Ad_V:C_\calG^*(E_1,\pi_1)\ra C_\calG^*(E_2,\pi_2), \quad T\mapsto (V\circ (V\circ T)^*)^*\]
        is a well-defined *-homomorphism between the two Roe algebras. Moreover, the induced map in K-theory $(Ad_V)_*:K_*(C_\calG^*(E_1,\pi_1))\ra K_*(C_\calG^*(E_2,\pi_2))$ is independent of the choice of $V$.
        \item For two $\calG$-equivariant properly supported isometries $V:E_1\ra E_2$ and $W:E_2\ra E_3$, we have $Ad_W\circ Ad_V=Ad_{W\circ V}$.
    \end{enumerate}
\end{thm}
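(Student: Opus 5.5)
We first check that $Ad_V$ is well defined on the dense $*$-subalgebra and then extend by continuity. Let $T\in B(E_1)$ be $\calG$-equivariant, locally compact and controlled. Since $V$ is properly supported, $\supp(VT)\subseteq \supp(V)\circ\supp(T)$, and a properly supported set is weakly controlled, so the ``useful fact'' shows that $VT$ is controlled. For $f\in C_0(Z_1)$, $VT\pi_1(f)$ is compact because $T\pi_1(f)$ is. For $g\in C_0(Z_2)$ we use the proper support of $V$: choose $h\in C_c(Z_1)$ equal to $1$ on the compact set $\supp(V)^{-1}(\supp g)$. Then $\pi_2(g)V=\pi_2(g)V\pi_1(h)$, so $\pi_2(g)VT=\pi_2(g)V\pi_1(h)T$. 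This is compact, since $\pi_1(h)T$ is compact (the cutoff argument needs care because $V$ is not adjointable, but it can be checked on supports). Hence $VT$ is properly supported and locally compact, and Theorem~\ref{main result properly supported locally compact implies adjointable} makes it adjointable; its adjoint $(VT)^*$ is again equivariant, locally compact and controlled. Repeating the argument for $V(VT)^*$ shows that it is adjointable and lies in $C_\calG^*(E_2,\pi_2)$. Therefore $Ad_V(T)=(V(VT)^*)^*$ is well defined on the algebraic Roe algebra.

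Next we prove the algebraic identities using the isometry relation $\langle Ve,Ve'\rangle=\langle e,e'\rangle$. Linearity is clear. For $\xi,\eta\in E_2$ we compute
\[\langle Ad_V(T)\xi,\eta\rangle=\langle \xi,V(VT)^*\eta\rangle,\]
which shows that $Ad_V(T)$ ``equals $VTV^*$'' in the weak sense. Multiplicativity then reduces to the identity $(VT)^*V=T$ on $E_1$. This holds because $\langle (VT)^*Ve,e'\rangle=\langle Ve,VTe'\rangle=\langle e,Te'\rangle$, and it gives $Ad_V(S)Ad_V(T)=Ad_V(ST)$. Self-adjointness, $Ad_V(T^*)=Ad_V(T)^*$, follows from a similar inner-product computation. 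The same computation gives the norm bound $\|Ad_V(T)\|\le\|T\|$, so $Ad_V$ extends to the closures. Part 2 is a formal check of the same kind: $W\circ V$ is an equivariant properly supported isometry, and both sides pair with $\xi,\eta\in E_3$ in the same way. Concretely, $\langle Ad_{WV}(T)\xi,\eta\rangle=\langle\xi,WV((WV)T)^*\eta\rangle$, and this matches $Ad_W(Ad_V(T))$ after we insert $Ad_V(T)=$ the ``$VTV^*$'' pairing.

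The main obstacle is independence of the induced map in K-theory from the choice of $V$. Let $V_1,V_2:E_1\to E_2$ be two such isometries. We pass to $E_2\oplus E_2$ and use the standard rotation trick. The rotation $R_t=\cos t\,(V_1\oplus 0)+\sin t\,(0\oplus V_2)$ is not obviously usable because $V_i$ are not adjointable, so the partial isometries $V_2V_1^*$ are not available directly. Instead we use the unitary-like element $U=V_2(V_1\cdot)^*$ applied to Roe-algebra elements, or equivalently we check that $u=\begin{pmatrix}1-p_1 & \cdot\\ \cdot & \cdot\end{pmatrix}$ lies in the multiplier algebra of $C_\calG^*(E_2\oplus E_2)$. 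Here ``multiplier'' is taken in the sense that left and right multiplication preserve the Roe algebra; this holds because all products with Roe-algebra elements are adjointable by Theorem~\ref{main result properly supported locally compact implies adjointable}. Then $Ad_{V_2}=Ad_u\circ Ad_{V_1}$ with $u$ a unitary multiplier. Conjugation by a unitary in the multiplier algebra of the stabilised algebra induces the identity on K-theory, by the rotation homotopy inside $M_2$ of the multiplier algebra. The delicate point I expect is verifying this multiplier property, that is, that $V_2$ composed with adjoints of $V_1$ applied to Roe elements stays in the Roe algebra.
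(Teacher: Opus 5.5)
Your well-definedness step is the paper's argument (Lemma~\ref{properly supported operators multiplies Roe algebra} together with Proposition~\ref{properly supported+locally compact is adjointable}). Your algebraic route to multiplicativity is simpler than the paper's localization to compact operators, once a slip is fixed: your computation gives $(VT)^*V=T^*$, not $T$. With that, $(Ad_V(S)Ad_V(T))^*=V(VT)^*V(VS)^*=VT^*(VS)^*=V(VST)^*=Ad_V(ST)^*$.

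The genuine gap is the $K$-theory step. You discard the rotation $R_t=\cos t\,(V_1\oplus 0)+\sin t\,(0\oplus V_2)$ as requiring $V_2V_1^*$, but it requires no adjoint of $V_i$ at all. The map $R_t\colon E_1\to E_2\oplus E_2$ is an isometry simply because its two components lie in orthogonal summands. It is also $\calG$-equivariant and properly supported, so part 1 makes each $Ad_{R_t}$ a $*$-homomorphism into $C^*_{\calG}(E_2^{\oplus 2})\cong M_2(C^*_{\calG}(E_2))$. Expanding $(R_t(R_tT)^*)^*$ shows that $Ad_{R_t}(T)$ is a trigonometric polynomial in $t$ whose coefficients are the adjointable operators $(V_i(V_jT)^*)^*$ placed in matrix entries, so $t\mapsto Ad_{R_t}(T)$ is norm continuous. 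The endpoints are $\alpha\circ Ad_{V_1}$ and $\beta\circ Ad_{V_2}$, with $\alpha,\beta$ the two corner embeddings, and these induce the same isomorphism on $K_*$. This is exactly the paper's proof of Proposition~\ref{two covering isometries induce same map in K}.

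What you propose instead is not carried out. The element $u$ is never written down, and its entries $1-p_1$ and $V_2V_1^*$ are not operators on $E_2$ at all. The reason is that $V_1E_1$ need not be orthogonally complemented in $E_2$, which is exactly what non-adjointability of $V_1$ means. So ``left and right multiplication by $u$ preserve the Roe algebra'' has no meaning as stated. You would have to construct $u$ as a double centralizer of $C^*_{\calG}(E_2^{\oplus 2})$ and verify that it is unitary and that $Ad_{0\oplus V_2}=Ad_u\circ Ad_{V_1\oplus 0}$. That is the whole content of the step, and you leave it open; adjointability of products with Roe elements does not supply it.

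A smaller gap of the same kind affects self-adjointness and part 2. Pairing arguments only determine how an operator acts on $\mathrm{ran}(V)$ and on $\mathrm{ran}(V)^\perp$. For instance, $Ad_V(T)$ and $Ad_V(T^*)^*$ both send $Ve$ to $VTe$ and both vanish on $\mathrm{ran}(V)^\perp$. Since in general $E_2\neq\mathrm{ran}(V)\oplus\mathrm{ran}(V)^\perp$, this does not identify them.

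A clean fix is to verify both identities on products $T=T_1T_2$ with $T_1,T_2\in\mathbb C_{\calG}[E_1]$, whose span is dense in $C^*_{\calG}(E_1,\pi_1)$. Then $\langle\xi,V(VT^*)^*\eta\rangle=\langle (VT_1)^*\xi,(VT_2^*)^*\eta\rangle=\langle V(VT)^*\xi,\eta\rangle$. Using multiplicativity, part 2 reduces to $(VT_2)^*(W\,Ad_V(T_1))^*=T_2^*(WVT_1)^*$, which can be tested against vectors of $E_1$ using $Ad_V(T_1)V=VT_1$. The paper instead localizes via $Ad_{V,W}(T)e=Ad_{V,W}(T\pi_1(f))e$ (Proposition~\ref{reduce adjoint map to compact case}) and reduces everything to $Ad_V(\theta_{e,f})=\theta_{Ve,Vf}$.
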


\subsection*{Analog of the functor \texorpdfstring{$X\mapsto K_*(C^*(X))$}{K(C*(X))}}

\subsubsection*{Non-equivariant case}
The Roe algebra associated with an ample $X$-module is called the Roe algebra of $X$ and is denoted by $C^*(X)$. In the word of \cite{willett2020higher}, the group $K_*(C^*(X))$ is a natural home for higher indices and a coarse geometric invariant.

Recall that an $X$-module $(H,\pi)$ is ample if for any nonzero $f\in C_0(X)$, $\pi(f)$ is not a compact operator. Indeed, the role of ample modules can be explained by the following fact: if $(H_1,\pi_1)$ is an $X$-module and $(H_2,\pi_2)$ is an ample $X$-module, then there exists an isometry $V:H_1\ra H_2$ with controlled support.

We can see it from the following viewpoint: in the category whose objects are $X$-modules and whose morphisms are controlled isometries, ample modules are weakly terminal objects. Consider the category whose objects are the graded abelian groups $K_*(C^*(H,\pi))$ for all the $X$-modules $(H,\pi)$ and whose morphisms are all the maps induced by controlled isometries. Then $K_*(C^*(X))$ is a terminal object in this category, and hence unique up to isomorphism.

\subsubsection*{Groupoid-equivariant case}
Motivated by the non-equivariant case, we now define the relation $\preceq$ between two $Z,\calG,A$-modules as the existence of $\calG$-equivariant properly supported isometries between them. We say that $(E,\pi)$ is a universal $Z,\calG,A$-module if for any $Z,\calG,A$-module $(E',\pi')$, we have $(E',\pi')\preceq (E,\pi)$. That is, a universal $Z,\calG,A$-module is a weakly terminal object in this directed set of equivariant unitary equivalence classes of $Z,\calG,A$-modules. Thus, universal modules play the same role here as ample modules do in the non-equivariant case. For technical reasons, we require universal modules to be stable in the sense of definition \ref{defn stable modules}.

Even without knowing whether universal $Z,\calG,A$-modules exist, theorem \ref{thm main 1.2} implies that the graded abelian groups $K_*(C_\calG^*(E,\pi))$ for all the (unitary equivalence classes of) $Z,\calG,A$-modules $(E,\pi)$ and the homomorphisms induced by adjoint maps given by $\calG$-equivariant properly supported isometries form a directed system, so we obtain a well-defined directed limit
\[KC_*(Z;\calG,A)=\varinjlim_{(E,\pi)}K_*(C_\calG^*(E,\pi)).\]
The functor $Z\mapsto KC_*(Z;\calG,A)$ is then an analog of the functor $X\mapsto K_*(C^*(X))$.

\begin{thm}
    Let $\calG$ be a locally compact Hausdorff \'etale groupoid and $A$ be a $\calG$-\cst-algebra.
    \begin{enumerate}
        \item Let $Z$ be a locally compact Hausdorff proper $\calG$-space. If $(E,\pi)$ is a universal $Z,\calG,A$-module, then
        \[KC_*(Z;\calG,A)\cong K_*(C_\calG^*(E,\pi)).\]
        \item For every $\calG$-equivariant proper continuous map $f:Z_1\ra Z_2$ between two locally compact Hausdorff proper $\calG$-spaces, let the map
        \[f_*:KC_*(Z_1;\calG,A)\ra KC_*(Z_2;\calG,A)\]
        be as described in proposition \ref{functoriality of KC}. Then the assignments
        \[Z\mapsto KC_*(Z;\calG,A),\quad f\mapsto f_*\]
        form a well-defined functor from the category of locally compact Hausdorff proper $\calG$-spaces and $\calG$-equivariant proper continuous maps to the category of graded abelian groups.
        \item For any two $\calG$-equivariant proper continuous maps $f,g:Z_1\ra Z_2$ between two locally compact Hausdorff proper $\calG$-spaces, we say that $f$ is close to $g$ if the set
        \[\{(f(z_1),g(z_1))\in Z_2\times_{\calG\units}Z_2:z_1\in Z_1\}\]
        is a weakly controlled subset of $Z_2\times_{\calG\units}Z_2$. In this case,
        \[f_*=g_*: KC_*(Z_1;\calG,A)\ra KC_*(Z_2;\calG,A).\]
    \end{enumerate}
\end{thm}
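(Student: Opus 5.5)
Part 1 is formal once the directed system is in place. By Theorem \ref{thm main 1.2}, for modules $(E',\pi')\preceq(E'',\pi'')$ the map $(Ad_V)_*$ is independent of the properly supported equivariant isometry $V$, and $(Ad_W)_*\circ(Ad_V)_*=(Ad_{WV})_*$. So the groups $K_*(C^*_\calG(E,\pi))$ form a directed system indexed by the preordered set of equivalence classes of modules. Directedness comes from direct sums: the inclusions $E_i\ra E_1\oplus E_2$ are equivariant isometries supported on the diagonal. If $(E,\pi)$ is universal, then $\{(E,\pi)\}$ is cofinal, and every object of the directed system maps to it.

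Cofinality alone does not give $KC_*\cong K_*(C^*_\calG(E,\pi))$. We also need the canonical self-maps of the universal object to be the identity. Independence of the isometry gives this directly: for any $V:E\ra E$, $(Ad_V)_*=(Ad_{\mathrm{id}})_*=\mathrm{id}$. Hence for any $E'\succeq E$, the composite $K_*(C^*_\calG(E))\ra K_*(C^*_\calG(E'))\ra K_*(C^*_\calG(E))$ is the identity. The reverse composite on $K_*(C^*_\calG(E'))$ is the canonical self-map, which is also the identity. So every map out of $E$ is an isomorphism, and the canonical map $K_*(C^*_\calG(E,\pi))\ra KC_*(Z;\calG,A)$ is an isomorphism.

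For part 2 I will use $f_*$ as in Proposition \ref{functoriality of KC}. A $Z_1,\calG,A$-module $(E,\pi)$ is sent to the $Z_2$-module $(E,\pi\circ f^*)$, where $f^*:C_0(Z_2)\ra C_0(Z_1)$; this is defined because $f$ is proper, and it stays non-degenerate and equivariant. I then check two things:
\begin{itemize}
\item The support of an operator for the pushed-forward structure lies in $(f\times f)(\supp T)$.
\item $(f\times f)$ maps $\calG$-compact sets to $\calG$-compact sets, and maps weakly controlled sets to weakly controlled sets. The latter holds because $f$ is proper, so the projections stay proper on the image.
\end{itemize}
Local compactness is preserved, since $\pi(f^*g)$ gives the required compactness. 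The identity map of $E$ then induces an inclusion $C^*_\calG(E,\pi)\subseteq C^*_\calG(E,\pi\circ f^*)$, and isometries properly supported over $Z_1$ stay properly supported over $Z_2$. So these maps are compatible with the directed systems and induce $f_*$ on the colimits. Functoriality $(gf)_*=g_*f_*$ and $\mathrm{id}_*=\mathrm{id}$ follow because on representatives both sides are the literal inclusion induced by the identity of $E$.

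For part 3, closeness of $f$ and $g$ means the two pushforwards $(E,\pi\circ f^*)$ and $(E,\pi\circ g^*)$ are related by the identity operator. Its support is contained in the closure of $\{(f(z),g(z))\}$, which is weakly controlled by hypothesis. So $\mathrm{id}_E$ is a properly supported equivariant isometry, and it is in fact a unitary, between these two $Z_2$-modules. The step I expect to be the main obstacle is showing that $Ad_{\mathrm{id}}$ carries the image of $C^*_\calG(E,\pi)$ in one Roe algebra identically onto its image in the other, compatibly with the directed system. For this I need that composing a controlled set with a weakly controlled set on both sides stays controlled, which is the useful fact recorded in the introduction. Then $(Ad_{\mathrm{id}})_*\circ\iota_f=\iota_g$ on $K$-theory. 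Since the maps into the colimit identify $K_*(C^*_\calG(E,\pi\circ f^*))$ and $K_*(C^*_\calG(E,\pi\circ g^*))$ via $(Ad_{\mathrm{id}})_*$, we get $f_*=g_*$. A remaining care point is that the support computation needs closedness and a $\calG$-invariant proper envelope; I would take the closure of the $\calG$-saturation of the graph set and use properness of $f$ and $g$.
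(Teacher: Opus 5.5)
Your proposal is correct and follows essentially the paper's argument: part 1 uses the mutually inverse maps $(Ad_V)_*$, $(Ad_W)_*$ coming from independence of the isometry; part 2 pushes modules forward along $f$ using $\supp_{f_*\pi_2,f_*\pi_1}(V)\subseteq Gr(f)\circ\supp_{\pi_2,\pi_1}(V)\circ Gr(f)\inv$; and part 3 observes that $\mathrm{id}_E$ is a properly supported equivariant isometry $(E,f_*\pi)\to(E,g_*\pi)$. The differences are cosmetic: the paper proves the equality $C^*_\calG(E,\pi)=C^*_\calG(E,f_*\pi)$ where you use only the inclusion, and the step you flag as the main obstacle in part 3 is immediate, since $Ad_{\mathrm{id}}(T)=T$ and the hypothesis already supplies the weakly controlled envelope.
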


Similarly, $KC_*(Z;\calG,A)$ is a natural home for a $\calG$-equivariant coarse index map from $\kk_*^\calG(C_0(Z),A)$. The coarse index map will be investigated in the next paper.

\subsection*{Construction of universal modules}

\subsubsection*{Non-equivariant case}
So far, the only property of ample modules we have needed is their universality: if $(H,\pi)$ is an ample $X$-module, then for any $X$-module $(H',\pi')$, there exists a controlled isometry $H'\ra H$. The proof uses a Borel decomposition: let $X=\sqcup_{i\in I}B_i$ be a partition of $X$ into a countable family of Borel subsets with non-empty interiors, such that $\cup_{i\in I}\overline{B_i}\times \overline{B_i}$ is a controlled subset of $X\times X$. The ampleness of $\pi$ implies that $\bar\pi(\chi_{B_i})H$ is an infinite-dimensional Hilbert space for each $i\in I$, where $\bar\pi_i$ is the canonical extension of $\pi$ to a unital representation of $\ca B_b(X)$, the \cst-algebra of bounded Borel functions on $X$. Hence, there exists an isometry $V_i:\bar\pi'(\chi_{B_i})H'\ra \bar\pi(\chi_{B_i})H$. So
\[V:H'\cong \oplus_{i\in I}\bar\pi'(\chi_{B_i})H'\ra \oplus_{i\in I}\bar\pi(\chi_{B_i})H\cong H\]
is the desired controlled isometry. See \cite[Construction 4.2.5]{willett2020higher}.

\subsubsection*{Groupoid-equivariant case}

Establishing a universal module in general is expected to be difficult in this case. For example, we cannot simply copy the method above, since there is no reasonable extension of a representation on a Hilbert modules of $C_0(Z)$ to $\ca B_b(Z)$. The main result in this part is to prove their existence for groupoid simplicial complexes with suitable hypotheses.

\begin{thm}\label{main thm existence of universal module}
    Let $\calG$ be a second countable locally compact Hausdorff \'etale groupoid, $A$ be a separable $\calG$-\cst-algebra and $(Y,\Delta)$ be a proper cocompact $\calG$-simplicial complex satisfying hypotheses $(H_1)$ and $(H_2)$ (see definition \ref{defn groupoid simplicial complex} and definition \ref{H_1 H_2}) such that $Y$ is second countable. Then there exists a universal $|\Delta|,\calG,A$-module.
\end{thm}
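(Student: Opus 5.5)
The plan is to build the universal module by a decomposition technique that replaces the Borel decomposition of the non-equivariant case. Since $(Y,\Delta)$ is a proper cocompact $\calG$-simplicial complex, $|\Delta|$ carries a stratification by open simplices. Under $(H_1)$ and $(H_2)$, I expect each $\calG$-orbit type of simplex to be described by an open subset $\sigma$ of the vertex set $Y$ together with its stabilizer data. Then the open star $\mathrm{St}(\sigma)$, or the open simplex $\mathring{\sigma}$, is a proper $\calG$-space of the form $\calG\times_{\calG_\sigma}(\text{something compact})$, or more precisely is induced from a proper groupoid acting on a space of the form $U\times \mathring\Delta^n$ with $U\subseteq Y$ open. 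By cocompactness there are only finitely many dimensions and finitely many orbit types to handle.

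\textbf{Step 1 (local universal modules).} On each induced piece $W_\sigma$, which I take to be an open $\calG$-invariant subset of $|\Delta|$ with $\calG$-compact closure inside a slightly larger piece, I will write down an explicit candidate module. The candidate is $E_\sigma=L^2$-type sections, i.e.\ $C_0(W_\sigma)\otimes_{C_0(\calG\units)} \ell^2(\mathbb N)\otimes L^2(\calG)\otimes A$ with the induced $\calG$-action. For this I use the étale structure, second countability of $Y$ and $\calG$, and separability of $A$. I will then show that every $W_\sigma,\calG,A$-module embeds into it by a $\calG$-equivariant isometry with controlled support. This is an equivariant Kasparov stabilization: any countably generated $\calG$-Hilbert $A$-module $E'$ with a non-degenerate $C_0(W_\sigma)$-action embeds equivariantly into $E_\sigma$ by a stabilization theorem for proper groupoid actions, using a cutoff function on the proper $\calG$-space $W_\sigma$. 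Properness of the support should follow because the embedding can be chosen $C_0(W_\sigma)$-linear up to a controlled error coming from the cutoff function. Stability in the sense of Definition \ref{defn stable modules} holds by the $\ell^2(\mathbb N)$ factor.

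\textbf{Step 2 (gluing).} I will order the orbit types by dimension and proceed by induction on skeleta. I choose a $\calG$-invariant partition of unity $\{\phi_\sigma\}$ subordinate to the cover by open stars; it exists by cocompactness and properness, and is finite modulo $\calG$. Given an arbitrary $|\Delta|,\calG,A$-module $(E',\pi')$, I restrict it to each $W_\sigma$ via $\overline{\pi'(C_0(W_\sigma))E'}$. The map $e\mapsto (\pi'(\phi_\sigma^{1/2})e)_\sigma$ is then a $\calG$-equivariant isometry $E'\to\bigoplus_\sigma E'|_{W_\sigma}$ whose support is contained in $\bigcup_\sigma \overline{W_\sigma}\times_{\calG\units}\overline{W_\sigma}$. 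This set is closed and invariant, and it projects properly because the cover is locally finite and each $\overline{W_\sigma}$ is proper over $|\Delta|$. The isometry is therefore properly supported. Composing with the direct sum of the local isometries from Step 1 gives a properly supported equivariant isometry into $E=\bigoplus_\sigma E_\sigma$ (finite sum), where each $E_\sigma$ is regarded as a $|\Delta|$-module via extension by zero. Nondegeneracy of the extended module must be checked, and I would obtain it by adding one copy of a module with full support. Universality then follows, and Theorem \ref{thm main 1.2}(2) gives compatibility of compositions.

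\textbf{Main obstacle.} I expect Step 1 to be the hard part. Producing, for a non-compact proper piece, an equivariant isometry that is simultaneously properly supported requires controlling supports without a Borel functional calculus. There is no $\bar\pi(\chi_B)$ available, so the usual orthogonal decomposition must be replaced by the square-root partition-of-unity trick and a stabilization argument. I would first try to reduce to the case where $\calG$ acts freely on $W_\sigma$ with compact quotient up to a finite stabilizer, using $(H_1)$ and $(H_2)$ to ensure the stabilizers are locally constant. In that case an equivariant module is the same as a module over the quotient twisted by a finite group, and I would apply the equivariant Kasparov stabilization fiberwise.
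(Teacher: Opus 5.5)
There is a genuine gap, and it sits where the paper places the main difficulty: Step 1 never produces a legitimate local target module. Read as a balanced tensor product, $C_0(W_\sigma)\otimes_{C_0(\calG\units)}L^2(\calG,A)^\infty$ is a Hilbert module over $C_0(W_\sigma)\otimes_{C_0(\calG\units)}A$, the pullback of $A$ to $W_\sigma$, not over $A$. To get a Hilbert $A$-module carrying a non-degenerate $C_0(\calG\units)$-linear representation of $C_0(W_\sigma)$, you would have to push down along $\tilde\rho$, for example by integrating over the fibres against a continuous family of measures. But $\tilde\rho\colon|\Delta|\to\calG\units$ is not a local homeomorphism. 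A continuous full-support family would force $\tilde\rho|_{W_\sigma}$ to be open, and this fails in general: by $(H_2)$ the family of simplices is closed, so the set of units over which a nearby simplex exists is in general only closed. For the same reason your pieces are not of the form $U\times\Delta^n$ with $U$ open. The paper flags exactly this (the Question preceding Proposition~\ref{property of locally symmetric subbase}) and gets around it using the simplicial structure. It passes to the barycentric subdivision so the complex is typed, and takes a locally symmetric neighbourhood $W$ of $\mu_0$ with $\calH\cong F_{\mu_0}\ltimes U$. It then puts the module only on the closed lowest stratum $T=W\cap(|\Delta^m|\setminus|\Delta^{m-1}|)\cong(Q\cap\mathrm{center}(m,\Delta))\times R$. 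Here $\tilde\rho$ identifies the first factor with a closed subset $C\subseteq U$, and $R$ is open in $\sigma_m$. So $C_0(W)$ acts through restriction to $T$: by the structure map on $L^2(\calH,(A|_\calH)_{\langle C\rangle})$, tensored with any representation of $C_0(R)$ on a separable Hilbert space. This local module deliberately does not have full support. Proposition~\ref{C_0(X)-linearity force base to shrink} shows it is still large enough, since every $W,\calH,A|_\calH$-module has inner products in $(A|_\calH)_{\langle C\rangle}$.

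The second gap is support control. Your $W_\sigma$ are $\calG$-invariant, so an isometry between two $W_\sigma$-modules gets no control for free. The set $\overline{W_\sigma}\times_{\calG\units}\overline{W_\sigma}$ is neither $\calG$-compact nor properly projecting: its fibre over $z$ is all of $\overline{W_\sigma}\cap|\Delta|_{\tilde\rho(z)}$, which is in general noncompact. So the bound you give in Step 2 does not yield properness. Your claim that the Step 1 embeddings can be made ``$C_0(W_\sigma)$-linear up to a controlled error'' also has no mechanism behind it.

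The paper needs no intertwining at all. It uses relatively compact $\calH_i$-invariant opens $V_i$, with $\calH_i\cong F_i\ltimes\calH_i\units$ proper and $\gamma\overline{V_i}\cap\overline{V_i}=\emptyset$ for $\gamma\notin\calH_i$. Then \emph{any} $\calH_i$-equivariant isometry $E'_{[V_i]}\to E_{[V_i]}$ is automatically supported in the compact set $\overline{V_i}\times\overline{V_i}$. Such an isometry exists by Mingo--Phillips stabilization for the finite group $F_i$ (Lemma~\ref{local universal module exists} with Lemma~\ref{direct summand of inflation}). The partition-of-unity map $\Phi$, the inflations $\infl^\calG_{\calH_i}(T_i)$ and the non-adjointable maps $\Psi_i$ then assemble these into a $\calG$-equivariant isometry supported in $\bigcup_i\calG(\overline{V_i}\times_{\calG\units}\overline{V_i})$ (Proposition~\ref{key proposition of universal module}).

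Your closing idea of reducing to finite stabilizers points the right way. Without induction from $\calH_i$-invariant relatively compact pieces, and without an actual local target module, the argument does not go through. Two smaller points: non-degeneracy after extension by zero along an open inclusion is automatic, whereas ``adding a module with full support'' presupposes the object you are trying to construct.
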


Note that Rips complexes satisfy the hypotheses of theorem \ref{main thm existence of universal module}, which suffices for the formulation of the Baum--Connes conjecture for \'etale groupoids.

The construction is achieved by a similar decomposition technique and a detailed study of the local structure of groupoid simplicial complexes. If $(E,\pi)$ is a $Z,\calG,A$-module, $\pi$ does not, in general, admit a natural extension to $\ca B_b(Z)$. So instead of a Borel decomposition, we need an ``equivariant'' open cover. Thanks to the local structure of proper actions (see proposition \ref{local structure of proper cocompact action of étale groupoid}), we can choose a countable family of open sets $(V_i)_{i\in I}$ and a countable family of proper open subgroupoids $(\calH_i)_{i\in I}$ such that
\begin{enumerate}
    \item for each $i\in I$, $\rho(V_i)\subseteq \calH_i\units$, $V_i$ is an $\calH_i$-invariant relatively compact open subset of $Z$, and for any $\gamma\in \calG\setminus\calH_i$, $\gamma\overline{V_i}\cap \overline{V_i}=\emptyset$ (which implies that $\calG V_i$ is $\calG$-equivariantly homeomorphic to a balanced product $\calG_{\calH\units}\times_\calH V_i$);
    \item $(\calG V_i)_{i\in I}$ is a countable locally finite cover of $Z$.
\end{enumerate}
In a suitable sense, this cover can be chosen to be arbitrarily fine. By analogy with the non-equivariant case, we develop the following decomposition technique.
\begin{thm}\label{thm main 1.4}
    Let $\calG$ be a second countable locally compact Hausdorff \'etale groupoid, $A$ be a separable $\calG$-\cst-algebra, $Z$ be a locally compact Hausdorff proper $\calG$-space. Let $(V_i)_{i\in I}$ and $(\calH_i)_{i\in I}$ be defined as above. Assume that $(E',\pi')$ is a $Z,\calG,A$-module and $(E,\pi)$ is a stable $Z,\calG,A$-module. For each $i\in I$, we write
    \[E_{[V_i]}=\overline{\pi(C_0(V_i))E}\cong C_0(V_i)\otimes_{\pi}E,\quad E'_{[V_i]}=\overline{\pi'(C_0(V_i))E'}\cong C_0(V_i)\otimes_{\pi'}E',\]
    which are canonically $\calH_i$-Hilbert $A|_{\calH_i}$-modules.
    
    If, for every $i\in I$, there exists an $\calH_i$-equivariant isometry $T_i\in \Hom_A(E'_{[V_i]}, E_{[V'_i]})$, then there exists a $\calG$-equivariant properly supported isometry $V\in \Hom_A(E',E)$. Moreover, $\supp_{\pi,\pi'}(V)\subseteq \cup_{i\in I}\calG(\overline{V_i}\times_{\calG\units}\overline{V_i})$.
\end{thm}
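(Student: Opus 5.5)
The plan is to imitate the Borel-decomposition argument, with a $\calG$-equivariant partition of unity taking the place of characteristic functions.

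\textbf{Step 1: partition of unity.} Since $(\calG V_i)_{i\in I}$ is a countable locally finite cover of $Z$ and the action is proper, I would pick compactly supported continuous functions $c_i\in C_c(V_i)$, $c_i\ge 0$, such that
\[\sum_{i\in I}\sum_{\gamma\in\calG/\calH_i}(\gamma\cdot c_i)^2=1\]
on $Z$. Here $\gamma\cdot c_i$ denotes the translate $z\mapsto c_i(\gamma\inv z)$, supported in $\gamma V_i$. First build a cutoff function for each piece: choose $\calH_i$-invariant functions $\phi_i$ on $V_i$ with $\sum_{[\gamma]}\gamma\cdot\phi_i>0$ on $\calG V_i$, using the identification $\calG V_i\cong \calG_{\calH_i\units}\times_{\calH_i}V_i$. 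Then normalize using local finiteness.

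\textbf{Step 2: local isometries into $E$.} For each $i$, define $S_i:E'\ra E'_{[V_i]}$ by $e\mapsto \pi'(c_i)e$, followed by $T_i$. This gives an $\calH_i$-equivariant map $E'\ra E_{[V_i]}\subseteq E$. Next induce it up to a $\calG$-equivariant map
\[W_i:\overline{\pi'(C_0(\calG V_i))E'}\ra E,\]
using the balanced-product structure. Concretely, $E'_{[\calG V_i]}$ decomposes as the induced module $\ind_{\calH_i}^{\calG}E'_{[V_i]}$; heuristically it is a $C_0$-direct sum of the translates $\gamma E'_{[V_i]}$ over $\gamma\in\calG/\calH_i$, and these translates are mutually orthogonal because $\gamma\overline{V_i}\cap\overline{V_i}=\emptyset$ for $\gamma\notin\calH_i$. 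One then sets $W_i=\ind(T_i)$ composed with multiplication by $\sum_{[\gamma]}\gamma\cdot c_i$. Making this rigorous requires the earlier result identifying $C_0(\calG V_i)\otimes_\pi E$ with an induced module. I expect this identification to be the main technical obstacle, especially in checking that equivariance is preserved and that everything is well defined fiberwise over $\calG\units$ for the non-adjointable $T_i$.

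\textbf{Step 3: assembling.} Set $V_0=\sum_i W_i\pi'(\cdot)$, so that formally
\[V_0e=\sum_i\sum_{[\gamma]}\gamma T_i\gamma\inv\pi'(\gamma\cdot c_i)e.\]
On its own this is not an isometry, because the ranges for different $i$ overlap in $E$. Stability of $E$ is used exactly here. Writing $E\cong E\otimes\ell^2(\mathbb N)$ equivariantly (definition \ref{defn stable modules}), compose the $i$-th piece with the isometry $e\mapsto e\otimes\delta_i$, after indexing $I\subseteq\mathbb N$. The pieces then have mutually orthogonal ranges, and
\[\langle Ve,Ve\rangle=\sum_{i,[\gamma]}\langle \pi'(\gamma c_i)e,\pi'(\gamma c_i)e\rangle=\langle e,\pi'\big(\textstyle\sum(\gamma c_i)^2\big)e\rangle=\langle e,e\rangle.\]
Convergence of these sums is strict/norm convergence. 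It follows from local finiteness together with the fact that the partial sums of $\sum(\gamma c_i)^2$ are bounded by $1$.

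\textbf{Step 4: support and properness.} Each summand has support in $\gamma(\overline{V_i}\times_{\calG\units}\overline{V_i})$, since $T_i$ maps $E'_{[V_i]}$ to $E_{[V_i]}$ and so has support in $\overline{V_i}\times\overline{V_i}$, while tensoring with $\delta_i$ preserves supports. Hence $\supp(V)$ lies in the closure of the union $\bigcup_i\calG(\overline{V_i}\times_{\calG\units}\overline{V_i})$. This union is already closed, by local finiteness of $(\calG V_i)$ and properness of the action. The same two facts show that both projections restricted to this set are proper: a compact $K\subseteq Z$ meets only finitely many $\calG V_i$, and for each such $i$, $\{\gamma:\gamma\overline{V_i}\cap K\neq\emptyset\}$ is compact. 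Therefore $V$ is properly supported, and $\calG$-equivariance holds by construction.
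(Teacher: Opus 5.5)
Your proposal is correct and follows essentially the same route as the paper. Your $c_i$ is the paper's $g_i=f_i|_{V_i}$, obtained from a partition of unity on $Z/\calG$ subordinate to the $q(V_i)$. Your formula for $V$ is exactly the paper's composite $U\inv\circ(\oplus_i\Psi_i)\circ(\oplus_i\infl^\calG_{\calH_i}(T_i))\circ\Phi$, and stability is used the same way.

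The one difference is that the paper never needs the identification of $\overline{\pi'(C_0(\calG V_i))E'}$ with an induced module, which you flag as the main obstacle. Instead it works with the inflation $\infl^\calG_{\calH_i}$, which is a Hilbert $A$-module, unlike Miller's induction. It defines the cut-off map $\Phi_i(e)(\gamma)=\pi_{s(\gamma)}(g_i)W_\gamma^*e(r(\gamma))$ directly on compactly supported vectors. It then only needs the summation map $\Psi_i(\xi)(x)=\sum_{\gamma\calH_i}W_\gamma\xi(\gamma)$ to be a possibly non-adjointable isometry, and that is precisely the orthogonality of translates you identified.
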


Note that, even if every $T_i$ is adjointable, the proof of theorem \ref{thm main 1.4} involves isometries that are not necessarily adjointable. This explains why we need to consider non-adjointable isometries.

An observation concerning theorem \ref{thm main 1.4} is that, if we can construct a stable $Z,\calG,A$-module such that each $E_{[V_i]}$ contains $L^2(\calH_i, A|_{\calH_i})^\infty$ as an orthogonal direct summand, then by an equivariant analog of Kasparov's stabilization theorem, $(E,\pi)$ will be a universal $Z,\calG,A$-module.

To generalize the coarse decomposition method for the Baum--Connes conjecture for groupoids (Guentner--Willett--Yu \cite{Guentner-Willet-Yu17} \cite{Guentner-Willet-Yu24}, Oyono-Oyono \cite{oyono2023groupoids}, Oyono-Oyono--Yu \cite{Oyono-Yu}, etc.), we need an analog of the localization algebra in this setting. In \cite{oyono2023groupoids}, Oyono-Oyono proved that for second countable locally compact Hausdorff groupoids with Haar systems that admit $\gamma$-elements, the Baum--Connes conjecture is stable under coarse decomposition of groupoids. We are interested in proving this permanence property without assuming the existence of a $\gamma$-element, and our approach can be seen as a first step in this direction.

The paper is organized as follows. In section \ref{sec prelim}, we recall the basic notions of $C_0(X)$-algebras, upper semicontinuous \cst-bundles and groupoid \cst-algebras. In section \ref{sec geo mod}, we introduce some operations on geometric modules and study the supports of operators on geometric modules. In section \ref{sec roe alg}, we use our construction involving properly supported isometries to develop a framework of Roe algebras and their K-theory for spaces equipped with proper actions of \'etale groupoids. We define a functor $KC_*(-;\calG,A)$ to replace the functor $K_*(C^*(X))$ in the non-equivariant case. In section \ref{sec construction univ mod}, we use the local structure of groupoid simplicial complexes to prove that proper cocompact groupoid simplicial complexes admit universal geometric modules.

\section*{Acknowledgements}

This paper covers part of the author's doctoral thesis. The author would like to thank his advisor, Professor Herv\'e Oyono-Oyono for his guidance and encouragement. The author was partially supported by the project OpART (ANR-23-CE40-0016) of the Agence Nationale de la Recherche.

\section{Preliminaries}\label{sec prelim}

\subsection{\texorpdfstring{$C_0(X)$}{C0(X)}-algebras and upper semicontinuous bundles}
We start by recalling the notion of (upper semicontinuous) Banach bundles. The references are \cite[Appendix A]{Muhly-Williams} and \cite[Appendix C]{williams2007crossed}. What they called upper semicontinuous Banach bundles and upper semicontinuous \cst-bundles will be simply called Banach bundles and \cst-bundles here.

In this section we fix $X$ as a locally compact Hausdorff space.

\begin{defn}[Banach bundles]
    A Banach bundle over $X$ is a topological space $\calA$ equipped with a continuous open surjection $p: \calA\ra X$, and every fiber $\calA_x:=p\inv(x)$ is equipped with complex Banach space structure, such that
\begin{enumerate}
    \item the map $a\mapsto \|a\|:=\|a\|_{\calA_{p(a)}}$ is upper semicontinuous from $\calA$ to $\mathbb R$;
    \item the addition $\calA\times_X \calA\ra \calA, (a,b)\mapsto a+b$ is continuous;
    \item for every $\lambda\in \mathbb C$, $\calA\ra \calA, a\mapsto \lambda a$ is continuous;
    \item if $(a_\lambda)_\lambda$ is a net in $\calA$ such that $p(a_\lambda)\ra x$ and $\|a_\lambda\|\ra 0$, then $a_\lambda\ra 0_x$.
\end{enumerate}

A morphism between two Banach bundles $\ca A, \ca B$ over $X$ is a continuous map $\varphi:\ca A\ra \ca B$, such that for every $x\in X$, $\varphi(\ca A_x)\subseteq \ca B_x$, and every fiber $\varphi_x:=\varphi|_{\ca A_x}:\ca A_x\ra \ca B_x$ is a bounded linear map.
\end{defn}

The following results are derived from the axioms in the definition.

\begin{prop}\label{first properties of Banach bundles}
    \textnormal{(C.f. \cite[Proposition C.17, Lemma C.18]{williams2007crossed})} Let $\ca A\ra X$ be a Banach bundle.
    \begin{enumerate}
        \item If $(a_\lambda)_\lambda$ is a net in $\ca A$ that converges to $0_x\in \ca A_x$ for some $x\in X$, then $\|a_\lambda\|\ra 0$.
        \item The scalar multiplication $\mathbb C\times \ca A\ra \ca A$ is continuous.
    \end{enumerate}
\end{prop}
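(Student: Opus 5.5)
For part 1, I will argue by contradiction using the upper semicontinuity of the norm together with axiom 4. Let $(a_\lambda)$ be a net with $a_\lambda\ra 0_x$ and suppose $\|a_\lambda\|\not\ra 0$. Then there are $\varepsilon>0$ and a subnet, still written $a_\lambda$, with $\|a_\lambda\|\ge\varepsilon$ for all $\lambda$. Since $\|0_x\|=0$, upper semicontinuity of $a\mapsto\|a\|$ at $0_x$ gives $\limsup\|a_\lambda\|\le\|0_x\|=0$. This contradicts $\|a_\lambda\|\ge\varepsilon$. So part 1 follows directly from axiom 1 and needs no further input.

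For part 2, take nets $t_\lambda\ra t$ in $\mathbb C$ and $a_\lambda\ra a$ in $\ca A$. Put $x_\lambda=p(a_\lambda)$ and $x=p(a)$; then $x_\lambda\ra x$ because $p$ is continuous. Since fiberwise addition is linear, we can write
\[t_\lambda a_\lambda=t a_\lambda+(t_\lambda-t)a_\lambda,\]
where both summands lie in the fiber over $x_\lambda$. By axiom 3, $t a_\lambda\ra ta$. For the second term, $\|(t_\lambda-t)a_\lambda\|=|t_\lambda-t|\,\|a_\lambda\|$.

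\textbf{Main obstacle.} We need $\|a_\lambda\|$ to be eventually bounded. Upper semicontinuity at $a$ gives $\limsup\|a_\lambda\|\le\|a\|$, so eventually $\|a_\lambda\|\le\|a\|+1$. Hence $\|(t_\lambda-t)a_\lambda\|\ra0$ while $p((t_\lambda-t)a_\lambda)=x_\lambda\ra x$, and axiom 4 gives $(t_\lambda-t)a_\lambda\ra0_x$.

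The pair $(ta_\lambda,(t_\lambda-t)a_\lambda)$ converges in $\ca A\times_X\ca A$ to $(ta,0_x)$. Continuity of addition (axiom 2) therefore yields $t_\lambda a_\lambda\ra ta+0_x=ta$. This proves part 2.
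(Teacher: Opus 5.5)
Your proof is correct. Part 1 is simply upper semicontinuity of the norm at $0_x$ (axiom 4 is not actually needed there, as you note yourself), and part 2 is the standard splitting $t_\lambda a_\lambda = t a_\lambda + (t_\lambda - t)a_\lambda$: upper semicontinuity at $a$ makes $\|a_\lambda\|$ eventually bounded, axiom 4 gives $(t_\lambda - t)a_\lambda \to 0_{p(a)}$, and continuity of addition on the fibered product finishes. The paper does not prove this itself but defers to \cite[Proposition C.17, Lemma C.18]{williams2007crossed}, and your argument is essentially the one given there.
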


\begin{defn}[\cst-bundles]
    A \cst-bundle over $X$ is a Banach bundle $p:\ca A\ra X$, such that every fiber is a \cst-algebra, and
    \begin{enumerate}
        \item The map $(a,b)\mapsto ab$ is continuous from $\ca A\times_{p,X,p}\ca A$ to $\ca A$.
        \item The map $a\mapsto a^*$ is continuous from $\ca A$ to $\ca A$.
    \end{enumerate}

    A morphism between two \cst-bundles $\ca A, \ca B$ over $X$ is a morphism of Banach bundles such that every fiber is a *-homomorphism of \cst-algebras.
\end{defn}

For a Banach bundle $p:\ca A\ra X$, we denote the space of bounded continuous sections by $\Gamma_b(X,\ca A)$, the space of continuous sections that vanish at infinity by $\Gamma_0(X,\ca A)$ and the spaces of compactly supported continuous sections by $\Gamma_c(X,\ca A)$. The three space are all equipped with a norm $\|f\|:=\sup_{x\in X}\|f(x)\|_{\ca A_x}$, such that $\Gamma_b(X,\ca A)$ and $\Gamma_0(X,\ca A)$ are Banach spaces, and $\Gamma_c(X,\ca A)$ is dense in $\Gamma_0(X,\ca A)$. If $\ca A\ra X$ is a \cst-bundle, then $\Gamma_b(X,\ca A)$ and $\Gamma_0(X,\ca A)$ are canonically \cst-algebras.

The following result asserts that a Banach bundle over a locally compact Hausdorff spaces have enough sections.

\begin{prop}\label{banach bundle has enough sections}
\textnormal{\cite[Corollary 2.10]{Lazar18}}
Let $\ca A\ra X$ be a Banach bundle. Then for any $x\in X$ and $a_x\in \ca A_x$, there exists $\xi\in \Gamma_0(X,\ca A)$, such that $\xi(x)=a_x$.
\end{prop}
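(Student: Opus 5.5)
The statement to prove is Proposition \ref{banach bundle has enough sections}: every point $a_x$ in a fibre $\ca A_x$ of a Banach bundle over a locally compact Hausdorff space lies on a section in $\Gamma_0(X,\ca A)$. The plan is to build approximate local sections through $a_x$, glue them into a global continuous section whose value at $x$ is close to $a_x$, and then remove the error by a convergent series.

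\textbf{Step 1: local approximation.} Fix $\varepsilon>0$. Because $p$ is a continuous open surjection, I would invoke the standard fact (this is where Lazar's argument, or Williams' Appendix C under the ``enough sections'' hypothesis, is really needed) that for each $b\in \ca A_x$ there is an open neighbourhood $U$ of $x$ and a continuous local section $s:U\ra \ca A$ with $\|s(x)-b\|<\varepsilon$.

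The natural way to get this is to use openness of $p$ together with upper semicontinuity of the norm, producing continuous selections near $b$ via a Michael-type selection argument. The fibres are Banach spaces, and the axioms give continuity of addition, of scalar multiplication, and of the norm only from above. Since the norm is merely upper semicontinuous, the classical Michael theorem does not apply directly. I expect this step to be the main obstacle. My first attempt would be to adapt Michael's construction to lower semicontinuous set-valued maps of the form $y\mapsto \{c\in\ca A_y:\ \text{$c$ lies in an open set around } b\}$, using axiom 4 to control the convergence.

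\textbf{Step 2: globalisation.} Choose $\phi\in C_c(U)$ with $0\le\phi\le 1$ and $\phi(x)=1$. Then $\phi s$, extended by $0_y$ outside $U$, is a continuous compactly supported section. Continuity at points of $\partial(\supp\phi)$ follows from axiom 4, since the norms there tend to $0$. This gives $\xi_1\in\Gamma_c(X,\ca A)$ with $\|\xi_1(x)-a_x\|<\varepsilon$.

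\textbf{Step 3: iteration.} Apply Steps 1 and 2 to $b_1=a_x-\xi_1(x)$ with tolerance $\varepsilon/2$, obtaining $\xi_2$ with $\|\xi_2(x)-b_1\|<\varepsilon/2$. Continue inductively, with $\|\xi_{n+1}(x)-b_n\|<\varepsilon/2^n$.

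At each stage I would also truncate so that the section itself has small sup norm, $\|\xi_{n+1}\|_\infty\lesssim \varepsilon/2^{n-1}$. To do this, multiply by a cutoff supported where $\|\xi_{n+1}(y)\|<2\|b_n\|$; such a set is open and contains $x$, by upper semicontinuity of the norm and continuity of $\xi_{n+1}$.

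\textbf{Step 4: conclusion.} The series $\sum_n\xi_n$ then converges in the Banach space $\Gamma_0(X,\ca A)$. Its value at $x$ equals $a_x$, because evaluation at $x$ is contractive and the partial sums at $x$ converge to $a_x$.
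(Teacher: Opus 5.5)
Your Steps 2--4 are fine, but Step 1 is the real gap. It is not a ``standard fact'' that follows from the axioms. It is in fact equivalent to the proposition: restricting a global section through $b$ gives Step 1, and your Steps 2--4 give the converse. So as written, the proposal only reduces the statement to itself.

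The paper does not prove this result either. It simply cites Lazar's selection theorem, whose content is precisely your Step 1. Appealing to Williams' Appendix C ``under the enough sections hypothesis'' is circular, since the Banach bundles here carry no such hypothesis.

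Steps 2--4 correctly upgrade approximate sections to exact ones, up to two small slips:
\begin{itemize}
\item In Step 3 the set $\{y:\|\xi_{n+1}(y)\|<2\|b_n\|\}$ need not contain $x$, because you only know $\|\xi_{n+1}(x)\|<\|b_n\|+\varepsilon/2^n$. Use a threshold such as $\varepsilon/2^{n-2}$ instead.
\item In Step 2, continuity inside $U$ is automatic, including at $\partial(\supp\phi)\subseteq U$. What must be checked is continuity on the open set $X\setminus\supp\phi$, where the section is the zero section; that is continuous by axiom 4.
\end{itemize}

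The Michael-type route you sketch does not get off the ground as stated. Openness of $p$ does make $y\mapsto\mathcal A_y\cap O$ lower semicontinuous, but that only gives pointwise existence of nearby points. Michael's first step glues constant functions with a partition of unity. In a bundle there are no constant functions across fibres, and the continuous local sections that would replace them are exactly what you are trying to construct.

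The missing idea is due to Douady--dal Soglio-H\'erault, adapted to the upper semicontinuous case by Lazar: glue arbitrary, possibly discontinuous, choice sections, and control their continuity only up to $\varepsilon$. The argument runs as follows.
\begin{itemize}
\item Every $b\in\mathcal A_x$ has an $\varepsilon$-thin open neighbourhood $N$: if $d,d'\in N$ lie in one fibre, then $\|d-d'\|<\varepsilon$. Otherwise there are nets $d_\lambda,d'_\lambda\to b$ in common fibres with $\|d_\lambda-d'_\lambda\|\geq\varepsilon$. But $d_\lambda-d'_\lambda\to 0_x$, which forces $\|d_\lambda-d'_\lambda\|\to 0$ by upper semicontinuity of the norm.
\item Any section over $p(N)$ with values in $N$ is then $\varepsilon$-continuous: every $d$ in a neighbourhood of $g(y_0)$ satisfies $\|d-g(p(d))\|<\varepsilon$.
\item Locally finite convex combinations of such sections are again approximately continuous; you prove this by lifting nets through the open map $p$. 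This lets you refine, on a compact neighbourhood of $x$, to sections $g_n$ that are $\varepsilon_n$-continuous with $\|g_{n+1}-g_n\|_\infty<\varepsilon_n$.
\item The uniform limit is then a genuinely continuous local section, with value at $x$ as close to $b$ as you like. The proof uses axiom 4, continuity of addition and the net-lifting property of $p$.
\end{itemize}
Some argument of this kind is needed to close Step 1.
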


We recall now the notion of $C_0(X)$-algebras.

\begin{defn}[$C_0(X)$-algebras]
    A $C_0(X)$-algebra is a pair $(A,\Phi_A)$, where $A$ is a \cst-algebra, $\Phi_A$ is a non-degenerate *-homomorphism from $C_0(X)$ to $ZM(A)$. The non-degeneracy means that the linear span of $\Phi_A(C_0(X))A$ is dense in $A$. We call $\Phi_A$ the structure map of the $C_0(X)$-algebra $A$, and when there is no ambiguity about the choice of the structure map, we can just say that $A$ is a $C_0(X)$-algebra and write $\Phi_A(f)a$ as $f\cdot a$ for any $f\in C_0(X)$, $a\in A$.

    Let $A,B$ be two $C_0(X)$-algebras. A $C_0(X)$-linear *-homomorphism $\varphi:A\ra B$ is a *-homomorphism $\varphi$ such that for any $f\in C_0(X)$,
    \[\varphi(f\cdot a)=f\cdot \varphi(a).\]
\end{defn}

\begin{ex}
Let $p:\ca A\ra X$ be a \cst-bundle. We define the map
\[\Phi:C_0(X)\ra ZM(\Gamma_0(X,\ca A)),\]
\[[\Phi(f)\xi](x)=f(x)\xi(x),\quad \forall f\in C_0(X), \xi\in \Gamma_0(X,\ca A).\]
Then $(\Gamma_0(X,\ca A),\Phi)$ is a $C_0(X)$-algebra (see \cite[Proposition C.23]{williams2007crossed}).
\end{ex}

\begin{defn}
    Let $(A,\Phi_A)$ be a $C_0(X)$-algebra. For any $x\in X$, we define the fiber of $A$ at $x$ as the quotient
    \[A_x=A/\overline{\Phi_A(C_0(X\setminus\{x\}))A}.\]
    For $a\in A$, we denote its image under the quotient map as $a(x)$.

    We define the \cst-bundle associated to $A$ as the canonical surjection
    \[\ca A:=\sqcup_{x\in X}A_x\ra X,\]
    where $\ca A$ is equipped with the weakest topology such that for any $a\in A$, the map $x\mapsto a(x)\in A_x$ is a continuous map from $X$ to $\ca A$.
\end{defn}

Indeed, \cite[Proposition C.10, Proposition C.25]{williams2007crossed} ensure that $\ca A\ra X$ is a well-defined \cst-bundle, and there is a canonical $C_0(X)$-linear isomorphism $A\cong \Gamma_0(X,\ca A)$ by \cite[Theorem C.26]{williams2007crossed}. From now on, without specific mentioning, when we denote a $C_0(X)$-algebra by a Roman capital letter, we will implicitly denote its associated \cst-bundle over $X$ by the corresponding calligraphic letter.

\begin{prop}
    \textnormal{\cite[Proposition 3.9, Proposition 3.10]{Lalonde15}}
    Let $(A,\Phi_A)$ be a $C_0(X)$-algebra and $I$ be a closed two-sided ideal of $A$. Then
    \[\Phi_I: C_0(X)\ra ZM(I),\quad \Phi_I(f)(a)=\Phi_A(f)a\]
    gives a $C_0(X)$-algebra structure of $I$. For any $x\in X$, let $q_x:A\ra A_x$ be the quotient map.
    \begin{enumerate}
        \item For any $x\in X$, $I_x:=q_x(I)\cong I/\overline{\Phi_I(C_0(X\setminus\{x\}))I}$;
        \item Let $\ca I:=\sqcup_{x\in X}q_x(I)$ be equipped with subspace topology of $\ca A=\sqcup_{x\in X}A_x$, then $\ca I$ is a \cst-bundle over $X$ with $\Gamma_0(X,\ca I)\cong I$.
    \end{enumerate}  
\end{prop}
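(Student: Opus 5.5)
The plan is to prove the three assertions in order, using only the ideal structure of $I$ and the general facts about \cst-bundles recalled above.

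\emph{Step 1: $\Phi_I$ is a structure map.} For $a\in I$, Cohen factorization in $I$ gives $a=bc$ with $b,c\in I$. Then $\Phi_A(f)a=(\Phi_A(f)b)c\in AI\subseteq I$, and likewise $a\Phi_A(f)\in I$. So $\Phi_A(f)$ restricts to a multiplier of $I$. This multiplier is central, since it commutes with all of $A\supseteq I$. For non-degeneracy, take an approximate unit $(e_\lambda)$ of $C_0(X)$. Non-degeneracy of $\Phi_A$ gives $\Phi_A(e_\lambda)a\to a$ for every $a\in A$, and in particular for $a\in I$. Hence $\Phi_I(C_0(X))I$ is dense in $I$.

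\emph{Step 2: the fibres.} Write $J_x=\overline{\Phi_A(C_0(X\setminus\{x\}))A}$ and $J^I_x=\overline{\Phi_I(C_0(X\setminus\{x\}))I}$. The restriction of $q_x$ to $I$ has kernel $I\cap J_x$. Its image $q_x(I)$ is closed, being the image of a *-homomorphism, so $q_x(I)\cong I/(I\cap J_x)$. It therefore suffices to show $I\cap J_x=J^I_x$.
\begin{itemize}
\item The inclusion $J^I_x\subseteq I\cap J_x$ is clear.
\item Conversely, let $a\in I\cap J_x$ and let $(u_\mu)$ be an approximate unit of $C_0(X\setminus\{x\})$. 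Since $a$ lies in $J_x$, we have $\Phi_A(u_\mu)a\to a$. Each $\Phi_A(u_\mu)a$ lies in $\Phi_I(C_0(X\setminus\{x\}))I$, so $a\in J^I_x$.
\end{itemize}
This proves (1). It also shows that the canonical maps $\iota_x:I_x\to A_x$ are injective *-homomorphisms, hence isometric, onto $q_x(I)$.

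\emph{Step 3: the bundle (the main obstacle).} By Step 1, $I$ is a $C_0(X)$-algebra, so it has its own associated \cst-bundle $\ca I'=\sqcup_x I_x$. Its topology is the weakest one making every section $x\mapsto a(x)$, $a\in I$, continuous, and $\Gamma_0(X,\ca I')\cong I$ by \cite[Theorem C.26]{williams2007crossed}. The fibrewise maps $\iota_x$ assemble to a bijection $\iota:\ca I'\to\ca I\subseteq\ca A$. I would show that $\iota$ is a homeomorphism onto $\ca I$ with the subspace topology.

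The tool is the standard description of bundle topologies, as in \cite[Appendix C]{williams2007crossed}. If a family of continuous sections is dense in each fibre, then the sets
\[W(f,U,\varepsilon)=\{b:\ p(b)\in U,\ \|b-f(p(b))\|<\varepsilon\}\]
form a base of the topology. For $\ca I'$, the sections coming from $I$ are such a family. The same sections, viewed in $\ca A$, are continuous, so $\iota$ is continuous.

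For openness, the key point is that $\iota$ preserves fibre norms, so $\iota(W'(f,U,\varepsilon))=W(f,U,\varepsilon)\cap\ca I$ for every $f\in I$. The remaining work is to check that the sets $W(f,U,\varepsilon)\cap\ca I$ with $f\in I$ form a base of the subspace topology at points of $\ca I$. Here the sets $W(g,U,\varepsilon)$ for general $g\in A$ are also open in $\ca A$ and must be accounted for. I would handle them as follows. Given a point $b\in\ca I$ over $x$ lying in such a $W(g,U,\varepsilon)$, choose $f\in I$ with $f(x)=b$. The function $y\mapsto\|f(y)-g(y)\|$ is upper semicontinuous, so a $W(f,U',\varepsilon')$-neighbourhood of $b$ lies inside $W(g,U,\varepsilon)$, by the triangle inequality.

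With this, $\ca I$ is a \cst-bundle: openness of $p|_{\ca I}$ and the remaining axioms transfer from $\ca I'$ via $\iota$. Finally, $\Gamma_0(X,\ca I)\cong\Gamma_0(X,\ca I')\cong I$, which proves (2).
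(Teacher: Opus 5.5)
The paper does not prove this statement; it is quoted from \cite[Propositions 3.9, 3.10]{Lalonde15}, so there is no internal argument to compare yours with. Your proof is correct. Steps 1 and 2 work as written. In particular, the observation that an approximate unit of $C_0(X\setminus\{x\})$ acts as an approximate unit on $J_x$ is exactly what gives $I\cap J_x=J^I_x$.

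In Step 3 the two halves are mislabelled, though nothing is missing.
\begin{enumerate}
\item Openness of $\iota$ is immediate from $\iota(W'(f,U,\varepsilon))=W(f,U,\varepsilon)\cap\ca I$, because $W(f,U,\varepsilon)$ is already open in $\ca A$ for $f\in I\subseteq A$.
\item Your tube argument with $g\in A$ (upper semicontinuity of $y\mapsto\|f(y)-g(y)\|$ plus the triangle inequality) is what actually proves \emph{continuity} of $\iota$. Equivalently, it shows the subspace topology is no finer than the topology transported from $\ca I'$.
\item The earlier sentence ``the same sections, viewed in $\ca A$, are continuous, so $\iota$ is continuous'' does not hold up by itself. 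The bundle topology is not the final topology determined by the sections, so that inference needs either your tube argument or the net criterion for convergence in \cite[Appendix C]{williams2007crossed}.
\end{enumerate}

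A shorter route to (2) avoids $\ca I'$ altogether.
\begin{enumerate}
\item Every bundle axiom except openness of the projection passes to the subspace $\ca I$ by restriction, since each $\ca I_x$ is a $C^*$-subalgebra of $A_x$ containing $0_x$.
\item Openness of $p|_{\ca I}$ holds because each $b\in\ca I_x$ equals $\hat a(x)$ for some $a\in I$, and $\hat a$ is a continuous section with values in $\ca I$. So for every open $O\ni b$, the set $\hat a^{-1}(O)$ is an open neighbourhood of $x$ contained in $p(O)$.
\item The map $a\mapsto\hat a$ sends $I$ isometrically onto a closed, $C_0(X)$-stable subspace of $\Gamma_0(X,\ca I)$ that fills every fibre. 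By the density theorem for sections in \cite[Appendix C]{williams2007crossed}, this subspace is all of $\Gamma_0(X,\ca I)$.
\end{enumerate}
Both routes are fine. Yours makes explicit that the canonical bundle of the $C_0(X)$-algebra $I$ from Step 1 is the subbundle $\ca I\subseteq\ca A$, at the cost of comparing two topologies. The direct route recovers that identification only afterwards, from uniqueness of the bundle topology with prescribed continuous sections.
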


\begin{defn}
    Let $(A,\Phi_A)$ be a $C_0(X)$-algebra, $E$ be a Hilbert $A$-module. We define an action of $C_0(X)$ on $E$ as
    \[\Phi_E(f)(ea)=e(\Phi_A(f)a)\]
    for any $f\in C_0(X)$, $e\in E$ and $a\in A$. (Since $EA=E$ by \cite[Lemma 1.3]{blanchard1996deformations}, we can check easily that this action is well-defined.) This gives a non-degenerate *-homomorphism
    \[\Phi_E:C_0(X)\ra Z\calL(E).\]
    The non-degeneracy means that the linear span of $\Phi_E(C_0(X))E$ is dense in $E$. We call $\Phi_E$ the structure map of $E$. We write $f\cdot e=\Phi_E(f)e$ when there is no ambiguity about the choice of the structure map.

    For any $x\in X$, we define the fiber of $E$ at $x$ as the quotient
    \[E_x:=E/\overline{\Phi_E(C_0(X\setminus\{x\})E},\]
    which is isomorphic to the inner tensor product $E\otimes_A A_x$, and hence can be seen as a Hilbert $A_x$-module. For any $e\in E$, we denote its image under the quotient map as $e(x)\in E_x$.

    We define the Banach bundle associated to $E$ as the canonical surjection
    \[\ca E:=\sqcup_{x\in X}E_x\ra X,\]
    where $\ca E$ is equipped with the weakest topology such that for any $e\in E$, the map $x\mapsto e(x)\in E_x$ is a continuous map from $X$ to $\ca E$.
\end{defn}

Whenever we denote a Hilbert module over some $C_0(X)$-algebra by a Roman capital letter, the same letter in calligraphic font will be used to denote the associated Banach bundle.

The Banach bundle $\ca E\ra X$ is in fact a Hilbert $\ca A$-bundle in notation of \cite{miller2022k}. The space $\Gamma_0(X,\ca E)$ is equipped with a Hilbert $\Gamma_0(X,\ca A)$-module structure as
\[[\eta\cdot \xi](x)=\eta(x)\xi(x), \quad \forall \eta\in \Gamma_0(\ca E,X), \xi\in \Gamma_0(X,\ca A), x\in X,\]
\[\langle \eta_1,\eta_2\rangle(x)=\langle \eta_1(x),\eta_2(x)\rangle_{E_x},\quad \forall \eta_1,\eta_2\in \Gamma_0(X,\ca E), \xi\in \Gamma_0(X,\ca A).\]
After identifying $A$ with $\Gamma_0(X,\ca A)$ through the canonical isomorphism, there is a canonical unitary isomorphism $E\cong \Gamma_0(X,\ca E)$ of Hilbert $A$-modules.

The following proposition is a consequence of \cite[Proposition 2.13]{Kaliszewski-Quigg-Williams}.
\begin{prop}\label{Hilbert bundle of submodule}
    Let $A$ be a $C_0(X)$-algebra, $E$ be a Hilbert $A$-module and $F\subseteq E$ be a closed $A$-submodule. For any $x\in X$, let $q_x:E\ra E_x$ be the quotient map.
    \begin{enumerate}
        \item For any $x\in X$, $F_x:=q_x(F)\cong F/\overline{C_0(X\setminus\{x\})\cdot F}$;
        \item Let $\ca F:=\sqcup_{x\in X}F_x$ be equipped with subspace topology of $\ca E=\sqcup_{x\in X}E_x$, then $\ca F$ is a Banach bundle over $X$ with $\Gamma_0(X,\ca F)\cong F$.
    \end{enumerate}
\end{prop}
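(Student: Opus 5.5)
The plan is to reduce both assertions to facts about the ambient bundle $\ca E$ and the $C_0(X)$-module structure that $F$ inherits from $E$. First, $F$ is a closed $A$-submodule, so it is a Hilbert $A$-module in its own right. Its structure map $\Phi_F$ is simply the restriction of $\Phi_E$. Indeed, for $f\in C_0(X)$ and $e a\in F$ (with $e\in F$, $a\in A$, using $FA=F$ by \cite[Lemma 1.3]{blanchard1996deformations}) we have $\Phi_E(f)(ea)=e(\Phi_A(f)a)\in F$. Hence $F$ is invariant under $\Phi_E$, and $\Phi_F(f)=\Phi_E(f)|_F$.

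For assertion 1, consider the map $F\to E_x$ given by restricting $q_x$. Its kernel is $F\cap \overline{C_0(X\setminus\{x\})\cdot E}$, and we must show this equals $\overline{C_0(X\setminus\{x\})\cdot F}$. The inclusion $\supseteq$ is clear. For $\subseteq$, I would use the formula $\|q_x(e)\|=\inf\{\|(1-g)e\|\}$, where the infimum runs over $g\in C_c(X)$ with $0\le g\le 1$ and $g=1$ near $x$. This is the standard norm formula in the quotient by $C_0(X\setminus\{x\})E$, valid for Hilbert modules over $C_0(X)$-algebras via $\langle e,e\rangle$ and the corresponding fact for $A$. Now take $e\in F$ with $q_x(e)=0$. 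Then $(1-g)e\in C_0(X\setminus\{x\})\cdot F$ approximates $e$, because $(1-g)\in C_0(X\setminus\{x\})$ up to the unitization, and $(1-g)e=e-ge$ with $ge\in F$. More carefully, I would write $(1-g)e$ as a limit of $h e$ with $h\in C_c(X\setminus\{x\})$, using non-degeneracy. So $e$ lies in the closure, and the induced map $F/\overline{C_0(X\setminus\{x\})F}\to q_x(F)$ is an isometric bijection. Isometry follows from the same norm formula applied in $F$ and in $E$, which agree since $\Phi_F$ is a restriction. In particular $q_x(F)$ is closed.

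For assertion 2, the approach is to compare two topologies on $\ca F=\sqcup_x F_x$. The first is the topology $\tau_F$ intrinsically defined by $F$ as a Hilbert module over a $C_0(X)$-algebra, for which $\ca F$ is a Banach bundle with $\Gamma_0(X,\ca F)\cong F$ by the general theory recalled above. The second is the subspace topology from $\ca E$. Under the identifications of assertion 1 the fibers agree. The task is to show the two topologies coincide. Both are generated in the same way, so I would argue via the characterization of bundle topologies by a dense family of sections, \cite[Proposition C.20]{williams2007crossed} (the Fell-type criterion): a net $a_\lambda\to a$ if and only if $p(a_\lambda)\to p(a)$ and there exist sections $\xi$ with $\xi(p(a))$ close to $a$ and $\|a_\lambda-\xi(p(a_\lambda))\|$ eventually small. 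The sections of $F$ form a subset of those of $E$ whose evaluations fill every fiber $F_x$, and the norms are the same by assertion 1. Therefore convergence in $\tau_F$ is equivalent to convergence in $\ca E$ for nets lying in $\ca F$, since the criterion for $\ca E$ may use any section of $E$ passing near $a\in F_x$, in particular one from $F$. This yields $\tau_F=$ subspace topology, and $\Gamma_0(X,\ca F)\cong F$ follows.

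I expect the main obstacle to be this topology comparison, specifically the direction showing that subspace convergence implies $\tau_F$ convergence. That direction relies on the criterion applying for any section through $a$, and not only for the generating family. I would check carefully that the Fell criterion holds with arbitrary $\xi\in\Gamma_0(X,\ca E)$ satisfying $\xi(x)=a$ exactly. This works by upper semicontinuity of the norm together with continuity of subtraction: $a_\lambda-\xi(p(a_\lambda))\to a-a=0_x$, so its norm tends to $0$ by Proposition \ref{first properties of Banach bundles}. Such a $\xi$ can be chosen in $F$ by Proposition \ref{banach bundle has enough sections} applied to $(F,\tau_F)$.
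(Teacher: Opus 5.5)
Your argument is correct once one slip in part (1) is repaired. It takes a different route from the paper, which gives no proof of its own and simply deduces the statement from \cite[Proposition 2.13]{Kaliszewski-Quigg-Williams}.

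\emph{The slip.} The formula $\|q_x(e)\|=\inf\|(1-g)e\|$, with the infimum over $g\in C_c(X)$, $0\le g\le 1$, $g=1$ near $x$, is false as written. If $X$ is compact, $g\equiv 1$ is allowed and the right-hand side vanishes for every $e$.

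The correct formula is $\|q_x(e)\|=\inf_g\|g\cdot e\|=\inf_g\|e-(1-g)e\|$. The inequality $\le$ holds because $(1-g)e\in\overline{C_0(X\setminus\{x\})\cdot E}$ by non-degeneracy. The inequality $\ge$ holds because, for $b=\sum_k\phi_k e_k$ with $\phi_k(x)=0$, you can choose $g$ supported where every $|\phi_k|$ is small; then $\|ge\|\le\|e-b\|+\|gb\|$ exceeds $\|e-b\|$ by an arbitrarily small amount.

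This is the version your next sentence actually uses: $\|e-(1-g)e\|=\|ge\|$ small. Its proof involves only the $C_0(X)$-action, so it holds verbatim in $F$ with $\Phi_F=\Phi_E|_F$. That gives at once that $F/\overline{C_0(X\setminus\{x\})\cdot F}\to E_x$ is isometric. Isometry already contains your kernel computation and the closedness of $q_x(F)$.

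\emph{Part (2).} Your strategy works: equip $\sqcup_x F_x$ with the intrinsic topology $\tau_F$ of the Hilbert $A$-module $F$, and identify it with the relative topology from $\ca E$ via the net criterion you quote. The key facts are that the sections $y\mapsto q_y(e)$, $e\in F$, are continuous for both topologies, pass through every point of $\ca F$, and by (1) have the same fibrewise norms. So the ``only if'' half of the criterion for one topology (upper semicontinuity of the norm plus continuity of subtraction) feeds the ``if'' half for the other. The two directions are symmetric, so the one you single out as the obstacle is no harder than its converse.

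\emph{Comparison with the paper.} Your route is self-contained: it shows the proposition is a formal consequence of the intrinsic-bundle theorem already recalled in the paper (applied to $F$ itself) together with the Fell-type criterion, and it makes the fibre identification in (1) explicit. The paper's citation buys brevity by outsourcing exactly this bookkeeping.
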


Now if $A$ is a $C_0(X)$-algebra and $E,F$ are Hilbert $C_0(X)$-modules, $x\in X$, for a bounded $A$-linear map (which will be called an operator in rest of the paper) $T\in \Hom_A(E,F)$, the $A$-linearity implies that $T$ is $C_0(X)$-linear, and hence maps $\overline{C_0(X\setminus\{x\})\cdot E}$ into $\overline{C_0(X\setminus\{x\})\cdot F}$. Therefore, there is uniquely an operator $T_x\in \Hom_{A_x}(E_x,F_x)$, called the fiber of $T$ at $x$, such that
\[(Te)(x)=T_x e(x), \quad \forall e\in E.\]
If $T$ is adjointable, then so is $T_x$. And if $T\in \calK(E,F)$ is a compact operator, then so is $T_x$. We define $\calK(\ca E,\ca F):=\sqcup_{x\in X}\calK(E_x,F_x)$, equipped with the weakest topology such that for any $T\in \calK(E,F)$, the map $x\mapsto T_x\in \calK(E_x,F_x)$ is a continuous map from $X$ to $\calK(\ca E,\ca F)$, then the canonical surjection $\calK(\ca E,\ca F)\ra X$ is a Banach bundle, and there is a canonical isomorphism $\calK(E,F)\ra \Gamma_0(X,\calK(\ca E,\ca F))$. See \cite[Proposition 1.51]{miller2022k}. Especially, when $E=F$, $\calK(\ca E)=\sqcup_{x\in X}\calK(E_x)$ is a \cst-bundle, which coincides with the fact that $\Phi_E:C_0(X)\ra Z\calL(E)\cong ZM(\calK(E))$ gives a $C_0(X)$-algebra structure on $\calK(E)$.

\begin{defn}
    \textnormal{\cite[Definition 1.52]{miller2022k}} Let $A$ be a $C_0(X)$-algebra, $E,F$ be Hilbert $A$-modules. We define the set $\calL(\ca E,\ca F)$ as the disjoint union $\sqcup_{x\in X}\calL(E_x,F_x)$, equipped with the weakest topology such that for any $e\in E$, $f\in F$, the maps
    \[\calL(\ca E,\ca F)\ra \ca F,\quad T\mapsto T(e(p(T))),\]
    \[\calL(\ca E,\ca F)\ra \ca E,\quad T\mapsto T^*(f(p(T)))\]
    are continuous, where $p:\calL(\ca E,\ca F)\ra X$ is the canonical surjection. We call $\calL(\ca E,\ca F)\ra X$ the bundle of adjointable operators over $X$.
\end{defn}

\begin{prop}\label{contiuous action of adjointable operator bundle}
    \textnormal{\cite[Proposition 1.54, Proposition 1.55]{miller2022k}}
    Let $A$ be a $C_0(X)$-algebra, $E,F$ be Hilbert $A$-modules.
    \begin{enumerate}
        \item The map $T\mapsto [x\mapsto T_x]$ is an isomorphism of Banach spaces from $\calL(E,F)$ to $\Gamma_b(\ca E,\ca F)$.
        \item The application map
    \[\calL(\ca E,\ca F)\times_X \ca E\ra \ca F, (T,e)\mapsto Te\]
    and the adjoint application map
    \[\calL(\ca E,\ca F)\times_X \ca F\ra \ca E, (T,f)\mapsto T^*f\]
    are continuous.
    \end{enumerate}
\end{prop}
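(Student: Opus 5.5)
The plan is to prove both parts by reducing everything to the defining property of the topology on $\calL(\ca E,\ca F)$ and to the identifications $E\cong\Gamma_0(X,\ca E)$ and $F\cong\Gamma_0(X,\ca F)$. In those identifications norms and inner products are computed fibrewise, via $\|e\|=\sup_x\|e(x)\|$ and $\langle e_1,e_2\rangle(x)=\langle e_1(x),e_2(x)\rangle$.

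\emph{Part 1: the forward map.} Let $T\in\calL(E,F)$. The section $x\mapsto T_x$ is continuous into $\calL(\ca E,\ca F)$ by the definition of that topology, because the two test maps become
\[x\mapsto T_x e(x)=(Te)(x),\qquad x\mapsto T_x^*f(x)=(T^*f)(x),\]
and both of these are continuous sections. The map is isometric:
\[\|T\|=\sup_{\|e\|\le 1}\sup_x\|T_xe(x)\|\le\sup_x\|T_x\|,\]
and the reverse inequality holds because each $T_x$ is a quotient of $T$. In particular the section is bounded and the map is injective.

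\emph{Part 1: surjectivity.} Let $S\in\Gamma_b(X,\calL(\ca E,\ca F))$. For $e\in E$ define $(Te)(x)=S_x e(x)$. This section of $\ca F$ is continuous, since it is the composition of the continuous section $x\mapsto S_x$ with the first test map attached to $e$. It vanishes at infinity because $\|S_xe(x)\|\le\|S\|\,\|e(x)\|$. Hence $Te\in\Gamma_0(X,\ca F)\cong F$. In the same way $(T^\dagger f)(x)=S_x^*f(x)$ defines $T^\dagger:F\ra E$. The relation $\langle Te,f\rangle=\langle e,T^\dagger f\rangle$ holds fibrewise, so $T$ is adjointable with $T^*=T^\dagger$, and $T_x=S_x$ by construction. $A$-linearity is also checked fibrewise. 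Together with the previous paragraph this gives the isometric isomorphism.

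\emph{Part 2: continuity of application.} I will use the standard convergence criterion for Banach bundles with enough sections (Proposition \ref{banach bundle has enough sections} together with \cite{williams2007crossed}): a net $b_\lambda\ra b\in\ca F_x$ if and only if $p(b_\lambda)\ra x$ and there is a section $\eta$ with $\eta(x)=b$ and $\|b_\lambda-\eta(p(b_\lambda))\|\ra0$.

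Let $(T_\lambda,e_\lambda)\ra(T,e)$ over $x_\lambda\ra x$, and pick $\xi\in E$ with $\xi(x)=e$. Then
\[T_\lambda e_\lambda=T_\lambda\xi(x_\lambda)+T_\lambda d_\lambda,\qquad d_\lambda=e_\lambda-\xi(x_\lambda).\]
By Proposition \ref{first properties of Banach bundles}, $d_\lambda\ra0_x$ implies $\|d_\lambda\|\ra0$. The term $T_\lambda\xi(x_\lambda)$ converges to $T\xi(x)=Te$ by the definition of the topology. So it remains to show $T_\lambda d_\lambda\ra 0_x$, and by property 4 of Banach bundles it suffices that $\|T_\lambda d_\lambda\|\le\|T_\lambda\|\,\|d_\lambda\|\ra0$. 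The adjoint application map is handled symmetrically, using the second test map.

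\emph{Main obstacle.} The delicate step is controlling $\|T_\lambda\|$: I need $\|T_\lambda\|$ to be eventually bounded along a convergent net, and it is not clear a priori that the norm on $\calL(\ca E,\ca F)$ is locally bounded. My first attempt is the identity $\|T_\lambda d_\lambda\|^2=\|\langle T_\lambda^*T_\lambda d_\lambda,d_\lambda\rangle\|$, combined with a Banach--Steinhaus argument applied to the pointwise-convergent data $T_\lambda\xi(x_\lambda)$ and $T_\lambda^*\eta(x_\lambda)$ as $\xi,\eta$ vary. If that fails, I will follow \cite{miller2022k} and exploit the two-sided (adjoint) convergence built into the topology to extract a local norm bound. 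I expect this step to require the most care.
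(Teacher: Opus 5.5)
Your Part 1 is correct. Continuity of $x\mapsto T_x$ via the two test maps (using $(T_x)^*=(T^*)_x$), the bound $\|T_x\|\le\|T\|$ from the quotient description of $E_x$, and the fibrewise construction of the preimage of a bounded section together with its fibrewise adjoint are what is needed. The paper gives no proof of its own here; it only cites Miller.

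The gap is in Part 2, at exactly the step you flag, and it cannot be closed. The norm is not locally bounded on $\calL(\ca E,\ca F)$, and for the topology as defined the application map is in fact not jointly continuous.

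\emph{Counterexample.} Take $X$ a point, $A=\mathbb C$ and $E=F=H$ an infinite-dimensional Hilbert space. Then $\ca E=\ca F=H$ carries the norm topology, and $\calL(\ca E,\ca F)$ is $B(H)$ with the strong-$*$ topology. Index a net by pairs $\lambda=(\Sigma,n)$ with $\Sigma\subseteq H$ finite and $n\in\mathbb N$, ordered componentwise. Choose a unit vector $u_\lambda\perp\Sigma$ and set $T_\lambda\xi=n\langle u_\lambda,\xi\rangle u_\lambda$ and $e_\lambda=n^{-1/2}u_\lambda$.
\begin{itemize}
\item Each $T_\lambda=T_\lambda^*$ annihilates $\xi$ as soon as $\xi\in\Sigma$, so $T_\lambda\to 0$ in $\calL(\ca E,\ca F)$.
\item $\|e_\lambda\|=n^{-1/2}$, so $e_\lambda\to 0$.
\item Yet $\|T_\lambda e_\lambda\|=n^{1/2}$, which does not tend to $0$.
\end{itemize}
Hence neither of your proposed fixes can produce an eventual bound on $\|T_\lambda\|$. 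A uniform-boundedness argument bounds convergent sequences but not nets. The adjoint half of the topology gives nothing extra, because the counterexample is self-adjoint.

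\emph{What does hold.} Your decomposition $T_\lambda e_\lambda=T_\lambda\xi(x_\lambda)+T_\lambda d_\lambda$ proves the bounded version verbatim. For each $C>0$, the application map is continuous on $\{T\in\calL(\ca E,\ca F):\|T\|\le C\}\times_X\ca E$. Indeed, $\|T_\lambda d_\lambda\|\le C\|d_\lambda\|\to0$ gives $T_\lambda d_\lambda\to 0_x$ by axiom 4 of Banach bundles, and continuity of addition finishes the argument. The adjoint application map is handled the same way, since $\|T^*\|=\|T\|$.

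This bounded form is all the paper ever uses: the bounded section $\gamma\mapsto\pi_{s(\gamma)}(g_i|_{V_{i,s(\gamma)}})$ in Lemma \ref{construction of Phi_i}, and the unitaries $V_\gamma$ in Proposition \ref{determine continuity of groupoid action}. You should therefore state and prove Part 2 with this norm-boundedness hypothesis, rather than try to establish it in the form given.
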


\begin{defn}
    Let $A, B$ be a $C_0(X)$-algebra, $E$ be a Hilbert $B$-module, $\pi:A\ra \calL_B(E)$ be a *-homomorphism (i.e. a representation). We say that $\pi$ is $C_0(X)$-linear or a $C_0(X)$-representation, if
    \[\pi(a)(f\cdot e)=\pi(f\cdot a)e,\quad \forall a\in A, e\in E, f\in C_0(X).\]
\end{defn}

The $C_0(X)$-linearity implies that, for any $x\in X$, there exists uniquely a representation $\pi_x:A_x\ra \calL(E_x)$, called the fiber of $\pi$ at $x$, such that
\[[\pi(a)e](x)=\pi_x(a(x))e(x),\quad \forall a\in A,e\in E.\]

A $C_0(X)$-representation $\pi$ is non-degenerate if and only if for any $x\in X$, $\pi_x$ is non-degenerate, see \cite[Lemma 2.14]{Mao26}.

\begin{defn}[Pullback]
    Let $f:Y\ra X$ be a continuous map between locally compact Hausdorff spaces.
    \begin{enumerate}
        \item Let $p:\ca A\ra X$ be a Banach bundle. We define the pullback of $\ca A$ by $f$ as the fiber product $f^*\ca A:=\ca A\times_{p,X,f}Y$, equipped with the continuous open surjection onto $Y$ as the projection onto the second factor. The bundle $f^*\ca A\ra Y$ is a well-defined Banach bundle over $Y$.
        \item Let $A$ be a $C_0(X)$-algebra. We define the pullback of $A$ by $f$ as the $C_0(Y)$-algebra $f^*A:=\Gamma_0(Y,f^*\ca A)$.
        \item Let $E$ be Hilbert $A$-module. We define the pullback of $E$ by $f$ as the Hilbert $f^*A$-module $f^*E:=\Gamma_0(Y,f^*\ca E)$.
        \item Let $F$ be another Hilbert $A$-module. We define the pullback of an operator $T\in \Hom_A(E,F)$ by $f$ as the operator $f^*T\in \Hom_{f^*A}(f^*E,f^*F)$ such that
        \[[(f^*T)\eta](y)=T_{f(y)}\eta(y),\quad \forall \eta\in f^*E, y\in Y.\]
        Clearly if $T$ is compact, then so is $f^*T$; if $T$ is adjointable, then so is $f^*T$, with adjoint $f^*(T^*)$.
        \item Let $D$ be another $C_0(X)$-algebra, $\pi:D\ra \calL(E)$ be a $C_0(X)$-representation. We define the pullback of $\pi$ by $f$ as the $C_0(Y)$-representation $f^*\pi:f^*D\ra \calL(f^*E)$ defined as
        \[[(f^*\pi)(\xi)\eta](y)=\pi_{f(y)}(\xi(y))\eta(y),\quad \forall \xi\in f^*D, \eta\in f^*E, y\in Y.\]
    \end{enumerate}
\end{defn}

Use the same notation as above, we have identification of fibers for any $y\in Y$: $(f^*A)_y\cong A_{f(y)}$, $(f^*E)_y\cong E_{f(y)}$ and $(f^*\pi)_y=\pi_{f(y)}$. Moreover, for the bundles of operators, we have homeomorphisms
\[f^*\calK(\ca E,\ca F)\cong \calK(f^*E,f^*F),\quad f^*\calL(\ca E,\ca F):=\calL(\ca E,\ca F)\times_X Y\cong \calL(f^*\ca E, f^*\ca F).\]
See \cite[Proposition 1.62]{miller2022k} for details.

The following lemma about pullback by local homeomorphisms will be important.

\begin{lem}\label{sum of fiber}
    \textnormal{\cite[Lemma 3.12]{bonicke2020going}, \cite[Lemma 2.19]{miller2024functors}}
    Let $\rho:Y\ra X$ be a local homeomorphism between locally compact Hausdorff spaces and $\ca A\ra X$ be a Banach bundle over $X$. Then for any $\xi\in \Gamma_b(Y,\rho^*\calA)$ such that $\rho|_{\supp(\xi)}:\supp(\xi)\ra X$ is proper,
    \[\rho_*\xi:x\mapsto \sum_{y\in Y_x}\xi(y)\]
    is well-defined and continuous. If $\xi$ is compactly supported then so is $\rho_*\xi$.
\end{lem}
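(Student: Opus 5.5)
The statement to prove is Lemma \ref{sum of fiber}. First I would show that $\rho_*\xi$ is well defined pointwise. Since $\rho$ is a local homeomorphism, each fiber $Y_x=\rho\inv(x)$ is discrete in $Y$. Hence $Y_x\cap\supp(\xi)$ is discrete. It is also compact, because it is the preimage of the point $\{x\}$ under the proper map $\rho|_{\supp(\xi)}$. So $Y_x\cap\supp(\xi)$ is finite. Under the identification $(\rho^*\calA)_y=\calA_{\rho(y)}=\calA_x$ for $y\in Y_x$, the sum $\sum_{y\in Y_x}\xi(y)$ has only finitely many nonzero terms, all in $\calA_x$, and so defines an element of $\calA_x$.

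The main step is continuity at a fixed $x_0\in X$. Let $\{y_1,\dots,y_n\}=Y_{x_0}\cap\supp(\xi)$. Choose pairwise disjoint open neighbourhoods $U_k$ of $y_k$ such that $\rho|_{U_k}$ is a homeomorphism onto an open set $W_k\ni x_0$.

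The key claim is that there is a neighbourhood $W\subseteq\bigcap_k W_k$ of $x_0$ with $\rho\inv(W)\cap\supp(\xi)\subseteq\bigcup_k U_k$. To get it, note that the set $K=\supp(\xi)\setminus\bigcup_k U_k$ is closed in $\supp(\xi)$. A proper map into a locally compact Hausdorff space is closed, so $\rho(K)$ is closed in $X$, and $x_0\notin\rho(K)$. We may therefore take $W=\bigcap_k W_k\setminus\rho(K)$.

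On $W$ we then have
\[\rho_*\xi(x)=\sum_{k=1}^n \xi\bigl((\rho|_{U_k})\inv(x)\bigr).\]
Each map $x\mapsto\xi((\rho|_{U_k})\inv(x))$ is a continuous section of $\calA$ over $W$. This follows because $\xi$ is continuous into $\rho^*\calA=\calA\times_X Y$, and composing with the projection to $\calA$ is continuous. Continuity of addition on $\calA\times_X\calA$ then gives continuity of $\rho_*\xi$ on $W$.

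The obstacle I expect is exactly this uniform control of the fiber near $x_0$: ruling out extra points of $\supp(\xi)$ in nearby fibers that escape the $U_k$. Properness is precisely what handles it, via the closed-map argument above.

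Two points remain. For $\rho_*\xi$ to be a section in $\Gamma_b(X,\calA)$, boundedness is needed; this presumably also requires a bound on the number of summands, and is not claimed in the statement. The statement only asserts that $\rho_*\xi$ is a well-defined continuous section, so I would leave boundedness aside. Finally, if $\supp(\xi)$ is compact, then $\rho_*\xi$ vanishes outside the compact set $\rho(\supp(\xi))$, so $\rho_*\xi$ is compactly supported.
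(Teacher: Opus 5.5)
Your proof is correct and complete. The paper gives no argument of its own here; it only cites B\"onicke and Miller, so the relevant comparison is with the usual proof of this fact.

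Your route is purely local:
\begin{itemize}
\item finiteness of $Y_x\cap\supp(\xi)$ comes from discreteness plus compactness;
\item closedness of the proper map $\rho|_{\supp(\xi)}$ then produces a neighbourhood $W$ of $x_0$ over which $\supp(\xi)$ lies in finitely many disjoint sheets $U_k$;
\item on $W$, $\rho_*\xi$ is a finite sum of continuous local sections.
\end{itemize}

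The other standard route, natural when $\xi$ is compactly supported, goes as follows. Multiply $\xi$ by $\phi\circ\rho$, with $\phi\in C_c(X)$ equal to $1$ near $x_0$; the product has compact support because $\rho|_{\supp(\xi)}$ is proper. Then use a partition of unity to split it into finitely many sections, each supported in an open set on which $\rho$ is injective. For each such piece, $\rho_*$ is composition with a local inverse followed by extension by zero. That version yields a global finite decomposition, which immediately gives boundedness of $\rho_*\xi$ when $\supp(\xi)$ is compact, but it needs the extension-by-zero fact for Banach bundle sections. Yours avoids partitions of unity and handles non-compact supports directly, using only the closed-map property of proper maps into locally compact Hausdorff spaces.

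Two small points. First, when $Y_{x_0}\cap\supp(\xi)=\emptyset$, your $W$ is $X\setminus\rho(\supp(\xi))$. There you need continuity of the zero section, which follows from the fourth axiom of a Banach bundle.

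Second, your caution about boundedness is justified. Take $X=\mathbb R$, $Y=\mathbb R\times\mathbb N$, and $\xi(t,k)=\sum_{n\geq k}\phi_n(t)$, where $\phi_n$ is a bump of height $1$ supported in $[n,n+1]$. Then $\rho|_{\supp(\xi)}$ is proper, yet $\rho_*\xi(n+\tfrac12)=n+1$. So the lemma rightly claims only continuity, not membership in $\Gamma_b(X,\calA)$.
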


\begin{defn}[Pushforward]
    Let $g:X\ra Z$ be a continuous map between locally compact Hausdorff spaces.
    
    \begin{enumerate}
        \item  Let $(A,\Phi_A)$ be a $C_0(X)$-algebra. Then the composite
        \[g_*\Phi_A:C_0(Z)\xrightarrow{g^*}C_b(X)\xrightarrow{\tilde \Phi_A} ZM(A)\]
        gives a $C_0(Z)$-algebra structure of $A$ (see \cite[Proposition 3.5]{bonicke2020going}), where $\tilde\Phi_A$ is the unique strictly continuous extension $C_b(X)\ra ZM(A)$ of $\Phi_A$ thanks to its non-degeneracy. We write $A$ as $g_*A$ to emphasize its $C_0(Z)$-algebra structure.
        \item Let $E$ be a Hilbert $A$-module. We write $E$ as $g_*E$ to emphasize the action of $C_0(Z)$ on it by the composite
        \[g_*\Phi_E:C_0(Z)\xrightarrow{g^*}C_b(X)\xrightarrow{\tilde \Phi_E}Z\calL(E).\]
        \item Let $D$ be another $C_0(X)$-algebra and $\pi:D\ra \calL(E)$ be a $C_0(X)$-representation. We write $\pi$ as $g_*\pi:g_*D\ra \calL(g_*E)$ to emphasize its $C_0(Z)$-linearity.
    \end{enumerate}
\end{defn}

Use the same notation as above, we have identifications of fibers for any $z\in Z$: $(g_*A)_z\cong \Gamma_0(X_z,\ca A|_{X_z})$, $(g_*E)_z\cong \Gamma_0(X_z,\ca E|_{X_z})$. See \cite[Proposition 3.6]{bonicke2020going}.

\subsection{Groupoid \texorpdfstring{\cst}{C*}-algebras}

We assume that the reader is familiar with basic definition of topological groupoids, especially locally compact groupoids and \'etale groupoids. More details can be found in \cite{paterson2012groupoids}, \cite{williams2019tool} etc. For convenience of writing, in this paper all \'etale groupoids are assumed to be locally compact Hausdorff.

Let $\calG$ be a locally compact Hausdorff groupoid.

\begin{defn}
    Let $X$ be a topological space and $\rho:X\ra \calG\units$ be a continuous map (called anchor map). We say that $(X,\rho)$ is a (left) $\calG$-space, or $X$ is a (left) $\calG$-space, if there is a continuous map (called a left action of $\calG$ on $X$)
    \[\calG\times_{s,\calG\units,\rho}X\ra X, (\gamma, x)\mapsto \gamma x,\]
    such that
    \begin{enumerate}
        \item for any $x\in X$, $\rho(x)x=x$;
        \item for any $(\gamma_1,\gamma_2)\in \calG^{(2)}$ and $x\in X$ such that $s(\gamma_2)=\rho(x)$, we have $\gamma_1(\gamma_2 x)=(\gamma_1\gamma_2)x$.
    \end{enumerate}
    The right actions of $\calG$ on $X$ is defined similarly.
\end{defn}

Now if $X$ is a locally compact Hausdorff $\calG$-space, there is an associated locally compact Hausdorff groupoid $\calG\ltimes X$ called action groupoid (see \cite[Definition 2.5]{williams2019tool}). If $\calG$ is an \'etale groupoid, then so is $\calG\ltimes X$. We say that the action of $\calG$ on $X$ is proper (free, resp.), if $\calG\ltimes X$ is a proper (principal, resp.) groupoid. When the anchor map is a local homeomorphism, we say that $X$ is an \'etale $\calG$-space.

\begin{defn}
    Let $p:\ca A\ra \calG\units$ be a Banach bundle. We say that $\ca A$ is a Banach $\calG$-bundle, if $(\ca A,p)$ is a $\calG$-space, and for any $\gamma \in \calG$, the map
    \[\ca A_{s(\gamma)}\ra \ca A_{r(\gamma)},\quad a\mapsto \gamma.a\]
    is an isomorphism of Banach spaces.
\end{defn}

We recall now the definition of groupoid actions on \cst-algebras or Hilbert modules.

\begin{defn}
    A $\calG$-\cst-algebra is a $C_0(\calG\units)$-algebra $A$ together with a $C_0(\calG)$-linear *-isomorphism $\alpha: s^*A\ra r^*A$ such that for any composable pair $(\gamma,h)\in \calG^{(2)}$, $\alpha_{\gamma h}=\alpha_\gamma\circ \alpha_h:A_{s(h)}\ra A_{r(\gamma)}$.
     
    A $\calG$-equivariant *-homomorphism between two $\calG$-\cst-algebras $(A,\alpha)$ and $(B,\beta)$ is a $C_0(\calG\units)$-linear *-homomorphism $\varphi:A\ra B$ such that $r^*\varphi\circ \alpha=s^*\varphi \circ \beta$.
\end{defn}

From the definition, we see that the associated \cst-bundle $\ca A$ is a Banach $\calG$-bundle.

\begin{ex}
Let $X$ be a locally compact Hausdorff $\calG$-space, then there is a homeomorphism
\[\calG\times_{s,\calG\units,\rho}X\ra \calG\times_{r,\calG\units,\rho}X,\quad (\gamma, x)\mapsto (\gamma,\gamma x).\]
It induces a $C_0(\calG)$-linear *-homomorphism $lt:s^*C_0(X)\cong C_0(\calG\times_{s,\calG\units,\rho}X)\ra C_0(\calG\times_{r,\calG\units,\rho}X)\cong r^*C_0(X)$.
The fiber of $lt$ at $\gamma\in \calG$ is the *-homomorphism
\[lt_\gamma: C_0(X_{s(\gamma)})\ra C_0(X_{r(\gamma)}),\quad f\mapsto f(\gamma\inv-).\]
It is easy to see that $(C_0(X),lt)$ is a well-defined $\calG$-\cst-algebra.
\end{ex}

\begin{defn}
    Let $(A,\alpha)$ be a $\calG$-\cst-algebra. A $\calG$-Hilbert $A$-module is a Hilbert $A$-module $E$ together with a unitary $V\in \calL_{s^*A}(s^*E,r^*E)$ (here $r^*E$ is seen as a Hilbert $s^*A$-module after being identified with $r^*E\otimes_{\alpha\inv} s^*A$) such that for any composable pair $(\gamma,h)\in \calG^{(2)}$, $V_{\gamma h}=V_\gamma \circ V_h$.
\end{defn}

From the definition, we see that the associated Banach bundle $\ca E$ is a Banach $\calG$-bundle. For any $\gamma\in \calG$ and $e_1\in E_{s(\gamma)}, e_2\in E_{r(\gamma)}$, we have
\[\langle V_\gamma e_1, e_2\rangle_{E_{r(\gamma)}}=\alpha_\gamma(\langle e_1,V_\gamma^* e_2\rangle_{E_{s(\gamma)}}).\]

Naturally, for a $\calG$-\cst-algebra $(A,\alpha)$ and a $\calG$-Hilbert $A$-module $(E,V)$, their associated Banach bundles can be seen as equipped with canonical action of $\calG$. The continuity of the map $\calG\times_{s,\calG\units}\ca A\ra \calG\times_{r,\calG\units}\ca A$ corresponds to the $C_0(\calG)$-linearity of $\alpha$, and the continuity of the map $\calG\times_{s,\calG\units}\ca E\ra \calG\times_{r,\calG\units}\ca E$ corresponds to the fact that $V$ gives a well-defined continuous section in $\Gamma_b(\calG, \calL(s^*\ca E, r^*\ca E))\cong \calL(s^*E, r^*E)$.

\begin{prop}\label{determine continuity of groupoid action}
    \begin{enumerate}
        \item \textnormal{\cite[Lemma 3.9]{bonicke2020going}} Let $A$ be a $C_0(\calG\units)$-algebra and $(\alpha_\gamma)_{\gamma\in \calG}$ be a family of *-isomorphisms $\alpha_\gamma: A_{s(\gamma)}\ra A_{r(\gamma)}$ such that for any composable pair $(\gamma,h)\in \calG^{(2)}$, $\alpha_{\gamma h}=\alpha_\gamma\circ \alpha_h$. Then $(\alpha_\gamma)_{\gamma\in \calG}$ gives a $\calG$-\cst-algebra structure on $A$ if and only if for any $a\in A$, the map $\gamma\mapsto \alpha_\gamma(a(s(\gamma)))$ is continuous section $\calG\ra r^*\ca A$.
        \item Let $A$ be a $\calG$-\cst-algebra and $E$ be a Hilbert $A$-module, $(V_\gamma)_{\gamma\in \calG}$ be a family of unitary isomorphisms $V_\gamma: E_{s(\gamma)}\ra E_{r(\gamma)}$ such that for any composable pair $(\gamma,h)\in \calG^{(2)}$, $V_{\gamma h}=V_\gamma \circ V_h$. Then $(V_\gamma)_{\gamma\in \calG}$ gives a $\calG$-Hilbert $A$-module structure on $E$ if and only if for any $e\in E$, the map $\gamma\mapsto V_\gamma(e(s(\gamma)))$ is continuous section $\calG\ra r^*\ca E$.
    \end{enumerate}
\end{prop}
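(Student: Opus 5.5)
The plan is to reduce the statement to the description of $\calL(s^*E,r^*E)$ as bounded continuous sections of the bundle of adjointable operators (proposition \ref{contiuous action of adjointable operator bundle}), applied over the base $X=\calG$ to the Hilbert $s^*A$-modules $s^*E$ and $r^*E$. Here $r^*E$ is regarded as a Hilbert $s^*A$-module via $\alpha\inv$, so that its fiber at $\gamma$ is $E_{r(\gamma)}$ with the $A_{s(\gamma)}$-structure transported by $\alpha_\gamma$. I will read ``unitary isomorphisms $V_\gamma$'' as including compatibility with $\alpha_\gamma$, namely $V_\gamma(ea)=V_\gamma(e)\alpha_\gamma(a)$ and $\langle V_\gamma e_1,V_\gamma e_2\rangle=\alpha_\gamma(\langle e_1,e_2\rangle)$. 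With that reading, each $V_\gamma$ is a unitary in $\calL(E_{s(\gamma)},(r^*E)_\gamma)$. Thus a $\calG$-Hilbert module structure with fibers $(V_\gamma)$ is exactly the statement that $\gamma\mapsto V_\gamma$ is a bounded continuous section of $\calL(s^*\ca E,r^*\ca E)\ra\calG$. Boundedness is automatic since $\|V_\gamma\|=1$, and the cocycle identity is given. So everything reduces to continuity of the section.

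\emph{Necessity.} Suppose $V\in\calL(s^*E,r^*E)$ is the unitary. For $e\in E$ and $\varphi\in C_c(\calG)$, the section $\eta=\varphi\cdot(e\circ s)$, $\gamma\mapsto\varphi(\gamma)e(s(\gamma))$, lies in $s^*E$. Hence $V\eta$, which is $\gamma\mapsto\varphi(\gamma)V_\gamma e(s(\gamma))$, is a continuous section of $r^*\ca E$. Choosing $\varphi\equiv1$ near any given point gives continuity of $\gamma\mapsto V_\gamma(e(s(\gamma)))$ locally, hence everywhere.

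\emph{Sufficiency.} By definition of the topology on $\calL(s^*\ca E,r^*\ca E)$, I must check two things:
\begin{enumerate}
\item $\gamma\mapsto V_\gamma\eta(\gamma)$ is continuous for every $\eta\in s^*E$;
\item $\gamma\mapsto V_\gamma^*\zeta(\gamma)$ is continuous for every $\zeta\in r^*E$.
\end{enumerate}
For (1), the hypothesis gives continuity when $\eta=\varphi\cdot(e\circ s)$ with $\varphi\in C_c(\calG)$, $e\in E$, and hence on the linear span $S$ of such sections. The key step is then a density lemma: $S$ is dense in $s^*E=\Gamma_0(\calG,s^*\ca E)$. I would prove this by the standard $C_0(X)$-module density criterion. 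The span $S$ is a $C_0(\calG)$-submodule (since $C_0(\calG)C_c(\calG)\subseteq C_c(\calG)$), and its evaluation at each $\gamma$ is all of $E_{s(\gamma)}$, so it is dense by \cite[Proposition C.24]{williams2007crossed}, or by a partition-of-unity argument on compact sets.

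Given density, take $\eta_n\in S$ with $\eta_n\ra\eta$ uniformly. Since $\|V_\gamma\|\le1$, the sections $\gamma\mapsto V_\gamma\eta_n(\gamma)$ converge uniformly to $\gamma\mapsto V_\gamma\eta(\gamma)$. A uniform limit of continuous sections of a Banach bundle is continuous, using axiom 4 and the continuity of addition. This gives (1).

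For (2), I use $V_\gamma^*=V_\gamma\inv=V_{\gamma\inv}$, which follows from the cocycle identity and $V_{s(\gamma)}=\mathrm{id}$. Elements of $r^*E$ of the form $\gamma\mapsto\varphi(\gamma)f(r(\gamma))$ with $f\in E$ map to
\[\gamma\mapsto\varphi(\gamma)V_{\gamma\inv}\bigl(f(s(\gamma\inv))\bigr),\]
which is the composition of the continuous map $h\mapsto V_h f(s(h))$ with inversion $\gamma\mapsto\gamma\inv$. This composition lands continuously in $s^*\ca E$, because inversion intertwines $r$ and $s$. The same density-plus-uniform-limit argument then handles general $\zeta$.

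Finally, proposition \ref{contiuous action of adjointable operator bundle}(1) yields an adjointable $V\in\calL(s^*E,r^*E)$ with fibers $V_\gamma$. It is unitary because $V^*V$ and $VV^*$ have identity fibers, and an adjointable operator is determined by its fibers. The cocycle identity holds by hypothesis, so $(E,V)$ is a $\calG$-Hilbert $A$-module. The main obstacle I expect is the density lemma for $S$ in $s^*E$; everything else is formal bookkeeping with the bundle topologies.
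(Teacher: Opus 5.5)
Your proof is correct. Its frame matches the paper's: necessity is immediate, and sufficiency reduces, via proposition \ref{contiuous action of adjointable operator bundle}, to continuity of $\gamma\mapsto V_\gamma\xi(\gamma)$ for $\xi\in s^*E$ and of $\gamma\mapsto V_\gamma^*\eta(\gamma)=V_{\gamma\inv}\eta(\gamma)$ for $\eta\in r^*E$. The difference is in how you obtain these continuities.

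The paper first upgrades the hypothesis to joint continuity of the action map $\beta:\calG\times_{s,\calG\units}\ca E\ra\ca E$, $(\gamma,e)\mapsto V_\gamma e$. It does this by the same method as the cited lemma for part (1): pass a global section through the limit point and use that each $V_{\gamma_\lambda}$ is isometric. It then simply composes $\beta$ with $\gamma\mapsto(\gamma,\xi(\gamma))$ and with $\gamma\mapsto(\gamma\inv,\eta(\gamma))$.

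You never work on the total space. You get continuity on the span of the elementary sections $\varphi\cdot(e\circ s)$, then extend by density of that $C_0(\calG)$-submodule and closedness of continuous sections under uniform limits. You treat the adjoint by precomposing with inversion on elementary sections of $r^*E$.

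What each route buys:
\begin{itemize}
\item The paper's route needs only that $E\ra E_x$ is surjective, not a density theorem. It also yields directly the jointly continuous action on $\ca E$, i.e.\ the Banach $\calG$-bundle structure asserted just before the proposition. In your approach that is only recovered afterwards, from continuity of the application map.
\item Your route stays entirely at the level of Hilbert modules and their sections. It also makes explicit several points the paper leaves tacit: the $\alpha_\gamma$-compatibility hidden in the word ``unitary''; the cutoff $\varphi\in C_c(\calG)$, needed because $e\circ s$ need not vanish at infinity; $V_x=\mathrm{id}$ for units $x$; and unitarity of the assembled operator $V$.
\end{itemize}
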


\begin{proof}
We prove only (2). If there exists a unitary $V\in \calL_{s^*A}(s^*E,r^*E)$ such that $V_\gamma$ is the fiber of $V$ at $\gamma$ for any $\gamma$ in $\calG$, then for any $e\in E$, the map $\gamma\mapsto V_\gamma(e(s(\gamma)))$ is clearly a continuous section. Conversely, it suffices to prove that the map $\calG\ra \calL(s^*\ca E, r^*\ca E), \gamma\mapsto V_\gamma$ is strictly continuous. Use the same method as above, the map
\[\beta: \calG\times_{s,\calG\units}\ca E\ra \ca E, (\gamma,e)\mapsto V_\gamma(e)\]
is continuous. For any $\xi\in s^*E$, $\eta\in r^*E$ and net $(\gamma_\lambda)_\lambda$ in $\calG$ converging to $\gamma$, using the continuity of $\beta,\xi$ and $\eta$, $V_{\gamma_\lambda}(\xi(\gamma_\lambda))=\beta(\gamma_\lambda,\xi(\gamma_\lambda))\ra \beta(\gamma,\xi(\gamma))=V_\gamma(\xi(\gamma))$ and $V^*_{\gamma_\lambda}(\eta(\gamma_\lambda))=\beta(\gamma_\lambda\inv,\eta(\gamma_\lambda))\ra \beta(\gamma\inv,\eta(\gamma))=V^*_\gamma(\eta(\gamma))$. Hence, the map $\calG\ra \calL(s^*\ca E, r^*\ca E)$ is strictly continuous.
\end{proof}

\begin{defn}
    Let $A$ be a $\calG$-\cst-algebra, $(E,V)$ and $(F,W)$ be two $\calG$-Hilbert $A$-modules. We say that a bounded $A$-linear map $T:E\ra F$ is $\calG$-equivariant, if for any $\gamma\in \calG$, $W_\gamma\circ T_{s(\gamma)}=T_{r(\gamma)}\circ V_\gamma$.
\end{defn}

\begin{defn}
    Let $(A,\alpha)$ and $(B,\beta)$ be two $\calG$-\cst-algebras, $(E,V)$ be a $\calG$-Hilbert $B$-module, $\pi:A\ra \calL_B(E)$ be a $C_0(\calG\units)$-representation. We say that $\pi$ is $\calG$-equivariant, if for any $\gamma\in \calG$, $a\in A_{s(\gamma)}$, we have
    \[\pi_{r(\gamma)}(\alpha_\gamma(a))=V_\gamma\circ\pi_{s(\gamma)}(a)\circ V_\gamma^*.\]
\end{defn}

The proofs of the following results are left for the reader.

\begin{prop}\label{continuity of action on pullback}
    Let $(X,\rho)$ be a locally compact Hausdorff $\calG$-space, $(A,\alpha)$ be a $\calG$-\cst-algebra, and $(E,V)$ be a $\calG$-Hilbert $A$-module. For every $(\gamma,x)\in \calG\ltimes X$, we define the *-isomorphism
    \[\beta_{(\gamma,x)}:(\rho^*A)_x\cong A_{s(\gamma)}\xrightarrow{\alpha_\gamma}A_{r(\gamma)}\cong (\rho^*A)_{\gamma x}\]
    and the unitary isomorphism
    \[W_{(\gamma,x)}:(\rho^*E)_x\cong E_{s(\gamma)}\xrightarrow{V_\gamma} E_{r(\gamma)}\cong (\rho^*E)_{\gamma x},\]
    then $(\beta_{(\gamma,x)})_{(\gamma,x)\in \calG\ltimes X}$ gives a $\calG\ltimes X$-\cst-algebra structure of $\rho^*A$, $(W_{(\gamma,x)})_{(\gamma,x)\in \calG\ltimes X}$ gives a $\calG\ltimes X$-Hilbert $\rho^*A$-module structure of $\rho^*E$.
\end{prop}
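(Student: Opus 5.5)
The plan is to invoke the criterion of Proposition \ref{determine continuity of groupoid action} for the action groupoid $\calG\ltimes X$, whose unit space is $X$. First I would check the algebraic conditions. Each $\beta_{(\gamma,x)}$ is a $*$-isomorphism and each $W_{(\gamma,x)}$ is a unitary, because they are conjugates of $\alpha_\gamma$ and $V_\gamma$ by the canonical fiber identifications $(\rho^*A)_y\cong A_{\rho(y)}$ and $(\rho^*E)_y\cong E_{\rho(y)}$. The maps go between the correct fibers, since $\rho(x)=s(\gamma)$ and $\rho(\gamma x)=r(\gamma)$. For a composable pair $(\gamma,hx)\cdot(h,x)=(\gamma h,x)$ in $\calG\ltimes X$, the identity $\beta_{(\gamma h,x)}=\beta_{(\gamma,hx)}\circ\beta_{(h,x)}$ reduces to $\alpha_{\gamma h}=\alpha_\gamma\alpha_h$. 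The same argument gives the cocycle identity for $W$. For the module, one must also check that $W_{(\gamma,x)}$ is compatible with $\beta_{(\gamma,x)}$, meaning $\langle W e_1,e_2\rangle=\beta(\langle e_1,W^*e_2\rangle)$. This follows fiberwise from the corresponding identity for $V_\gamma$ and $\alpha_\gamma$.

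Next comes continuity, which is the main point. By Proposition \ref{determine continuity of groupoid action}(1), applied with $\calG\ltimes X$ in place of $\calG$, it suffices to show the following. For every $\xi\in\rho^*A=\Gamma_0(X,\rho^*\ca A)$, the section $(\gamma,x)\mapsto\beta_{(\gamma,x)}(\xi(x))$ must be continuous from $\calG\ltimes X$ into $r^*(\rho^*\ca A)$. Here the range map is $r(\gamma,x)=\gamma x$, so this pullback bundle is $\{((\gamma,x),a): a\in \ca A_{\rho(\gamma x)}\}$, sitting inside $(\calG\ltimes X)\times\ca A$. Under the identifications, this section is
\[(\gamma,x)\mapsto\big((\gamma,x),\ \alpha_\gamma(\xi(x))\big),\]
where $\xi(x)\in\ca A_{s(\gamma)}$. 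Since the fiber product carries the subspace topology of the product, it is enough to show that $(\gamma,x)\mapsto\alpha_\gamma(\xi(x))\in\ca A$ is continuous. This map factors as
\[(\gamma,x)\mapsto(\gamma,\xi(x))\in\calG\times_{s,\calG\units,p}\ca A,\qquad (\gamma,a)\mapsto\gamma.a.\]
The first map is continuous because $\xi$ is a continuous map $X\ra\ca A$, obtained by composing with the projection $\rho^*\ca A\ra\ca A$. The second map is the continuous action of $\calG$ on the Banach $\calG$-bundle $\ca A$, which the paper states corresponds to the $C_0(\calG)$-linearity of $\alpha$. The module case is identical, using Proposition \ref{determine continuity of groupoid action}(2) and the continuity of the action map $\calG\times_{s,\calG\units}\ca E\ra\ca E$ noted after the definition of $\calG$-Hilbert modules.

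The only delicate step I expect is justifying that the action maps $\calG\times_s\ca A\ra\ca A$ and $\calG\times_s\ca E\ra\ca E$ are continuous. For $\ca E$ this was already used as the map $\beta$ in the proof of Proposition \ref{determine continuity of groupoid action}. The argument there goes as follows. Take a net $(\gamma_\lambda,e_\lambda)\ra(\gamma,e)$. Choose $\eta\in E$ with $\eta(s(\gamma))=e$, which exists by Proposition \ref{banach bundle has enough sections}. Then compare $V_{\gamma_\lambda}e_\lambda$ with $V_{\gamma_\lambda}\eta(s(\gamma_\lambda))$. The latter converges to $V_\gamma e$ by continuity of the section $\gamma\mapsto V_\gamma\eta(s(\gamma))$. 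The difference has norm $\|e_\lambda-\eta(s(\gamma_\lambda))\|\ra0$, using Proposition \ref{first properties of Banach bundles}. So the standard Banach bundle convergence criterion (axiom 4 combined with continuity of addition) gives $V_{\gamma_\lambda}e_\lambda\ra V_\gamma e$. The same argument works for $\ca A$, and this completes the proof.
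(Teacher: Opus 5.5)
Your proof is correct. The paper gives no proof of this proposition; it is among the results ``left for the reader''. Your route is the one the paper's own tools are built for, and it matches how the paper argues Proposition \ref{L^2 has continuous G-action}. You check the cocycle and inner-product identities fiberwise, reducing them to those for $\alpha$ and $V$. You then verify the continuity criterion of Proposition \ref{determine continuity of groupoid action} for $\calG\ltimes X$ by factoring $(\gamma,x)\mapsto\alpha_\gamma(\xi(x))$ through the action map $\calG\times_{s,\calG\units}\ca A\to\ca A$. Your net argument for continuity of that action map is sound: it uses isometry of $V_{\gamma_\lambda}$, Proposition \ref{first properties of Banach bundles}, axiom 4 and continuity of addition.

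Two small precisions.
\begin{enumerate}
\item The continuity of $\gamma\mapsto V_\gamma\eta(s(\gamma))$ is the ``only if'' half of Proposition \ref{determine continuity of groupoid action}(2), which the paper calls ``clear''. To see it directly you need a cutoff $f\in C_c(\calG)$ with $f=1$ near $\gamma$. The reason is that $\gamma\mapsto\eta(s(\gamma))$ need not vanish at infinity on $\calG$, so it need not lie in $s^*E$. With the cutoff, $V(f\cdot s^*\eta)$ agrees with the desired section near $\gamma$.
\item The paper's proof of Proposition \ref{determine continuity of groupoid action} does not actually contain the argument you attribute to it. It only asserts continuity of the action map (``use the same method as above''). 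So your net argument supplies that step rather than quoting it.
\end{enumerate}
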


\begin{prop}\label{G-equivariance of pullback}
    Let $(X,\rho)$ be a locally compact Hausdorff $\calG$-space, $A,B$ be $\calG$-\cst-algebras and $E$ be a $\calG$-Hilbert $B$-module. If $\pi:A\ra \calL(E)$ is a $\calG$-equivariant representation, then $\rho^*\pi:\rho^*A\ra \calL(\rho^*E)$ is a $\calG\ltimes X$-equivariant representation.
\end{prop}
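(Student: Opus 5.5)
The statement to prove is Proposition~\ref{G-equivariance of pullback}. Two things must be checked: that $\rho^*\pi$ is a $C_0(X)$-representation, which holds by construction, and that it intertwines the $\calG\ltimes X$-actions given in Proposition~\ref{continuity of action on pullback}. The plan is to reduce everything to the fibers, using the identifications $(\rho^*A)_x\cong A_{\rho(x)}$, $(\rho^*E)_x\cong E_{\rho(x)}$ and $(\rho^*\pi)_x=\pi_{\rho(x)}$ recorded after the definition of pullbacks.

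First I will confirm that $\rho^*\pi$ is a well-defined $C_0(X)$-linear $*$-homomorphism $\rho^*A\ra\calL(\rho^*E)$. For $\xi\in\rho^*A$ and $\eta\in\rho^*E$, the section $x\mapsto \pi_{\rho(x)}(\xi(x))\eta(x)$ should lie in $\rho^*E$. Continuity follows because it is the pullback of the continuous application map $\calL(\ca E)\times_{\calG\units}\ca E\ra\ca E$ from Proposition~\ref{contiuous action of adjointable operator bundle}, composed with the continuous bundle map $\ca A\ra\calL(\ca E)$ induced by $\pi$. Vanishing at infinity comes from the bound $\|\pi_{\rho(x)}(\xi(x))\eta(x)\|\le\|\xi(x)\|\|\eta(x)\|$. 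Adjointability holds with adjoint $\rho^*\pi(\xi^*)$, and the $C_0(X)$-linearity is immediate from the fiberwise formula. I expect this step to be the only real obstacle, namely justifying that $a\mapsto\pi_{p(a)}(a)$ is continuous from $\ca A$ into $\calL(\ca E)$. To handle it, I would use the weak topology defining $\calL(\ca E)$: it suffices to check that $a\mapsto\pi_{p(a)}(a)e(p(a))$ is continuous, and by an approximation argument (condition (4) of Banach bundles, together with the fact from Proposition~\ref{banach bundle has enough sections} that $\ca A$ has enough sections) this reduces to sections $a\in A$. For those, $x\mapsto(\pi(a)e)(x)$ is continuous.

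Second, I will verify equivariance fiberwise. Fix $(\gamma,x)\in\calG\ltimes X$, so that $s(\gamma)=\rho(x)$ and $\rho(\gamma x)=r(\gamma)$. For $a\in(\rho^*A)_x=A_{s(\gamma)}$, the required identity is
\[(\rho^*\pi)_{\gamma x}(\beta_{(\gamma,x)}(a))=W_{(\gamma,x)}(\rho^*\pi)_x(a)W_{(\gamma,x)}^*.\]
Under the identifications, the left side is $\pi_{r(\gamma)}(\alpha_\gamma(a))$ and the right side is $V_\gamma\pi_{s(\gamma)}(a)V_\gamma^*$. These agree by the $\calG$-equivariance of $\pi$, which completes the argument.
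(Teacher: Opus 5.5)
Your proposal is correct. The paper leaves this proposition to the reader, and the intended argument is exactly your fiberwise check: use the identifications $(\rho^*A)_x\cong A_{\rho(x)}$, $(\rho^*E)_x\cong E_{\rho(x)}$ and $(\rho^*\pi)_x=\pi_{\rho(x)}$, then apply the $\calG$-equivariance of $\pi$. Your first step is optional, because the paper already builds the well-definedness of $\rho^*\pi$ into its definition of the pullback representation. If you keep it, note that the weak topology on $\calL(\ca E)$ also requires continuity of the adjoint map $a\mapsto\pi_{p(a)}(a^*)e(p(a))$, which follows from continuity of the involution on $\ca A$.
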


\begin{prop}\label{continuity of action on pushout}
    \textnormal{\cite[Proposition 3.10]{bonicke2020going}}
    Let $(X,\rho)$ be a locally compact Hausdorff $\calG$-space, $(A,\alpha)$ be a $\calG\ltimes X$-\cst-algebra, and $(E,V)$ be a $\calG\ltimes X$-Hilbert $A$-module. For any $\gamma\in \calG$, we define the *-isomorphism
    \[\beta_\gamma:(\rho_*A)_{s(\gamma)}\cong \Gamma_0(X_{s(\gamma)},\ca A)\ra \Gamma_0(X_{r(\gamma)},\ca A)\cong (\rho_*A)_{r(\gamma)},f \mapsto \alpha_{(\gamma,-)}(f(\gamma\inv-))\] 
    and the unitary isomorphism
    \[W_{\gamma}:(\rho_*E)_{s(\gamma)}\cong \Gamma_0(X_{s(\gamma)},\ca E)\ra \Gamma_0(X_{r(\gamma)},\ca E)\cong (\rho_*E)_{r(\gamma)}, g \mapsto V_{(\gamma,-)}(g(\gamma\inv-)),\] 
    then $(\beta_\gamma)_\gamma$ gives a $\calG$-\cst-algebra structure of $\rho_*A$, $(W_\gamma)_\gamma$ gives a $\calG$-Hilbert $\rho_*A$-module structure of $\rho_*E$.
\end{prop}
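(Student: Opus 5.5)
We need to prove Proposition \ref{continuity of action on pushout}: for a $\calG\ltimes X$-\cst-algebra $A$ and a $\calG\ltimes X$-Hilbert $A$-module $E$, the families $(\beta_\gamma)_\gamma$ and $(W_\gamma)_\gamma$ define a $\calG$-\cst-algebra structure on $\rho_*A$ and a $\calG$-Hilbert module structure on $\rho_*E$. The plan is to reduce everything to the criterion of Proposition \ref{determine continuity of groupoid action}. Algebraic conditions come first, then continuity. We only treat $E$; the algebra case is identical with $\alpha$ in place of $V$.

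\textbf{Algebraic step.} Using the fiber identification $(\rho_*E)_u\cong\Gamma_0(X_u,\ca E|_{X_u})$, we check that $W_\gamma$ is well defined. For $g\in\Gamma_0(X_{s(\gamma)},\ca E)$ and $x\in X_{r(\gamma)}$ we have $\gamma\inv x\in X_{s(\gamma)}$, and $V_{(\gamma,\gamma\inv x)}$ maps $E_{\gamma\inv x}$ to $E_x$. The section $x\mapsto V_{(\gamma,\gamma\inv x)}g(\gamma\inv x)$ is continuous on $X_{r(\gamma)}$, because it is the composite of the homeomorphism $X_{r(\gamma)}\ra X_{s(\gamma)}$, $x\mapsto\gamma\inv x$, with the continuous action map $(\calG\ltimes X)\times_{s}\ca E\ra\ca E$. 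It also vanishes at infinity, since each $V_{(\gamma,y)}$ is unitary and so preserves norms.

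Each $W_\gamma$ preserves inner products, twisted by $\beta_\gamma$; this holds fiberwise from the corresponding identity for $V$. Its inverse is $W_{\gamma\inv}$, so $W_\gamma$ is a unitary isomorphism. The cocycle identity $W_{\gamma h}=W_\gamma W_h$ follows from $V_{(\gamma h,x)}=V_{(\gamma,hx)}V_{(h,x)}$.

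\textbf{Continuity step, which is the main obstacle.} By Proposition \ref{determine continuity of groupoid action}, it suffices to show that for each $e\in E$ the map $\gamma\mapsto W_\gamma(e(s(\gamma)))$ is a continuous section $\calG\ra r^*\rho_*\ca E$. By density and the uniform bound $\|W_\gamma\|=1$, we may take $e\in\Gamma_c(X,\ca E)$ of the form $e=f\cdot e'$ with $f\in C_c(X)$.

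Consider the space $\calG\times_{r,\rho}X$, with projection $q$ onto $\calG$. The function $(\gamma,x)\mapsto V_{(\gamma,\gamma\inv x)}e(\gamma\inv x)$ is a continuous, compactly supported-in-$x$ section of the pullback of $\ca E$ to $\calG\times_{r,\rho}X$. It therefore defines an element $\xi$ of $q_*$ of the pullback module, and by \cite[Proposition 3.6]{bonicke2020going} this is canonically $r^*\rho_*E$, where we use that $q$ is the pullback of $\rho$ along $r$.

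The fiber of $\xi$ at $\gamma$ is exactly $W_\gamma(e(s(\gamma)))$, so the section is continuous. The delicate point is this identification $q_*(\text{pullback})\cong r^*\rho_*E$, which is a pushforward/pullback base-change. To establish it, we would check that the inclusion on $C_c$-sections is isometric with dense range, using the fiber descriptions.
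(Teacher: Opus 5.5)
Your algebraic step is fine. So is the reduction to $e\in\Gamma_c(X,\ca E)$, which uses Proposition~\ref{determine continuity of groupoid action}, the bound $\|W_\gamma\|=1$, and the fact that uniform limits of continuous sections of a Banach bundle are continuous. The paper itself gives no proof of this statement: it quotes \cite[Proposition 3.10]{bonicke2020going} and leaves the details to the reader. The gap is in the continuity step, which is the entire content of the proposition. You move it into a base-change isomorphism $q_*(\mathrm{pr}_X^*E)\cong r^*\rho_*E$ that you never establish. This isomorphism is not \cite[Proposition 3.6]{bonicke2020going}, which only identifies the fibres of a single pushforward. What you need is a $C_0(\calG)$-linear isometric isomorphism between the two modules that induces the identity of $\Gamma_0(X_{r(\gamma)},\ca E)$ on every fibre; only then do the two bundles over $\calG$ carry the same topology.

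The check you propose is circular in the direction your argument uses. The map $\Gamma_c(\calG\times_{r,\rho}X,\mathrm{pr}_X^*\ca E)\ra r^*\rho_*E$, $\xi\mapsto[\gamma\mapsto\xi(\gamma,\cdot)]$, is only well defined once you know that $\gamma\mapsto\xi(\gamma,\cdot)$ is continuous into $\rho_*\ca E$, and that is exactly what has to be proved. There is also a smaller problem. Your section $\xi(\gamma,x)=V_{(\gamma,\gamma\inv x)}e(\gamma\inv x)$ is not in $\Gamma_0(\calG\times_{r,\rho}X,\mathrm{pr}_X^*\ca E)$, because $\{(\gamma,\gamma y):y\in\supp(e)\}\cong\calG\times_{s,\rho}\supp(e)$ is not compact. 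You must first multiply by $h(\gamma)$ with $h\in C_c(\calG)$ equal to $1$ near the point where you test continuity.

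You can close the gap in one of two ways.

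\emph{Base change, built in the non-circular direction.} Send $\sum_i h_i\otimes e_i\in C_c(\calG)\odot E$ to the section $(\gamma,x)\mapsto\sum_i h_i(\gamma)e_i(x)$, which is clearly continuous. Both norms equal $\sup_{(\gamma,x)}\|\sum_i h_i(\gamma)e_i(x)\|$, because the fibre norms of $\rho_*E$ are sup norms over $X_u$. The closure of the range is a $C_0(\calG\times_{r,\rho}X)$-submodule, since the functions $h'(\gamma)k(x)$ span a dense subalgebra and preserve the range. It also has full fibres, so it is everything. The induced fibre maps are the identity. Inverting this map then gives continuity of $\gamma\mapsto h(\gamma)W_\gamma(e|_{X_{s(\gamma)}})$.

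\emph{Direct argument.} This is more economical and follows the pattern of the paper's proof of Proposition~\ref{L^2 has continuous G-action}.
\begin{enumerate}
\item Given $\gamma_\lambda\ra\gamma$, choose $\eta\in\Gamma_c(X,\ca E)$ with $\eta|_{X_{r(\gamma)}}=W_\gamma(e|_{X_{s(\gamma)}})$, using Proposition~\ref{banach bundle has enough sections} and a cutoff.
\item By the standard convergence criterion in Banach bundles, it suffices to show $\sup_{x\in X_{r(\gamma_\lambda)}}\|V_{(\gamma_\lambda,\gamma_\lambda\inv x)}e(\gamma_\lambda\inv x)-\eta(x)\|\ra 0$.
\item If this failed, pick points $x_\lambda$ where the norm is at least $\epsilon$. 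They lie in the compact set $K\supp(e)\cup\supp(\eta)$, where $K$ is a compact neighbourhood of $\gamma$. Hence a subnet converges to some $x\in X_{r(\gamma)}$.
\item Continuity of the $\calG\ltimes X$-action on $\ca E$ forces the differences to converge to $0_x$. Their norms then tend to $0$ by Proposition~\ref{first properties of Banach bundles}, a contradiction.
\end{enumerate}
This is exactly the special case of base change you need, and it avoids the general identification.
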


\begin{prop}\label{G-equivariance of pushout}
    Let $(X,\rho)$ be a locally compact Hausdorff $\calG$-space, $A,B$ be $\calG\ltimes X$-\cst-algebras and $E$ be a $\calG\ltimes X$-Hilbert $B$-module. If $\pi:A\ra \calL(E)$ is a $\calG\ltimes X$-equivariant representation, then $\rho_*\pi:\rho_*A\ra \calL(\rho_*E)$ is a $\calG$-equivariant representation.
\end{prop}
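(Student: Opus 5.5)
We need to prove that pushforward $\rho_*\pi$ is $\calG$-equivariant, i.e. that for every $\gamma\in\calG$ and $f\in(\rho_*A)_{s(\gamma)}$,
\[(\rho_*\pi)_{r(\gamma)}(\beta_\gamma(f))=W_\gamma\circ(\rho_*\pi)_{s(\gamma)}(f)\circ W_\gamma^*.\]
By Proposition \ref{continuity of action on pushout}, $\rho_*A$ and $\rho_*E$ are already a $\calG$-\cst-algebra and a $\calG$-Hilbert module with actions $\beta$ and $W$. Also, $\rho_*\pi$ is $C_0(\calG\units)$-linear by the definition of pushforward. So only the displayed fiberwise identity remains.

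The first step is to identify the fiber $(\rho_*\pi)_u$ for $u\in\calG\units$ under the identifications $(\rho_*A)_u\cong\Gamma_0(X_u,\ca A|_{X_u})$ and $(\rho_*E)_u\cong\Gamma_0(X_u,\ca E|_{X_u})$ of \cite[Proposition 3.6]{bonicke2020going}. I claim it acts pointwise:
\[[(\rho_*\pi)_u(f)g](x)=\pi_x(f(x))g(x),\qquad x\in X_u.\]
To check this, take $a\in A$ and $e\in E$. Then $\pi(a)e$ is the section $x\mapsto\pi_x(a(x))e(x)$ of $\ca E$. The quotient map $E\ra(\rho_*E)_u$ is restriction of sections to $X_u$, and likewise for $A$. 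Hence the formula holds on restrictions $a|_{X_u}$ and $e|_{X_u}$. These restrictions are dense; in fact they exhaust the fibers, since the quotient maps are surjective. Both sides are bounded in $f$ and $g$, so the formula holds on the whole fiber.

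Next, let $\gamma\in\calG$, $f\in\Gamma_0(X_{s(\gamma)},\ca A)$ and $g\in\Gamma_0(X_{r(\gamma)},\ca E)$, and evaluate both sides at $y\in X_{r(\gamma)}$. The adjoint of $W_\gamma$ is $W_{\gamma^{-1}}$, so
\[(W_\gamma^*g)(x)=V_{(\gamma,x)}^{-1}\bigl(g(\gamma x)\bigr),\qquad x\in X_{s(\gamma)}.\]
The right-hand side evaluated at $y$ is therefore
\[V_{(\gamma,\gamma^{-1}y)}\,\pi_{\gamma^{-1}y}\bigl(f(\gamma^{-1}y)\bigr)\,V_{(\gamma,\gamma^{-1}y)}^{*}\,g(y).\]
The left-hand side evaluated at $y$ is
\[\pi_y\bigl(\alpha_{(\gamma,\gamma^{-1}y)}(f(\gamma^{-1}y))\bigr)g(y).\]
These agree by the $\calG\ltimes X$-equivariance of $\pi$ applied to the arrow $(\gamma,\gamma^{-1}y)$, whose source is $\gamma^{-1}y$ and whose range is $y$. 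Since $y$ is arbitrary and sections are determined by their values, the identity holds.

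The only real obstacle is the first step, making the identification of $(\rho_*\pi)_u$ with the pointwise action rigorous. The concern is whether the quotient $E\ra(\rho_*E)_u$ really coincides with restriction to $X_u$. This is exactly the content of the cited isomorphism $(\rho_*E)_u\cong\Gamma_0(X_u,\ca E|_{X_u})$, and I would unwind it on elementary sections as above. Everything after that is bookkeeping with the conventions $W_{\gamma^{-1}}=W_\gamma^*$ and $V_{(\gamma,x)}^{-1}=V_{(\gamma,x)^{-1}}$.
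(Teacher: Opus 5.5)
Your proof is correct. The paper leaves this proposition to the reader, and your argument is the natural unwinding of the definitions: you identify $(\rho_*\pi)_u$ with the pointwise action $x\mapsto \pi_x(f(x))g(x)$ on $\Gamma_0(X_u,\ca E|_{X_u})$, and you compare both sides at each $y\in X_{r(\gamma)}$ using the $\calG\ltimes X$-equivariance of $\pi$ at the arrow $(\gamma,\gamma^{-1}y)$. This is exactly the verification the paper leaves implicit.
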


\begin{prop}\label{continuity of action on inner tensor product}
    Let $(A,\alpha)$ and $(B,\beta)$ be two $\calG$-\cst-algebras, $(E,V)$ be a $\calG$-Hilbert $A$-module, $(F,W)$ be a $\calG$-Hilbert $B$-module and $\pi:A\ra \calL(F)$ be a $\calG$-equivariant representation. After the identifications $s^*(E\otimes_\pi F)\cong s^*E\otimes_{s^*\pi}s^*F$ and $r^*(E\otimes_\pi F)\cong r^*E\otimes_{r^*\pi}r^*F$ by \cite[Proposition 4.2]{le1999theorie}, consider the unitary isomorphisms
    \[V_\gamma\otimes W_\gamma:E_{s(\gamma)}\otimes_{\pi_{s(\gamma)}}F_{s(\gamma)}\ra E_{r(\gamma)}\otimes_{\pi_{r(\gamma)}}F_{r(\gamma)}\] 
    for all $\gamma\in \calG$, then $(V_\gamma\otimes W_\gamma)_{\gamma\in \calG}$ gives a $\calG$-Hilbert $B$-module structure of $E\otimes_\pi F$.
\end{prop}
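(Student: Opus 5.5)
We will use the criterion of proposition \ref{determine continuity of groupoid action}(2) for the inner tensor product $E\otimes_\pi F$, regarded as a Hilbert $B$-module over the $C_0(\calG\units)$-algebra $B$. Its fibre at $u\in\calG\units$ is identified with $E_u\otimes_{\pi_u}F_u$ (this is the fibrewise content of \cite[Proposition 4.2]{le1999theorie}). So there are three things to check: each $V_\gamma\otimes W_\gamma$ is a well-defined unitary $E_{s(\gamma)}\otimes_{\pi_{s(\gamma)}}F_{s(\gamma)}\ra E_{r(\gamma)}\otimes_{\pi_{r(\gamma)}}F_{r(\gamma)}$ that is compatible with $\beta_\gamma$; the family is multiplicative; and it is continuous in $\gamma$.

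\textbf{Step 1: well-definedness and unitarity.} On elementary tensors, set $(V_\gamma\otimes W_\gamma)(e\otimes f)=V_\gamma e\otimes W_\gamma f$. Inner products are preserved up to $\beta_\gamma$:
\[\langle V_\gamma e_1\otimes W_\gamma f_1,V_\gamma e_2\otimes W_\gamma f_2\rangle=\langle W_\gamma f_1,\pi_{r(\gamma)}(\alpha_\gamma\langle e_1,e_2\rangle)W_\gamma f_2\rangle .\]
By $\calG$-equivariance of $\pi$, $\pi_{r(\gamma)}(\alpha_\gamma(a))=W_\gamma\pi_{s(\gamma)}(a)W_\gamma^*$, so the right-hand side equals $\beta_\gamma(\langle e_1\otimes f_1,e_2\otimes f_2\rangle)$.

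The same computation shows that the balancing relations $ea\otimes f=e\otimes\pi(a)f$ are respected. Hence the map extends to an isometric, $\beta_\gamma$-semilinear map with dense range. Its inverse is $V_{\gamma\inv}\otimes W_{\gamma\inv}$, so it is a unitary in the appropriate sense. Multiplicativity $V_{\gamma h}\otimes W_{\gamma h}=(V_\gamma\otimes W_\gamma)(V_h\otimes W_h)$ is immediate on elementary tensors.

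\textbf{Step 2: continuity.} Fix $\xi\in E\otimes_\pi F$; we must show that $\gamma\mapsto (V_\gamma\otimes W_\gamma)\xi(s(\gamma))$ is a continuous section of $r^*(\ca E\otimes\ca F)$.

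We first reduce to elementary tensors $\xi=e\otimes f$. The sections $\gamma\mapsto(V_\gamma\otimes W_\gamma)\xi(s(\gamma))$ are bounded by $\|\xi\|$, and continuous sections are closed under uniform limits: the set of $\xi$ giving continuous sections is a closed subspace, by upper semicontinuity of the norm and axiom (4) of Banach bundles. Since finite sums of elementary tensors are dense, the reduction is valid.

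For $\xi=e\otimes f$, the section $\gamma\mapsto V_\gamma e(s(\gamma))$ lies in $\Gamma_b(\calG,r^*\ca E)$, and indeed in $r^*E$ after multiplying by a cutoff from $C_c(\calG)$; similarly for $\gamma\mapsto W_\gamma f(s(\gamma))$. The desired section is their fibrewise tensor product. By the identification $r^*(E\otimes_\pi F)\cong r^*E\otimes_{r^*\pi}r^*F$, the fibrewise tensor of a section of $r^*E$ and a section of $r^*F$ is a genuine element of $r^*(E\otimes_\pi F)$, hence continuous.

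\textbf{Main obstacle.} The delicate point is making this last identification precise: checking that under \cite[Proposition 4.2]{le1999theorie} the element $\eta\otimes\zeta\in r^*E\otimes r^*F$ corresponds to the section $\gamma\mapsto\eta(\gamma)\otimes\zeta(\gamma)$. One also needs a local cutoff argument, since $V_\gamma e(s(\gamma))$ is only bounded rather than vanishing at infinity; continuity is local, so multiplying by $\phi\in C_c(\calG)$ equal to $1$ near a given point resolves this.
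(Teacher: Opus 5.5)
Your argument is correct. The paper itself gives no proof of this proposition; it is among the results whose proofs are ``left for the reader''.

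The phrasing of the statement suggests a more global route than yours: tensor the two structure unitaries directly, $\eta\otimes\zeta\mapsto V\eta\otimes W\zeta$, from $s^*E\otimes_{s^*\pi}s^*F$ to $r^*E\otimes_{r^*\pi}r^*F$. Equivariance of $\pi$ is exactly what makes this respect the balancing relations and preserve inner products up to $\beta$. This is your Step 1 computation, done over $\calG$ rather than fibrewise. After the two identifications one gets directly a unitary $s^*(E\otimes_\pi F)\ra r^*(E\otimes_\pi F)$ with fibres $V_\gamma\otimes W_\gamma$. Continuity is then automatic, and only the cocycle identity remains.

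Your route through Proposition \ref{determine continuity of groupoid action}(2) costs the density/uniform-limit reduction. In exchange it needs neither the $s^*$ identification nor any bookkeeping about $r^*E$ being regarded as an $s^*A$-module via $\alpha\inv$.

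The point you flag as the main obstacle can be avoided altogether, as in the proof of Proposition \ref{L^2 has continuous G-action}. Given $\gamma_0$, choose $e'\in E$ and $f'\in F$ with $e'(r(\gamma_0))=V_{\gamma_0}e(s(\gamma_0))$ and $f'(r(\gamma_0))=W_{\gamma_0}f(s(\gamma_0))$, using Proposition \ref{banach bundle has enough sections}. Then
\[\|V_\gamma e(s(\gamma))\otimes W_\gamma f(s(\gamma))-e'(r(\gamma))\otimes f'(r(\gamma))\|\leqslant\|V_\gamma e(s(\gamma))-e'(r(\gamma))\|\,\|f\|+\|e'\|\,\|W_\gamma f(s(\gamma))-f'(r(\gamma))\|.\]
Both terms tend to $0$ as $\gamma\ra\gamma_0$ by Proposition \ref{first properties of Banach bundles}(1), because the differences are continuous sections of $r^*\ca E$ and $r^*\ca F$ vanishing at $\gamma_0$. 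Axiom (4), together with continuity of addition against the continuous section $\gamma\mapsto(e'\otimes f')(r(\gamma))$, then gives continuity at $\gamma_0$. This uses only the canonical fibre identification $(E\otimes_\pi F)_u\cong E_u\otimes_{\pi_u}F_u$ over $\calG\units$, with no cutoff functions and no unpacking of the identification for $r^*$.
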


\begin{prop}\label{G-equivariance of inner tensor product}
    Let $A,B,D$ be $\calG$-\cst-algebras, $E$ be a $\calG$-Hilbert $B$-module, $F$ be a $\calG$-Hilbert $D$-module, $\pi:A\ra \calL_B(E)$ and $\pi':B\ra \calL_D(F)$ be $\calG$-equivariant representations. Then $\pi\otimes id: A\ra \calL_D(E\otimes_{\pi'}F)$ is also a $\calG$-equivariant representation.
\end{prop}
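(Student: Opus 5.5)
The plan is to reduce everything to a fiberwise computation on elementary tensors, using the $\calG$-Hilbert $D$-module structure on $E\otimes_{\pi'}F$ already provided by proposition \ref{continuity of action on inner tensor product}. Write $(A,\alpha)$, $(E,V)$ for the $\calG$-Hilbert $B$-module and $(F,W)$ for the $\calG$-Hilbert $D$-module. The structure unitaries on $E\otimes_{\pi'}F$ are then $V_\gamma\otimes W_\gamma$. This tensor product is well defined precisely because $\pi'$ is $\calG$-equivariant: $W_\gamma\pi'_{s(\gamma)}(b)W_\gamma^*=\pi'_{r(\gamma)}(\beta_\gamma(b))$.

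First, I will check that $\pi\otimes id$ is a $C_0(\calG\units)$-representation. The $C_0(\calG\units)$-action on $E\otimes_{\pi'}F$ comes from the $D$-module structure. I claim that $f\cdot(e\otimes x)=(f\cdot e)\otimes x=e\otimes (f\cdot x)$. Write $e=e'b$ using $EB=E$. Then $(f\cdot e)\otimes x=e'\otimes\pi'(f\cdot b)x$. Since $\pi'$ is $C_0(\calG\units)$-linear, this equals $e'\otimes \pi'(b)(f\cdot x)=e\otimes f\cdot x$, and the latter is $f\cdot(e\otimes x)$ because $f\cdot$ on $F$ is defined through $D$. Combining this with the $C_0(\calG\units)$-linearity of $\pi$ gives $(\pi\otimes id)(a)(f\cdot \zeta)=(\pi(f\cdot a)\otimes id)\zeta$ on elementary tensors, and hence on all of $E\otimes_{\pi'}F$ by linearity and continuity.

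Second, I will identify fibers. Using \cite[Proposition 4.2]{le1999theorie}, the same identification invoked in proposition \ref{continuity of action on inner tensor product} (pull back along the inclusion of a point), we have $(E\otimes_{\pi'}F)_u\cong E_u\otimes_{\pi'_u}F_u$ with $(e\otimes x)(u)=e(u)\otimes x(u)$. Under this identification, the fiber $(\pi\otimes id)_u$ is $\pi_u\otimes id$, since both sides agree on elements of the form $a(u)$ applied to $e(u)\otimes x(u)$. Such elements span a dense subspace, because the quotient maps are surjective.

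Finally, I will verify the equivariance identity for $\gamma\in\calG$ and $a\in A_{s(\gamma)}$. On elementary tensors,
\[(V_\gamma\otimes W_\gamma)(\pi_{s(\gamma)}(a)\otimes id)(V_\gamma\otimes W_\gamma)^*=V_\gamma\pi_{s(\gamma)}(a)V_\gamma^*\otimes W_\gamma W_\gamma^*=\pi_{r(\gamma)}(\alpha_\gamma(a))\otimes id.\]
This uses that $(V_\gamma\otimes W_\gamma)^*=V_\gamma^*\otimes W_\gamma^*$, which holds since both factors are unitaries, and that $\pi$ is equivariant. Density of elementary tensors then finishes the argument.

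The main obstacle is the bookkeeping in the second step. One must make sure that the fiber identification $(E\otimes_{\pi'}F)_u\cong E_u\otimes_{\pi'_u}F_u$ is compatible both with the operators $\pi\otimes id$ and with the unitaries $V_\gamma\otimes W_\gamma$ from proposition \ref{continuity of action on inner tensor product}. I would handle this by checking everything on the dense set of elements $e(u)\otimes x(u)$.
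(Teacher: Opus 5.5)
Your proposal is correct. The paper leaves this proof to the reader, and your verification is exactly the intended routine check: $C_0(\calG\units)$-linearity via $EB=E$ and the balancing relation, the fiber identification $(E\otimes_{\pi'}F)_u\cong E_u\otimes_{\pi'_u}F_u$, and the elementary-tensor computation with $V_\gamma\otimes W_\gamma$. The compatibility ``obstacle'' you flag is in fact automatic, since in proposition \ref{continuity of action on inner tensor product} the $\calG$-structure on $E\otimes_{\pi'}F$ is defined through that very fiber identification.
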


We recall now the notion of reduced crossed product by groupoids.
Let $\calG$ be an étale groupoid and $(A,\alpha)$ be a $\calG$-\cst-algebra.

\begin{defn}\label{defn L^2(Z,rho,E)}
    Let $\rho:Z\ra Y$ be a local homeomorphism between two locally compact Hausdorff spaces, $A$ be a $C_0(Y)$-algebra and $E$ be a Hilbert $A$-module. We define a pre-Hilbert $A$-module structure on $\Gamma_c(Z,\rho^*\ca E)$ as 
    \[(\xi\cdot a)(z)=\xi(z)a(\rho(z)),\quad \forall \xi\in \Gamma_c(Z,\rho^*\ca E),a\in A,z\in Z,\]
    \[\langle \xi_1,\xi_2\rangle(y)=\sum_{z\in \rho\inv(y)}\langle\xi_1(z),\xi_2(z)\rangle_{E_y},\quad \forall \xi_1,\xi_2\in \Gamma_c(Z,\rho^*\ca E).\]
    We denote its completion by $L^2(Z,\rho,E)$, which is a Hilbert $A$-module. 
\end{defn}

Clearly, the fiber of $L^2(Z,\rho,E)$ at $y\in Y$ is the Hilbert $A_y$-module $\ell^2(\rho\inv(y),E_y)$. The quotient map $L^2(Z,\rho,E)\ra \ell^2(\rho\inv(y),E_y)$ is the restriction map $\xi\mapsto \xi|_{Z_y}$.

\begin{lem}\label{uniformly finite fiber}
    If $\rho:Y\ra X$ is a local homeomorphism between two locally compact Hausdorff spaces, $K\subseteq Y$ a compact subset of $Y$, then $\sup_{x\in X}\#(K\cap \rho\inv(x))<\infty$.
\end{lem}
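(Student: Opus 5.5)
Since $\rho$ is a local homeomorphism, every $y\in Y$ has an open neighbourhood $U_y$ on which $\rho$ is injective. The plan is to cover $K$ by finitely many of these sets and count how many points of $K$ a single fibre can meet.

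First, for each $y\in K$ choose an open set $U_y\ni y$ such that $\rho|_{U_y}:U_y\ra\rho(U_y)$ is a homeomorphism onto an open subset of $X$. In particular $\rho|_{U_y}$ is injective.

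Second, the family $(U_y)_{y\in K}$ is an open cover of the compact set $K$. Extract a finite subcover $U_{y_1},\dots,U_{y_n}$.

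Third, fix $x\in X$. Each point of $K\cap\rho\inv(x)$ lies in some $U_{y_j}$. Since $\rho$ is injective on $U_{y_j}$, each $U_{y_j}$ contains at most one point of $\rho\inv(x)$. Hence $\#(K\cap\rho\inv(x))\le n$, and $n$ does not depend on $x$. Taking the supremum over $x\in X$ gives $\sup_{x\in X}\#(K\cap\rho\inv(x))\le n<\infty$.

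No step is a real obstacle. The only point to get right is that local injectivity comes directly from the definition of a local homeomorphism, and that the bound $n$ is uniform because the subcover was chosen independently of $x$. Hausdorffness is not needed for this argument.
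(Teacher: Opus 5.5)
Your proof is correct and matches the paper's argument: you cover $K$ by finitely many open sets on which $\rho$ is injective, and each fibre then meets each of them in at most one point, giving the uniform bound $n$. Your remark that Hausdorffness is not used is also accurate.
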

\begin{proof}
    For every $y\in K$, there exists an open neighborhood $V_y$ of $y$ such that $\rho|_{V_y}$ is a homeomorphism onto an open of $X$. Since $K$ is compact, there exists finitely many point $y_1,\cdots, y_n$, such that $K\subseteq \cup_{i=1}^nV_{y_i}$. Since every $\rho|_{V_{y_i}}$ is injective, we have $\#(K\cap \rho\inv(x))\leqslant n$ for every $x\in X$.
\end{proof}

\begin{prop}\label{L^2 has continuous G-action}
    Let $(Z,\rho)$ be a locally compact Hausdorff \'etale right $\calG$-space, $(A,\alpha)$ be a $\calG$-\cst-algebra and $(E,V)$ be a $\calG$-Hilbert $A$-module. For any $\gamma\in \calG$, we define $\tilde V_\gamma$ as the unitary isomorphism
    \[\ell^2(Z_{s(\gamma)},E_{s(\gamma)})\ra \ell^2(Z_{r(\gamma)}, E_{r(\gamma)}), f\mapsto V_\gamma(f(-\gamma)), \]
    then $(\tilde V_\gamma)_{\gamma\in \calG}$ gives a $\calG$-Hilbert $A$-module structure on $L^2(Z,\rho, E)$.
\end{prop}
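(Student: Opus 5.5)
The plan is to reduce everything to Proposition \ref{determine continuity of groupoid action}(2). This requires three things: that the $\tilde V_\gamma$ are well-defined unitary isomorphisms of the fibers, that they are multiplicative, and that sections of the form $\gamma\mapsto \tilde V_\gamma(\xi(s(\gamma)))$ are continuous into $r^*\ca L$, where $\ca L$ denotes the Banach bundle of $L := L^2(Z,\rho,E)$. We identify the fiber at $x$ with $\ell^2(Z_x,E_x)$ via restriction. Here $Z_x=\rho\inv(x)$, and the right action sends $Z_{r(\gamma)}$ bijectively onto $Z_{s(\gamma)}$ by $z\mapsto z\gamma$.

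First, $\tilde V_\gamma f(z)=V_\gamma(f(z\gamma))$ is $\alpha_\gamma$-linear, and it preserves inner products in the sense $\langle \tilde V_\gamma f,\tilde V_\gamma g\rangle=\alpha_\gamma(\langle f,g\rangle)$. This holds because $z\mapsto z\gamma$ is a bijection and $V_\gamma$ has the same property. Its inverse is $\tilde V_{\gamma\inv}$, so it is unitary. Next, $\tilde V_{\gamma h}=\tilde V_\gamma\tilde V_h$ follows from $z(\gamma h)=(z\gamma)h$ together with $V_{\gamma h}=V_\gamma V_h$.

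The main step is continuity. Since a Banach bundle section that is a uniform limit of continuous sections is continuous, and $\|\tilde V_\gamma\|=1$, it suffices to check $\xi\in\Gamma_c(Z,\rho^*\ca E)$, which is dense in $L$. Fix such a $\xi$ with compact support $K$. The candidate section is
\[\gamma\mapsto \big[z\mapsto V_\gamma(\xi(z\gamma))\big]\in \ell^2(Z_{r(\gamma)},E_{r(\gamma)}).\]
I will realize it as an element of $r^*L$, namely $L^2$ of the pullback. Consider the space $Z\times_{s,\calG\units,r}\calG \cong Z\times_{\rho,\calG\units,r}\calG$, with projection to $\calG$, which is a local homeomorphism. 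Identify $r^*L\cong L^2(Z\times_{\calG\units}\calG\to\calG, r^*E)$ fiberwise. The function
\[(z,\gamma)\mapsto V_\gamma(\xi(z\gamma))\]
is a continuous section of the appropriate pulled-back bundle. This uses continuity of the action on $\ca E$, as in the proof of Proposition \ref{determine continuity of groupoid action}, and continuity of the right action on $Z$. Its support is contained in $\{(z,\gamma): z\gamma\in K\}$, which is closed. This support is not compact in general, but over a compact neighbourhood $C$ of a given $\gamma_0$ the set $\{(z,\gamma):\gamma\in C,\ z\gamma\in K\}$ equals $\{(k\gamma\inv,\gamma): k\in K, \gamma\in C\}$, a continuous image of a compact set. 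Multiplying by a cutoff $\varphi\in C_c(\calG)$ equal to $1$ near $\gamma_0$ therefore gives a compactly supported continuous section, hence an element of the dense subspace of $r^*L$.

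Evaluating this element at $\gamma$ gives exactly $\varphi(\gamma)\tilde V_\gamma(\xi|_{Z_{s(\gamma)}})$. So near $\gamma_0$ our map agrees with a continuous section of $r^*\ca L$, and local continuity gives global continuity. Lemma \ref{uniformly finite fiber} guarantees that the fiber sums are uniformly finite, so the fiber identification and the norm estimates are harmless.

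I expect the main obstacle to be the identification $r^*L^2(Z,\rho,E)\cong L^2(Z\times_{\calG\units}\calG,\mathrm{pr}_2,r^*E)$ compatibly with the fiber topology. I would prove it by checking that both sides have the same fibers, and that compactly supported sections of the latter give continuous sections of $r^*\ca L$. The latter follows from the topology on $\ca L$ being generated by elements of $\Gamma_c$, combined with a partition of unity on $Z$ subordinate to open sets on which $\rho$ is injective. On such a set, a section reduces to a section of $\ca E$ composed with a local inverse of $\rho$.
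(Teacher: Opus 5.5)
Your proposal is correct. It shares the paper's reduction (Proposition \ref{determine continuity of groupoid action}(2), then restricting to sections $\xi\in\Gamma_c(Z,\rho^*\ca E)$), but it proves continuity by a different route.

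The paper argues with nets. For $\gamma_\lambda\to\gamma$ it picks an auxiliary $\eta\in\Gamma_c(Z,\rho^*\ca E)$ with $\eta|_{Z_{r(\gamma)}}=\tilde V_\gamma(\xi|_{Z_{s(\gamma)}})$. It then bounds the $\ell^2$-norm of $\tilde V_{\gamma_\lambda}(\xi|_{Z_{s(\gamma_\lambda)}})-\eta|_{Z_{r(\gamma_\lambda)}}$ by $\sqrt{M}$ times its sup-norm, where $M$ is the uniform fiber count of a compact set from Lemma \ref{uniformly finite fiber}. Finally it makes the sup-norm vanish using continuity of the $\calG$-action on the pushed-forward bundle $\rho_*\rho^*\ca E=\sqcup_x C_0(Z_x,E_x)$ (Propositions \ref{continuity of action on pullback} and \ref{continuity of action on pushout}).

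You instead build one jointly continuous, locally compactly supported section $(z,\gamma)\mapsto\varphi(\gamma)V_\gamma(\xi(z\gamma))$ over $Z\times_{\rho,\calG\units,r}\calG$. Base change of $L^2$ along $r$ then turns it into a continuous section of $r^*\ca L$. This avoids the cited pushforward-action result and the $\ell^2$-versus-sup comparison, and it yields a base-change fact you can reuse. The price is that you must prove that fact, which is the same kind of local work the paper outsources; you only need the one direction you stated.

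In that local step, be careful: a piece supported in $O\times_{\calG\units}\calG$ with $\rho|_O$ injective is a section of $r^*\ca E$ that depends on $\gamma$ through $V_\gamma$. It is not a section of $\ca E$ composed with $(\rho|_O)\inv$. So you still need the approximation criterion for convergence in $\ca L$. For example, near $\gamma_0$ compare with the restriction of $h\cdot(e_0\circ\rho)$, where $h\in C_c(O)$ and $e_0\in E$ is chosen so that $e_0(r(\gamma_0))$ equals the piece's value at $\gamma_0$.

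Your explicit check of unitarity and multiplicativity of $\tilde V_\gamma$ covers a point the paper leaves implicit.
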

\begin{proof}
    Let $\ca E'=\sqcup_{x\in \calG\units} \ell^2(Z_x,E_x)$ be the associated Hilbert bundle of $L^2(Z,\rho,E)$. By proposition \ref{determine continuity of groupoid action}, it suffices to prove that for any $\xi\in \Gamma_c(Z,\rho^*\ca E)\subseteq L^2(Z,\rho,E)$ and a net $(\gamma_\lambda)_\lambda$ in $\calG$ converging to $\gamma$, we have $\tilde V_{\gamma_\lambda}(\xi|_{Z_{s(\gamma_\lambda)}})\ra \tilde V_\gamma(\xi|_{Z_{s(\gamma)}})$ in $\ca E'$. By proposition \ref{banach bundle has enough sections}, let $\eta\in L^2(\calG,E)$ such that its fiber at $r(\gamma)$ is $\eta|_{Z_{r(\gamma)}}=\tilde V_\gamma(\xi|_{Z_{s(\gamma)}})=V_\gamma(\xi|_{Z_{s(\gamma)}}(-\gamma))$. Since $\supp(\xi)$ is a compact subset of $Z$, by Urysohn's lemma, there exists $f\in C_c(Z)$ such that $f|_{\supp(\xi)}=1$. Then $(f\cdot\xi)|_{Z_{s(\gamma)}}=\xi|_{Z_{s(\gamma)}}$. Hence, after replacing $\eta$ by $f\cdot \eta$, we can assume that $\eta\in \Gamma_c(Z,\rho^*\ca E)$.

    Claim: $\|\tilde V_{\gamma_\lambda}(\xi|_{Z_{s(\gamma_\lambda)}})-\eta|_{Z_{r(\gamma_\lambda)}}\|_{\ca E'_{r(\gamma_\lambda)}}\ra 0$.

    Let $K_1$ be a compact neighborhood of $\gamma$. Since $\gamma_\lambda$ is eventually in $K_1$, we can assume that $(\gamma_\lambda)_\lambda$ is contained in $K_1$ after replacing by a subnet. Let $K=K_1 \supp(\xi)\cup \supp(\eta)$, which is a compact subset of $\calG$. By lemma \ref{uniformly finite fiber}, $M=\sup_{x\in \calG\units}\#(\rho\inv(x)\cap K)$ is finite. We have
    \begin{align*}
        \|\tilde V_{\gamma_\lambda}(\xi|_{Z_{s(\gamma_\lambda)}})-\eta|_{Z_{r(\gamma_\lambda)}}\|^2_{\ca E'_{r(\gamma_\lambda)}} & = \|\sum_{z\in Z_{r(\gamma_\lambda)}}\langle V_{\gamma_\lambda}\xi(z\gamma_\lambda)-\eta(z),V_{\gamma_\lambda}\xi(z\gamma_\lambda)-\eta(z)\rangle_{E_{r(\gamma_\lambda)}}\|_{A_{r(\gamma_\lambda)}}\\
        &\leqslant M\cdot \sup_{z\in Z_{r(\gamma_\lambda)}}\|V_{\gamma_\lambda}\xi(z\gamma_\lambda)-\eta(z)\|^2_{E_{r(\gamma_\lambda)}}\\
        & = M\cdot \|\tilde V_{\gamma_\lambda}(\xi|_{Z_{s(\gamma_\lambda)}})-\eta|_{Z_{r(\gamma_\lambda)}}\|^2_{C_0(Z_{r(\gamma_\lambda)}, E_{r(\gamma_\lambda)})}.
    \end{align*}
    By the continuity of the action of $\calG$ on $\rho_*\rho^*\ca E=\sqcup_{x\in \calG\units}C_0(Z_x, E_x)$ (see proposition \ref{continuity of action on pullback} and proposition \ref{continuity of action on pushout}), we have $\eta|_{Z_{r(\gamma_\lambda)}}\ra \eta|_{Z_{r(\gamma)}}$ and $\tilde V_{\gamma_\lambda}(\xi|_{Z_{s(\gamma_\lambda)}})\ra \tilde V_\gamma(\xi|_{Z_{s(\gamma)}})$ in the Hilbert bundle $\rho_*\rho^*\ca E$. Then $\tilde V_{\gamma_\lambda}(\xi|_{Z_{s(\gamma_\lambda)}})-\eta|_{Z_{r(\gamma_\lambda)}}\ra 0_{r(\gamma)}$ in $\rho_*\rho^*\ca E$. Then by proposition \ref{first properties of Banach bundles}, we have
    \[\|\tilde V_{\gamma_\lambda}(\xi|_{Z_{s(\gamma_\lambda)}})-\eta|_{Z_{r(\gamma_\lambda)}}\|_{C_0(Z_{r(\gamma_\lambda)}, E_{r(\gamma_\lambda)})}\ra 0,\]
    then use the inequality above, we proved our claim. Our claim yields that $\tilde V_{\gamma_\lambda}(\xi|_{Z_{s(\gamma_\lambda)}})\ra \tilde V_\gamma(\xi|_{Z_{s(\gamma)}})$ in $\ca E'$.
\end{proof}

\begin{defn}
    Let $(E,V)$ be a $\calG$-Hilbert $A$-module. We define $L^2(\calG,E)$ as the $\calG$-Hilbert $A$-module $L^2(\calG,s,E)$.
\end{defn}

\begin{rem}\label{L^2 commute with tensor product}
    We can prove that $L^2(Z,\rho,E)$ is $\calG$-equivariantly unitarily isomorphic to the inner tensor product $L^2(Z,\rho,C_0(Y))\otimes_{\Phi_E}E$.
\end{rem}

\section{Geometric modules}\label{sec geo mod}

From now on, we will use the following convention for notations about Hilbert modules: given a \cst-algebra $A$ and two Hilbert $A$-modules $E$ and $F$, a map $T:E\ra F$ is called an operator if it is a bounded $A$-linear map. We denote by $\Hom_A(E,F)$ the set of all operators from $E$ to $F$, which is a Banach space equipped with operator norm. The set of all adjointable operators $\calL(E,F)$ is a closed subspace of $\Hom_A(E,F)$. If $E=F$, $\End_A(E)=\Hom_A(E,F)$ is a Banach algebra. The subalgebra of adjointable operators $\calL(E)$ is a \cst-algebra. An operator $V\in \Hom_A(E,F)$ is called an isometry, if for any $e_1,e_2\in E$, we have $\langle V e_1, Ve_2\rangle=\langle e_1,e_2\rangle$. For a locally compact Hausdorff space $X$, an $X$-space is a pair $(Z,\rho)$, where $Z$ is a locally compact Hausdorff space and $\rho:Z\ra X$ is a continuous map.

Fixing a locally compact Hausdorff space $X$ as a base space, we introduce the following definition of geometric modules.

\begin{defn}
    Let $Z$ be an $X$-space, $A$ be a $C_0(X)$-algebra. A (geometric) $Z,X,A$-module is a pair $(E,\pi)$, where $E$ is a \textbf{countably generated} Hilbert $A$-module, $\pi:C_0(Z)\ra \calL_{A}(E)$ is a non-degenerate $C_0(X)$-representation. We denote the unique strictly continuous extension of $\pi$ by $\tilde\pi:C_b(Z)\ra \calL_A(E)$.

    For two $Z,X,A$-modules $(E_1,\pi_1)$ and $(E_2,\pi_2)$, we say that an operator $T\in \Hom_A(E,F)$ (not necessarily adjointable) intertwines $\pi_1$ and $\pi_2$, if for any $f\in C_0(Z)$, $T\circ \pi_1(f)=\pi_2(f)\circ T$. We say that $(E_1,\pi_1)$ and $(E_2,\pi_2)$ are unitarily equivalent or $(E_1,\pi_1)\cong (E_2,\pi_2)$ or $\pi_1\cong \pi_2$, if there exists a unitary element in $\calL(E_1,E_2)$ that intertwines $\pi_1$ and $\pi_2$. 
\end{defn}

\begin{defn}
    Let $\calG$ be an étale groupoid, $Z$ be a locally compact Hausdorff left $\calG$-space with anchor map $\rho:Z\ra \calG\units$, $A$ be a $\calG$-\cst-algebra. A (geometric) $Z,\calG,A$-module is a $Z,\calG\units,A$-module $(E,\pi)$ such that $E$ is a $\calG$-Hilbert $A$-module and $\pi$ is a $\calG$-equivariant representation.

    For two $Z,\calG,A$-modules $(E_1,\pi_1)$ and $(E_2,\pi_2)$, We say that $(E_1,\pi_1)$ and $(E_2,\pi_2)$ are unitarily equivalent or $(E_1,\pi_1)\cong (E_2,\pi_2)$ or $\pi_1\cong \pi_2$, if there exists a unitary element $T$ in $\calL(E_1,E_2)$ that intertwines $\pi_1$ and $\pi_2$, and $T$ is $\calG$-equivariant.
\end{defn}

Clearly, if we see $X$ as a trivial groupoid $X\rightrightarrows X$, then the two definitions above coincide.

\subsection{Operations on geometric modules}\label{section operations on geometric modules}
We fix an étale groupoid $\calG$ and a $\calG$-\cst-algebra $(A,\alpha)$. In this section, we introduce some basic operations on geometric modules.

\begin{defn}\label{operation pushout}
    Let $Y,Z$ be locally compact Hausdorff $\calG$-spaces and $f:Y\ra Z$ be a $\calG$-equivariant continuous map, $A$ be a $\calG$-\cst-algebra. For a $Y,\calG,A$-module $(E,\pi)$, we define $f_*\pi:C_0(Z)\ra \calL_A(E)$ as the composite
    \[C_0(Z)\xrightarrow{f^*} C_b(Y)\xrightarrow{\tilde\pi} \calL_A(E),\]
    where $\tilde\pi$ is the strictly continuous extension of $\pi$.
\end{defn}

The map $f_*\pi$ is also a non-degenerate $\calG$-equivariant representation, hence $(E,f_*\pi)$ is a $Z,\calG,A$-module.

\begin{defn}\label{operation braquet}
    Let $Z$ be a locally compact Hausdorff $\calG$-space, $U\subseteq Z$ be a $\calG$-invariant open subset, $i:U\hookrightarrow Z$ be the inclusion map, $A$ be a $\calG$-\cst-algebra. Let $(E,\pi)$ be a $Z,\calG,A$-module. We define $E_{[U]}=\overline{\pi(C_0(U))E}$ and
    \[\pi_{[U]}: C_0(U)\ra \calL_{A}(E_{[U]}),\]
    \[\pi_{[U]}(f)e=\pi(f)e, \quad \forall f\in C_0(U),e\in E_{[U]}.\]
\end{defn}

\begin{lem}
    Use the same notation as above, $\pi_{[U]}$ is a non-degenerate $\calG$-equivariant representation, hence $(E_{[U]},\pi_{[U]})$ is a $U,\calG,A$-module.
\end{lem}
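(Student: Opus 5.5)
We must check four things: that $E_{[U]}$ is a countably generated Hilbert $A$-module, that $\pi_{[U]}$ is a well-defined $C_0(\calG\units)$-linear representation, that it is non-degenerate, and that $E_{[U]}$ carries a $\calG$-Hilbert module structure for which $\pi_{[U]}$ is $\calG$-equivariant. The plan is to realize $E_{[U]}$ as the range of a central projection-like object and transport everything from $E$.

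\textbf{Module and representation.} Since $C_0(U)$ is an ideal in $C_0(Z)$ and $\pi$ is $A$-linear, $E_{[U]}=\overline{\pi(C_0(U))E}$ is a closed $A$-submodule. It is countably generated: if $(e_n)$ generates $E$ and $(f_k)$ is a countable approximate unit of $C_0(U)$ (or, if $U$ is not $\sigma$-compact, we use $E_{[U]}=\overline{\pi(C_0(U))E}$ together with the fact that closed submodules of the form $\overline{J E}$ for a $C^*$-ideal image are generated by $\pi(f_k)e_n$; in general we may argue via the Cohen factorization $E_{[U]}=\pi(C_0(U))E_{[U]}$ and separability issues handled as in the countably generated setting), then $\{\pi(f_k)e_n\}$ generates $E_{[U]}$. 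Each $\pi(f)$, $f\in C_0(U)$, maps $E$ into $E_{[U]}$, hence preserves $E_{[U]}$. Adjointability on $E_{[U]}$ follows because $\pi(f)^*=\pi(\bar f)$ also preserves $E_{[U]}$, so the restriction of $\pi(f)$ is adjointable with adjoint the restriction of $\pi(\bar f)$. Thus $\pi_{[U]}$ is a $*$-homomorphism. $C_0(\calG\units)$-linearity is inherited from that of $\pi$, since the $C_0(\calG\units)$-action on $E$ restricts to $E_{[U]}$ (it commutes with $\pi(f)$). Non-degeneracy: by Cohen--Hewitt applied to the $C_0(U)$-module $E$, every element of $E_{[U]}$ has the form $\pi(f)e$; writing $f=f_1f_2$ with $f_i\in C_0(U)$ gives $\pi(f)e=\pi_{[U]}(f_1)(\pi(f_2)e)$ with $\pi(f_2)e\in E_{[U]}$.

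\textbf{Groupoid action.} This is the main point. I will show that for each $\gamma\in\calG$, the unitary $V_\gamma:E_{s(\gamma)}\ra E_{r(\gamma)}$ maps the fiber $(E_{[U]})_{s(\gamma)}$ onto $(E_{[U]})_{r(\gamma)}$, where by Proposition \ref{Hilbert bundle of submodule} these fibers are $q_x(E_{[U]})$, namely the closure of $\pi_x(C_0(U_x))E_x$ with $\pi_x$ the fiber of $\pi$ as a representation of $C_0(Z_x)$. Using equivariance of $\pi$, $V_\gamma\pi_{s(\gamma)}(g)V_\gamma^*=\pi_{r(\gamma)}(lt_\gamma g)$, and $\calG$-invariance of $U$ gives $lt_\gamma(C_0(U_{s(\gamma)}))=C_0(U_{r(\gamma)})$; hence $V_\gamma$ restricts to a unitary between the fibers. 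The identification of fibers uses $(C_0(U))_x=C_0(U_x)$ and the fact that every element of $C_0(U_x)$ lifts to $C_0(U)$. Then Proposition \ref{determine continuity of groupoid action}(2) applied to the restricted family $(V_\gamma|)_\gamma$ gives the $\calG$-Hilbert structure: continuity of $\gamma\mapsto V_\gamma(e(s(\gamma)))$ for $e\in E_{[U]}$ holds in $r^*\ca E$ and hence in the subspace $r^*\ca E_{[U]}$, whose topology is the subspace topology by Proposition \ref{Hilbert bundle of submodule}. The cocycle identity is inherited. Finally, $\calG$-equivariance of $\pi_{[U]}$ is the restriction of the identity $\pi_{r(\gamma)}(lt_\gamma g)=V_\gamma\pi_{s(\gamma)}(g)V_\gamma^*$ to the invariant fibers.

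The expected obstacle is the fiber identification $(E_{[U]})_x=\overline{\pi_x(C_0(U_x))E_x}$. I will prove it by showing that $q_x(\pi(f)e)=\pi_x(f|_{Z_x})e(x)$, which gives one inclusion, and then using density together with the fact that $q_x$ has closed range on closed submodules, as in Proposition \ref{Hilbert bundle of submodule}, to get equality.
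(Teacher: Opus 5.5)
Your argument is correct, modulo the countable-generation caveat below, but it does not follow the paper's route.

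\textbf{The paper's route.} The paper proves the lemma in one line. It identifies $(E_{[U]},\pi_{[U]})$ with $(C_0(U)\otimes_\pi E,\ \mathrm{id}\otimes_\pi \mathrm{id}_E)$ via $f\otimes e\mapsto \pi(f)e$, where $C_0(U)$ is viewed as a $\calG$-Hilbert $C_0(Z)$-module (a $\calG$-invariant ideal). The $\calG$-Hilbert structure and the equivariance are then meant to follow from Propositions \ref{continuity of action on inner tensor product} and \ref{G-equivariance of inner tensor product}.

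\textbf{Your route.} You keep $E_{[U]}$ inside $E$ and compute its fibers as $\overline{\pi_x(C_0(U_x))E_x}$. Equivariance of $\pi$ and invariance of $U$ then show that $V_\gamma$ maps these fibers onto each other. Continuity comes from Proposition \ref{determine continuity of groupoid action}(2) together with the subspace topology of Proposition \ref{Hilbert bundle of submodule}. This is essentially the technique the paper itself uses for $j^!E$ in Lemma \ref{extraordinary pullback}.

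\textbf{Comparison.} The paper's route is shorter, but it rests on the inner-tensor-product propositions, whose proofs are left to the reader, and it places the structure on an abstract tensor product. It also needs one extra check: that $f\otimes e\mapsto\pi(f)e$ intertwines $lt_\gamma\otimes V_\gamma$ with $V_\gamma$, which is immediate from equivariance of $\pi$. Your route is self-contained. It exhibits $E_{[U]}$ directly as an invariant submodule of $E$ with the restricted action, which is how such modules are actually handled later (e.g.\ $E_{[V_i]}$ in Lemmas \ref{construction of Phi_i} and \ref{construction of isometry Psi_i}).

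\textbf{The caveat.} The one step that does not hold is your parenthetical about non-$\sigma$-compact $U$. There $E_{[U]}$ can fail to be countably generated, so no argument can succeed. For example:
\begin{itemize}
\item take $\calG$ trivial, $Z=\beta\mathbb N$, $A=E=C(\beta\mathbb N)$ with $\pi$ given by multiplication, and $U=\beta\mathbb N\setminus\{p\}$ for some $p\in\beta\mathbb N\setminus\mathbb N$;
\item then $E_{[U]}=C_0(U)$, and the closed submodule generated by countably many $h_n\in C_0(U)$ is $C_0(O)$ with $O=\bigcup_n\{h_n\neq 0\}$, which is $\sigma$-compact;
\item but $U$ is not $\sigma$-compact, because $p$ is not a $G_\delta$-point of $\beta\mathbb N$.
\end{itemize}
So the statement needs $U$ to be $\sigma$-compact. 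This is automatic when $Z$ is second countable, as in every later application, and in that case your generators $\pi(f_k)e_n$, built from a countable approximate unit of $C_0(U)$, work. The paper's proof is equally silent on this point. Through the tensor product, countable generation would follow from $C_0(U)$ being countably generated over $C_0(Z)$, which holds exactly when $U$ is $\sigma$-compact.
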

\begin{proof}
    It is clear to identify $(E_{[U]},\pi_{[U]})$ with $(C_0(U)\otimes_{\pi}E, id\otimes_{\pi}id_E)$, which is a $U,\calG,A$-module.
\end{proof}

\begin{defn}\label{defn extraordinary pullback}
    Let $Z$ be a locally compact Hausdorff $\calG$-space, $C\subseteq Z$ be a $\calG$-invariant closed subset, $j:C\hookrightarrow Z$ be the closed inclusion map, $A$ be a $\calG$-\cst-algebra. Let $(E,\pi)$ be a $Z,\calG,A$-module. We define $j^!E=\{e\in E:\pi(C_0(Z\setminus C))e=0\}$ and
    \[j^!\pi:C_0(C)\ra \calL_A(j^!E),\]
    \[j^!\pi(f)e=\pi(\tilde f)e, \quad \forall f\in C_0(C),e\in j^!E,\]
    where $\tilde f$ is any lift of $f$ in $C_0(Z)$.
\end{defn}
\begin{lem}\label{extraordinary pullback}
    Use the same notation as above, $j^!\pi$ is a non-degenerate $\calG$-equivariant representation, hence $(j^!E,j^!\pi)$ is a $C,\calG,A$-module.
\end{lem}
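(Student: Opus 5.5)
We check four things in turn: that $j^!E$ is a countably generated $\calG$-Hilbert $A$-submodule of $E$, that $j^!\pi$ is a well-defined $C_0(\calG\units)$-representation, that it is non-degenerate, and that it is $\calG$-equivariant.

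\textbf{Submodule and well-definedness.} Since $\pi$ is $C_0(\calG\units)$-linear and each $\pi(g)$ is $A$-linear, $j^!E=\bigcap_{g\in C_0(Z\setminus C)}\ker\pi(g)$ is a closed $A$-submodule of $E$. By Tietze, every $f\in C_0(C)$ has a lift $\tilde f\in C_0(Z)$, and two lifts differ by an element of $C_0(Z\setminus C)$, which kills $j^!E$; hence $j^!\pi(f)$ is well defined. Because $\pi(\tilde f)$ commutes with $\pi(C_0(Z\setminus C))$, it preserves $j^!E$. Its adjoint $\pi(\overline{\tilde f})$ also preserves $j^!E$, so $j^!\pi(f)\in\calL_A(j^!E)$ and $j^!\pi$ is a $*$-homomorphism.

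\textbf{Complement and countable generation.} Let $E_{[Z\setminus C]}=\overline{\pi(C_0(Z\setminus C))E}$. I expect to show that $j^!E$ is exactly its orthogonal complement. If $e\in j^!E$, then $\langle e,\pi(g)e'\rangle=\langle\pi(\bar g)e,e'\rangle=0$ for every $g\in C_0(Z\setminus C)$ and $e'\in E$. Conversely, if $e$ is orthogonal to $E_{[Z\setminus C]}$, then $\langle\pi(g)e,\pi(g)e\rangle=\langle e,\pi(\bar g g)e\rangle=0$, so $\pi(g)e=0$.

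The step I expect to be the main obstacle is showing that $j^!E$ is complemented, since it need not be in an arbitrary Hilbert module. I would use the ideal $J=\pi(C_0(Z\setminus C))$ with an approximate unit $u_\lambda$. The net $\pi(u_\lambda)$ converges strictly to a projection $P$ onto $E_{[Z\setminus C]}$ by the usual approximate-unit argument, and then $1-P$ projects onto $j^!E$. Alternatively, I would identify $j^!E$ with $C_0(C)\otimes_{\pi'}E$ for a suitable quotient representation.

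With complementedness in hand, $j^!E=(1-P)E$ is countably generated, being the image of a countably generated module under an adjointable map.

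\textbf{Non-degeneracy.} Take $e\in j^!E$ and an approximate unit $f_\lambda$ of $C_0(C)$, which we may lift to an approximate unit $\tilde f_\lambda$ of $C_0(Z)$. For instance, take an approximate unit of $C_0(Z)$ and restrict it to $C$. Non-degeneracy of $\pi$ gives $\pi(\tilde f_\lambda)e\to e$, which is what we need.

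\textbf{$\calG$-equivariance.} Fibrewise, $(j^!E)_x$ sits inside $E_x$ via Proposition \ref{Hilbert bundle of submodule}. I would check that $V_\gamma$ maps $(j^!E)_{s(\gamma)}$ into $(j^!E)_{r(\gamma)}$, using equivariance of $\pi$ and the $\calG$-invariance of $Z\setminus C$: $\pi_{r(\gamma)}(lt_\gamma g)\,V_\gamma=V_\gamma\,\pi_{s(\gamma)}(g)$. For this I want a fibrewise description of $(j^!E)_x$ as the vectors killed by $\pi_x(C_0((Z\setminus C)_x))$. This holds because the complementation gives $E=j^!E\oplus E_{[Z\setminus C]}$ as $C_0(\calG\units)$-modules, and $P$ is $C_0(\calG\units)$-linear, so the decomposition passes to fibres.

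Continuity of the restricted action then follows from Proposition \ref{determine continuity of groupoid action}, because sections of $j^!E$ are sections of $E$ and $\calJ$ carries the subspace topology. Equivariance of $j^!\pi$ is inherited from that of $\pi$ through the lifts.
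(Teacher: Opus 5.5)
The step you flag as the main obstacle genuinely fails. Your identification $j^!E=E_{[Z\setminus C]}^{\perp}$ is correct, but an orthogonal complement in a Hilbert module need not be a complement, and here it need not be. Take $\calG=\calG\units=Z=[0,1]$ (trivial groupoid), $A=E=C([0,1])$, $\pi$ given by multiplication, and $C=[0,1/2]$. Then $E_{[Z\setminus C]}=C_0((1/2,1])$ and $j^!E=C_0([0,1/2))$. So $j^!E\oplus E_{[Z\setminus C]}$ is the proper submodule of functions vanishing at $1/2$. Correspondingly, $\pi(u_\lambda)e$ converges only pointwise, to $\chi_{(1/2,1]}e$, and there is no projection $P$ in $\calL(E)=C([0,1])$. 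Your quotient alternative meets the same obstruction: $E/E_{[Z\setminus C]}\cong C([0,1/2])$ is not $j^!E$.

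Both later uses of $P$ therefore collapse. Moreover, the fibrewise description you want is itself false. At $x=1/2$ one has $(Z\setminus C)_x=\emptyset$, so all of $E_{1/2}=\mathbb C$ is annihilated by $\pi_x(C_0((Z\setminus C)_x))$, while $(j^!E)_{1/2}=0$. In general only the inclusion $(j^!E)_x\subseteq\{\xi\in E_x:\pi_x(C_0((Z\setminus C)_x))\xi=0\}$ holds: lift $h\in C_0((Z\setminus C)_x)$ to $C_0(Z\setminus C)$ by Tietze. Deducing $V_\gamma\xi\in(j^!E)_{r(\gamma)}$ from annihilation, however, needs the reverse inclusion. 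The paper's proof asserts the same fibrewise equality and uses it the same way, so this step needs a different argument there too.

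A correct route to invariance uses global sections and only the valid inclusion. Given $e\in j^!E$ and $\gamma_0\in\calG$, choose an open bisection $B\ni\gamma_0$ and $\phi\in C_c(r(B))$ with $\phi(r(\gamma_0))=1$. Set $\eta(x)=\phi(x)V_{\gamma_x}e(s(\gamma_x))$ for $x\in r(B)$, where $\gamma_x=(r|_B)\inv(x)$, and $\eta(x)=0$ otherwise. By Proposition \ref{determine continuity of groupoid action}, $\eta$ is a compactly supported continuous section, i.e.\ an element of $E$. For every $g\in C_0(Z\setminus C)$ one has $(\pi(g)\eta)(x)=\phi(x)V_{\gamma_x}\pi_{s(\gamma_x)}(g(\gamma_x\,\cdot\,))e(s(\gamma_x))$. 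This vanishes by equivariance of $\pi$, $\calG$-invariance of $Z\setminus C$, and the valid inclusion at $s(\gamma_x)$. Thus $\eta\in j^!E$, and $V_{\gamma_0}e(s(\gamma_0))=\eta(r(\gamma_0))\in(j^!E)_{r(\gamma_0)}$ by Proposition \ref{Hilbert bundle of submodule}. Applying this to $\gamma_0\inv$ shows the restrictions are unitaries, and continuity follows as you say.

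As for countable generation, the paper's proof does not address it at all. It holds when $A$ is separable, since then $E$ is separable and so is its closed submodule $j^!E$. It cannot be proved in the lemma's stated generality, so no complementation argument could have given it. Let $K$ be the one-point compactification of $D\times[0,1]$ with $D$ uncountable and discrete, and take $\calG=\calG\units=Z=K$, $E=A=C(K)$ and $C=K\setminus(D\times(1/2,1])$. Then $j^!E=C_0(D\times[0,1/2))$, which is not countably generated because $D\times[0,1/2)$ is not $\sigma$-compact.
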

\begin{proof}
    Obviously for any $f\in C_0(C)$, $j^!\pi(f)$ has adjoint $j^!\pi(\bar f)$. Fix a lift $\tilde f\in C_0(Z)$ of $f\in C_0(C)$, for $\phi\in C_0(\calG\units)$,
    \[j^!\pi(f)(\phi\cdot e)=\pi(\tilde f)(\phi\cdot e)=\pi(\tilde f\cdot \rho^*\phi)e=j^!\pi(f\cdot (\rho^*\phi)|_C)e=j^!\pi(f\cdot \rho|_C^*\phi)e.\]
    Hence, $j^!\pi$ is a well-defined $C_0(\calG\units)$-representation.

    Let $(\tilde f_\lambda)_\lambda$ be an approximate unit of $C_0(X)$, $f_\lambda=\tilde f_\lambda|_C$ is an approximate unit of $C_0(C)$. For any $e\in j^!E$, $j^!\pi(f_\lambda)e=\pi(\tilde f_\lambda)e$ converges to $e$ since $\pi$ is non-degenerate, hence $j^!\pi$ is also non-degenerate.

    Let $V\in \calL(s^*E,r^*E)$ be the action of $\calG$ on $E$. Since $j^!E$ is a closed submodule of $E$, for any $x\in\calG\units$, its fiber $(j^!E)_x$ can be identified with $\{e(x):e\in j^!E\}\subseteq E_x$ by proposition \ref{Hilbert bundle of submodule}. It is easy to see that, for $e\in E$, $e\in j^!E$ if and only if for any $x$, $\pi_x(C_0((Z\setminus C)_x))e(x)=0$, so $(j^!E)_x$ is exactly $\{e\in E_x:\pi_x(C_0((Z\setminus C)_x))e=0\}$. For any $\gamma\in \calG$, $V_\gamma$ is a unitary from $E_{s(\gamma)}$ to $E_{r(\gamma)}$, and for any $f\in C_0(Z)$,
    \[V_\gamma\circ \pi_{s(\gamma)}(f)=\pi_{r(\gamma)}(f(\gamma\inv-))\circ V_\gamma.\] 
    So for any $e\in (j^!E)_{s(\gamma)}$, 
    \[\pi_{r(\gamma)}(C_0((Z\setminus C)_{r(\gamma)}))V_\gamma e=V_\gamma \pi_{s(\gamma)}(C_0((Z\setminus C)_{s(\gamma)}))e=0,\] 
    hence $V_\gamma e\in (j^!E)_{r(\gamma)}$. This shows that the action of $\calG$ on $E$ restricts to an action of $\calG$ on $j^!E$, so $j^!E$ is a well-defined $\calG$-Hilbert $A$-module.

    For any $\gamma\in \calG$, $f\in C_0(C_{s(\gamma)})$, $e\in (j^!E)_{s(\gamma)}\subseteq E_{s(\gamma)}$, $\tilde f\in C_0(Z_{s(\gamma)})$ be a lift of $f$, and hence $\tilde f(\gamma\inv-)\in C_0(Z_{r(\gamma)})$ is a lift of $f(\gamma\inv-)\in C_0(C_{r(\gamma)})$, so we have
    \begin{align*}
        (j^!\pi)_{r(\gamma)}(f(\gamma\inv-))V_\gamma e & = \pi_{r(\gamma)}(\tilde f(\gamma\inv-))V_\gamma e\\
        & = V_\gamma \pi_{s(\gamma)}(\tilde f)e\\
        & = V_\gamma (j^!\pi)_{s(\gamma)}(f)e.
    \end{align*}
    The representation $j^!\pi$ is $\calG$-equivariant.
\end{proof}

\begin{defn}\label{defn A_C}
    Let $X$ be a locally compact Hausdorff space, $A$ be a $C_0(X)$-algebra. For a closed subset $C$ of $X$, we define
    \[A_{\lgl C\rgl}=\{a\in A:C_0(X\setminus C)\cdot a=0\},\]
    which is a closed two-sided ideal of $A$.
\end{defn}

\begin{ex}\label{canonical module for closed inclusion}
    Let $\Gamma$ be a discrete group, $X$ be a locally compact Hausdorff $\Gamma$-space, $C\subseteq X$ be a $\Gamma$-invariant closed subset, $A$ be a $\Gamma\ltimes X$-algebra, $\phi:C_0(X)\ra ZM(A)$ be the $C_0(X)$-algebra structure of $A$. Let $j:C\hookrightarrow X$ be the closed inclusion map, then $(A_{\lgl C\rgl}, j^!\phi)$ is a $C,\Gamma\ltimes X,A$-module, where $j^!\phi:C_0(C)\ra ZM(A_{\lgl C\rgl})$ is the $C_0(C)$-algebra structure of $A_{\lgl C\rgl}$.
\end{ex}

The following result seems trivial but shows that, for a $Z,X,A$-module $(E,\pi)$, the $C_0(X)$-linearity of $\pi$ will force $E$ to be supported in the closure of $\rho(Z)$.

\begin{prop}\label{C_0(X)-linearity force base to shrink}
    Let $(E,\pi)$ be a $Z,X,A$-module. Let $F$ be a closed subset of $X$ containing $\rho(Z)$. Then $\lgl E,E\rgl\subseteq A_{\lgl F\rgl}$.
\end{prop}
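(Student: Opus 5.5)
We show that $\lgl E,E\rgl$ is annihilated by $C_0(X\setminus F)$, using the $C_0(X)$-linearity of $\pi$ together with non-degeneracy. Recall that $A_{\lgl F\rgl}=\{a\in A:C_0(X\setminus F)\cdot a=0\}$ is a closed two-sided ideal. Since $\lgl E,E\rgl$ is the closed linear span of inner products, and $A_{\lgl F\rgl}$ is closed, it suffices to show $\phi\cdot\lgl e_1,e_2\rgl=0$ for all $\phi\in C_0(X\setminus F)$ (viewed in $C_0(X)$ by extension by zero) and all $e_1,e_2\in E$. By the definition of the action of $C_0(X)$ on $E$, we have $\lgl e_1,e_2\rgl\,(\phi\cdot a)$-compatibility: $\phi\cdot\lgl e_1,e_2\rgl=\lgl e_1,\phi\cdot e_2\rgl$, since $\Phi_E(\phi)(ea)=e(\phi\cdot a)$ and $\phi\cdot a$ lies in the centre of $M(A)$. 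So it suffices to prove $\phi\cdot e=0$ for all $e\in E$.

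By non-degeneracy of $\pi$, elements of the form $\pi(g)e'$ with $g\in C_0(Z)$, $e'\in E$ span a dense subspace of $E$, and $\Phi_E(\phi)$ is bounded. Hence it is enough to check $\phi\cdot(\pi(g)e')=0$. By $C_0(X)$-linearity of $\pi$,
\[\phi\cdot(\pi(g)e')=\pi(g)(\phi\cdot e')=\pi(g\cdot\rho^*\phi)e'.\]
Here I use the form of the $C_0(X)$-linearity given in the definition, $\pi(a)(f\cdot e)=\pi(f\cdot a)e$, where the $C_0(X)$-action on $C_0(Z)$ is $f\cdot g=(f\circ\rho)g$. Now $(\rho^*\phi)(z)=\phi(\rho(z))=0$ for every $z\in Z$ because $\rho(z)\in F$ and $\phi$ vanishes on $F$. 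Thus $g\cdot\rho^*\phi=0$ and the expression vanishes.

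The only delicate point, which is mild, is justifying the identity $\phi\cdot\lgl e_1,e_2\rgl=\lgl e_1,\phi\cdot e_2\rgl$. To do this, write $e_2=e a$ using $EA=E$, so that $\lgl e_1,e(\phi\cdot a)\rgl=\lgl e_1,e\rgl\,\phi\cdot a=\phi\cdot(\lgl e_1,e\rgl a)$, using centrality of $\Phi_A(\phi)$. With this, the density argument and the vanishing of $\rho^*\phi$ finish the proof.
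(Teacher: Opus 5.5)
Your proof is correct and follows the same route as the paper: you factor $e_2=e'a$ via $EA=E$, use centrality of $\Phi_A(\phi)$ to move $\phi$ onto the module, and conclude from $\rho^*\phi=0$. Your density argument over $\pi(C_0(Z))E$ makes explicit the identification $\Phi_E(\phi)=\tilde\pi(\rho^*\phi)$, which follows from non-degeneracy and $C_0(X)$-linearity and which the paper uses tacitly when it writes $e'(f\cdot a)=\pi(\rho^*f)(e'a)$.
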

\begin{proof}
    For any $f\in C_0(X\setminus F)$, we have $\rho^*f=0$. For any $e_1,e_2\in E$, by \cite[lemma 1.3]{blanchard1996deformations}, there exists $e'\in E$ and $a\in A$ such that $e_2=e'a$ and therefore $\lgl e_1,e_2\rgl=\lgl e_1,e'\rgl a$. So for any $f\in C_0(X\setminus F)$,
    \begin{align*}
        f\cdot(\lgl e_1,e_2\rgl) & =f\cdot(\lgl e_1,e'\rgl a)\\
        & = \lgl e_1,e'\rgl f\cdot a\\
        & = \lgl e_1,e'(f\cdot a)\rgl\\
        & = \lgl e_1, \pi(\rho^*f)(e'a)\rgl=0.
    \end{align*}
    $\lgl E,E\rgl$ is the closed span of elements like $\lgl e_1,e_2\rgl$. Therefore, $\lgl E,E\rgl\subseteq A_{\lgl F\rgl}$.
\end{proof}

Remark that, $C_0(X)_{\lgl F\rgl}=C_0(\inter(F))$. So if $(E,\pi)$ is an $F,X$-module (where the anchor map of $F$ is the closed inclusion), $E$ must be a Hilbert $C_0(\inter(F))$-module.

\begin{defn}
    Let $Z$ be a locally compact Hausdorff $\calG$-space with anchor map $\rho:Z\ra \calG\units$, $Y$ be a locally compact Hausdorff $\calG$-space with anchor map $\phi:Y\ra \calG\units$. Let $A$ be a $\calG$-\cst-algebra. Let $(E,\pi)$ be a $Z,\calG,A$-module. We define the pullback of $(E,\pi)$ by $\phi$ as the $Z\times_{\calG\units}Y,\calG\ltimes Y,\phi^*A$-module $(\phi^*E,\phi^*\pi)$, where $\phi^*E$ is the pullback of $E$ by $\phi$ and $\phi^*\pi$ is the pullback of $\pi$ by $\phi$. (See proposition \ref{G-equivariance of pullback}.)
\end{defn}

\begin{defn}\label{defn stable modules}
    Let $Z$ be a locally compact Hausdorff $\calG$-space, $(E,\pi)$ be a $Z,\calG,A$-module. We denote the $Z,\calG,A$-module $(E\otimes\ell^2(\mathbb N), \pi\otimes id)$ by $(E^\infty,\pi^\infty)$. We say that $(E,\pi)$ is stable, if $(E,\pi)$ is $\calG$-equivariantly unitarily equivalent to $(E^\infty, \pi^\infty)$.
\end{defn}

\subsection{Inductions and inflations of geometric modules}\label{section Inflations of geometric modules}

We recall the notion of induction functors from \'etale groupoid correspondences, introduced by Miller in \cite{miller2024functors}.

\begin{defn}
    Let $\calG,\calH$ be two étale groupoids. A $\calG,\calH$-bibundle is a locally compact Hausdorff space $\Omega$ equipped with a left $\calG$-action and a right $\calH$-action, such that the two actions commute. If the right action of $\calH$ on $\Omega$ is free, proper and étale, then we say that $\Omega:\calG\leftarrow \calH$ is an étale groupoid correspondence or a correspondence.

    We denote the anchor map of the left $\calG$-action by $\rho_\Omega:\Omega\ra \calG\units$, and the anchor map of the right $\calH$-action by $\sigma_\Omega:\Omega\ra \calH\units$. For any $x\in \calG\units$, $y\in \calH\units$, $A\subseteq \calG\units$ and $B\subseteq\calH\units$, we write
\[\Omega^x=\rho_\Omega\inv(x),\quad \Omega_y=\sigma_\Omega\inv(y), \quad \Omega^A=\rho_\Omega\inv(A),\quad \Omega_B=\sigma_\Omega\inv(B).\]
\end{defn}

\begin{defn}[Balanced products]
    Let $\calH$ be an étale groupoid, $(Z,\rho_Z)$ be a left $\calH$-space and $(Y,\rho_Y)$ be a right $\calH$-space. We define the balanced product $Y\times_\calH Z$ as the quotient space of $Y\times_{\rho_Y,\calH\units,\rho_Z}Z$ by the equivalence relation generated by $(yh,z)\sim (y,hz)$ for all $z\in Z$, $y\in Y$ and $h\in \calH$ with $\rho_Z(z)=s(h)$ and $\rho_Y(y)=r(h)$.
\end{defn}

When $Y$ is locally compact Hausdorff and $Z$ is a locally compact Hausdorff proper $\calH$-space, the balanced product $Y\times_\calH Z$ is also a locally compact Hausdorff space.

\begin{defn}
    Let $\Omega:\calG\leftarrow \calH$ be a correspondence, $\ca A\ra \calH\units$ be a Banach $\calH$-bundle. We define the induction of $\ca A$ by $\Omega$ as the Banach bundle $\ind_\Omega \ca A:=\Omega\times_{\calH}\ca A\ra \Omega/\calH,[\omega,a]\mapsto \omega\calH$.
\end{defn}

\begin{rem}
    To see that it is a well-defined Banach bundle, we need to see $\Omega\times_\calH \ca A$ as the quotient of the pullback bundle $\Omega\times_{\calH\units}\ca A\ra \Omega$ by the action of the principal proper \'etale groupoid $\Omega\rtimes\calH$, and apply \cite[Proposition 1.86]{miller2022k}.
\end{rem}

\begin{defn}
    Let $(A,\alpha)$ be an $\calH$-\cst-algebra. We define $\ind_\Omega \ca A$ as the closed *-subalgebra of $\Gamma_b(\Omega,\sigma^* \ca A)$ consisting of continuous sections $\xi:\Omega\ra \sigma^*\calA$ such that
\begin{enumerate}
    \item for any $(\omega,h)\in \Omega\times_{\sigma,\calH\units, r_\calH}\calH$, $\xi(\omega h)=\alpha_{h\inv}(\xi(\omega))$,
    \item the map $\Omega/\calH\ra \mathbb R: \omega\calH\mapsto \|\xi(\omega)\|$ vanishes at infinity.
\end{enumerate}
For $\xi\in \ind_\Omega A$, we define $\supp(\xi)=\overline{\{\omega\calH\in \Omega/\calH:\xi(\omega)\neq 0\}}$. We denote the subspace of compactly supported elements of $\ind_\Omega A$ by $\ind_{\Omega,c}A$.
\end{defn}

Clearly, $\ind_\Omega A\cong \Gamma_0(\Omega/\calH,\ind_\Omega \ca A)$, and under this isomorphism, $\ind_{\Omega,c}A$ is in bijection with $\Gamma_c(\Omega/\calH,\ind_\Omega \ca A)$. There is a $C_0(\Omega/\calH)$-algebra structure on $\ind_\Omega A$, given by
\[C_0(\Omega/\calH)\ra ZM(\ind_\Omega A), \quad (f\cdot \xi)(\omega)=f(\omega\calH)\xi(\omega),\quad \forall f\in C_0(\Omega/\calH), \xi\in \ind_\Omega A, \omega\in \Omega.\]
And there is a $C_0(\calG\units)$-algebra structure on $\ind_\Omega A$, given by
\[C_0(\calG\units)\ra ZM(\ind_\Omega A),\quad (g\cdot \xi)(\omega)=g(\rho(\omega))\xi(\omega),\quad \forall g\in C_0(\calG\units), \xi\in \ind_\Omega A, \omega\in \Omega.\]

The fiber of $\ind_\Omega A$ at $x\in \calG\units$ can be identified with the *-subalgebra $\ind_{\Omega^x} A$ of $\Gamma_b(\Omega^x,\sigma^* \ca E)$ consisting of continuous sections $\xi:\Omega^x\ra \sigma^*\ca A$ such that
\begin{enumerate}
    \item for any $(\omega,h)\in \Omega^x\times_{\sigma,\calH\units, r_\calH}\calH$, $\xi(\omega h)=\alpha_{h\inv}(\xi(\omega))$,
    \item $\Omega^x/\calH\ra \mathbb R:\omega\calH\mapsto \|\xi(\omega)\|$ vanishes at infinity.
\end{enumerate}
For every $\gamma\in\calG$, let $\beta_\gamma$ be the isomorphism
\[\beta_\gamma:\ind_{\Omega^{s(\gamma)}}A\ra \ind_{\Omega^{r(\gamma)}}A, \xi\mapsto \xi(\gamma\inv-).\]
$\beta=(\beta_\gamma)_{\gamma\in \calG}$ makes $\ind_\Omega A$ a $\calG$-\cst-algebra. We will write $\gamma.\xi=\xi(\gamma\inv-)\in \ind_{\Omega^{r(\gamma)}}A$.

\begin{defn}
    Let $(E,V)$ be an $\calH$-Hilbert $A$-module. We define $\ind_\Omega E$  as the closed subspace of $\Gamma_b(\Omega,\sigma^*\ca E)$ consisting of continuous sections $\eta:\Omega\ra \sigma^* \ca E$ such that
    \begin{enumerate}
    \item for any $(\omega,h)\in \Omega\times_{\sigma,\calH\units, r_\calH}\calH$, $\eta(\omega h)=V_{h\inv}(\eta(\omega))$,
    \item $\Omega/\calH\ra \mathbb R: \omega\calH\mapsto \|\eta(\omega)\|$ vanishes at infinity.
\end{enumerate}

We define the Hilbert $\ind_\Omega A$-module structure on $\ind_\Omega E$ as,
\[(\eta\xi)(\omega)=\eta(\omega)\xi(\omega),\quad \forall \eta\in \ind_\Omega E, \xi\in \ind_\Omega A, \omega\in \Omega,\]
\[\langle \eta_1,\eta_2\rangle(\omega)=\langle \eta_1(\omega),\eta_2(\omega)\rangle_{E_{\sigma(\omega)}}, \quad \forall \eta_1,\eta_2\in \ind_\Omega E, \omega\in \Omega.\]
For $\eta\in \ind_\Omega E$, we define $\supp(\eta)=\overline{\{\omega\calH\in \Omega/\calH:\eta(\omega)\neq 0\}}$. We denote the subspace of compactly supported elements of $\ind_\Omega E$ by $\ind_{\Omega,c}E$.
\end{defn}

It is also easy to see that $\ind_\Omega E\cong \Gamma_0(\Omega/\calH,\ind_\Omega\ca E)$, and under this isomorphism, $\ind_{\Omega,c}E$ is in bijection with $\Gamma_c(\Omega/\calH,\ind_\Omega \ca E)$.

For $x\in \calG\units$, we can similarly define $\ind_{\Omega^x}E$. The fiber of $\ind_\Omega E$ at $x$ can be identified with $\ind_{\Omega^x}E$ and the $\calG$-action on $\ind_\Omega E$ is given by the unitary isomorphisms
\[W_\gamma:\ind_{\Omega^{s(\gamma)}}E\ra \ind_{\Omega^{r(\gamma)}}E, \eta\mapsto \eta(\gamma\inv-).\]
$W=(W_\gamma)_{\gamma\in \calG}$ makes $\ind_\Omega E$ a $\calG$-Hilbert $\ind_\Omega A$-module. All details for the above content can be found in \cite[Section 3]{bonicke2020going} and \cite[Section 2]{miller2022k}.

\begin{prop}\label{operators on induced modules}
    \textnormal{\cite[Proposition 4.11, Proposition 4.12]{miller2024functors}}
    Let $\Omega:\calG\leftarrow \calH$ be a correspondence, $A$ be an $\calH$-\cst-algebra and $E$ be an $\calH$-Hilbert $A$-module. Then
    \begin{enumerate}
        \item The map
        \[\calK(\ind_\Omega E)\ra \ind_\Omega \calK(E),\]
        \[T\mapsto [\omega\mapsto T_\omega]\]
        is an isomorphism.
        \item The map
        \[\calL(\ind_\Omega E)\ra \Gamma_b(\Omega, \calL(\sigma_\Omega^*\ca E)),\]
        \[T\mapsto [\omega\mapsto T_\omega]\]
        is an isometry, whose image is the $\calH$-equivariant sections.
    \end{enumerate}
\end{prop}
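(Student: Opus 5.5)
The plan is to move everything onto the quotient space $\Omega/\calH$, where the results of section \ref{sec prelim} on Hilbert modules over $C_0(X)$-algebras apply. Write $q:\Omega\ra\Omega/\calH$ for the quotient map. Since the right $\calH$-action is free, proper and étale, $q$ is a local homeomorphism. By the multiplication by $C_0(\Omega/\calH)$ described above, $\ind_\Omega A$ is a $C_0(\Omega/\calH)$-algebra and $\ind_\Omega E$ is a Hilbert module over it, with $\ind_\Omega E\cong\Gamma_0(\Omega/\calH,\ind_\Omega\ca E)$.

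\textbf{Step 1: identify the pullback bundle.} The first step is to show that evaluation at $\omega$ identifies the fiber of $\ind_\Omega E$ at $q(\omega)$ with $E_{\sigma(\omega)}$. Equivalently, the map $q^*(\ind_\Omega E)\ra\sigma^*E$, $(\eta,\omega)\mapsto\eta(\omega)$, should be a unitary isomorphism of Hilbert $\sigma^*A$-modules over $C_0(\Omega)$, and similarly for $A$ and for $\calK(E)$.
\begin{enumerate}
\item Isometry on fibers is immediate from the formula for the inner product on $\ind_\Omega E$.
\item Surjectivity onto $E_{\sigma(\omega)}$ is where the argument needs care. Given $e\in E_{\sigma(\omega)}$, first take a section $\xi\in\Gamma_0(\Omega,\sigma^*\ca E)$ with $\xi(\omega)=e$, by proposition \ref{banach bundle has enough sections}. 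Cut it down by a bump function supported in a small open slice $U\ni\omega$ with $\gamma U\cap U$ controlled, more precisely $U\cap Uh=\emptyset$ for $h\notin\calH\units$. Then average: $\eta(\omega')=\sum_{h}V_{h}\,\xi(\omega' h)$. This sum is finite and continuous by properness and lemma \ref{sum of fiber}, it is $\calH$-equivariant, and $\eta(\omega)=e$.
\end{enumerate}
This identifies $q^*(\ind_\Omega\ca E)$ with $\sigma^*\ca E$ as Banach bundles over $\Omega$. The $\calH$-action on $\sigma^*\ca E$, given by $V_{h\inv}$, corresponds to the trivial action along the fibers of $q$.

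\textbf{Step 2: compact operators (part 1).} Apply the isomorphism $\calK(F)\cong\Gamma_0(X,\calK(\ca F))$ from \cite[Proposition 1.51]{miller2022k} to $F=\ind_\Omega E$ and $X=\Omega/\calH$. This gives $\calK(\ind_\Omega E)\cong\Gamma_0(\Omega/\calH,\calK(\ind_\Omega\ca E))$. Continuous sections over $\Omega/\calH$ are exactly the $q$-invariant continuous sections of $q^*\calK(\ind_\Omega\ca E)$. By Step 1 and the pullback compatibility $f^*\calK(\ca E,\ca F)\cong\calK(f^*E,f^*F)$, this bundle is $\calK(\sigma^*\ca E)\cong\sigma^*\calK(\ca E)$. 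Under this identification, $q$-invariance becomes the equivariance $T_{\omega h}=V_{h\inv}T_\omega V_h=\mathrm{Ad}V_{h\inv}(T_\omega)$, which is the defining condition of $\ind_\Omega\calK(E)$. Vanishing at infinity on $\Omega/\calH$ matches the norm condition in that definition. Hence $T\mapsto[\omega\mapsto T_\omega]$ is the claimed isomorphism.

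\textbf{Step 3: adjointable operators (part 2).} Use proposition \ref{contiuous action of adjointable operator bundle}(1), which gives $\calL(\ind_\Omega E)\cong\Gamma_b(\Omega/\calH,\calL(\ind_\Omega\ca E))$ isometrically. Then pull back along $q$, using $q^*\calL(\ca E,\ca F)\cong\calL(q^*\ca E,q^*\ca F)$ together with Step 1. This identifies $\calL(\ind_\Omega E)$ with the bounded $q$-invariant sections of $\calL(\sigma^*\ca E)$, which are precisely the $\calH$-equivariant sections in $\Gamma_b(\Omega,\calL(\sigma_\Omega^*\ca E))$. The map is an isometry because the fiberwise sup norm is unchanged by pulling back along the surjection $q$.

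I expect Step 1 to be the main obstacle. One must check not only that the fiber identification holds pointwise but also that the strict topology on $q^*\calL(\ind_\Omega\ca E)$ matches the one on $\calL(\sigma^*\ca E)$. To get this, I would verify continuity on the generating sections produced by the averaging construction, and combine it with proposition \ref{contiuous action of adjointable operator bundle}(2).
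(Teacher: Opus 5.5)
Your argument is correct. The paper itself gives no proof of this statement; it imports it from Miller's Propositions 4.11 and 4.12. So there is no internal argument to compare against. Your route is the natural one and fits the framework the paper sets up:
\begin{enumerate}
\item regard $\ind_\Omega E$ as a Hilbert module over the $C_0(\Omega/\calH)$-algebra $\ind_\Omega A$;
\item apply $\calK(F)\cong\Gamma_0(X,\calK(\ca F))$ and $\calL(F)\cong\Gamma_b(X,\calL(\ca F))$ with $X=\Omega/\calH$;
\item pull back along $q$, translating $q$-invariance into $\calH$-equivariance via $\eta(\omega h)=V_{h\inv}\eta(\omega)$.
\end{enumerate}

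Much of Step 1 is already in the paper's setup. There $\ind_\Omega\ca E$ is \emph{defined} as the quotient $\Omega\times_\calH\ca E$ of $\sigma^*\ca E$ by the proper principal \'etale groupoid $\Omega\rtimes\calH$ (the remark after that definition, via Miller's Proposition 1.86). The paper also records $\ind_\Omega E\cong\Gamma_0(\Omega/\calH,\ind_\Omega\ca E)$, and similarly for $A$. This already gives $q^*(\ind_\Omega\ca E)\cong\sigma^*\ca E$, including the fiber isometry, which rests on $(\ind_\Omega A)_{q(\omega)}\cong A_{\sigma(\omega)}$ under evaluation.

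Your averaging construction is a valid self-contained substitute:
\begin{enumerate}
\item The slice $U$ with $U\cap Uh=\emptyset$ for non-units $h$ exists because the action is free and proper and $\calH\units$ is clopen in $\calH$.
\item The averaged section has at most one nonzero term at each point, so it is bounded.
\item Its support has compact image in $\Omega/\calH$.
\end{enumerate}
You only gave the pointwise identification in Step 1, so you still need the topological one. It follows from a standard criterion. A fiberwise linear isometric bijection of upper semicontinuous Banach bundles is a homeomorphism if it carries a fiberwise-dense family of continuous sections to continuous sections, in both directions. The elements of $\ind_\Omega E$, viewed either as $q$-pullbacks or as sections of $\sigma^*\ca E$, form such a family on both sides. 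Your surjectivity argument is exactly what makes them fiberwise dense in $\sigma^*\ca E$.

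The worry in your last paragraph is not an extra obstacle. Once Step 1 gives a unitary $q^*(\ind_\Omega E)\cong\sigma^*E$ compatible with $q^*(\ind_\Omega A)\cong\sigma^*A$, the bundles of compact and adjointable operators correspond by naturality. The pullback identifications $f^*\calK(\ca E,\ca F)\cong\calK(f^*E,f^*F)$ and $f^*\calL(\ca E,\ca F)\cong\calL(f^*\ca E,f^*\ca F)$ from the preliminaries then finish Steps 2 and 3. No separate comparison of strict topologies is needed.
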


\begin{defn}\label{induction of representation}
    Let $A,B$ be $\calH$-\cst-algebras, $E$ be an $\calH$-Hilbert $B$-module, $\pi:A\ra \calL(E)$ be an $\calH$-equivariant representation, we define the representation $\ind_\Omega \pi$ as the $\calG$-equivariant representation $\ind_\Omega\pi:\ind_\Omega A\ra \calL(\ind_\Omega E)$ such that, for every $\xi\in \ind_\Omega A$, $\eta\in \ind_\Omega E$, $\omega\in \Omega$,
\[((\ind_\Omega\pi)(\xi)\eta)(\omega)=\pi_{\sigma(\omega)}(\xi(\omega))\eta(\omega).\]
\end{defn}

For the correspondences associated to relatively clopen subgroupoids, we recovered B\"onickes' subgroupoid induction functor.

\begin{defn}
    \textnormal{\cite[Definition 2.2]{oyono2023groupoids}}
    For a locally compact Hausdorff groupoid $\calG$ and an open subgroupoid $\calH\subseteq \calG$, we say that $\calH$ is relatively clopen in $\calG$, if $\calH$ is clopen in $\calG_{\calH\units}$.
\end{defn}

\begin{prop}
    \textnormal{\cite[Lemma 1.21, Corollary 1.22]{Mao26}}
    For an open subgroupoid $\calH\subseteq\calG$, the right action of $\calH$ on $\calG_{\calH\units}$ by multiplication is proper if and only if $\calH$ is relatively clopen in $\calG$. Especially, all proper open subgroupoids of $\calG$ is relatively clopen in $\calG$.
\end{prop}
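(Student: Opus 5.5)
We plan to compare two closedness conditions directly. Write $\Omega=\calG_{\calH\units}=s\inv(\calH\units)$. It is open in $\calG$, since $\calH\units$ is open in $\calG\units$ because $\calH$ is open, and so it is locally compact Hausdorff. The anchor of the right $\calH$-action is $s|_\Omega$, and the action is free and étale. Properness means the map
\[\Phi:\Omega\times_{s,\calH\units,r}\calH\ra \Omega\times\Omega,\quad (\omega,h)\mapsto(\omega,\omega h)\]
is proper. $\Phi$ is injective and continuous. On its image it has the continuous inverse $(\omega,\eta)\mapsto(\omega,\omega\inv\eta)$. Hence $\Phi$ is a homeomorphism onto its image, and since $\Omega\times\Omega$ is locally compact Hausdorff, $\Phi$ is proper if and only if its image is closed in $\Omega\times\Omega$. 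The first step is to record this reduction.

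Next we describe the image. Let $\Omega\times_r\Omega=\{(\omega,\eta):r(\omega)=r(\eta)\}$, which is closed in $\Omega\times\Omega$, and let $\theta(\omega,\eta)=\omega\inv\eta$. The map $\theta$ is continuous and takes values in $\calG|_{\calH\units}\subseteq\Omega$. Then
\[\Phi(\Omega\times_{s,r}\calH)=\theta\inv(\calH)\cap(\Omega\times_r\Omega).\]
For ``relatively clopen $\Rightarrow$ proper'': if $\calH$ is closed in $\Omega$, then $\theta\inv(\calH)$ is closed in $\Omega\times_r\Omega$, so the image is closed and the action is proper. This direction is routine.

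For the converse, take a net $h_\lambda\in\calH$ converging to $g\in\Omega$. We want to show $g\in\calH$. The natural test pairs are $(h_\lambda,r(h_\lambda))=\Phi(h_\lambda,h_\lambda\inv)$, which lie in the image and converge to $(g,r(g))$. If $r(g)\in\calH\units$, this limit lies in $\Omega\times\Omega$. Closedness of the image then gives $g\inv=\theta(g,r(g))\in\calH$, and so $g\in\calH$. The \textbf{main obstacle} is exactly to know that $r(g)\in\calH\units$, that is, that $r(g)\in\Omega$. Properness only controls limits that stay inside $\Omega\times\Omega$. I would first try to extract this from the compactness formulation, namely that $\{h:Lh\cap L\neq\emptyset\}$ is compact for compact $L\subseteq\Omega$, by choosing $L$ to be a compact neighbourhood of $g$ together with $s(L)$.

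I should warn that this converse may fail as stated. For the pair groupoid on $[0,1]$ with $\calH=(0,1]^2$, the action is proper but $\calH$ is not closed in $\Omega$. So the converse probably needs an extra hypothesis, for instance that $\calH\units$ is closed in $r(\Omega)$, or $\calH$ being proper. For the final assertion, let $\calH$ be a proper open subgroupoid. Then $\calH\units$ is closed in $\calG\units$, and the range of any limit of elements of $\calH$ lies in $\overline{\calH\units}$. I would therefore establish $r(g)\in\calH\units$ by using compactness of $\calH\cap r\inv(K)\cap s\inv(K)$ for compact $K$, and then apply the argument above.
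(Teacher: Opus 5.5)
Your reduction (properness of the action $\iff$ closedness of $\Phi(\Omega\times_{s,r}\calH)$) is correct, and so is the direction ``relatively clopen $\Rightarrow$ proper''. The converse is not proved, and the obstacle you isolate, $r(g)\in\calH\units$, cannot be overcome. With the paper's definition (closed in $\Omega=s\inv(\calH\units)$), both the converse and the final clause are false even for étale $\calG$. The paper gives no proof here, only a citation to \cite{Mao26}.

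Here is a counterexample. Take $\calG=[0,1]\times\{a,b\}^2$, the product of the space $[0,1]$ with the pair groupoid on $\{a,b\}$, so $r(x,i,j)=(x,i)$ and $s(x,i,j)=(x,j)$; it is compact and étale. Put $U=([0,1]\times\{a\})\cup((0,1]\times\{b\})$ and $\calH=\{(x,i,j):(x,i),(x,j)\in U\}$.
\begin{itemize}
\item $\calH$ is an open subgroupoid.
\item $\calH$ is proper, being the full preimage of $U\times U$ under the closed embedding $(r,s)$.
\item Your map $\theta$ takes values in $\calG^{\calH\units}_{\calH\units}$, which here equals $\calH$. So the image of $\Phi$ is all of $\Omega\times_r\Omega$, which is closed, and the action is proper.
\item Yet $(1/n,b,a)\in\calH$ converges to $(0,b,a)\in\Omega\setminus\calH$, so $\calH$ is not relatively clopen.
\end{itemize}

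Two steps of your plan are also wrong on their own terms.
\begin{itemize}
\item Your pair-groupoid example lies outside the étale setting. Moreover, in it $\calH=(0,1]^2$ is already a proper groupoid, since $(r,s)$ is a homeomorphism onto $(0,1]^2$. So adding the hypothesis ``$\calH$ proper'' rescues nothing.
\item The claim ``$\calH$ proper open $\Rightarrow$ $\calH\units$ closed in $\calG\units$'' fails already for $\calG=[0,1]$ viewed as a space and $\calH=(0,1]$.
\end{itemize}

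What your argument does prove, once you use that $\theta$ lands in $\calG^{\calH\units}_{\calH\units}$, is a corrected statement: the action is proper if and only if $\calH$ is closed in $\calG^{\calH\units}_{\calH\units}$.
\begin{itemize}
\item For $g$ in that set, $r(g)\in\calH\units$ by definition. So your test pairs $(h_\lambda,r(h_\lambda))\ra(g,r(g))$ stay in $\Omega\times\Omega$, and closedness of the image forces $g\in\calH$.
\item Every proper open subgroupoid satisfies this condition. If $h_\lambda\ra g$ with $r(g),s(g)\in\calH\units$, choose compact neighbourhoods $K_1,K_2\subseteq\calH\units$ of $r(g)$ and $s(g)$. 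Eventually $h_\lambda\in\calH\cap r\inv(K_1)\cap s\inv(K_2)$, which is compact and hence closed in $\calG$, so $g\in\calH$.
\end{itemize}
This weaker condition is exactly what makes $\calG_{\calH\units}:\calG\leftarrow\calH$ a correspondence. Closedness in $\calG_{\calH\units}$ itself is strictly stronger.
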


In other words, $\calH$ is relatively clopen in $\calG$ is a necessary and sufficient condition for $\calG_{\calH\units}:\calG\leftarrow \calH$ to become a well-defined correspondence. 

Now we will introduce an operation called inflation, motivated by the inflation map in \cite[Section 6]{bonicke2020going}, which is constructed as the inverse of the compression isomorphism.

\begin{defn}\label{defn of inflation}
    Let $\calG$ be an étale groupoid, $\calH$ be a relatively clopen subgroupoid of $\calG$, $\Omega=\calG_{\calH\units}$, $(B,\beta)$ be a $\calG$-\cst-algebra. For an $\calH$-Hilbert $B|_\calH$-module $(E,V)$, we can define a pre-Hilbert $B$-module structure on $\ind_{\Omega,c}E$ as
    \[(\eta\cdot b)(\gamma)=\eta(\gamma)\beta_{\gamma\inv}(b(r(\gamma))),\quad \forall \eta\in \ind_{\Omega,c}E, b\in B,\gamma\in \Omega,\]
    \[\langle \eta_1,\eta_2\rangle(x)=\sum_{\omega\calH\in \Omega^{x}/\calH}\beta_\omega(\langle \eta_1(\omega),\eta_2(\omega)\rangle),\quad \forall \eta_1,\eta_2\in \ind_{\Omega,c}E, x\in \calG\units. \]
    We will denote the completion as $\infl^\calG_\calH E$, which is a Hilbert $B$-module.
\end{defn}

Notice that, let $\tilde r:\Omega/\calH\ra \calG\units$ be the map $\omega\calH\mapsto r(\omega)$. Since the map $r|_\Omega$ and the quotient map $\Omega\ra \Omega/\calH$ are local homeomorphisms (see \cite[Lemma 2.12, Proposition 2.19]{antunes2021bicategory}), so $\tilde r$ is also a local homeomorphism. Then by lemma \ref{sum of fiber}, this $B$-valued inner product is well-defined. We can check that $\langle \eta_1,\eta_2\cdot b\rangle=\langle \eta_1,\eta_2\rangle b$ for any $\eta_1,\eta_2\in \ind_{\Omega,c}E$ and $b\in B$.

\begin{prop}\label{inflation is induction inner tensor product a module}
    Use the same notation as above, let $\psi:\ind_\Omega(B|_\calH)\ra \calL(L^2(\Omega/\calH,\tilde r, B))$ be the map defined by
    \[[\psi(\xi)\eta](\omega\calH)=\beta_\omega(\xi(\omega))\eta(\omega\calH),\quad \forall \xi\in \ind_\Omega(B|_\calH), \eta\in \Gamma_c(\Omega/\calH,\tilde r^*\ca B), \omega\calH\in \Omega/\calH.\]
    \begin{enumerate}
        \item The map $\psi$ is a well-defined representation. Moreover, $\psi(\ind_\Omega(B|_\calH))\subseteq \calK(L^2(\Omega/\calH,\tilde r, B))$.
        \item There is a unitary isomorphism of Hilbert $B$-modules $\infl^\calG_\calH E \cong \ind_\Omega E\otimes_\psi L^2(\Omega/\calH,\tilde r,B)$.
    \end{enumerate}
\end{prop}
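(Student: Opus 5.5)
We treat the two parts in turn. Throughout, write $\Omega=\calG_{\calH\units}$ and $\tilde r:\Omega/\calH\ra \calG\units$. The key fact is that $\Omega\ra\Omega/\calH$ and $\tilde r$ are local homeomorphisms. The fiber of $L^2(\Omega/\calH,\tilde r,B)$ at $x\in\calG\units$ is $\ell^2(\Omega^x/\calH,B_x)$.

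For (1), first check well-definedness. The $\calH$-equivariance $\xi(\omega h)=\beta_{h\inv}(\xi(\omega))$ gives $\beta_{\omega h}(\xi(\omega h))=\beta_\omega(\xi(\omega))$. Hence $\tilde\xi:\omega\calH\mapsto\beta_\omega(\xi(\omega))\in B_{r(\omega)}$ is a well-defined section of $\tilde r^*\ca B$. It is continuous by the continuity of the $\calG$-action on $\ca B$ (proposition \ref{determine continuity of groupoid action}). It vanishes at infinity on $\Omega/\calH$, with $\|\tilde\xi\|_\infty=\|\xi\|$. So $\psi(\xi)$ is pointwise multiplication by an element of $\Gamma_0(\Omega/\calH,\tilde r^*\ca B)$. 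On $\Gamma_c$ it is bounded by $\|\xi\|$, since the inner product is a fiberwise sum and left multiplication by $\tilde\xi(\omega\calH)$ is bounded by its norm. Its adjoint is multiplication by $\tilde\xi^*$. Multiplicativity and $*$-preservation are pointwise.

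For compactness, by density of $\ind_{\Omega,c}$ it suffices to treat $\xi$ with compact support $K\subseteq\Omega/\calH$. Choose a finite cover of $K$ by open sets $U_j$ on which $\tilde r$ is injective, and a subordinate partition of unity $\phi_j$. Then $\psi(\xi)=\sum_j\psi(\phi_j^2\xi)$, and each summand should equal the rank-one-type operator $\theta_{\phi_j\tilde\xi,\ \phi_j}$. To see this, use that on $U_j$ the inner product $\lgl\phi_j,\eta\rgl(x)$ picks out the single point of $U_j\cap\tilde r\inv(x)$. Here $\phi_j\tilde\xi\in\Gamma_c$ is viewed in $L^2$, and $\phi_j$ is regarded as an element of $L^2$ after writing $\tilde\xi=\tilde\xi_1\tilde\xi_2$ via a factorization so that the $B$-coefficients are genuinely in $B$. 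Strictly, one writes $\theta_{\zeta,\zeta'}$ with $\zeta=\phi_j\tilde\xi$ and $\zeta'=\phi_j\cdot b$, where $b\in B$ is chosen as a local unit on $\tilde r(\supp\phi_j)$, and then approximates. This shows $\psi(\xi)\in\calK$. I expect this local-unit/approximation step to be the fiddliest part of (1).

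For (2), define $U:\ind_{\Omega,c}E\odot\Gamma_c(\Omega/\calH,\tilde r^*\ca B)\ra\ind_{\Omega,c}E$ by
\[U(\eta\otimes\zeta)(\omega)=\eta(\omega)\,\beta_{\omega\inv}(\zeta(\omega\calH)).\]
The $\calH$-equivariance is checked directly from $\beta_{(\omega h)\inv}=\beta_{h\inv}\beta_{\omega\inv}$. Next compare inner products:
\[\lgl \eta_1\otimes\zeta_1,\eta_2\otimes\zeta_2\rgl=\lgl\zeta_1,\psi(\lgl\eta_1,\eta_2\rgl)\zeta_2\rgl,\]
which evaluated at $x$ equals $\sum_{\omega\calH\in\Omega^x/\calH}\zeta_1(\omega\calH)^*\beta_\omega(\lgl\eta_1(\omega),\eta_2(\omega)\rgl)\zeta_2(\omega\calH)$. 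This is exactly the inflation inner product of $U(\eta_1\otimes\zeta_1)$ and $U(\eta_2\otimes\zeta_2)$ from definition \ref{defn of inflation}, using that $\beta_\omega$ is a $*$-isomorphism. So $U$ is isometric and extends to an isometry into $\infl^\calG_\calH E$. It is $B$-linear since right multiplication on $\Gamma_c$ matches the formula $(\eta\cdot b)(\gamma)=\eta(\gamma)\beta_{\gamma\inv}(b(r(\gamma)))$.

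The main obstacle is surjectivity, i.e.\ density of the image in $\ind_{\Omega,c}E$ for the inflation norm. I would localize with the same partitions of unity as in (1). Given $\eta\in\ind_{\Omega,c}E$ with support covered by $\tilde r$-injective open sets $U_j$, write $\eta=\sum_j\phi_j\eta$. By Cohen factorization in the fibers, approximate each $\phi_j\eta$ by $\eta_j\cdot a$ with $a\in\ind_\Omega(B|_\calH)$. Such a piece is the image of $\eta_j\otimes\tilde a\phi_j'$, where $\phi_j'$ is a cutoff equal to $1$ on $\supp\phi_j$. Finally, since inflation norm convergence is controlled by uniform convergence with support in a fixed compact set (lemma \ref{uniformly finite fiber}), it suffices to approximate in sup-norm with controlled supports. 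This last reduction is what makes the approximation argument close.
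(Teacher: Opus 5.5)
\textbf{Verdict.} Your proposal is correct. Part (1) and the construction and isometry in part (2) are the paper's argument; the only real difference is how you prove surjectivity in (2).

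\textbf{Part (1).} This matches the paper: well-definedness comes from $\calH$-equivariance, and a partition of unity splits $\xi$ into pieces supported where $\tilde r$ is injective, on which $\psi$ becomes rank one. You are more careful than the paper here, which simply asserts that such a $\psi(\xi)$ is rank one.

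Since $B$ has no local units in general, the clean fix is to take an approximate unit $(b_\lambda)$ of $B$. Through $\tilde r|_{U_j}$, the section $\phi_j^2\tilde\xi$ is an element $c\in B$, and the error $\psi(\phi_j^2\xi)-\theta_{\phi_j\tilde\xi,\phi_j b_\lambda}$ is left multiplication by $c(1-b_\lambda^*)$. So $\theta_{\phi_j\tilde\xi,\phi_j b_\lambda}\to\psi(\phi_j^2\xi)$ in norm. Alternatively, factor $\phi_j^2\tilde\xi=\zeta\zeta'^*$ inside $\Gamma_0(U_j,\tilde r^*\ca B)$ to get an honest rank-one operator.

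\textbf{Part (2).} Your map $U$ and your inner-product identity are exactly the paper's $T$. The difference is surjectivity, which the paper does in one step with no partition of unity and no norm estimate:
\begin{itemize}
\item By \cite[Lemma 1.3]{blanchard1996deformations}, any $\eta\in\ind_{\Omega,c}E$ can be written $\eta=\eta'\lgl\eta',\eta'\rgl$.
\item Set $\zeta(\omega\calH)=\beta_\omega(\lgl\eta',\eta'\rgl(\omega))$.
\item Then $T(\eta'\otimes\zeta)=\eta$ exactly, so the range of $T$ contains $\ind_{\Omega,c}E$.
\end{itemize}

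Your route is the same idea with extra scaffolding. The partition of unity plays no role, since the identity $U(\eta_j\otimes\tilde a\phi'_j)=\eta_j a\phi'_j$ never uses injectivity of $\tilde r$. If you take the factorization exactly ($\eta=\eta_0 a$ on the nose, by Cohen's theorem), the approximation and the sup-norm-to-inflation-norm estimate via lemma \ref{uniformly finite fiber} become unnecessary.

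Your version does have a small advantage. Your cutoff $\phi'_j$ makes the second tensor factor compactly supported regardless of where the factor $\eta_j$ lives. The paper instead needs $\eta'$ to be compactly supported so that $\zeta\in\Gamma_c$, and it asserts this with a ``clearly''. That claim is true because $\eta'$ is built from $\eta$ by functional calculus and so vanishes wherever $\eta$ does, but the paper does not say so.
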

\begin{proof}
    (1) It is easy to check that $\beta_\omega(\xi(\omega))\eta(\omega\calH)$ does not depend on the choice of $\omega$. The map $\Omega/\calH\ra \tilde r^*\ca B, \omega\calH\mapsto \beta_\omega(\xi(\omega))\eta(\omega\calH)$ is also an element of $\Gamma_c(\Omega/\calH,\tilde r^*\ca B)$. So $\psi$ is a well-defined representation.

    If $\xi\in \ind_{\Omega,c}(B|_\calH)$, then $\supp(\xi)$ is a compact subset of $\Omega/\calH$. We will say that $\xi$ has small enough support, if there exists an open neighborhood $V$ of $\supp(\xi)$ such that $\tilde r|_V$ is a homeomorphism onto an open of $\calG\units$. If $\xi$ has small enough support, $\omega\calH\mapsto \beta_\omega(\xi(\omega))$ is an element of $\Gamma_c(\Omega/\calH,\tilde r^*B)$, and hence $\psi(\xi)$ is a rank one operator. By a partition of unity, all elements of $\ind_{\Omega,c}(B|_\calH)$ can be written as a finite sum of elements that have small enough supports. So we proved that $\psi(\ind_\Omega(B|_\calH))\subseteq \calK(L^2(\Omega/\calH,\tilde r, B))$.

    (2) For any $\eta\in \ind_{\Omega,c}E\subseteq \ind_\Omega E$ and $\xi\in \Gamma_c(\Omega/\calH,\tilde r^*\ca B)$, for any $\omega\in \Omega$, we define
    \[T(\eta\otimes \xi)(\omega)=\eta(\omega)\beta_{\omega\inv}(\xi(\omega\calH))\in E_{s(\omega)}.\]
    Then $T(\eta\otimes \xi)$ is an element of $\Gamma_b(\Omega,s^*\ca E)$. Now if $(\omega, h)\in \Omega\times_{s,\calH\units,r}\calH$, we have
    \begin{align*}
        T(\eta\otimes \xi)(\omega h) & = \eta(\omega h)\beta_{h\inv \omega\inv}(\xi(\omega\calH))\\
        & = V_{h\inv}(\eta(\omega))\beta_{h\inv}(\beta_{\omega\inv}(\xi(\omega\calH)))\\
        & = V_{h\inv}(T(\eta\otimes \xi)(\omega)),
    \end{align*}
    and clearly the map $\omega\calH\mapsto \|T(\eta\otimes\xi)(\omega)\|$ is also compactly supported. So $T(\eta\otimes \xi)$ is a well-defined element of $\ind_{\Omega,c}E$. We will prove that $T$ extends to a unitary isomorphism.

    For any $\eta_1,\eta_2\in \ind_{\Omega,c}E$ and $\xi_1,\xi_2\in \Gamma_c(\Omega/\calH, \tilde r^*\ca B)$, $x\in \calG\units$,
    \begin{align*}
        \langle T(\eta_1\otimes \xi_1),T(\eta_2\otimes\xi_2)\rangle_{\infl^\calG_\calH E}(x) & = \sum_{\omega\calH\in \Omega^x/\calH}\beta_\omega(\langle T(\eta_1\otimes \xi_1)(\omega),T(\eta_2\otimes\xi_2)(\omega)\rangle)\\
        & = \sum_{\omega\calH\in \Omega^x/\calH}\beta_\omega(\langle \eta_1(\omega)\beta_{\omega\inv}(\xi_1(\omega\calH)), \eta_2(\omega)\beta_{\omega\inv}(\xi_2(\omega\calH))\rangle)\\
        & = \sum_{\omega\calH\in \Omega^x/\calH}\xi_1(\omega\calH)^* \beta_\omega(\langle \eta_1(\omega), \eta_2(\omega))\rangle) \xi_2(\omega\calH)\\
        & = \sum_{\omega\calH\in \Omega^x/\calH} \xi_1(\omega\calH)^*[\psi(\langle \eta_1,\eta_2\rangle_{\ind_\Omega E})\xi_2](\omega\calH)\\
        & = \langle \xi_1,\psi(\langle \eta_1,\eta_2\rangle_{\ind_\Omega E})\xi_2\rangle_{L^2(\Omega/\calH,\tilde r, B)}(x)\\
        & = \langle \eta_1\otimes \xi_1,\eta_2\otimes \xi_2\rangle_{\ind_\Omega E\otimes_\psi L^2(\Omega/\calH,\tilde r,B)}(x).
    \end{align*}

    Claim: $\ind_{\Omega,c}E$ is in range of $T$. If $\eta\in \ind_{\Omega,c}E$, by \cite[Lemma 1.3]{blanchard1996deformations}, there exists $\eta'\in \ind_\Omega E$ such that $\eta=\eta'\langle \eta',\eta'\rangle_{\ind_\Omega E}$ and clearly $\eta'$ should be in $\ind_{\Omega,c}E$. We define $\xi:\Omega/\calH\ra \tilde r^*\ca B$ as $\xi(\omega\calH)= \beta_\omega(\langle \eta',\eta'\rangle_{\ind_\Omega E}(\omega))$. We can check easily that $\xi\in \Gamma_c(\Omega/\calH,\tilde r^*\ca B)$, and $\eta=T(\eta'\otimes \xi)$. And this implies that $T$ has dense range in $\infl^\calG_\calH E$.

    So in conclusion, there is a unitary isomorphism $\ind_\Omega E\otimes_\psi L^2(\Omega/\calH,\tilde r, B)\cong \infl^\calG_\calH E$.
\end{proof}

Use the same notation as above, for any $x\in \calG\units$, the fiber of $\infl^\calG_\calH E$ at $x$ can be described as the completion of the pre-Hilbert $B_x$-module $\ind_{\Omega^x,c}E$, where its pre-Hilbert module structure is characterized by
\[(\eta\cdot b)(\gamma)=\eta(\gamma)\beta_{\gamma\inv}(b),\quad \forall \eta\in \ind_{\Omega^x,c}E, b\in B_x,\gamma\in \Omega^x,\]
\[\langle \eta_1,\eta_2\rangle=\sum_{\omega\calH\in \Omega^{x}/\calH}\beta_\omega(\langle \eta_1(\omega),\eta_2(\omega)\rangle),\quad \forall \eta_1,\eta_2\in \ind_{\Omega^x,c}E. \]
And we have a unitary isomorphism $(\infl^\calG_\calH E)_x\cong \ind_{\Omega^x}E\otimes_{\psi_x} \ell^2(\Omega^x/\calH,B_x)$.

There is a $\calG$-Hilbert module structure of $\ind_\Omega E$ is given by the unitary isomorphisms
\[W_\gamma: \ind_{\Omega^{s(\gamma)}}E\ra \ind_{\Omega^{r(\gamma)}}E, \quad \xi\mapsto \xi(\gamma\inv-),\]
and using the proposition \ref{L^2 has continuous G-action}, there is a $\calG$-Hilbert module structure of $L^2(\Omega/\calH,\tilde r,B)$ given by the unitary isomorphisms
\[\tilde V_\gamma: \ell^2(\Omega^{s(\gamma)}/\calH, B_{s(\gamma)})\mapsto \ell^2(\Omega^{r(\gamma)}/\calH,B_{r(\gamma)}),\xi\mapsto \beta_\gamma(\xi(\gamma\inv-)).\]
By proposition \ref{continuity of action on inner tensor product}, $W\otimes_{s^*\psi} \tilde V$ gives a $\calG$-Hilbert module structure on $\ind_\Omega E\otimes_\psi L^2(\Omega/\calH,\tilde r, B)$. After identifying this inner tensor product with $\infl^\calG_\calH E$ by proposition \ref{inflation is induction inner tensor product a module}, the $\calG$-Hilbert module structure on $\infl^\calG_\calH E$ is given by unitary isomorphisms $V^{\infl}_\gamma: (\infl^\calG_\calH E)_{s(\gamma)}\ra (\infl^\calG_\calH E)_{r(\gamma)}$, which is the extension of the isometry
\[\ind_{\Omega^{s(\gamma)},c}E\ra \ind_{\Omega^{r(\gamma)},c}E, \quad \xi\mapsto \xi(\gamma\inv-),\]
since for any $\gamma\in \calG$, $\eta\in \ind_{\Omega^{s(\gamma)},c}E\subseteq \ind_{\Omega^{s(\gamma)}}E$, $\xi\in C_c(\Omega^{s(\gamma)}/\calH,B_{s(\gamma)})\subseteq \ell^2(\Omega^{s(\gamma)}/\calH,B_{s(\gamma)})$, $\omega\in \Omega^{r(\gamma)}$, we have
    \begin{align*}
        [T_{r(\gamma)}(W_\gamma(\eta)\otimes \tilde V_\gamma(\xi))](\omega) & = W_\gamma(\eta)(\omega)\beta_{\omega\inv}(\tilde V_\gamma(\xi)(\omega))\\
        & = \eta(\gamma\inv \omega)\beta_{\omega\inv}(\beta_\gamma(\xi(\gamma\inv\omega)))\\
        & = [T_{s(\gamma)}(\eta\otimes \xi)](\gamma\inv \omega).
    \end{align*}

\begin{defn}\label{defn inflation of representation}
    Let $\calG$ be an étale groupoid, $\calH$ be a relatively clopen subgroupoid of $\calG$, $\Omega=\calG_{\calH\units}$, $(A,\alpha)$ be an $\calH$-\cst-algebra, $(B,\beta)$ be $\calG$-\cst-algebras, $(E,V)$ be an $\calH$-Hilbert $B|_\calH$-module, $\pi:A\ra \calL(E)$ be an $\calH$-equivariant representation. Then we define $\infl^\calG_\calH\pi: \ind_\Omega A\ra \calL_B(\infl^\calG_\calH E)$ as, for any $\xi\in \ind_\Omega A$, $\eta\in \ind_{\Omega,c}E\subseteq \infl^\calG_\calH E$, $\omega\in \Omega$,
    \[[(\infl^\calG_\calH\pi)(\xi)\eta](\omega)=\pi_{s(\omega)}(\xi(\omega))\eta(\omega).\]

    Let $(E',V')$ be also an $\calH$-Hilbert $B|_\calH$-module, for any $T\in \Hom_A(E,E')$ which is $\calH$-equivariant, we define $\infl^\calG_\calH(T)\in \Hom_A(\infl^\calG_\calH E, \infl^\calG_\calH E')$ as
    \[(\infl^\calG_\calH(T)\xi)(\gamma)=T_{s(\gamma)}\xi(\gamma),\]
    for any $\gamma\in \Omega$, $\xi\in \ind_{\Omega,c}E\subseteq \infl^\calG_\calH E$.
\end{defn}

\begin{prop}\label{induction and inflation}
    Use the same notation as above.
    \begin{enumerate}
        \item The representation $\infl^\calG_\calH\pi$ is $\calG$-equivariant.
        \item Let $\psi:\ind_{\Omega}(B|_\calH)\ra \calL(L^2(\Omega/\calH,\tilde r, B))$ be the representation defined in proposition \ref{inflation is induction inner tensor product a module}, then the $\ind_\Omega A,B$-bimodule $(\infl^\calG_\calH E,\infl^\calG_\calH\pi)$ is equivariantly unitarily equivalent to the bimodule $(\ind_\Omega E\otimes_\psi L^2(\Omega/\calH,\tilde r,B),\ind_\Omega \pi\otimes_\psi id)$.
    \end{enumerate}
\end{prop}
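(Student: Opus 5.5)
The plan is to prove (2) first and then read off (1) from it, since $\calG$-equivariance of $\ind_\Omega\pi\otimes_\psi id$ follows from propositions \ref{G-equivariance of inner tensor product} and the $\calG$-equivariance of $\ind_\Omega\pi$ (definition \ref{induction of representation}). For (2) I will use the unitary $T:\ind_\Omega E\otimes_\psi L^2(\Omega/\calH,\tilde r,B)\ra \infl^\calG_\calH E$ built in the proof of proposition \ref{inflation is induction inner tensor product a module}, given on elementary tensors by
\[T(\eta\otimes\xi)(\omega)=\eta(\omega)\beta_{\omega\inv}(\xi(\omega\calH)).\]
Two facts about $T$ are needed. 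The first is that it is $\calG$-equivariant. This is exactly the fiberwise computation displayed just before definition \ref{defn inflation of representation}: $T_{r(\gamma)}\circ(W_\gamma\otimes\tilde V_\gamma)=V^{\infl}_\gamma\circ T_{s(\gamma)}$ on the dense subspace $\ind_{\Omega^{s(\gamma)},c}E\odot C_c(\Omega^{s(\gamma)}/\calH,B_{s(\gamma)})$, and it then extends by continuity. The second is that $T$ intertwines $\ind_\Omega\pi\otimes id$ and $\infl^\calG_\calH\pi$.

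For the intertwining I take $\zeta\in\ind_\Omega A$, $\eta\in\ind_{\Omega,c}E$, $\xi\in\Gamma_c(\Omega/\calH,\tilde r^*\ca B)$ and compute pointwise at $\omega\in\Omega$:
\[T\big((\ind_\Omega\pi)(\zeta)\eta\otimes\xi\big)(\omega)=\pi_{s(\omega)}(\zeta(\omega))\eta(\omega)\,\beta_{\omega\inv}(\xi(\omega\calH)).\]
This equals $[(\infl^\calG_\calH\pi)(\zeta)T(\eta\otimes\xi)](\omega)$, because $\pi_{s(\omega)}(\zeta(\omega))$ is $B_{s(\omega)}$-linear on $E_{s(\omega)}$. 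Since $T$ is a unitary with dense range on such tensors and $(\ind_\Omega\pi)(\zeta)\otimes id$ is adjointable, we get
\[\infl^\calG_\calH\pi(\zeta)=T\big((\ind_\Omega\pi)(\zeta)\otimes id\big)T^*.\]
This has two consequences. It shows that the formula in definition \ref{defn inflation of representation} defines a bounded adjointable operator, which is a point the definition tacitly needs. It also shows that $\infl^\calG_\calH\pi$ is a $*$-homomorphism, and non-degenerate whenever $\pi$ is.

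For (1), the $\calG$-equivariance of $\ind_\Omega\pi$ holds because $\ind_\Omega A$ and $\ind_\Omega E$ carry the translation actions $\xi\mapsto\xi(\gamma\inv-)$, and the defining formula of $\ind_\Omega\pi$ is evaluated pointwise in $\omega$. Proposition \ref{G-equivariance of inner tensor product}, applied with $\psi$ playing the role of $\pi'$, then gives equivariance of $\ind_\Omega\pi\otimes id$. Conjugating by the equivariant unitary $T$ transfers equivariance to $\infl^\calG_\calH\pi$. Alternatively, (1) can be checked directly on the fibers $\ind_{\Omega^x,c}E$, where $V^{\infl}_\gamma$ acts by left translation:
\[\pi_{s(\omega)}\big(\zeta(\gamma\inv\omega)\big)\eta(\gamma\inv\omega)=\big[(\infl\pi)_{r(\gamma)}(\gamma.\zeta)\,V^{\infl}_\gamma\eta\big](\omega).\]

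The main obstacle is the proposition \ref{G-equivariance of inner tensor product} step. It requires $\psi$ to be a $\calG$-equivariant representation of $\ind_\Omega(B|_\calH)$ on $L^2(\Omega/\calH,\tilde r,B)$. I would verify this fiberwise: for $\xi\in\ind_{\Omega^{s(\gamma)}}(B|_\calH)$ and $\eta\in C_c(\Omega^{s(\gamma)}/\calH,B_{s(\gamma)})$,
\[\tilde V_\gamma\big(\psi(\xi)\eta\big)(\omega\calH)=\beta_\gamma\beta_{\gamma\inv\omega}\big(\xi(\gamma\inv\omega)\big)\,\beta_\gamma\big(\eta(\gamma\inv\omega\calH)\big)=\big[\psi(\gamma.\xi)\tilde V_\gamma\eta\big](\omega\calH),\]
using only $\beta_\gamma\beta_{\gamma\inv\omega}=\beta_\omega$. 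Everything else reduces to density arguments on compactly supported sections.
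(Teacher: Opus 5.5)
Your proposal is correct, and for (2) it is the paper's argument: the same unitary $T$ from proposition \ref{inflation is induction inner tensor product a module}, the same pointwise intertwining computation using $B_{s(\omega)}$-linearity of $\pi_{s(\omega)}(\zeta(\omega))$, and equivariance of $T$ read off from the way $V^{\infl}$ was defined.

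For (1), the paper does the direct fiberwise computation that you list as your alternative. Your primary route, transporting equivariance from $\ind_\Omega\pi\otimes_\psi id$ through $T$, also works. It costs the extra check that $\psi$ is $\calG$-equivariant; your verification via $\beta_\gamma\beta_{\gamma\inv\omega}=\beta_\omega$ is right, and the paper implicitly needs this anyway to put the $\calG$-structure on the tensor product. In exchange, your route shows that the formula defining $\infl^\calG_\calH\pi$ gives bounded adjointable operators and a $*$-homomorphism, which the paper takes for granted.
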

\begin{proof}
    (1) For any $\gamma\in \calG$, $\xi\in \ind_{\Omega^{s(\gamma)}}A$ and $\eta\in \ind_{\Omega^{r(\gamma)},c}E\subseteq (\infl^\calG_\calH E)_{r(\gamma)}$, $\omega\in \Omega^{r(\gamma)}$,
    \begin{align*}
        [(\infl^\calG_\calH\pi)_{r(\gamma)}(\gamma.\xi) \eta](\omega) & = \pi_{s(\omega)}((\gamma.\xi)(\omega))\eta(\omega)\\
        & = \pi_{s(\omega)}(\xi(\gamma\inv\omega))\eta(\omega)\\
        & = \pi_{s(\gamma\inv\omega)}(\xi(\gamma\inv\omega))(V^{\infl}_{\gamma\inv}\eta)(\gamma\inv\omega)\\
        & = [V^{\infl}_\gamma\circ (\infl^\calG_\calH\pi)_{s(\gamma)}(\xi)\circ V^{\infl}_{\gamma\inv}(\eta)](\omega).
    \end{align*}
    That is $(\infl^\calG_\calH\pi)_{r(\gamma)}(\gamma.-)=V^{\infl}_\gamma\circ (\infl^\calG_\calH\pi)_{s(\gamma)}(-)\circ V^{\infl}_{\gamma\inv}$. So $\infl^\calG_\calH\pi$ is $\calG$-equivariant.

    (2) Let $T$ be the unitary isomorphism defined as in the proof of proposition \ref{inflation is induction inner tensor product a module}. For any $\eta\in \ind_{\Omega,c}E\subseteq \ind_\Omega E$, $\xi\in \Gamma_c(\Omega/\calH,\tilde r^*\ca B)\subseteq L^2(\Omega/\calH,\tilde r, B)$, $f\in \ind_\Omega A$, $\omega\in \Omega$, we have
    \begin{align*}
        [T\circ (\ind_\Omega\pi\otimes_\psi id)(f)(\eta\otimes \xi)](\omega) & = T(\ind_\Omega\pi(f)\eta \otimes \xi)(\omega)\\
        & = \pi_{s(\omega)}(f(\omega))\eta(\omega) \beta_{\omega\inv}(\xi(\omega\calH))\\
        & = \pi_{s(\omega)}(f(\omega))([T(\eta\otimes\xi)](\omega))\\
        & = [(\infl^\calG_\calH\pi)(f)\circ T (\eta\otimes\xi)](\omega).
    \end{align*}
    That is, $T\circ (\ind_\Omega\pi\otimes_\psi id)(-)= (\infl^\calG_\calH\pi)(-)\circ T$. So $T$ intertwines $\infl^\calG_\calH\pi$ and $\ind_\Omega\pi\otimes id$. By the definition of $V^\infl$, $T$ is $\calG$-equivariant.
\end{proof}

\begin{defn}\label{defn operation inflation}
    Let $\calH$ be a relatively clopen subgroupoid of a second countable étale groupoid $\calG$, $Z$ be a locally compact Hausdorff $\calH$-space, $B$ be a separable $\calG$-\cst-algebra, $(E,\pi)$ be a $Z,\calH,B|_\calH$-module. We define the inflation of $(E,\pi)$ as the $\calG_{\calH\units}\times_{\calH}Z,\calG,B$-module $(\infl^\calG_\calH E, \infl^\calG_\calH \pi)$.
\end{defn}

\begin{rem}\label{details on inflation map}
    Use the same notation as above, let $\Omega=\calG_{\calH\units}$, $\ind_\Omega E$ is countably generated by \cite[Proposition 2.36]{Mao26}, hence $\infl^\calG_\calH E\cong \ind_\Omega E\otimes_{\psi}L^2(\Omega/\calH,\tilde r, B)$ is also countably generated. One can prove that, if $\pi$ is non-degenerate, then $\ind_\Omega \pi$ is also non-degenerate, and hence so is $\infl^\calG_\calH\pi$.
\end{rem}

\subsection{Coarse structures}\label{section coarse structures}

Fix a locally compact Hausdorff space $X$. In our setting, we need a relative coarse structure over $X$.

\begin{defn}
    Let $Z_1,Z_2,Z_3$ be three $X$-spaces, for any $E\subseteq Z_3\times_X Z_2$, $F\subseteq Z_2\times_X Z_1$, $A\subseteq Z_1$, $B\subseteq Z_2$, we define
    \[F\inv=\{(z_1,z_2)\in Z_1\times_X Z_2:(z_2,z_1)\in F\},\]
    \[E\circ F:=\{(z_3,z_1)\in Z_3\times_X Z_1:\exists z_2\in Z_2, (z_3,z_2)\in E \text{ and } (z_2,z_1)\in F\},\]
    \[F\circ A=\{z_2\in Z_2:\exists z_1\in A, (z_2,z_1)\in F\}\]
    \[B\circ F=\{z_1\in Z_1:\exists z_2\in B, (z_2,z_1)\in F\}.\]

    When there is no ambiguity, we denote by $r$ the projection $pr_{Z_2}:Z_2\times_X Z_1\ra Z_2$ and by $s$ the projection $pr_{Z_1}:Z_2\times_X Z_1\ra Z_1$. So $F\circ A=r(F\cap s\inv(A))$ and $B\circ F=s(F\cap r\inv(B))$.
\end{defn}

\begin{rem}
    It is easy to see that the operation $\circ$ is associative, and
    \begin{enumerate}
        \item $(F\inv)\inv=F$,
        \item $A\circ F\inv= F\circ A, F\inv\circ B=B\circ F$,
        \item $s(F)=r(F\inv), r(F)=s(F\inv)$.
    \end{enumerate}
    
    And in case that $Z_1=Z_2$, the notations above are compatible with the pair groupoid $Z_1\times_X Z_1\rightrightarrows Z_1$.
\end{rem}

\begin{defn}
    Let $Z$ be an $X$-space. An $X$-coarse structure on $Z$ is a collection $\ca E$ of subsets of $Z\times_{X} Z$, satisfying the following axioms.
    \begin{enumerate}
        \item If $E\in \ca E$, $F\subseteq E$, then $F\in \ca E$;
        \item If $E,F\in \ca E$, then $E\inv\in \ca E$, $E\circ F\in \ca E$ and $E\cup F\in \ca E$.
    \end{enumerate}
    We say that the $X$-coarse structure $\ca E$ is unital, if the diagonal $\Delta_Z\in \ca E$.
\end{defn}

\begin{defn}\label{defn controlled and weakly controlled set}
    Let $\calG$ be an étale groupoid, $Z_1,Z_2$ be locally compact Hausdorff proper $\calG$-spaces. For a subset $E$ of $Z_2\times_{\calG}Z_1$, if it is contained in some $\calG$-compact subset of $Z_2\times_{\calG\units}Z_1$, we say that $E$ is controlled; if it is contained in some $\calG$-invariant closed subset $F$ of $Z_2\times_{\calG\units}Z_1$ such that $r|_F:F\ra Z_2$ and $s|_F:F\ra Z_1$ are proper maps, we say that $E$ is weakly controlled.
    
    We denote the set of all controlled subsets of $Z_2\times_{\calG\units}Z_1$ as $\ca E(Z_2,Z_1)$, and the set of all weakly controlled subsets of $Z_2\times_{\calG\units}Z_1$ as $\ca E'(Z_2,Z_1)$.

    Especially, when $Z_1=Z_2=Z$, we write $\ca E_Z=\ca E(Z,Z)$ and $\ca E_Z'=\ca E'(Z,Z)$.
\end{defn}

\begin{ex}
    Let $\calG$ be an \'etale groupoid, $X,Y$ be two locally compact Hausdorff proper $\calG$-spaces, $f:X\ra Y$ be a $\calG$-equivariant proper continuous map. Then
    \[Gr(f):=\{(y,x)\in Y\times_{\calG\units}X:y=f(x)\}\]
    is a weakly controlled subset of $Y\times_{\calG\units}X$.
\end{ex}

\begin{lem}\label{maps between G-compact sets are proper}
    \textnormal{\cite[Lemma 4.2.1]{bonicke2018going}}
    Let $\calG$ be a locally compact Hausdorff groupoid, $X$ be a locally compact Hausdorff $\calG$-compact $\calG$-space and $Y$ be a locally compact Hausdorff proper $\calG$-space. Then any $\calG$-equivariant continuous map $X\ra Y$ is proper.
\end{lem}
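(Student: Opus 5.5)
Let $f:X\ra Y$ be a $\calG$-equivariant continuous map, and write $\rho_X,\rho_Y$ for the anchor maps. We must show that $f\inv(L)$ is compact for every compact $L\subseteq Y$. The plan is to exploit two facts: $X$ is covered by the $\calG$-saturation of one compact set, and proper actions only let a compact set meet another through a compact set of groupoid elements.

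First, by $\calG$-compactness choose a compact $K\subseteq X$ with $X=\calG K$. Since $X$ is locally compact Hausdorff, the conclusion is the same whether $\calG$-compact is read as "$X/\calG$ compact" or as "$X=\calG K$ for some compact $K$". Then fix a compact $L\subseteq Y$. Every $x\in f\inv(L)$ can be written $x=\gamma k$ with $k\in K$ and $s(\gamma)=\rho_X(k)$. By equivariance, $f(x)=\gamma f(k)$, with $f(k)\in f(K)$ and $\gamma f(k)\in L$. Here $f(K)$ is compact.

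The key step uses properness of the action on $Y$. The map
\[\calG\ltimes Y\ra Y\times Y,\quad (\gamma,y)\mapsto(\gamma y,y)\]
is proper. Hence the set
\[C=\{(\gamma,y)\in \calG\ltimes Y:\ y\in f(K),\ \gamma y\in L\}\]
is compact, being the preimage of $L\times f(K)$. Its projection $P\subseteq\calG$ is therefore compact, and every $\gamma$ arising above lies in $P$.

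It follows that
\[f\inv(L)\subseteq \{\gamma k:(\gamma,k)\in (P\times K)\cap(\calG\times_{s,\calG\units,\rho_X}X)\}.\]
The set $(P\times K)\cap(\calG\times_{s,\calG\units,\rho_X}X)$ is a closed subset of the compact set $P\times K$, because $\calG\units$ is Hausdorff, so it is compact. The action map is continuous, so the right-hand side, the image of this set, is compact. Finally, $f\inv(L)$ is closed in $X$, so as a closed subset of a compact set in a Hausdorff space it is compact.

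The main obstacle is the middle step, namely identifying the right properness statement. One might expect to need properness of the action on $X$, but only properness on $Y$ is used, to bound the groupoid elements $\gamma$ inside a compact set $P$. Everything else is routine topology.
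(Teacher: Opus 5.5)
Your proof is correct and complete. The paper gives no proof of this lemma; it only cites \cite[Lemma 4.2.1]{bonicke2018going}. What you wrote is the standard argument. You write $X=\calG K$ with $K$ compact. Properness of $\calG\ltimes Y$ then confines the relevant arrows to the compact set $P=\mathrm{pr}_{\calG}\bigl((r,s)^{-1}(L\times f(K))\bigr)$. Finally, $f^{-1}(L)$ is a closed subset of the compact set $PK$. As you note, properness of the action on $X$ is never used.

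One side remark of yours needs a correction, though it does not affect the argument. Local compactness of $X$ alone does not make the two readings of $\calG$-compactness equivalent. Passing from compactness of $X/\calG$ to $X=\calG K$ uses that the orbit map $X\ra X/\calG$ is open, so that the images of finitely many relatively compact open sets cover $X/\calG$. This holds when $r$ and $s$ are open (cf.\ Lemma \ref{orbits of opens are open}), for instance for \'etale groupoids. The remark is harmless here: the paper itself works with the definition $X=\calG K$, as in the proofs of Proposition \ref{summary of controlled and weakly controlled sets} and Lemma \ref{G-compact subsets are closed}, and that is exactly what your proof uses.
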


\begin{prop}\label{summary of controlled and weakly controlled sets}
    Let $\calG$ be an étale groupoid, $(Z_1,\rho_1)$,$(Z_2,\rho_2)$ and $(Z_3,\rho_3)$ be locally compact Hausdorff proper $\calG$-spaces.
    \begin{enumerate}
        \item We have $\ca E(Z_2,Z_1)\subseteq \ca E'(Z_2,Z_1)$. If $Z_1$ or $Z_2$ is $\calG$-compact, then $\ca E(Z_2,Z_1)=\ca E'(Z_2,Z_1)$.
        \item If $E_1,E_2\in \ca E(Z_2,Z_1)$, then $E_1\inv\in \ca E(Z_1,Z_2)$, $E_1\cup E_2\in \ca E(Z_2,Z_1)$.
        \item If $E_1',E_2'\in \ca E'(Z_2,Z_1)$, then ${E_1'}\inv\in \ca E'(Z_1,Z_2)$, $E_1'\cup E_2'\in \ca E'(Z_2,Z_1)$.
        \item If $E_{21}'\in \ca E'(Z_2,Z_1)$, $E_{32}'\in \ca E'(Z_3,Z_2)$, then $E_{32}'\circ E'_{21}\in \ca E'(Z_3,Z_1)$.
        \item If $E\in \ca E(Z_2,Z_1)$ and $E'\in \ca E'(Z_3,Z_2)$, then $E'\circ E\in \ca E(Z_3,Z_1)$; if $E'\in \ca E'(Z_2,Z_1)$ and $E\in \ca E(Z_3,Z_2)$, then $E\circ E'\in \ca E(Z_3,Z_1)$.
    \end{enumerate}
\end{prop}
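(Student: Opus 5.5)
The plan is to handle the five items one at a time, reducing everything to two basic facts about proper actions. Throughout I use that $Z_2\times_{\calG\units}Z_1$ is a proper $\calG$-space for the diagonal action $\gamma(z_2,z_1)=(\gamma z_2,\gamma z_1)$. I also use the following description of $\calG$-compact sets: a subset is $\calG$-compact if it is contained in $\calG C$ for some compact $C$, and for proper actions $\calG C$ is closed. The only dynamical input is properness: for compact $K\subseteq Z_1$ and $L\subseteq Z_2$ (or in a single proper $\calG$-space), the set $\{\gamma\in\calG:\gamma K\cap L\neq\emptyset\}$ is compact.

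\textbf{Item (1).} Let $E\subseteq F:=\calG C$ with $C$ compact; then $F$ is closed and $\calG$-invariant. To see that $r|_F$ is proper, fix a compact $L\subseteq Z_2$. Every point of $F\cap r\inv(L)$ has the form $\gamma c$ with $c\in C$ and $\gamma\, r(C)\cap L\neq\emptyset$, so $\gamma$ lies in a compact set $K_\calG$. Hence $F\cap r\inv(L)$ is a closed subset of the image of the compact set $K_\calG\times_{\calG\units}C$ under the continuous action map, and is therefore compact. The argument for $s|_F$ is symmetric.

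For the converse, suppose $Z_1=\calG K_1$ with $K_1$ compact, and let $E\subseteq F$ with $F$ closed, invariant and $s|_F$ proper. Then $C:=F\cap s\inv(K_1)$ is compact, and by invariance $F\subseteq \calG C$, so $E$ is controlled. The case where $Z_2$ is $\calG$-compact is the same with $r$ in place of $s$.

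\textbf{Items (2) and (3).} These are formal. Inversion is the $\calG$-equivariant flip homeomorphism $Z_2\times_{\calG\units}Z_1\to Z_1\times_{\calG\units}Z_2$, which exchanges $r$ and $s$. For unions, $\calG C_1\cup\calG C_2=\calG(C_1\cup C_2)$. Similarly, a finite union of closed invariant sets on which both projections are proper again has both projections proper.

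\textbf{Item (4).} Let $E'_{21}\subseteq F_{21}$ and $E'_{32}\subseteq F_{32}$ be witnessing sets, and put $P:=F_{32}\times_{Z_2}F_{21}$, which is a closed subset of $Z_3\times Z_2\times Z_1$. Let $p:P\to Z_3\times_{\calG\units}Z_1$ be the projection; its image $F_{32}\circ F_{21}$ is invariant and contains $E'_{32}\circ E'_{21}$. The key claim is that $s\circ p$ and $r\circ p$ are proper. For compact $L\subseteq Z_1$, the set $F_{21}\cap s\inv(L)$ is compact, so its $r$-image $K_2$ is compact, and then $F_{32}\cap s\inv(K_2)$ is compact. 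Consequently $(s\circ p)\inv(L)$ is a closed subset of a product of compacts, hence compact; the argument for $r\circ p$ is symmetric. It follows that $p$ is a proper map into a locally compact Hausdorff space, hence closed. So $F_{32}\circ F_{21}$ is closed, and $r$ and $s$ restricted to it are proper, since the preimage of $L$ is $p$ of the compact set $(s\circ p)\inv(L)$.

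\textbf{Item (5).} Write $E\subseteq\calG C$ with $C$ compact, and $E'\subseteq F'$ with $F'$ closed, invariant and proper projections. By invariance $F'\circ\calG C=\calG(F'\circ C)$. Moreover, $F'\circ C$ lies in the image of the compact set $\bigl(F'\cap s\inv(r(C))\bigr)\times_{Z_2}C$ under projection, so it is relatively compact. Hence $E'\circ E$ is controlled. The second statement follows from the first by taking inverses, using (2) and (3).

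The main obstacle is the closedness and properness bookkeeping in (1) and (4). In (1) I must make sure that $\calG C$ is closed and that the set of relevant $\gamma$ is compact. In (4) closedness of the composite is not automatic, and it is precisely the properness of $p$ that rescues it.
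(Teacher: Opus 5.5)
Your proposal is correct and follows essentially the same route as the paper. The converse in (1) via $F\subseteq\calG\bigl(F\cap s\inv(K_1)\bigr)$ and (5) via $F'\circ\calG C=\calG(F'\circ C)$ with $F'\circ C$ compact are the paper's arguments in slightly different packaging; the only differences are that you prove directly, from the compactness of $\{\gamma:\gamma K\cap L\neq\emptyset\}$, the properness of the projections restricted to $\calG C$ where the paper cites B\"onicke's lemma, and that in (4) you supply the closedness and properness argument for $F_{32}\circ F_{21}$ through the proper map $p$, which the paper simply calls trivial.
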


\begin{proof}
    (1) For $E\in \ca E(Z_2,Z_1)$, suppose that $Z'$ is a $\calG$-compact subset of $Z_2\times_{\calG\units} Z_1$ such that $E\subseteq Z'$. Since $Z_2$ is a proper $\calG$-space, by lemma \ref{maps between G-compact sets are proper}, $r|_{Z'}:Z'\ra Z_2$ is a proper map. Similar for the second coordinate projection. So we have $E\in \ca E'(Z_2,Z_1)$.

    Now assume that $Z_1$ is $\calG$-compact and $E'\in \ca E'(Z_2,Z_1)$. We assume that $K_1$ is a compact subset of $Z_1$ such that $Z_1=\calG K_1$, $F$ is a $\calG$-invariant closed subset of $Z_2\times_{\calG\units}Z_1$ containing $E'$ and such that $r|_F,s|_F$ are proper. For any $(z_2,z_1)\in F$, there exists $(\gamma,k_1)\in \calG\times_{s,\rho_1}K_1$, such that $z_1=\gamma k_1$. Since $F$ is $\calG$-invariant, we have $(\gamma\inv z_2,k_1)\in F$. This implies that $\gamma\inv z_2\in F\circ K_1$ and hence $(z_2,z_1)=\gamma(\gamma\inv z_2,k_1)\in \calG((F\circ K_1)\times_{\calG\units} K_1)$. We have that $F\circ K_1$ is compact, and therefore $E'$ is controlled. In conclusion, $\ca E(Z_2,Z_1)=\ca E'(Z_2,Z_1)$. The case that $Z_2$ is $\calG$-compact is similar.

    (2), (3) and (4) are trivial.

    (5) Assume that $E\in \ca E(Z_2,Z_1)$ and $E'\in \ca E'(Z_3,Z_2)$, we can assume that $K_1$ is a compact subset of $Z_1$, $K_2$ is a compact subset of $Z_2$, such that $E\subseteq \calG(K_2\times_{\calG\units}K_1)$ and $F$ is a $\calG$-invariant closed subset of $Z_3\times_{\calG\units}Z_2$ containing $E'$ such that $r|_F,s|_F$ are proper. Now assume that $(z,z'')\in F\circ (\calG(K_2\times_{\calG\units}K_1))$, so there exists $z'\in \calG K_2$, such that $(z,z')\in F$ and $(z',z'')\in \calG(K_2\times_{\calG\units}K_1)$. Then there exists $\gamma\in \calG$ and $(k',k'')\in K_{2,s(\gamma)}\times K_{1,s(\gamma)}$ such that $z'=\gamma k'$ and $z''=\gamma k''$. Since $F$ is $\calG$-invariant, $(\gamma\inv z,k')\in F$, which implies that $\gamma\inv z\in F\circ K_2$, and hence $(z,z'')=\gamma.(\gamma\inv z,k'')\in \calG((F\circ K_2)\times_{\calG\units}K_1)$. So we proved that $E'\circ E\subseteq F\circ \calG(K_2\times_{\calG\units}K_1)\subseteq \calG((F\circ K_2)\times_{\calG\units}K_1)$ is controlled. The proof for $E\circ E'$ is the same.
\end{proof}

\begin{corr}\label{coarse structure of G-space}
Assume that $\calG$ is an étale groupoid, $Z$ is a locally compact Hausdorff proper $\calG$-space. Then $\ca E_Z$ is a well-defined $\calG\units$-coarse structure on $Z$. And $\ca E_Z'$ is a well-defined $\calG\units$-coarse structure on $Z$, which is always unital. If $Z$ is $\calG$-compact, then $\ca E_Z=\ca E_Z'$ is unital. 
\end{corr}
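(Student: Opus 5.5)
The corollary follows almost directly from Proposition \ref{summary of controlled and weakly controlled sets} applied with $Z_1=Z_2=Z_3=Z$. I would check the two axioms of an $X$-coarse structure (with $X=\calG\units$) separately for $\ca E_Z$ and for $\ca E_Z'$, then verify unitality.

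\textbf{Axioms for $\ca E_Z$.} Closure under subsets is immediate, because both notions are defined by containment in some set: a subset of a set contained in a $\calG$-compact set is still contained in that set. Closure under inverses and finite unions is part (2) of the proposition. For compositions, take $E,F\in\ca E_Z$. By part (1), $F\in\ca E_Z'$. Then part (5) gives $E\circ F\in\ca E_Z$.

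\textbf{Axioms for $\ca E_Z'$.} Closure under subsets holds for the same containment reason. Inverses and unions are part (3), and compositions are part (4).

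\textbf{Unitality of $\ca E_Z'$.} I would take $F=\Delta_Z$.
\begin{itemize}
\item $F$ is closed in $Z\times_{\calG\units}Z$ because $Z$ is Hausdorff.
\item $F$ is $\calG$-invariant, since $\gamma(z,z)=(\gamma z,\gamma z)$.
\item Both restrictions $r|_F$ and $s|_F$ are homeomorphisms onto $Z$, hence proper.
\end{itemize}
Alternatively, $\Delta_Z$ is the graph $Gr(\mathrm{id}_Z)$ of the example preceding the proposition. So $\Delta_Z\in\ca E_Z'$.

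\textbf{The $\calG$-compact case.} If $Z$ is $\calG$-compact, part (1) gives $\ca E_Z=\ca E_Z'$, and unitality of $\ca E_Z$ then follows from that of $\ca E_Z'$.

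No step is a genuine obstacle. The one point worth stating explicitly is the composition axiom for $\ca E_Z$: it is not listed directly in the proposition and must be obtained by combining parts (1) and (5) as above.
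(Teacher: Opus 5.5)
Your proof is correct and follows essentially the same route as the paper: the corollary is stated as an immediate consequence of Proposition~\ref{summary of controlled and weakly controlled sets} with $Z_1=Z_2=Z_3=Z$, with unitality of $\mathcal E_Z'$ coming from $\Delta_Z=Gr(\mathrm{id}_Z)$ and the $\calG$-compact case from part~(1). Your explicit derivation of the composition axiom for $\mathcal E_Z$ by combining parts~(1) and~(5) correctly supplies the one step the proposition does not state verbatim.
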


\subsection{Support of operators}\label{section support}
Let $X$ be a locally compact Hausdorff space.

\begin{defn}
Let $(Z,\rho)$ be an $X$-space, $A$ be a $C_0(X)$-algebra, $(E,\pi)$ be a $Z,X,A$-module. For $e\in E$, we define $\supp_\pi (e)$ as the complement of
\[\{z\in Z:\exists f\in C_0(Z), \text{ s.t. }f(z)\neq 0, \pi(f)e=0\}.\]
Let $Ann_{\pi}(e)=\{f\in C_0(Z): \pi(f)e=0\}$, it is a closed ideal of $C_0(Z)$. Then let $U=\cup_{f\in Ann_\pi(e)}\{z\in Z:f(z)\neq 0\}$. The complement of $U$ is exactly $\supp_{\pi}(e)$. When there is no ambiguity, we simply write $\supp(e)$ for $\supp_\pi(e)$.

We say that $e$ is compactly supported if $\supp_\pi(e)$ is compact.
\end{defn}
It is easy to see that $\pi(C_c(Z))E$ is exactly the set of all compactly supported elements of $E$.

\begin{lem}
    Use the same notation as above, $(\supp_{\pi}(e))_x\supseteq \supp_{\pi_x}(e(x))$.
\end{lem}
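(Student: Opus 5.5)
To prove $(\supp_\pi(e))_x\supseteq \supp_{\pi_x}(e(x))$, where $(\supp_\pi(e))_x=\supp_\pi(e)\cap Z_x$ and $Z_x=\rho\inv(x)$, I will pass to complements inside the fiber $Z_x$. Concretely, I will show that every $z\in Z_x$ lying outside $\supp_\pi(e)$ also lies outside $\supp_{\pi_x}(e(x))$. Here $\pi_x:C_0(Z)_x\cong C_0(Z_x)\ra \calL(E_x)$ is the fiber of the $C_0(X)$-representation $\pi$, and I view $C_0(Z)$ as a $C_0(X)$-algebra through $\rho^*$. The identification of the fiber $C_0(Z)_x$ with $C_0(Z_x)$ is via restriction $f\mapsto f|_{Z_x}$. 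This is standard for the $C_0(X)$-algebra $C_0(Z)$, since $\overline{C_0(X\setminus\{x\})\cdot C_0(Z)}=C_0(Z\setminus Z_x)$ by a Stone--Weierstrass/ideal argument.

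So fix $z\in Z_x\setminus\supp_\pi(e)$. By the definition of the support, there is $f\in C_0(Z)$ with $f(z)\neq 0$ and $\pi(f)e=0$. I then apply the defining property of the fiber representation, $[\pi(f)e](x)=\pi_x(f(x))e(x)$, where $f(x)$ is the image of $f$ in $C_0(Z)_x$, that is, $f|_{Z_x}$. This gives
\[\pi_x(f|_{Z_x})e(x)=[\pi(f)e](x)=0.\]
Since $f|_{Z_x}\in C_0(Z_x)$ and $f|_{Z_x}(z)=f(z)\neq 0$, the point $z$ belongs to the open set whose complement defines $\supp_{\pi_x}(e(x))$. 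Hence $z\notin\supp_{\pi_x}(e(x))$, which is the claimed inclusion.

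The only point needing care is the identification $C_0(Z)_x\cong C_0(Z_x)$ and the compatibility of $\pi_x$ with it. Two facts are needed. First, the kernel of restriction $C_0(Z)\ra C_0(Z_x)$ is $C_0(Z\setminus Z_x)$. Second, this kernel equals the closed span of $\rho^*(C_0(X\setminus\{x\}))C_0(Z)$. For the second fact, the span is a closed ideal whose hull in $Z$ is exactly $Z_x$, and closed ideals of $C_0(Z)$ are determined by their hulls. Surjectivity of restriction follows from Tietze extension, since $Z_x$ is closed in $Z$. With this in hand the argument is a one-line restriction, so I expect no real obstacle. The reverse inclusion fails in general, because a section can vanish on a fiber without the corresponding function annihilating $e$ globally; this is why only one inclusion is claimed.
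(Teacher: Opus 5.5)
Your proposal is correct and is essentially the paper's argument: for $z\in Z_x$ outside $\supp_\pi(e)$, choose $f$ with $f(z)\neq 0$ and $\pi(f)e=0$, then restrict to the fiber to get $\pi_x(f|_{Z_x})e(x)=0$, so $z\notin\supp_{\pi_x}(e(x))$. The only difference is that you also justify the identification $C_0(Z)_x\cong C_0(Z_x)$ via restriction, which the paper uses implicitly.
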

\begin{proof}
Let $U(e)=\{z\in Z: \exists f\in C_0(Z), f(z)\neq 0\wedge \pi(f)e=0\}$. Then $\supp_{\pi}(e)=\overline{U(e)^c}$.

If $z\in U(e)\cap Z_x$, there exists $f\in C_0(Z)$ such that $f(z)\neq 0$ and $\pi(f)e=0$, then $f|_{Z_x}\in C_0(Z_x)$, $\pi_x(f|_{Z_x})e(x)=0$. This implies that $z\in U(e(x))$. Hence, $U(e)\cap Z_x\subseteq U(e(x)).$
So we have
$\supp_{\pi}(e)\cap Z_x\supseteq \supp_{\pi_x}(e(x))$.
\end{proof}

\begin{lem}\label{empty support must be 0}
    If $\supp_{\pi}(e)=\emptyset$, then $e=0$.
\end{lem}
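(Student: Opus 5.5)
If $\supp_\pi(e)=\emptyset$, then the open set $U$ from the definition of $\supp_\pi(e)$ is all of $Z$. The plan is to show that the closed ideal $Ann_\pi(e)$ is then all of $C_0(Z)$, and conclude with non-degeneracy of $\pi$.

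\emph{Step 1: $Ann_\pi(e)=C_0(Z)$.} By definition, $U=\bigcup_{f\in Ann_\pi(e)}\{z: f(z)\neq 0\}$, so the equality $U=Z$ says that no point of $Z$ is a common zero of $Ann_\pi(e)$. Closed ideals of $C_0(Z)$ are exactly those of the form $C_0(W)$ with $W$ open, where $W$ is the complement of the common zero set (the cozero set of the ideal). Here $W=U=Z$, hence $Ann_\pi(e)=C_0(Z)$.

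If one prefers to avoid the ideal–open set correspondence, there is a direct argument for $g\in C_c(Z)$. Cover $\supp(g)$ by finitely many sets $\{f_i\neq 0\}$ with $f_i\in Ann_\pi(e)$. Then $h=\sum_i |f_i|^2=\sum_i \bar f_i f_i$ lies in the ideal and is strictly positive on $\supp(g)$. Write $g=(g/h)\,h$ on $\supp(g)$, extended by zero; $g/h$ is continuous there because $h$ is bounded below on the compact support. Thus $g\in Ann_\pi(e)$. Since $Ann_\pi(e)$ is closed and $C_c(Z)$ is dense in $C_0(Z)$, this gives $Ann_\pi(e)=C_0(Z)$.

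\emph{Step 2: $e=0$.} Now $\pi(f)e=0$ for every $f\in C_0(Z)$. Because $\pi$ is non-degenerate, for an approximate unit $(f_\lambda)$ of $C_0(Z)$ we have $\pi(f_\lambda)e\to e$. Each term is $0$, so $e=0$.

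The only substantive step is Step 1: passing from "every point is a non-zero of some annihilating function" to "every function annihilates". The compact-support partition argument handles this, and the rest is immediate.
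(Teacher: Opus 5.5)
Your proof is correct and follows the paper's route: first show $Ann_\pi(e)=C_0(Z)$, then use non-degeneracy of $\pi$. The only differences are small. You justify $Ann_\pi(e)=C_0(Z)$, which the paper simply asserts. You also finish with an approximate unit, $\pi(f_\lambda)e\to e$, whereas the paper notes that $e$ is orthogonal to the dense span of $\pi(C_0(Z))E$.
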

\begin{proof}
Since $\supp_{\pi}(e)=\emptyset$, we have $Ann_{\pi}(e)=\{f\in C_0(Z):\pi(f)e=0\}=C_0(Z)$, therefore $\pi(C_0(Z))e=0$. Then for any $e'\in E$, $f\in C_0(Z)$,
\[\langle e,\pi(f)e'\rangle_E=\langle \pi(\bar f)e,e'\rangle_E=0.\]
Since $\pi$ is non-degenerate, $\pi(C_0(Z))E$ has dense span in $E$, hence $e=0$.
\end{proof}

\begin{corr}\label{no intersection between supports of vector and operator}
    \begin{enumerate}
        \item If $f\in C_b(Z)$ such that $f|_{\supp_{\pi}(e)}=0$, then $\tilde\pi(f)e=0$.
        \item If $f\in C_b(Z)$ such that $f|_{\supp_{\pi}(e)}=1$, then $\tilde\pi(f)e=e$.
    \end{enumerate}
\end{corr}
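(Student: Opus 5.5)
The plan is to reduce both statements to a computation with the ideal $Ann_\pi(e)$ and an approximate unit, and then use the definition of $\supp_\pi(e)$ together with lemma \ref{empty support must be 0}. Let $U=Z\setminus\supp_\pi(e)$, which is open. By definition, $U=\cup_{g\in Ann_\pi(e)}\{g\neq 0\}$. The first step is to show that $Ann_\pi(e)=C_0(U)$, viewed as an ideal of $C_0(Z)$. The inclusion $Ann_\pi(e)\subseteq C_0(U)$ is immediate from the definition of $U$. For the converse, note that $Ann_\pi(e)$ is a closed ideal of $C_0(Z)$, so $Ann_\pi(e)=C_0(W)$ for the open set $W$ of points where some element of $Ann_\pi(e)$ is nonzero. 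That set is exactly $U$, so $Ann_\pi(e)=C_0(U)$.

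For (1), let $f\in C_b(Z)$ vanish on $\supp_\pi(e)$. Pick an approximate unit $(u_\lambda)$ of $C_0(Z)$. Then $\tilde\pi(f)e=\lim_\lambda \tilde\pi(f)\pi(u_\lambda)e=\lim_\lambda\pi(fu_\lambda)e$, using that $\pi$ is non-degenerate and that $\tilde\pi$ is its strict extension. Each $fu_\lambda$ lies in $C_0(Z)$ and vanishes on the closed set $\supp_\pi(e)$, hence lies in $C_0(U)=Ann_\pi(e)$. Therefore $\pi(fu_\lambda)e=0$ for every $\lambda$, and so $\tilde\pi(f)e=0$.

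The one subtle point is that an element of $C_0(Z)$ vanishing on the closed set $C=\supp_\pi(e)$ belongs to $C_0(Z\setminus C)$. This is a standard fact: given $\epsilon>0$, the set $\{|h|\geq\epsilon\}$ is compact and disjoint from $C$. It is precisely the identification of closed ideals of $C_0(Z)$ with open subsets of $Z$.

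For (2), apply (1) to $1-f\in C_b(Z)$, which vanishes on $\supp_\pi(e)$. This gives $\tilde\pi(1-f)e=0$. Since $\tilde\pi$ is unital, being the strict extension of a non-degenerate representation, we get $e-\tilde\pi(f)e=0$, that is, $\tilde\pi(f)e=e$.

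I expect no real obstacle here. The only care needed is in step (1): checking that $fu_\lambda\in Ann_\pi(e)$ and that $\pi(u_\lambda)e\to e$, where the latter uses non-degeneracy. Lemma \ref{empty support must be 0} is then the special case of (2) with $\supp_\pi(e)=\emptyset$ and $f=0$, which is consistent with the argument above.
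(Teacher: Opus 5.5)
Your proof is correct and follows essentially the same route as the paper. Both arguments rest on the fact that any function in $C_0(Z)$ vanishing on $\supp_\pi(e)$ annihilates $e$, and both then conclude by non-degeneracy. You justify that fact explicitly via the closed-ideal correspondence $Ann_\pi(e)=C_0(Z\setminus\supp_\pi(e))$, whereas the paper uses it tacitly. The remaining difference is cosmetic: the paper shows $\pi(g)\tilde\pi(f)e=\pi(gf)e=0$ for all $g\in C_0(Z)$ and invokes lemma \ref{empty support must be 0}, while you pass to the limit along an approximate unit directly.
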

\begin{proof}
    (1) For any $g\in C_0(Z)$, $gf\in C_0(Z)$ vanishes on $\supp_\pi(e)$, hence $\pi(g)\tilde\pi(f)e=\pi(gf)e=0$. So we have $\supp_\pi(\tilde\pi(f)e)=\emptyset$. By lemma \ref{empty support must be 0}, we have $\tilde\pi(f)e=0$. (2) is implied by (1).
\end{proof}

\begin{corr}\label{elements with empty support intersection are orthogono}
If $e_1, e_2\in E$ has $\supp_{\pi}(e_1)\cap \supp_{\pi}(e_2)=\emptyset$, then $\langle e_1,e_2\rangle_{E}=0$.
\end{corr}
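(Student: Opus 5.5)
The plan is to reduce to Corollary \ref{no intersection between supports of vector and operator} via a continuous bump function that separates the two supports. Set $K_1=\supp_\pi(e_1)$ and $K_2=\supp_\pi(e_2)$. These are disjoint closed subsets of $Z$, but they need not be compact, so Urysohn's lemma for locally compact spaces does not apply directly.

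I would sidestep this by first cutting down with elements of $C_c(Z)$, in two steps.

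\textbf{Step 1: approximation.} By non-degeneracy of $\pi$, take an approximate unit $(u_\lambda)$ of $C_0(Z)$ with $u_\lambda\in C_c(Z)$. Then $\pi(u_\lambda)e_2\to e_2$, so it suffices to show $\langle e_1,\pi(u)e_2\rangle=0$ for every $u\in C_c(Z)$.

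\textbf{Step 2: Urysohn on a compact piece.} Fix $u\in C_c(Z)$ and put $L=\supp(u)\cap K_2$, which is compact and disjoint from the closed set $K_1$. Urysohn's lemma for locally compact Hausdorff spaces gives $g\in C_c(Z)$ with $0\le g\le 1$, $g=1$ on $L$, and $\supp(g)\subseteq Z\setminus K_1$.

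\textbf{Step 3: apply the corollary twice.} The function $u(1-g)$ vanishes on $K_2$: on $\supp(u)\cap K_2$ we have $g=1$, and elsewhere on $K_2$ we have $u=0$. By Corollary \ref{no intersection between supports of vector and operator}(1), $\pi(u(1-g))e_2=0$, hence $\pi(u)e_2=\pi(g)\pi(u)e_2$. Therefore
\[\langle e_1,\pi(u)e_2\rangle=\langle \pi(\bar g)e_1,\pi(u)e_2\rangle.\]
Since $\bar g$ vanishes on $K_1$, the same corollary gives $\pi(\bar g)e_1=0$, and the inner product is zero.

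I expect no real obstacle here. The only point needing care is the non-compactness of the supports, which is exactly what the $C_c$ truncation in Steps 1 and 2 handles.
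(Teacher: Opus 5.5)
Your proof is correct and follows essentially the paper's route: reduce to compactly supported pieces using elements of $C_c(Z)$, then use an Urysohn bump together with Corollary~\ref{no intersection between supports of vector and operator}. The difference is minor. The paper truncates both $e_1$ and $e_2$ and separates two disjoint compact supports, while you truncate only $e_2$ and separate the compact set $\supp(u)\cap K_2$ from the closed set $K_1$. Your version has the small advantage of not needing the fact $\supp_\pi(\pi(f)e)\subseteq\supp_\pi(e)$, which the paper uses without stating it.
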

\begin{proof}
    If for all $f\in C_c(Z)$, we have $\langle \pi(f)e_1,\pi(f)e_2\rangle=0$, then we have $\langle e_1,e_2\rangle=0$. So, it suffices to prove the case that $e_1,e_2$ are compactly supported. Since $\supp_\pi(e_1)$ and $\supp_\pi(e_2)$ are disjoint compact subsets of the locally compact Hausdorff space $Z$, there exists a relatively compact open neighborhood $U$ of $\supp_\pi(e_1)$ such that $U\cap \supp_\pi(e_2)=\emptyset$. By Urysohn's lemma, we can choose $f\in C_c(Z)_+$ such that $\supp(f)\subseteq V$ and $f|_{\supp_\pi(e_1)}=1$. By corollary \ref{no intersection between supports of vector and operator},
    \[\langle e_1,e_2\rangle=\langle \pi(f)e_1,e_2\rangle = \langle e_1,\pi(f)e_2\rangle=0.\]
\end{proof}

\begin{defn}
Let $(Z_1,\rho_1)$ and $(Z_2,\rho_2)$ be two $X$-spaces, $A$ be a $C_0(X)$-algebra, $(E,\pi)$ be a $Z_1,X,A$-module, $(F,\pi')$ be a $Z_2,X,A$-module, $T\in \Hom_A(E,F)$ (not necessarily adjointable). Then we define the support $\supp_{\pi',\pi}(T)$ of $T$ as the complement of the open
\[\{(z_2,z_1)\in Z_2\times_{X}Z_1:\exists f\in C_0(Z_2), g\in C_0(Z_1), \text{ s.t. }f(z_2)\neq 0, g(z_1)\neq 0, \pi'(f)T\pi(g)=0\}.\]
That is, the complement of $\supp_{\pi',\pi}(T)$ is the union of all opens of the form $U_2\times_X U_1$, where $U_2$ is an open of $Z_2$ and $U_1$ is an open of $Z_1$ such that $\pi'(C_0(U_2))T\pi(C_0(U_1))=0$. When there is no ambiguity, we simply write $\supp(T)$ for $\supp_{\pi',\pi}(T)$.

Equivalently, $\supp_{\pi',\pi}(T)$ consists of point $(z_2,z_1)\in Z_2\times_X Z_1$ such that, for any $f\in C_0(Z_2)$ and $g\in C_0(Z_1)$ such that $f(z_2)\neq 0$ and $g(z_1)\neq 0$, we have $\pi'(f)T\pi(g)\neq 0$.
\end{defn}

\begin{lem}\label{equiv cond of support}
    For $(z_2,z_1)\in Z_2\times_X Z_1$, the following are equivalent,
    \begin{enumerate}
        \item $(z_2,z_1)\in \supp_{\pi',\pi}(T)$;
        \item for any open neighborhood $U$ of $z_2$ and open neighborhood $V$ of $z_1$, there exists $f\in C_c(U), g\in C_c(V)$ such that $\pi'(f)T\pi(g)\neq 0$;
        \item for any open neighborhood $U$ of $z_2$ and open neighborhood $V$ of $z_1$, there exists $f\in C_0(U), g\in C_0(V)$ such that $\pi'(f)T\pi(g)\neq 0$.
    \end{enumerate}
\end{lem}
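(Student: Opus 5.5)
The plan is to prove the cycle of implications (1)$\Rightarrow$(2)$\Rightarrow$(3)$\Rightarrow$(1). Throughout, only the definition of $\supp_{\pi',\pi}(T)$ as the complement of the union of the open sets $\{f\neq 0\}\times_X\{g\neq 0\}$ with $\pi'(f)T\pi(g)=0$ is used, together with elementary approximation in $C_0$.

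\emph{(1)$\Rightarrow$(2).} Let $(z_2,z_1)\in\supp_{\pi',\pi}(T)$, and let $U\ni z_2$ and $V\ni z_1$ be open. By Urysohn's lemma for locally compact Hausdorff spaces, choose $f\in C_c(U)$ with $f(z_2)=1$ and $g\in C_c(V)$ with $g(z_1)=1$, extended by zero to $Z_2$ and $Z_1$. Since $f(z_2)\neq0$ and $g(z_1)\neq0$, the equivalent formulation of the definition gives $\pi'(f)T\pi(g)\neq0$.

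\emph{(2)$\Rightarrow$(3).} This is immediate, since $C_c(U)\subseteq C_0(U)$.

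\emph{(3)$\Rightarrow$(1).} Argue by contraposition. Suppose $(z_2,z_1)\notin\supp(T)$. Then there are $f_0\in C_0(Z_2)$ and $g_0\in C_0(Z_1)$ with $f_0(z_2)\neq0$, $g_0(z_1)\neq0$ and $\pi'(f_0)T\pi(g_0)=0$. Set $U=\{f_0\neq0\}$ and $V=\{g_0\neq0\}$. We claim that $\pi'(f)T\pi(g)=0$ for all $f\in C_0(U)$ and $g\in C_0(V)$, which contradicts (3).

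This claim is the only real step. The key observation is that $C_0(U)$ is the closed ideal of $C_0(Z_2)$ generated by $f_0$. For instance, the functions $f\cdot h_n(|f_0|)$, with $h_n(t)=\min(nt,1)$, converge uniformly to $f$, and each factors as $\bigl(f\cdot h_n(|f_0|)/f_0\bigr)\cdot f_0$. The quotient here is continuous and compactly supported inside $\{|f_0|\geq 1/n\}$, hence lies in $C_0$. Consequently $f$ is a norm limit of products $a_nf_0$ with $a_n\in C_0(Z_2)$, and similarly $g$ is a norm limit of products $g_0b_n$. Because $\pi'$ and $\pi$ are homomorphisms, $\pi'(a_nf_0)T\pi(g_0b_n)=\pi'(a_n)\bigl(\pi'(f_0)T\pi(g_0)\bigr)\pi(b_n)=0$. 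Continuity of $\pi,\pi'$ and boundedness of $T$ then give $\pi'(f)T\pi(g)=0$.

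I expect no genuine obstacle. The one point needing care is this factorization and approximation argument, which is a standard fact about ideals of $C_0(Z)$, or equivalently a consequence of Cohen factorization.
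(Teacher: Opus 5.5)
Your proof is correct, but it is organized differently from the paper's and does more work at the crucial step. The paper proves (1)$\Leftrightarrow$(3) and (2)$\Leftrightarrow$(3).
\begin{itemize}
\item For (3)$\Rightarrow$(1) it simply invokes the description of the complement of $\supp_{\pi',\pi}(T)$ given with ``That is'' in the definition: the union of the boxes $U_2\times_X U_1$ with $\pi'(C_0(U_2))T\pi(C_0(U_1))=0$.
\item For (3)$\Rightarrow$(2) it perturbs $f,g$ to nearby compactly supported functions, using norm continuity of $(f,g)\mapsto\pi'(f)T\pi(g)$.
\end{itemize}
Your cycle removes the perturbation step, because Urysohn's lemma gives compactly supported witnesses directly in (1)$\Rightarrow$(2). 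Your factorization argument in (3)$\Rightarrow$(1) proves exactly that the pointwise definition of the support agrees with the box description, which is the one fact the paper takes for granted. So your version is more self-contained, while the paper's is shorter because that fact is folded into the definition.

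One detail needs fixing. With $h_n(t)=\min(nt,1)$, the quotient $a_n=f\,h_n(|f_0|)/f_0$ is not supported in $\{|f_0|\geq 1/n\}$; it is nonzero wherever $f$ is. It still lies in $C_0(Z_2)$, because $|a_n|\leq n|f|$ everywhere and $a_n$ is continuous. On $U$ this is clear, and at a zero of $f_0$ the function $f$ vanishes, so the bound forces continuity there. You have three easy repairs:
\begin{itemize}
\item keep your cutoff and use the bound $|a_n|\leq n|f|$ as above;
\item use a cutoff vanishing on $[0,1/n]$, such as $h_n(t)=\min(\max(nt-1,0),1)$, for which your support claim is literally true;
\item first reduce to $f\in C_c(U)$ by density. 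Then $|f_0|$ is bounded below on $\supp(f)$, so $f=(f/f_0)f_0$ exactly with $f/f_0\in C_c(Z_2)$.
\end{itemize}
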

\begin{proof}
    (1) implies (3): If $(z_2,z_1)\in \supp_{\pi',\pi}(T)$, let $U$ be any open neighborhood of $z_2$ and $V$ be any open neighborhood of $z_1$, then there exists $f\in C_0(U)$ and $g\in C_0(V)$ such that $f(z_2)\neq 0$, $g(z_1)\neq 0$, then $(z_2,z_1)\in \supp_{\pi',\pi}(T)$ implies that $\pi'(f)T\pi(g)\neq 0$.

    (3) implies (1): If $(z_2,z_1)\in \supp_{\pi',\pi}(T)$, then there exists open neighborhood $U$ of $z_2$ and open neighborhood $V$ of $z_1$, such that $\pi'(C_0(U))T\pi(C_0(V))=0$.

    (2) implies (3) is trivial.

    (3) implies (2): After replacing $f$ by some $f_1\in C_c(U)$ such that $\|f-f_1\|_\infty$ is small enough and replacing $g$ by some $g_1\in C_c(V)$ such that $\|g-g_1\|_\infty$ is small enough, we can assume that $f\in C_c(U)$ and $g\in C_c(V)$.
\end{proof}

\begin{ex}
    An operator has empty support if and only if it is a zero map.
\end{ex}

\begin{defn}
    Use the same notation as above. We say that $T$ is properly supported, if the restrictions of the coordinate projections
    \[Z_2\times_X Z_1\ra Z_1, Z_2\times_X Z_1\ra Z_2\]
    to $\supp_{\pi',\pi}(T)$ are proper maps.
\end{defn}

Clearly, $T$ is properly supported if and only if for any compact subsets $K_1$ of $Z_1$ and $K_2$ of $Z_2$, $\supp_{\pi',\pi}(T)\circ K_1$ and $K_2\circ \supp_{\pi',\pi}(T)$ are compact.

\begin{defn}
Use the same notation as above. Let $\phi:Z_1\ra Z_2$ be an $X$-continuous map, we say that $T$ intertwines $\pi$ and $\pi'$ through $\phi$, if for any $f\in C_0(Z_2)$,
\[\pi'(f)\circ T=T\circ (\phi_*\pi)(f).\]
\end{defn}

\begin{prop}\label{support of intertwiner}
    Use the same notation as above. If $T$ intertwines $\pi$ and $\pi'$ through $\phi$, then $\supp_{\pi',\pi}(T)\subseteq Gr(\phi)$, where $Gr(\phi)$ is defined as
    \[Gr(\phi):=\{(\phi(z),z)\in Z_2\times_X Z_1:z\in Z_1\}.\]
\end{prop}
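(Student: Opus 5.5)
We argue by contraposition: a point $(z_2,z_1)\in Z_2\times_X Z_1$ with $z_2\neq\phi(z_1)$ is shown to lie outside $\supp_{\pi',\pi}(T)$. This requires an open neighborhood $U_2$ of $z_2$ and an open neighborhood $U_1$ of $z_1$ such that $\pi'(C_0(U_2))T\pi(C_0(U_1))=0$; the definition of support then gives the claim.

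\textbf{Choosing the neighborhoods.} Since $Z_2$ is Hausdorff and $z_2\neq\phi(z_1)$, pick disjoint open sets $U_2\ni z_2$ and $W\ni\phi(z_1)$. Set $U_1=\phi^{-1}(W)$, which is an open neighborhood of $z_1$ by continuity of $\phi$. Then $\phi(U_1)\subseteq W$ and $W\cap U_2=\emptyset$.

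\textbf{Verifying the vanishing.} Let $f\in C_0(U_2)$ and $g\in C_0(U_1)$, extended by zero to $Z_2$ and $Z_1$. The intertwining relation gives
\[\pi'(f)T\pi(g)=T\,(\phi_*\pi)(f)\,\pi(g)=T\,\tilde\pi(f\circ\phi)\,\pi(g)=T\,\pi\big((f\circ\phi)\,g\big).\]
The last equality uses that $\tilde\pi$ is the strictly continuous extension of $\pi$ to $C_b(Z_1)$, so it is multiplicative and restricts to $\pi$ on $C_0(Z_1)$. Now $(f\circ\phi)\,g=0$ pointwise: wherever $g(z)\neq0$ we have $z\in U_1$, so $\phi(z)\in W$, which lies outside $U_2$, and hence $f(\phi(z))=0$. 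Therefore $\pi'(f)T\pi(g)=0$, and $(z_2,z_1)\notin\supp_{\pi',\pi}(T)$.

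This shows $\supp_{\pi',\pi}(T)\subseteq Gr(\phi)$; there is no need to show $Gr(\phi)$ is closed. The only point needing care is the identity $\tilde\pi(h)\pi(g)=\pi(hg)$ for $h\in C_b(Z_1)$ and $g\in C_0(Z_1)$. It follows from strict continuity: approximate $h$ strictly by $h e_\lambda$, where $(e_\lambda)$ is an approximate unit of $C_0(Z_1)$.
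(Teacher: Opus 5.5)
Your proof is correct and is essentially the paper's argument: choose neighborhoods $U_2\ni z_2$ and $U_1\ni z_1$ with $\phi(U_1)\cap U_2=\emptyset$, then use the intertwining relation to get $\pi'(f)T\pi(g)=T\tilde\pi(\phi^*f)\pi(g)=0$. The only difference is cosmetic: the paper obtains the neighborhoods from the closedness of $Gr(\phi)$, while you build them directly from Hausdorff separation and the continuity of $\phi$, which is exactly how that closedness is proved.
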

\begin{proof}
    It is easy to see that $Gr(\phi)$ is a closed subset of $Z_2\times_X Z_1$. If $(z_2,z_1)\notin Gr(\phi)$, then there exists an open neighborhood $U_2$ of $z_2$ and $U_1$ of $z_1$ such that $U_2\times_X U_1\cap Gr(\phi)=\emptyset$. This implies that $\phi(U_1)\cap U_2=\emptyset$. For any $f\in C_0(U_2)$  and $g\in C_0(U_1)$, we have therefore $g\cdot \phi^*f=0$ in $C_0(Z_1)$. Then
    \[\pi'(f)T\pi(g)=T\tilde\pi(\phi^*f) \pi(g)=0.\]
    That is $(z_2,z_1)\notin \supp_{\pi',\pi}(T)$.
\end{proof}

\begin{ex}
    Let $(E,\pi)$ be a $Z,X,A$-module, then for any $f\in C_b(Z)$, 
    \[\supp_{\pi,\pi}(\tilde \pi(f))\subseteq \overline{\{(z,z)\in Z\times_X Z:f(z)\neq 0\}}.\]
\end{ex}

\begin{prop}\label{property of support of operator}
Let $Z_1,Z_2,Z_3$ be $X$-spaces, $A$ be a $C_0(X)$-algebra, $(E_i,\pi_i)$ be a $Z_i,X,A$-module for $i=1,2,3$ and $T\in \Hom_A(E_1,E_2)$ be an operator.
\begin{enumerate}
    \item For any $x\in X$, $(\supp_{\pi_2,\pi_1}(T))_x\supseteq  \supp_{\pi_{2,x},\pi_{1,x}}(T_x)$.
    \item If $e\in \pi_1(C_c(Z))E_1$, then
    \[\supp_{\pi_2}(Te)\subseteq \supp_{\pi_2,\pi_1}(T)\circ \supp_{\pi_1}(e).\]
    Moreover, $\supp_{\pi_2,\pi_1}(T)$ is the smallest closed subset of $Z_2\times_X Z_1$ satisfying this property.
    \item If $T$ is adjointable, then \[\supp_{\pi_1,\pi_2}(T^*)=\supp_{\pi_2,\pi_1}(T)\inv.\]
    \item If $T_1,T_2\in \Hom_A(E_1,E_2)$ are operators, then \[\supp_{\pi_2,\pi_1}(T_1+T_2)\subseteq \supp_{\pi_2,\pi_1}(T_1)\cup \supp_{\pi_2,\pi_1}(T_2).\]
    \item If $T'\in \Hom_A(E_2,E_3)$ is an operator, $T$ is properly supported, then \[\supp_{\pi_3,\pi_1}(T'\circ T)\subseteq \supp_{\pi_3,\pi_2}(T')\circ \supp_{\pi_2,\pi_1}(T).\]
    \item If $T'\in \Hom_A(E_2,E_3)$ is an operator, then \[\supp_{\pi_3,\pi_1}(T'\circ T)\subseteq \overline{\supp_{\pi_3,\pi_2}(T')\circ \supp_{\pi_2,\pi_1}(T)}.\]
\end{enumerate}
\end{prop}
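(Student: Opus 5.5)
I will prove the six items roughly in the order (1), (4), (2), (3), (6), (5). Item (5) uses (2), and (6) uses a weakened form of the same argument.

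For (1), take $(z_2,z_1)\in Z_{2,x}\times Z_{1,x}$ outside $\supp_{\pi_2,\pi_1}(T)$. Choose open sets $U_2\ni z_2$ and $U_1\ni z_1$ with $\pi_2(C_0(U_2))T\pi_1(C_0(U_1))=0$. Every $f\in C_c(U_2\cap Z_{2,x})$ extends (Tietze) to some $\tilde f\in C_0(U_2)$, and similarly for $g$. Since fibers of $T$ and of $\pi_i$ are compatible, $\pi_{2,x}(f)T_x\pi_{1,x}(g)$ is the fiber at $x$ of $\pi_2(\tilde f)T\pi_1(\tilde g)=0$. Hence the point lies outside $\supp(T_x)$.

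For (4), if $\pi_2(C_0(U_2))T_i\pi_1(C_0(U_1^i))=0$ for $i=1,2$ (after intersecting neighborhoods), the same holds for $T_1+T_2$. The set-theoretic complement argument then gives the inclusion.

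For (2), let $S=\supp(T)\circ\supp(e)$. Since $\supp(e)$ is compact, the projection $s$ restricted to $\supp(T)\cap s^{-1}(\supp(e))$ has compact domain fibers over a compact set. This should make $S$ closed: $S$ is the image under $r$ of the closed set $\supp(T)\cap (Z_2\times_X \supp(e))$, and a net in $S$ can be lifted, with the $z_1$-coordinates having a convergent subnet in the compact $\supp(e)$. Now take $z_2\notin S$ and a neighborhood $U_2$ of $z_2$ with compact closure disjoint from $S$. For each $z_1\in\supp(e)$, each pair $(w,z_1)$ with $w\in\overline{U_2}$ lies outside $\supp(T)$. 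Compactness of $\overline{U_2}\times\{z_1\}$ gives a neighborhood $V_{z_1}$ with $\pi_2(C_0(U_2))T\pi_1(C_0(V_{z_1}))=0$. Cover $\supp(e)$ by finitely many $V_{z_1}$ and take a partition of unity $g_j$ summing to $1$ on $\supp(e)$. Then $e=\sum_j\pi_1(g_j)e$ by Corollary \ref{no intersection between supports of vector and operator}, so $\pi_2(f)Te=0$ for all $f\in C_0(U_2)$, and $z_2\notin\supp(Te)$.

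Minimality in (2) runs as follows. Let $F$ be closed with the property and suppose $(z_2,z_1)\notin F$. Pick neighborhoods $U_2,U_1$, with $\overline{U_1}$ compact, such that $(U_2\times_X\overline{U_1})\cap F=\emptyset$. For $g\in C_c(U_1)$ we get $\supp(T\pi_1(g)e')\subseteq F\circ\overline{U_1}$, which is disjoint from $U_2$. Hence $\pi_2(f)T\pi_1(g)=0$ by Corollary \ref{no intersection between supports of vector and operator}, using density of compactly supported vectors.

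For (3), use $\|\pi_1(g)T^*\pi_2(f)\|=\|\pi_2(\bar f)T\pi_1(\bar g)\|$; the two supports then have the same complements with coordinates swapped.

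For (6), take $(z_3,z_1)$ outside the closure of $\supp(T')\circ\supp(T)$, and choose $U_3,U_1$ relatively compact with $(U_3\times_X U_1)\cap \supp(T')\circ\supp(T)=\emptyset$. Apply (2) twice to $e\in\pi_1(C_c(U_1))E_1$. The first application gives $\supp(Te)\subseteq\supp(T)\circ\overline{U_1}$. This set need not be compact, so (2) cannot be applied verbatim to $Te$.

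This is the main obstacle. I will first extend (2) to arbitrary $e$, with $\overline{\supp(T)\circ\supp(e)}$ on the right-hand side. To do so, approximate $Te$ by $\pi_2(h_\lambda)Te$ with $h_\lambda\in C_c(Z_2)$ from an approximate unit. Each $\pi_2(h_\lambda)Te$ is compactly supported, with support contained in $\supp(h_\lambda)\cap\overline{\supp(T)\circ\overline{U_1}}$. Applying (2) to $T'$ then gives $\pi_3(f)T'\pi_2(h_\lambda)Te=0$ for $f\in C_0(U_3)$, after shrinking $U_3,U_1$ slightly so that closures stay disjoint. Letting $\lambda\to\infty$ gives $\pi_3(f)T'Te=0$.

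For (5), proper support makes $\supp(T)\circ\supp(e)$ compact for compactly supported $e$. Then (2) applies directly twice, so $\supp(T'Te)\subseteq\supp(T')\circ\supp(T)\circ\supp(e)$. The composite $\supp(T')\circ\supp(T)$ is closed because $T$ is properly supported (same net-lifting argument as above). The minimality clause of (2) then yields the inclusion.
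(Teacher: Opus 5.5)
Your proposal is correct. For (1), (3), (4), (5) and the minimality clause of (2) it is essentially the paper's argument. In (5) you also check that $\supp(T')\circ\supp(T)$ is closed when $T$ is properly supported. The paper needs this, because the minimality clause only concerns closed sets, but leaves it implicit.

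The real difference is in (6). The paper argues pointwise. Assuming $(z_3,z_1)\in\supp(T'T)$, for each pair of neighborhoods $U\ni z_3$, $V\ni z_1$ it uses a partition of unity and an approximate unit of $C_0(Z_2)$ to find a witness $y_{U,V}\in Z_2$. It then finds points $z_{3,U,V}\in U$ and $z_{1,U,V}\in V$ with $(z_{3,U,V},y_{U,V})\in\supp(T')$ and $(y_{U,V},z_{1,U,V})\in\supp(T)$. This gives a net in the composite converging to $(z_3,z_1)$.

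You instead upgrade (2) to arbitrary vectors, $\supp(T'\xi)\subseteq\overline{\supp(T')\circ\supp(\xi)}$. You do this by cutting $\xi=Te$ down to $\pi_2(h_\lambda)Te\in\pi_2(C_c(Z_2))E_2$, applying (2), and taking the norm limit. Since a set disjoint from the open set $U_3$ has closure disjoint from $U_3$, (6) follows. Your route is shorter and more modular: it reuses (2) and gives the vector-level inclusion as a byproduct. The paper's route works directly with operator products and exhibits explicit approximating points.

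In the first half of (2) your organization also differs. You prove $\supp(T)\circ\supp(e)$ closed and run a compactness argument over $\overline{U_2}$. The paper just covers $\supp(e)$ by finitely many product neighborhoods of the pairs $(z_2,z_1)$ and intersects the corresponding neighborhoods of $z_2$, so it never needs closedness there.

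One point both versions gloss over: when $w$ and $z_1$ lie over different points of $X$, we have $(w,z_1)\notin Z_2\times_X Z_1$. So the vanishing of $\pi_2(C_0(O))T\pi_1(C_0(V))$ for small neighborhoods $O\ni w$, $V\ni z_1$ does not come from the definition of the support. It comes from the $C_0(X)$-linearity of $T$ and the $\pi_i$: separate the images in $X$ of the supports by some $\phi\in C_c(X)$.
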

\begin{proof}

(1) Let $(z_2,z_1)\in Z_{2,x}\times Z_{1,x}$ be not in $\supp_{\pi_2,\pi_1}(T)$. Then there exists an open neighborhood $U_2$ of $z_2$ in $Z_2$ and an open neighborhood $U_1$ of $z_1$ in $Z_1$, such that $\pi_2(C_0(U_2))T\pi_1(C_0(U_1))=0$. This implies that $\pi_{2,x}(C_0(U_{2,x}))T_x\pi_{1,x}(C_0(U_{1,x}))=0$. Therefore, $(z_2,z_1)\notin \supp_{\pi_{2,x},\pi_{1,x}}(T_x)$.

(2) If $z_2\notin \supp_{\pi_2,\pi_1}(T)\circ \supp_{\pi_1}(e)$, then for any $z_1\in \supp_{\pi_1}(e)$, we should have $(z_2,z_1)\notin \supp_{\pi_2,\pi_1}(T)$. Hence, for every $z_1\in \supp_{\pi_1}(e)$, we can choose an open neighborhood $U_{z_1}$ of $z_2$ in $Z_2$ and an open neighborhood $V_{z_1}$ of $z_1$ in $Z_1$, such that $\pi_2(C_0(U_{z_1}))T\pi_1(C_0(V_{z_1}))=0$. Since $(V_{z_1})_{z_1\in \supp_{\pi_1}(e)}$ is an open cover of the compact set $\supp_{\pi_1}(e)$, after selecting a finite subcover, there exists opens $V_1,\cdots, V_n$ of $Z_1$ and $U_1,\cdots, U_n$ of $Z_2$, such that
\begin{enumerate}
    \item for every $1\leqslant i\leqslant n$, $\pi_2(C_0(U_i))T\pi_1(C_0(V_i))=0$;
    \item $\supp_{\pi_1}(e)\subseteq \cup_{i=1}^n V_i$.
\end{enumerate}

By Urysohn's lemma, we can choose $g\in C_c(Z_1)$ such that $\supp(g)\subseteq \cup_{i=1}^n V_i$ and $g|_{\supp_{\pi_1}(e)}=1$. Using a partition of unity, we have $\pi_2(C_0(U))T\pi_1(g)=0$, where $U=\cap_{i=1}^n U_i$ is an open neighborhood of $z_2$. Then for any $f\in C_0(U)$,
\[\pi_2(f)Te=\pi_2(f)T\pi_1(g)e=0.\]
That implies that $z_2\notin \supp_{\pi_2}(Te)$. So we proved that $\supp_{\pi_2}(Te)\subseteq \supp_{\pi_2,\pi_1}(T)\circ \supp_{\pi_1}(e)$.

Now for the minimality, assume that $F\subseteq Z_2\times_X Z_1$ is a closed subset such that for any $e\in \pi_1(C_c(Z))E_1$, $\supp_{\pi_2}(Te)\subseteq F\circ \supp_{\pi_1}(e)$. Assume that $(z_2,z_1)\in \supp_{\pi_2,\pi_1}(T)$. Claim: for any relatively compact open neighborhoods $U_2$ of $z_2$ and $U_1$ of $z_1$, there exists $u\in U_2$ and $v\in U_1$, such that $(u,v)\in F$. Since $(z_2,z_1)\in \supp_{\pi_2,\pi_1}(T)$, there exists $f\in C_0(U_2)$ and $g\in C_0(U_1)$ such that $\pi_2(f)T\pi_1(g)\neq 0$. Then there exists $e\in E_1$ such that $\pi_2(f)T\pi_1(g)e\neq 0$. Let $e_1=\pi_1(g)e$, which is in $\pi_1(C_c(Z))E$ since $U_1$ is relatively compact, we see that $\supp_{\pi_1}(e_1)\subseteq U_1$ and $\supp_{\pi_2}(Te_1)\cap U_2\neq \emptyset$. And $\supp_{\pi_2}(Te_1)\subseteq F\circ \supp_{\pi_1}(e_1)$ implies that there exists $u\in U_2$ and $v\in U_1$ such that $(u,v)\in F$. We proved our claim. And the claim implies that $\supp_{\pi_2,\pi_1}(T)\subseteq F$. We proved the minimality.

The proof of (3) and (4) are trivial.

(5) Since $T$ is properly supported, by (2), $T$ maps $\pi_1(C_c(Z_1))E_1$ into $\pi_2(C_c(Z_2))E_2$. Then again by (2), for any $e\in \pi_1(C_c(Z_1))E_1$,
\[\supp_{\pi_3,\pi_1}(T'(Te))\subseteq \supp_{\pi_3,\pi_2}(T')\circ \supp_{\pi_2,\pi_1}(T)\circ \supp_{\pi_1}(e).\]
By the minimality of $\supp_{\pi_3,\pi_1}(T'\circ T)$, we have
\[\supp_{\pi_3,\pi_1}(T'\circ T)\subseteq \supp_{\pi_3,\pi_2}(T')\circ \supp_{\pi_2,\pi_1}(T).\]

(6) Suppose that $(z_3,z_1)\in \supp_{\pi_3,\pi_1}(T)$. Let $\ca O(z_3)$ be the upward-filtered ordered set of open neighborhoods of $z_3$ in $Z_3$ and $\ca O(z_1)$ be the upward-filtered ordered set of open neighborhoods of $z_1$ in $Z_1$. (If $U,U'\in \ca O(z_3)$, then $U\leqslant U'$ if and only if $U\supseteq U'$.) For any $U\in \ca O(z_3)$ and $V\in \ca O(z_1)$, there exists $f_{3,U,V}\in C_c(U)$ and $f_{2,U,V}\in C_c(V)$ such that $\pi_3(f_{3,U,V})T'T\pi_1(f_{1,U,V})\neq 0$. 

Claim: for any $U\in \ca O(z_3)$, $V\in \ca O(z_1)$, there exists $y_{U,V}\in Z_2$, such that for any open neighborhood $W$ of $y_{U,V}$, there exists $h\in C_0(W)$ such that
\[\pi_3(f_{3,U,V})T'\pi_2(h)T\pi_1(f_{1,U,V})\neq 0.\]
If not, for any $y\in Z_2$, there exists an open neighborhood $W_y$ of $y$, such that 
\[\pi_3(f_{3,U,V})T'\pi_2(C_0(W_y))T\pi_1(f_{1,U,V})=0.\]
Then after using a partition of unity, we have
\[\pi_3(f_{3,U,V})T'\pi_2(C_c(Z_2))T\pi_1(f_{1,U,V})=0,\]
then let $(h_\lambda)_\lambda$ be an approximate unit of $C_0(Z_2)$ contained in $C_c(Z_2)$, then 
\[0=\pi_3(f_{3,U,V})T'\pi_2(h_\lambda)T\pi_1(f_{1,U,V})\ra \pi_3(f_{U,V})T'T\pi_1(g_{U,V})\neq 0\]
in strong module topology, which is a contradiction. We proved our claim.

Claim: for any $U\in \ca O(z_3)$, $V\in \ca O(z_1)$, there exists $z_{3,U,V}\in U$, such that $(z_{3,U,V},y_{U,V})\in \supp_{\pi_3,\pi_2}(T')$. Otherwise, for any $z\in U$, there exists an open neighborhood $U'_z$ of $z$ and an open neighborhood $W_z$ of $y_{U,V}$, such that
\[\pi_3(C_0(U'_z))T'\pi_2(C_0(W_z))=0.\]

Since $f_{3,U,V}$ is compactly supported, there exists finitely many points $p_1, \cdots, p_m\in U$ such that $\supp(f_{3,U,V})\subseteq \cup_{i=1}^m U'_{p_i}$. Now let $W=\cap_{i=1}^m W_{p_i}$, after using a partition of unity, we have
\begin{equation}\label{eqn 2}
    \pi_3(f_{3,U,V})T'\pi_2(C_0(W))=0.
\end{equation}

However, by our choice of $y_{U,V}$, there exists some $h\in C_0(W)$ such that
\[\pi_3(f_{3,U,V})T'\pi_2(h)T\pi_1(f_{1,U,V})\neq 0,\]
which contradicts to (\ref{eqn 2}). Hence, we proved our claim. Similarly, there exists $z_{1,U,V}\in V$ such that $(y_{U,V}, z_{1,U,V})\in \supp_{\pi_2,\pi_1}(T)$.

Now we have: for each $U\in \ca O(z_3), V\in \ca O(z_1)$, $(z_{3,U,V}, z_{1,U,V})\in \supp_{\pi_3,\pi_2}(T')\circ \supp_{\pi_2,\pi_1}(T)$, and
\[\lim_{(U,V)\in \ca O(z_3)\times \ca O(z_1)}(z_{3,U,V}, z_{1,U,V})=(z_3,z_1).\]
We have therefore $(z_3,z_1)\in \overline{\supp_{\pi_3,\pi_2}(T')\circ \supp_{\pi_2,\pi_1}(T)}$.
\end{proof}

The following lemma is an analog of \cite[Lemma 4.1.15]{willett2020higher}.
\begin{lem}\label{properly supported operator transit compact support}
    Let $Z_1, Z_2$ be $X$-spaces, $A$ be a $C_0(X)$-algebra, $(E_1,\pi_1)$ be a $Z_1,X,A$-module, $(E_2,\pi_2)$ be a $(Z_2,X,A)$-module. Let $T:E_1\ra E_2$ be a properly supported bounded $A$-linear map and let $F=\supp_{\pi_2,\pi_1}(T)$.
    \begin{enumerate}
        \item If $K_1$ is a compact subset of $Z_1$, $f\in C_c(Z_1)$ such that $\supp(f)\subseteq K_1$, then for any $g\in C_c(Z_2)$ such that $g|_{F\circ K_1}=1$, we have $\pi_2(g)T\pi_1(f)=T\pi_1(f)$.
        \item If $K_2$ is a compact subset of $Z_2$, $g\in C_c(Z_2)$ such that $\supp(g)\subseteq K_2$, then for any $f\in C_c(Z_1)$ such that $f|_{K_2\circ F}=1$, we have $\pi_2(g)T\pi_1(f)=\pi_2(g)T$.
    \end{enumerate}
\end{lem}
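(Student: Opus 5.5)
We prove (1) by showing that $T\pi_1(f)e$ has support inside $F\circ K_1$ for every $e\in E_1$, and then applying Corollary \ref{no intersection between supports of vector and operator}(2) to $g$. Part (2) is handled by a dual argument, not by adjoints: $T$ need not be adjointable, so we cannot simply pass to $T^*$ and reduce to (1).

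For (1), fix $e\in E_1$ and put $e'=\pi_1(f)e$. Since $f$ vanishes outside $K_1$, any $h\in C_0(Z_1)$ with $h|_{K_1}=0$ satisfies $hf=0$, hence $\pi_1(h)e'=0$. Therefore $\supp_{\pi_1}(e')\subseteq \supp(f)\subseteq K_1$, so $e'$ is compactly supported and lies in $\pi_1(C_c(Z_1))E_1$. Proposition \ref{property of support of operator}(2) then gives
\[\supp_{\pi_2}(Te')\subseteq F\circ \supp_{\pi_1}(e')\subseteq F\circ K_1.\]
Because $T$ is properly supported, $F\circ K_1$ is compact. Since $g|_{F\circ K_1}=1$, $g$ equals $1$ on $\supp_{\pi_2}(Te')$, and Corollary \ref{no intersection between supports of vector and operator}(2) yields $\pi_2(g)Te'=Te'$. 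As $e$ was arbitrary, $\pi_2(g)T\pi_1(f)=T\pi_1(f)$.

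For (2), we show that $\pi_2(g)T\pi_1(1-f)=0$, where $\tilde\pi_1(1-f)$ is understood via the strictly continuous extension. Since $\pi_1(C_c(Z_1))E_1$ is dense in $E_1$ and $\tilde\pi_1(1-f)$ maps this subspace into itself, it suffices to check that $\pi_2(g)Te'=0$ for every compactly supported $e'$ of the form $e'=\tilde\pi_1(1-f)e$, with $e$ compactly supported. Such an $e'$ has support in $\overline{\{z:f(z)\neq 1\}}$, and this set might meet $K_2\circ F$ only on its boundary, which is the first point to control. To avoid it, we first choose an open $W\supseteq K_2\circ F$ on which $f$ is still close to $1$, or more cleanly we argue locally. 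By the definition of support, for each $z_1$ outside the compact set $K_2\circ F$ and each $z_2\in K_2$ we have $(z_2,z_1)\notin F$, so there are neighbourhoods $U_{z_2}$, $V_{z_1}$ with $\pi_2(C_0(U_{z_2}))T\pi_1(C_0(V_{z_1}))=0$. Covering $K_2$ by finitely many of the $U_{z_2}$ and using a partition of unity as in the proof of Proposition \ref{property of support of operator}(2), we get $\pi_2(g)T\pi_1(h)=0$ for every $h\in C_0(Z_1\setminus (K_2\circ F))$.

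Now the function $1-f$ vanishes on $K_2\circ F$, but it is only in $C_b$ and not in $C_0$ of the complement. For $e$ compactly supported, say $\supp_{\pi_1}(e)\subseteq L$ with $L$ compact, we instead use $h=(1-f)k$, where $k\in C_c(Z_1)$ and $k|_L=1$. This $h$ lies in $C_0(Z_1)$ and vanishes on $K_2\circ F$. Approximating $h$ in sup-norm by functions compactly supported in $Z_1\setminus(K_2\circ F)$ (for instance $h_\epsilon=\max(|h|-\epsilon,0)$ multiplied by the phase of $h$) shows that $h\in C_0(Z_1\setminus (K_2\circ F))$. Hence
\[\pi_2(g)T\tilde\pi_1(1-f)e=\pi_2(g)T\pi_1(h)e=0.\]
By density and boundedness of the operators involved, $\pi_2(g)T=\pi_2(g)T\pi_1(f)$. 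The main obstacle is precisely this step: making the vanishing statement legitimate for non-compactly-supported functions and for $T$ that is only bounded and not adjointable. The cutoff function $k$ together with the approximation argument is how we intend to handle it.
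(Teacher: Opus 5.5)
Your proposal is correct and follows essentially the same route as the paper. In (1) you confine $\supp_{\pi_2}(T\pi_1(f)e)$ to $F\circ K_1$ and then apply Corollary~\ref{no intersection between supports of vector and operator}; the only difference is that you cite Proposition~\ref{property of support of operator}(2) directly, where the paper re-derives the local vanishing. In (2) you prove that $\pi_2(g)T\pi_1(\cdot)$ vanishes locally near each point off $K_2\circ F$ and then globalize, with your cutoff $k$ and the sup-norm approximation of $(1-f)k$ playing the role of the paper's ideal $I$ and approximate unit $(h_K)_K$. The one compressed step is the passage from the neighbourhoods $V_{z_1}$ to all $h\in C_0(Z_1\setminus(K_2\circ F))$: it needs a second finite cover and partition of unity on the compact support of each $h_\epsilon$, which is routine.
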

\begin{proof}
    (1) By proposition \ref{property of support of operator},
    \[\supp_{\pi_2,\pi_1}(T\pi_1(f))\subseteq F\circ \supp_{\pi_1,\pi_1}(\pi_1(f))\subseteq (F\circ K_1)\times_X K_1.\]
    Fix $y\in Z_2\setminus (F\circ K_1)$. Then for any $x\in K_1$, $(y,x)\notin \supp_{\pi_2,\pi_1}(T\pi_1(f))$, so there exists an open neighborhood $V_{y,x}$ of $x$ in $Z_1$ and an open neighborhood $U_{y,x}$ of $y$ in $Z_2\setminus (F\circ K_1)$, such that
    \[\pi_2(C_0(U_{y,x}))T\pi_1(f)\pi_1(C_0(V_{y,x}))=0.\]

    There are finitely many points $x_1,\cdots, x_n$ such that $K_1\subseteq \cup_{i=1}^n V_{y,x_i}$. Then there exists a partition of unity: $\phi_1\in C_c(V_{y,x_1},[0,1]),\cdots, \phi_n\in C_c(V_{y,x_n},[0,1])$, such that for any $z\in K_1$, $\sum_{i=1}^n \phi_i(z)=1$.  Let $U_y=\cap_{i=1}^n U_{y,x_i}$. So for any $g_y\in C_0(U_y)$, we have
    \[\pi_2(g_y)T\pi_1(f)=\sum_{i=1}^n \pi_2(g_y)T \pi_1(f)\pi_1(\phi_i)=0.\]
    In conclusion, for any $e\in E_1$, we proved that for any $y\notin F\circ K_1$, $y$ is not in $\supp_{\pi_2}(T\pi_1(f)e)$, so $\supp_{\pi_2}(T\pi_1(f)e)\subseteq F\circ K$. Now $\chi_{Z_2}-g$ is an element of $C_b(Z_2)$ such that $(\chi_{Z_2}-g)|_{F\circ K_1}=0$. By corollary \ref{no intersection between supports of vector and operator}, for any $e\in E_1$,
    \[(T\pi_1(f)-\pi_2(g)T\pi_1(f))e=\tilde \pi_2(\chi_{Z_2}-g)T\pi_1(f)e=0,\]
    that is $\pi_2(g)T\pi_1(f)=T\pi_1(f)$.

    (2) Use similar argument, we can prove that for any $x\in Z_1\setminus (K_2\circ F)$, there exists an open neighborhood $V_x$ of $x$ in $Z_1\setminus (K_2\circ F)$, such that
    \[\pi_2(g)T\pi_1(C_0(V_x))=0.\]
    Let $I=\{\psi\in C_0(Z_1): \pi_2(g)T\pi_1(\psi)=0\}$. So $I$ is a closed ideal of $C_0(Z_1)$. Let $U=\cup_{\psi\in I}\{z\in Z_1: \psi(z)\neq 0\}$. So we proved that $Z_1\setminus (K_2\circ F)\subseteq U$ and therefore for any $f\in C_0(Z_1)$ such that $f|_{K_2\circ F}=0$, we have $\pi_2(g)T\pi_1(f)=0$.

    Let $\Lambda$ be the set of all compact subsets of $Z_1$, equipped with upward-filtering order defined by inclusions. For any $K\in \Lambda$, choose some $h\in C_c(Z_1)$ such that $h|_K=1$. Then $(h_K)_{K\in \Lambda}$ is an approximate unit of $C_0(Z_1)$. Since $\pi_1$ is non-degenerate, $\pi_1(h_K)$ converge strictly to $id_{E_1}$. For any $K\in \Lambda$ such that $\supp(f)\subseteq K$, $h_K-f$ is an element of $C_c(Z_1)$ such that $(h_K-f)|_{K_2\circ F}=0$. So for $K$ large enough, $\pi_2(g)T\pi_1(h_K-f)=0$. After taking the strict limit, we have $\pi_2(g)T=\pi_2(g)T\pi_1(f)$.
\end{proof}

\begin{lem}\label{support of equivariant operator is equivariant}
    Let $\calG$ be an étale groupoid, $A$ be a $\calG$-\cst-algebra, $(Z_1,\rho_1),(Z_2,\rho_2)$ be locally compact Hausdorff proper $\calG$-spaces, $(E_1,\pi_1), (E_2,\pi_2)$ be $Z,\calG,A$-modules. Assume that $T\in \Hom_A(E_1,E_2)$ is $\calG$-equivariant, then $\supp_{\pi_2,\pi_1}(T)$ is a $\calG$-invariant closed subset of $Z_2\times_{\calG\units}Z_1$.
\end{lem}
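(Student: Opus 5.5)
Closedness holds by definition, since the support is the complement of a union of open sets of the form $U_2\times_{\calG\units}U_1$. So the task is $\calG$-invariance. Here $\calG$ acts diagonally on $Z_2\times_{\calG\units}Z_1$ by $\gamma(z_2,z_1)=(\gamma z_2,\gamma z_1)$ whenever $\rho_1(z_1)=\rho_2(z_2)=s(\gamma)$. Because $\gamma\inv$ acts as well, it is enough to show that the complement of $\supp_{\pi_2,\pi_1}(T)$ is $\calG$-invariant. Concretely: if $(z_2,z_1)\notin\supp(T)$ with $\rho(z_i)=s(\gamma)$, then $(\gamma z_2,\gamma z_1)\notin\supp(T)$.

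The plan is to work fiberwise, using the local homeomorphism structure of $\calG$. Fix a point $(z_2,z_1)$ outside the support. Then there are open $U_2\ni z_2$ and $U_1\ni z_1$ with $\pi_2(C_0(U_2))T\pi_1(C_0(U_1))=0$. Since $\calG$ is étale, choose an open bisection $S\ni\gamma$. The action is continuous and $S$ is a bisection, so the map $z\mapsto \sigma(z)z$ is a homeomorphism from $Z_{i,s(S)}$ onto $Z_{i,r(S)}$, where $\sigma(z)$ denotes the unique element of $S$ with source $\rho_i(z)$. We may shrink $U_i$ so that $U_i\subseteq Z_{i,s(S)}$, and set $U_i'=SU_i$, an open neighborhood of $\gamma z_i$. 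The goal is then $\pi_2(C_0(U_2'))T\pi_1(C_0(U_1'))=0$.

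To check this vanishing, it suffices to look at each fiber $x'\in r(S)$, since an operator into $E_2$ vanishes iff all its fibers do. Given $f\in C_0(U_2')$ and $g\in C_0(U_1')$, fix $x'=r(\eta)$ with $\eta\in S$. Equivariance of $T$ gives $T_{r(\eta)}=W_\eta T_{s(\eta)}V_\eta^*$, and equivariance of $\pi_i$ gives $\pi_{i,r(\eta)}(h)=V_\eta\pi_{i,s(\eta)}(h(\eta\,\cdot))V_\eta^*$. Combining these,
\[(\pi_2(f)T\pi_1(g))_{r(\eta)}=W_\eta\,\pi_{2,s(\eta)}(f(\eta\cdot))\,T_{s(\eta)}\,\pi_{1,s(\eta)}(g(\eta\cdot))\,V_\eta^*.\]
The pulled-back functions $\tilde f(z)=f(\sigma(z)z)$ on $U_2$ and $\tilde g$ on $U_1$ are continuous and vanish at infinity on $U_i$, so they extend by zero to elements of $C_0(U_i)$. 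Their restrictions to the fiber over $s(\eta)$ are exactly $f(\eta\cdot)$ and $g(\eta\cdot)$. Hence the middle factor is the fiber at $s(\eta)$ of $\pi_2(\tilde f)T\pi_1(\tilde g)=0$, and so it vanishes. Fibers over points outside $r(S)$ vanish trivially, because $f$ and $g$ are supported over $r(S)$.

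The main technical point is the claim that $\tilde f\in C_0(U_2)$. I would first reduce to $f\in C_c(U_2')$, which is allowed by density together with Lemma~\ref{equiv cond of support}. In that case $\tilde f$ has compact support, namely the image of $\supp f$ under the inverse homeomorphism $z'\mapsto\sigma'(z')\inv z'$, where $\sigma'(z')\in S$ is the unique element with range $\rho(z')$. That image is a compact subset of $U_2$, which settles the claim.
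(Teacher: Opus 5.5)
Your proposal is correct and follows essentially the same route as the paper's proof: translate a product neighborhood lying outside the support by an open bisection through $\gamma$, then verify the vanishing fiber by fiber over $r(S)$ using the $\calG$-equivariance of $T$ and of the $\pi_i$. Your pulled-back functions $\tilde f,\tilde g\in C_0(U_i)$ spell out a step the paper leaves implicit, namely why the fiberwise products $\pi_{2,s(\eta)}(f(\eta\cdot))\,T_{s(\eta)}\,\pi_{1,s(\eta)}(g(\eta\cdot))$ vanish.
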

\begin{proof}
    It suffices to show that the complement of $\supp_{\pi_2,\pi_1}(T)$ is $\calG$-invariant. We will prove that, for any $(z_2,z_1)\not\in \supp_{\pi_2,\pi_1}(T)$ and $\gamma\in \calG$ such that $s(\gamma)=\rho_1(z_1)=\rho_2(z_2)$, we have $(\gamma z_1,\gamma z_2)\not\in \supp_{\pi_2,\pi_1}(T)$.

    Without loss of generality, we can select an open bisection $B$ containing $\gamma$, an open neighborhood $V_1$ of $Z_1$ and an open neighborhood $V_2$ of $Z_2$, such that $s(B)=\rho_1(V_1)=\rho_2(V_2)$, and $(V_2\times_{\calG\units}V_1)\cap \supp_{\pi_2,\pi_1}(T)=\emptyset$. By definition, 
    \[\pi_2|_{s(B)}(C_0(V_2))T|_{s(B)}\pi_1|_{s(B)}(C_0(V_1))=0.\]

    Claim: $\pi_2(C_0(BV_2))T\pi_1(C_0(BV_1))=0$. Assume that $W_i\in \calL(s^*E_i,r^*E_i)$ is the action of $\calG$ on $E_i$ for $i=1,2$. For any $f\in C_0(BV_2)$ and $g\in C_0(BV_1)$, clearly for any $x\in \calG\units\setminus r(B)$,
    \[(\pi_2(C_0(BV_2))T\pi_1(C_0(BV_1)))_x=0.\]
    Otherwise, for any $x\in r(B)$, assume that $\gamma'=r|_B\inv(x)$, then we have
    \begin{align*}
        & (\pi_2(C_0(BV_2))T\pi_1(C_0(BV_1)))_x \\
        & = \pi_{2,r(\gamma')}(C_0(\gamma' V_{2,s(\gamma')}))T_{r(\gamma')}\pi_{1,r(\gamma')}(C_0(\gamma'V_{1,s(\gamma')})) \\
        & =W_{2,\gamma'}\pi_{2,s(\gamma')}(C_0(V_{2,s(\gamma')})) W_{2,\gamma'}\inv T_{r(\gamma')} W_{1,\gamma'} \pi_{1,s(\gamma')}(C_0(V_{1,s(\gamma')})) W_{1,\gamma'}\inv\\
        & =W_{2,\gamma'} \pi_{2,s(\gamma')}(C_0(V_{2,s(\gamma')})) T_{s(\gamma')} \pi_{1,s(\gamma')}(C_0(V_{1,s(\gamma')})) W_{1,\gamma}\inv=0.
    \end{align*}
    We proved our claim. And therefore, $(\gamma z_2,\gamma z_1)\not\in \supp_{\pi_2,\pi_1}(T)$.
\end{proof}

\begin{prop}\label{support of inflation}
    Let $\calG$ be an étale groupoid, $\calH$ be a relatively clopen subgroupoid, $\Omega=\calG_{\calH\units}$, $A$ be a $\calG$-\cst-algebra, $E_1, E_2$ be a $\calG$-Hilbert $A$-module, $(Z,\rho)$ be a locally compact Hausdorff $\calH$-space, and $\pi_i:C_0(Z)\ra \calL(E_i)$ be a non-degenerate $\calH$-equivariant representation for $i=1,2$. If $T\in \Hom_A(E_1,E_2)$ is $\calH$-equivariant, then $\supp_{\infl^\calG_\calH\pi_2, \infl^\calG_\calH\pi_1}(\infl^\calG_\calH(T))$ is contained in
    \[ \{([\gamma, z_2], [\gamma, z_1])\in (\Omega\times_\calH Z)\times_{\calG\units}(\Omega\times_\calH Z):\gamma\in \Omega, (z_2,z_1)\in \supp_{\pi_2,\pi_1}(T), s(\gamma)=\rho(z_1)=\rho(z_2)\},\]
    which can be identified with $\Omega\times_\calH \supp(T)$.
\end{prop}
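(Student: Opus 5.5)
We argue by contrapositive: we take a point of $(\Omega\times_\calH Z)\times_{\calG\units}(\Omega\times_\calH Z)$ outside the displayed set $S:=\Omega\times_\calH\supp(T)$ and show it lies outside the support of $\infl^\calG_\calH(T)$. Such a point has the form $([\gamma_2,z_2],[\gamma_1,z_1])$ with $r(\gamma_1)=r(\gamma_2)$.

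First, we show that we may take $\gamma_1=\gamma_2$. Since $\gamma_1,\gamma_2\in\Omega^x$ and the right $\calH$-action on $\Omega$ is free, we have two cases. If $\gamma_1\calH\neq\gamma_2\calH$, the two classes lie in different $\calH$-orbits of $\Omega^x$. If $\gamma_1\calH=\gamma_2\calH$, we write $\gamma_2=\gamma_1h$; rewriting $[\gamma_2,z_2]=[\gamma_1,hz_2]$ makes the first coordinates equal, and the condition of not lying in $S$ then becomes $(hz_2,z_1)\notin\supp(T)$.

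\textbf{Case $\gamma_1\calH\neq\gamma_2\calH$.} The quotient map $\Omega\ra\Omega/\calH$ is a local homeomorphism and $\tilde r$ is a local homeomorphism. We choose an open bisection-like neighbourhood $B_i$ of $\gamma_i$ in $\Omega$ such that $B_1\calH\cap B_2\calH=\emptyset$; shrinking using Hausdorffness of $\Omega/\calH$ makes this possible. The sets $U_i=B_i\times_\calH Z$ are open neighbourhoods of the two points. For $f\in C_0(U_2)$, $g\in C_0(U_1)$ and $\eta\in\ind_{\Omega,c}E_1$, the definition gives
\[
[\infl(\pi_2)(f)\infl(T)\infl(\pi_1)(g)\eta](\omega)=\pi_{2,s(\omega)}(f(\omega))T_{s(\omega)}\pi_{1,s(\omega)}(g(\omega))\eta(\omega),
\]
where $f(\omega)\in C_0(Z_{s(\omega)})$ denotes the induced section. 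This vanishes pointwise, because $f(\omega)$ and $g(\omega)$ cannot both be nonzero. Density of $\ind_{\Omega,c}E_1$ then gives zero.

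\textbf{Case $\gamma_1=\gamma_2=\gamma$, $(z_2,z_1)\notin\supp(T)$.} We pick open $V_2\ni z_2$ and $V_1\ni z_1$ in $Z$ with $\pi_2(C_0(V_2))T\pi_1(C_0(V_1))=0$. By $\calH$-equivariance of $T$ and the argument of lemma \ref{support of equivariant operator is equivariant}, we may enlarge these to $\calH$-saturated statements: for all $h\in\calH$ one has
\[
\pi_{2}(C_0(hV_2))\,T\,\pi_{1}(C_0(hV_1))=0
\]
fibrewise, i.e. $\pi_{2,y}(C_0((\calH V_2)\ldots))$ is handled via the bisection trick. We take a small open bisection $B\ni\gamma$ in $\Omega$ and set $U_i=$ the image of $B\times_{\calH\units}V_i$ in $\Omega\times_\calH Z$. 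These sets are open because the quotient map of the proper free action is open. For $f\in C_0(U_2)$ and $g\in C_0(U_1)$, the pointwise formula at $\omega\in\Omega$ factors through the $\calH$-translate $\omega=\beta h$ with $\beta\in B$. Using the equivariance $\eta(\omega h)=V_{h^{-1}}\eta(\omega)$, it reduces to $V_{h^{-1}}\pi_{2,s(\beta)}(f_\beta)T_{s(\beta)}\pi_{1,s(\beta)}(g_\beta)(\cdots)$ with $f_\beta\in C_0(V_{2,s(\beta)})$ and $g_\beta\in C_0(V_{1,s(\beta)})$. This vanishes because the fibres of $\pi_2(C_0(V_2))T\pi_1(C_0(V_1))$ vanish; here we use proposition \ref{property of support of operator}(1)-style fibre restriction, noting that for $x$ in an open set the fibre of a zero operator is zero.

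The main obstacle will be the bookkeeping in the second case: describing $C_0(U_i)$-functions as $\calH$-equivariant families $\omega\mapsto f(\omega)\in C_0(Z_{s(\omega)})$, and checking that the operator defined on the dense subspace $\ind_{\Omega,c}E_1$ indeed has the pointwise formula above. Both points follow from definition \ref{defn inflation of representation} together with the identification of $\ind_\Omega C_0(Z)$ with $C_0(\Omega\times_\calH Z)$. We will also note that $S$ is closed, being the image of the closed $\calH$-invariant set $\Omega\times_{\calH\units}\supp(T)$ under the quotient by a proper action; this gives the identification with $\Omega\times_\calH\supp(T)$.
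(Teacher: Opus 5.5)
Your argument is correct, and its skeleton matches the paper's: contrapositive, a split according to whether $\gamma_1\calH=\gamma_2\calH$, and in the first case the same proof (neighbourhoods built from $B_1,B_2$ with $B_1\calH\cap B_2\calH=\emptyset$, so $f([\omega,-])$ and $g([\omega,-])$ are never both nonzero).

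The real difference is in the second case, and there your route is the better one.

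\begin{itemize}
\item \textbf{The paper's second case.} It uses the saturated neighbourhoods $[\Omega,U_i]$ and asserts $f([\gamma,-])\in C_0(U_{2,s(\gamma)})$ and $g([\gamma,-])\in C_0(U_{1,s(\gamma)})$, without using equivariance. But $[\gamma,z]\in[\Omega,U_i]$ iff $z\in\calH U_i$. So the assertion holds only when the $U_i$ are $\calH$-invariant, and this cannot be arranged in general.
\item \textbf{Counterexample to that step.} Take $\calG=\calH=\mathbb Z/2$ acting on $Z=\{a,b\}$ by swapping, $T=\mathrm{id}$, and the point $(a,b)$. Then $[\Omega,U_2]\times[\Omega,U_1]$ is all of $Z\times Z$, which meets the diagonal support.
\item \textbf{Your fix.} You localise to $[B,V_i]$ with $B\ni\gamma$ an open bisection. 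You then transport from $\omega=\beta h$ back to $\beta\in B$ using the $\calH$-equivariance of $\pi_1,\pi_2$ and $T$. This is exactly what makes the second case work.
\end{itemize}

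When writing it up, delete the garbled sentence about enlarging to ``$\calH$-saturated statements''; you never use it. Instead, state explicitly the facts your computation relies on:
\begin{itemize}
\item $f([\omega,-])=g([\omega,-])=0$ unless $\omega\in B\calH$.
\item For $\omega\in B\calH$ the factorisation $\omega=\beta h$ is unique, by injectivity of $r|_B$ and freeness. Hence the \emph{same} $\beta,h$ serve for both $f$ and $g$, with $f([\beta,-])\in C_0(V_{2,s(\beta)})$ and $g([\beta,-])\in C_0(V_{1,s(\beta)})$.
\item Restriction $C_0(V_i)\to C_0(V_{i,x})$ is surjective. So every such pair lifts, and $\pi_{2,x}(f_\beta)T_x\pi_{1,x}(g_\beta)=0$ follows from $\pi_2(C_0(V_2))T\pi_1(C_0(V_1))=0$.
\end{itemize}
The closedness of $S$ is not needed for the inclusion.
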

\begin{proof}
    If $([\gamma_1,z_1],[\gamma_2,z_2])\in (\Omega\times_\calH Z)\times_{\calG\units}(\Omega\times_\calH Z)$ is not in the latter set, then either $\gamma_1\calH\neq \gamma_2\calH$, or there exists $\gamma\in \Omega$, $(z_2',z_1')\in \supp_{\pi_2,\pi_1}(T)^c$ such that $([\gamma_1,z_1],[\gamma_2,z_2])=([\gamma, z_2'],[\gamma,z_1']))$.

    Let $q:\Omega\ra \Omega/\calH$ be the quotient map. In this first case, if $\gamma_1\calH\neq \gamma_2\calH$, then there exists a bisection $B_1\subseteq \Omega$ containing $\gamma_1$, and a bisection $B_2\subseteq \Omega$ containing $\gamma_2$, such that $q(B_1)\cap q(B_2)=\emptyset$. Therefore, $B_1\calH\cap B_2\calH=\emptyset$. We define
    \[[B_i,Z]:=\{[b,u]\in \Omega\times_\calH Z:(b,u)\in B_i\times_{\calH\units}Z\}\]
    for $i=1,2$, and $[B_i,Z]$ is an open neighborhood of $[\gamma_i,z_i]$ in $\Omega\times_\calH Z$. And for any $f\in C_0([B_2,Z])$, $g\in C_0([B_1,Z])$, for any $\gamma\in \Omega$, $\xi\in \ind_{\Omega,c}E\subseteq \infl^\calG_\calH(E)$,
    \begin{equation}\label{formula_new}
        [(\infl^\calG_\calH\pi_2)(f) \infl^\calG_{\calH}(T) (\infl^\calG_\calH\pi_1)(g)\xi](\gamma)=\pi_{2,s(\gamma)}(f([\gamma,-]))T_{s(\gamma)}\pi_1(g([\gamma,-]))\xi(\gamma).
    \end{equation}
    If $\gamma\not\in B_1\calH$, then $g([\gamma,-])=0$; if $\gamma\not\in B_2\calH$, then $f([\gamma,-])=0$. So we have
    \[(\infl^\calG_\calH\pi_2)(C_0([B_2,Z])) \infl^\calG_{\calH}(T) (\infl^\calG_\calH\pi_1)(C_0([B_1,Z]))=0,\]
    which implies that $([\gamma_2,z_2],[\gamma_1,z_1])\in \supp(\infl^\calG_\calH(T))^c$.

    In the second case, without loss of generality assume that $\gamma_1=\gamma_2$ and $(z_2,z_1)\in \supp(T)^c$. Then there exists an open neighborhood $U_i$ of $z_i$ in $Z_i$ for $i=1,2$, such that
    \[\pi_2(C_0(U_2))T\pi_1(C_0(U_1))=0.\]
    Then
    \[[\Omega,U_i]:=\{[\omega,u]\in \Omega\times_\calH Z:(\omega,u)\in \Omega\times_{\calH\units}U_i\}\]
    is open neighborhood of $[\gamma_i,z_i]$ in $\Omega\times_\calH Z$. And for any $f\in C_0([\Omega,U_2])$, $g\in C_0([\Omega,U_1])$, $\xi\in \ind_{\Omega,c}E\subseteq \infl^\calG_\calH(E)$, $\gamma\in \Omega$,
    \[[(\infl^\calG_\calH\pi_2)(f) \infl^\calG_{\calH}(T) (\infl^\calG_\calH\pi_1)(g)\xi](\gamma)=\pi_{2,s(\gamma)}(f([\gamma,-]))T_{s(\gamma)}\pi_1(g([\gamma,-]))\xi(\gamma)=0,\]
    which is because $f([\gamma,-])\in C_0(U_{2,s(\gamma)})$ and $g([\gamma,-])\in C_0(U_{2,s(\gamma)})$. That is,
    \[(\infl^\calG_\calH\pi_2)(C_0([\Omega,U_2])) \infl^\calG_{\calH}(T) (\infl^\calG_\calH\pi_1)(C_0([\Omega,U_1]))=0,\]
    hence, $([\gamma_2,z_2],[\gamma_1,z_1])\in \supp(\infl^\calG_\calH(T))^c$.
\end{proof}

\section{Roe algebras and K-theory}\label{sec roe alg}

Recall that, in the theory of Roe algebras of proper metric space (see \cite{willett2020higher}, \cite{higson2000analytic}, etc.), ample modules play the role of standard modules. If $X$ is a proper metric space, an $X$-module is a non-degenerate representation $\pi:C_0(X)\ra B(H)$, where $H$ is a separable Hilbert space. Then the $X$-module $(H,\pi)$ is ample means that for any non-zero $f\in C_0(X)$, $\pi(f)$ is not a compact operator. The key fact is that an ample module $(H,\pi)$ will coarsely absorb all other $X$-modules: if $(H',\pi')$ is another $X$-module, then there exists an isometry $V:H'\ra H$, such that the propagation of $V$ is finite. Moreover, the propagation of $V$ can be chosen to be arbitrarily small.

However in out setting, let $\calG$ be a second countable \'etale groupoid, $Z$ be a second countable locally compact Hausdorff $\calG$-space and $A$ be a $\calG$-\cst-algebra, for a $Z,\calG,A$-module $(E,\pi)$, there cannot be a reasonable action of Borel functions over $Z$ on $E$, so a coarse counterpart to Voiculescu's theorem based on Borel decompositions cannot work here like in the proofs of \cite[Proposition 6.3.12]{higson2000analytic} or \cite[Corollary 4.2.7]{willett2020higher}. Our strategy to remedy this is surprisingly simple but effective: we will accept properly supported non-adjointable isometries to have *-homomorphisms between Roe algebras of geometric modules.

In this section, we fix an \'etale groupoid $\calG$ and a $\calG$-\cst-algebra $(A,\alpha)$.

\subsection{Roe algebras}

\begin{defn}
    Let $Z_1,Z_2$ be \lch  proper $\calG$-spaces, $(E_1,\pi_1)$ be a $Z_1,\calG,A$-module and $(E_2,\pi_2)$ be a $Z_2,\calG,A$-module. For an operator $T\in \Hom_A(E_1,E_2)$,
    \begin{enumerate}
        \item we say that $T$ is controlled, if $\supp_{\pi_2,\pi_1}(T)$ is a controlled subset of $Z_2\times_{\calG\units}Z_1$;
        \item we say that $T$ is weakly controlled, if $\supp_{\pi_2,\pi_1}(T)$ is a weakly controlled subset of $Z_2\times_{\calG\units}Z_1$;
        \item we say that $T$ is locally compact, if for any $f\in C_0(Z_1)$ and $g\in C_0(Z_2)$, we have $T\pi_1(f)\in \calK(E_1,E_2)$, $\pi_2(g)T\in \calK(E_1,E_2)$.
    \end{enumerate}
\end{defn}

By proposition \ref{summary of controlled and weakly controlled sets}, every controlled operator is weakly controlled; if $Z_1$ or $Z_2$ is $\calG$-compact, the converse is also true. The following lemma states that we should not distinguish $\calG$-equivariant properly supported operators and $\calG$-equivariant weakly controlled operators.

\begin{lem}\label{controlled operator is properly supported}
    Use the same notation as above, every weakly controlled operator is properly supported. And every $\calG$-equivariant properly supported operator is weakly controlled.
\end{lem}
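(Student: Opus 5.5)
The lemma has two halves, and I will unwind the definitions for each.

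\emph{Weakly controlled implies properly supported.} Let $T$ be weakly controlled. Then $\supp_{\pi_2,\pi_1}(T)\subseteq F$ for some $\calG$-invariant closed subset $F\subseteq Z_2\times_{\calG\units}Z_1$ on which $r|_F$ and $s|_F$ are proper. The support is closed in $Z_2\times_{\calG\units}Z_1$, so it is also closed in $F$. The restriction of a proper map to a closed subset is again proper. Hence both coordinate projections restricted to $\supp_{\pi_2,\pi_1}(T)$ are proper, which is exactly the definition of properly supported. This half needs neither equivariance nor properness of the actions.

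\emph{Equivariant and properly supported implies weakly controlled.} Let $T$ be $\calG$-equivariant and properly supported. By lemma \ref{support of equivariant operator is equivariant}, the set $F:=\supp_{\pi_2,\pi_1}(T)$ is a $\calG$-invariant closed subset of $Z_2\times_{\calG\units}Z_1$. By hypothesis, $r|_F$ and $s|_F$ are proper. So $F$ itself witnesses that $\supp_{\pi_2,\pi_1}(T)$ is weakly controlled, in the sense of definition \ref{defn controlled and weakly controlled set}.

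The only point needing care is in lemma \ref{support of equivariant operator is equivariant}: invariance of the support must be checked for the diagonal action $\gamma(z_2,z_1)=(\gamma z_2,\gamma z_1)$. That lemma provides exactly this, so there is no real obstacle. Its proof works by transporting a vanishing neighbourhood $V_2\times_{\calG\units}V_1$ of a point outside the support along an open bisection.
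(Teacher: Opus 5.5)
Your proof is correct and follows the paper's argument. The first half uses that the closed support is a closed subset of the invariant witness set $F$, so the proper projections $r|_F$ and $s|_F$ stay proper on it. The second half uses the lemma that supports of $\mathcal{G}$-equivariant operators are $\mathcal{G}$-invariant closed sets, so the support itself serves as the witness.
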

\begin{proof}
    Suppose that $Z'$ is a $\calG$-invariant subset of $Z_2\times_{\calG\units} Z_1$ such that $\supp_{\pi_2,\pi_1}(T)\subseteq Z'$, and $r|_{Z'}, s|_{Z'}$ are proper. After composing the closed inclusion, $r|_{\supp(T)}$ and $s|_{\supp(T)}$ are also proper maps.

    Now, if $T$ is $\calG$-equivariant and properly supported, by lemma \ref{support of equivariant operator is equivariant}, $\supp_{\pi_2,\pi_1}(T)\in \ca E'(Z_2,Z_1)$.
\end{proof}

\begin{lem}
    Let $(Z_i,\rho_i)$ be a locally compact Hausdorff proper $\calG$-space for $i=1,2,3$, $(E_i,\pi_i)$ be a $Z_i,\calG,A$-module, $T_{12}\in \Hom_A(E_1,E_2)$ and $T_{23}\in \Hom_A(E_2,E_3)$. If $T_{12}$ is controlled and $T_{23}$ is $\calG$-equivariant properly supported, or $T_{12}$ is $\calG$-equivariant properly supported and $T_{23}$ is controlled, then $T_{23}\circ T_{12}$ is controlled.
\end{lem}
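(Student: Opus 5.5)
The plan is to reduce the statement to the set-theoretic composition result, Proposition \ref{summary of controlled and weakly controlled sets}(5), by way of the support estimate for compositions in Proposition \ref{property of support of operator}(5) and (6). The argument splits into the two cases of the hypothesis. In each case the goal is to show that $\supp_{\pi_3,\pi_1}(T_{23}\circ T_{12})$ lies inside a controlled subset of $Z_3\times_{\calG\units}Z_1$.

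\emph{Case 1: $T_{12}$ controlled, $T_{23}$ $\calG$-equivariant and properly supported.}
\begin{enumerate}
\item By Lemma \ref{controlled operator is properly supported}, $T_{23}$ is weakly controlled, so $F_{32}:=\supp_{\pi_3,\pi_2}(T_{23})\in\ca E'(Z_3,Z_2)$.
\item Also by Lemma \ref{controlled operator is properly supported}, $T_{12}$ is properly supported, since controlled sets are weakly controlled by Proposition \ref{summary of controlled and weakly controlled sets}(1).
\item Proposition \ref{property of support of operator}(5) then gives
\[\supp_{\pi_3,\pi_1}(T_{23}T_{12})\subseteq F_{32}\circ F_{21},\qquad F_{21}:=\supp_{\pi_2,\pi_1}(T_{12})\in\ca E(Z_2,Z_1).\]
\item By Proposition \ref{summary of controlled and weakly controlled sets}(5), $F_{32}\circ F_{21}\in \ca E(Z_3,Z_1)$, which finishes this case.
\end{enumerate}

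\emph{Case 2: $T_{12}$ $\calG$-equivariant and properly supported, $T_{23}$ controlled.}
\begin{enumerate}
\item $T_{12}$ is itself properly supported, so Proposition \ref{property of support of operator}(5) applies directly and gives
\[\supp(T_{23}T_{12})\subseteq \supp(T_{23})\circ\supp(T_{12}).\]
\item By Lemma \ref{controlled operator is properly supported}, $\supp(T_{12})\in\ca E'(Z_2,Z_1)$.
\item Since $\supp(T_{23})\in\ca E(Z_3,Z_2)$, the second half of Proposition \ref{summary of controlled and weakly controlled sets}(5) shows the composite set is controlled.
\end{enumerate}
In both cases we use that a subset of a controlled set is controlled, which holds by definition.

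The main point to check is whether Proposition \ref{property of support of operator}(5) genuinely needs only the \emph{first} factor $T_{12}$ to be properly supported, and not both. The proof of (5) uses only that $T$ maps compactly supported vectors to compactly supported vectors, so only the first factor matters. Since Lemma \ref{controlled operator is properly supported} makes $T_{12}$ properly supported in either case, this causes no difficulty.

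If one wants to avoid relying on (5), there is a fallback using (6): it gives containment in the closure $\overline{F_{32}\circ F_{21}}$. In that case one would argue that the controlled set from Proposition \ref{summary of controlled and weakly controlled sets}(5) can be taken closed. The set has the form $\calG(K\times_{\calG\units}K')$ with $K,K'$ compact, and such a set is closed by properness of the action. Its closure is therefore still contained in a $\calG$-compact set.
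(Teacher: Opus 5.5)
Your proposal is correct and follows the argument the paper intends. The paper states this lemma without proof, but its introduction and the proof of the subsequent composition lemma use exactly your combination: Lemma \ref{controlled operator is properly supported} supplies the weakly controlled and properly supported hypotheses, Proposition \ref{property of support of operator}(5) bounds the support of the composite, and Proposition \ref{summary of controlled and weakly controlled sets}(5) shows the bounding set is controlled. Your fallback via Proposition \ref{property of support of operator}(6) is also valid, since $\calG$-compact subsets are closed (Lemma \ref{G-compact subsets are closed}), and it removes the need to invoke proper support of the first factor.
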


The following result is the key idea for our framework of Roe algebras.

\begin{prop}\label{properly supported+locally compact is adjointable}
    Let $Z,Z'$ be \lch proper $\calG$-spaces, $(E,\pi)$ be a $Z,\calG,A$-module and $(F,\pi')$ be a $Z',\calG,A$-module. If $T\in \Hom_A(E,F)$ is locally compact and properly supported, then $T$ is adjointable.
\end{prop}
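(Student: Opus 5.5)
The strategy is to build the adjoint locally from compact pieces and glue them with an approximate unit of $C_c(Z')$. Compact operators are adjointable, and proper support lets us localize $T$ on the $F$-side by compactly supported cutoffs on the $E$-side. Since adjointable operators form a closed subspace of $\Hom_A(E,F)$ and are closed under strict-type limits only with care, we instead construct a candidate adjoint $S:F\ra E$ directly on the dense submodule $\pi'(C_c(Z'))F$ and check that it is bounded.

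\emph{Step 1 (localization).} Write $G=\supp_{\pi',\pi}(T)$. For $g\in C_c(Z')$ with support in a compact $K_2$, choose $f\in C_c(Z)$ with $f|_{K_2\circ G}=1$; this is possible because $K_2\circ G$ is compact by properness. By lemma \ref{properly supported operator transit compact support}(2), $\pi'(g)T=\pi'(g)T\pi(f)$. Since $T$ is locally compact, $\pi'(g)T\in\calK(E,F)$, so it is adjointable, with adjoint $(\pi'(g)T)^*\in \calK(F,E)$.

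\emph{Step 2 (defining $S$).} For $\eta\in\pi'(C_c(Z'))F$, pick $g\in C_c(Z')$ with $\pi'(g)\eta=\eta$; this can be done by choosing $g$ equal to $1$ on $\supp_{\pi'}(\eta)$ and invoking corollary \ref{no intersection between supports of vector and operator}. Set $S\eta=(\pi'(g)T)^*\eta$. The definition is independent of $g$: for any $e\in E$,
\[\langle e,(\pi'(g)T)^*\eta\rangle=\langle Te,\pi'(\bar g)\eta\rangle=\langle Te,\eta\rangle,\]
and the right-hand side does not involve $g$. In particular $\langle e,S\eta\rangle=\langle Te,\eta\rangle$ for all $e\in E$ and all compactly supported $\eta$. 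Linearity of $S$ follows from choosing one $g$ that works for two vectors at once.

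\emph{Step 3 (boundedness and extension).} Taking $e=S\eta$ gives $\|S\eta\|^2=\|\langle TS\eta,\eta\rangle\|\le\|T\|\|S\eta\|\|\eta\|$, so $\|S\eta\|\le\|T\|\|\eta\|$. Hence $S$ extends by continuity to all of $F$, because $\pi'(C_c(Z'))F$ is dense by non-degeneracy. The identity $\langle e,S\eta\rangle=\langle Te,\eta\rangle$ passes to the limit, so $T$ is adjointable with $T^*=S$.

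The only substantive step is Step 1, namely the use of properness through lemma \ref{properly supported operator transit compact support} to ensure that $\pi'(g)T$ is compact. Here only the right-hand condition of local compactness, $\pi'(g)T\in\calK$, is really needed, so Step 1 actually goes through using local compactness alone. I therefore expect Step 1 to be quick, and I expect the care to go into Step 2: checking that $S$ is well defined, in particular that the choice of $g$ with $\pi'(g)\eta=\eta$ is always available for compactly supported $\eta$. I would secure that by corollary \ref{no intersection between supports of vector and operator}(2), applied with $g=1$ on the compact set $\supp_{\pi'}(\eta)$.
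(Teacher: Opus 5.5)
Your proof is correct, but it works on the opposite side from the paper's.

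The paper localizes on the source side. For compactly supported $f\in F$ it sets $\hat T f=(T\pi(h))^*f$, with $h\in C_c(Z)$ equal to $1$ on $\supp_{\pi',\pi}(T)^{-1}\circ\supp_{\pi'}(f)$. So it uses the half $T\pi(h)\in\calK(E,F)$ of local compactness, and it needs proper support exactly to make that set compact. It proves independence of $h$ via the support calculus (Proposition \ref{property of support of operator} and Lemma \ref{empty support must be 0}), and leaves boundedness as ``easy to check''.

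You localize on the target side instead, using the other half $\pi'(g)T\in\calK(E,F)$. The identity $\langle e,(\pi'(g)T)^*\eta\rangle=\langle Te,\eta\rangle$ holds for every $e\in E$, because $g$, hence $\bar g$, is $1$ on $\supp_{\pi'}(\eta)$. This single identity gives well-definedness, linearity and $A$-linearity at once, with no support arithmetic. Your Cauchy--Schwarz estimate then supplies the boundedness that the paper omits.

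Your route also buys generality. As you observe, the appeal to Lemma \ref{properly supported operator transit compact support} in Step 1 is superfluous. Your argument therefore shows that any $T$ with $\pi'(g)T$ adjointable for all $g\in C_c(Z')$ is adjointable, with no support hypothesis at all. The paper's route is the one you would need if only the source-side condition $T\pi(h)\in\calK$ were available; that is where proper support genuinely enters.
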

\begin{proof}
    Let $S=\supp_{\pi,\pi'}(T)$. First, we define a map $\hat{T}:\pi(C_c(Z))E\ra F$ such that, for any $f\in \pi'(C_c(Z))F$, we define $\hat{T}f=(T\pi(h))^*f$, where $h\in C_c(Z)$ such that $h|_{ S\inv\circ \supp_{\pi'}(f)}=1$. For any two $h,h'$ satisfying these conditions, $\Delta h=h-h'$ vanishes on $S\inv\circ \supp_{\pi'}(f)$.

    \[\supp_{\pi'}((T\pi(\Delta h))^*f)\subseteq (S\circ \delta)\inv \circ \supp_{\pi'}(f)=\delta\inv\circ S\inv\circ \supp_{\pi'}(f),\]
    where
    \[\delta=\{(z,z):z\in \supp(\Delta h)\}\subseteq \{(z,z):z\in Z\setminus (S\inv\circ \supp_{\pi'}(f))\},\]
    therefore $\supp_{\pi'}((T\pi(\Delta h))^*f)=\emptyset$, which implies that $(T\pi(h))^*f=(T\pi(h'))^*f$. So $\hat T$ is a well-defined map. We can check easily that $\hat T$ is bounded $A$-linear map. Now, by abuse of language, we denote its extension on $E\ra F$ by $\hat T$.

    Claim: $T$ is adjoint to $\hat T$. Now for any $e\in \pi(C_c(Z))E$ and $f\in \pi'(C_c(Z))F$,
    \begin{align*}
        \langle Te, f\rangle & = \langle T \pi(h)e,f\rangle\\
        & = \langle e, (T\pi(h))^*f\rangle\\
        & = \langle e,\hat T f\rangle
    \end{align*}
    where $h\in C_c(Z)_+$ is chosen such that its restriction to $S\inv\circ \supp_{\pi'}(f)\cup \supp_{\pi}(e)$ is 1.
\end{proof}

\begin{defn}
    Let $Z,Z'$ be locally compact Hausdorff $\calG$-spaces, $(E,\pi)$ be a $Z,\calG,A$-module and $(F,\pi')$ be a $Z',\calG,A$-module. We define 
    \[\mathbb C_\calG[(E,\pi),(F,\pi')]:=\{T\in \Hom_A(E,F): T \text{ is } \calG\text{-equivariant, controlled and locally compact}\}.\]
    When there is no ambiguity, we write simply $\mathbb C_\calG[E,F]$ for $\mathbb C_\calG[(E,\pi),(F,\pi')]$.
    
    We denote the norm closure of $\mathbb C_\calG[(E,\pi),(F,\pi')]$ in $\Hom_A(E,F)$ by $C^*_\calG((E,\pi),(F,\pi'))$. We denote $C^*_\calG(E,E)$ by $C^*_\calG(E,\pi)$ for convenience of writing.
\end{defn}

\begin{lem}
    Use the same notation as above, $C_\calG^*(E,F)$ is a well-defined closed linear subspace of $\calL(E,F)$.
\end{lem}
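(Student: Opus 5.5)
The claim has two parts. First, $\mathbb C_\calG[E,F]$ is a linear subspace of $\calL(E,F)$. Second, its norm closure stays inside $\calL(E,F)$.

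\textbf{Containment in $\calL(E,F)$.} Let $T\in\mathbb C_\calG[E,F]$. By definition $T$ is controlled, hence weakly controlled by proposition \ref{summary of controlled and weakly controlled sets}(1). Lemma \ref{controlled operator is properly supported} then shows that $T$ is properly supported. Since $T$ is also locally compact, proposition \ref{properly supported+locally compact is adjointable} gives $T\in\calL(E,F)$.

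\textbf{Linear subspace.} Let $T_1,T_2\in\mathbb C_\calG[E,F]$ and $\lambda\in\mathbb C$.
\begin{enumerate}
\item $\calG$-equivariance is a fiberwise linear condition, so it is preserved by $T_1+\lambda T_2$.
\item Local compactness is preserved because $\calK(E,F)$ is a linear subspace: $(T_1+\lambda T_2)\pi(f)=T_1\pi(f)+\lambda T_2\pi(f)$, and likewise for $\pi'(g)(T_1+\lambda T_2)$.
\item For control, proposition \ref{property of support of operator}(4) gives $\supp(T_1+T_2)\subseteq\supp(T_1)\cup\supp(T_2)$, and $\supp(\lambda T)\subseteq\supp(T)$ is immediate from the definition. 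The union of two controlled sets is controlled by proposition \ref{summary of controlled and weakly controlled sets}(2), and subsets of controlled sets are controlled by definition.
\end{enumerate}
Hence $\mathbb C_\calG[E,F]$ is a linear subspace of $\calL(E,F)$.

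\textbf{Closure.} It remains to show that taking the closure in $\Hom_A(E,F)$ does not leave $\calL(E,F)$, i.e. that $\calL(E,F)$ is norm closed in $\Hom_A(E,F)$. This is the only step needing care, because adjointability is not automatic in Hilbert modules. Let $T_n\in\calL(E,F)$ with $T_n\to T$ in operator norm. Then $\|T_n^*-T_m^*\|=\|T_n-T_m\|$, so $(T_n^*)$ is Cauchy and converges in norm to some $S\in\Hom_A(F,E)$. Passing to the limit in $\langle T_ne,f\rangle=\langle e,T_n^*f\rangle$ gives $\langle Te,f\rangle=\langle e,Sf\rangle$, so $T$ is adjointable with $T^*=S$. (The paper already records this closedness in the conventions of section \ref{sec geo mod}.)

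Therefore $C^*_\calG(E,F)$, the norm closure of a linear subspace of the closed subspace $\calL(E,F)$, is a well-defined closed linear subspace of $\calL(E,F)$.
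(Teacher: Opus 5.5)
Your proof is correct and follows the same route as the paper: adjointability via Lemma~\ref{controlled operator is properly supported} and Proposition~\ref{properly supported+locally compact is adjointable}, and linearity via Propositions~\ref{summary of controlled and weakly controlled sets} and~\ref{property of support of operator}. The one addition is that you prove directly that $\calL(E,F)$ is norm closed in $\Hom_A(E,F)$, a fact the paper simply assumes from its conventions in Section~\ref{sec geo mod}.
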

\begin{proof}
    By lemma \ref{controlled operator is properly supported} and proposition \ref{properly supported+locally compact is adjointable}, all operators in $\mathbb C_\calG[E,F]$ are adjointable. And by proposition \ref{summary of controlled and weakly controlled sets} and proposition \ref{property of support of operator}, it is easy to see that $\mathbb C_\calG[E,F]$ is a linear subspace of $\calL(E,F)$.
\end{proof}

\begin{lem}
    Let $Z_1,Z_2,Z_3$ be \lch proper $\calG$-spaces, $(E_i,\pi_i)$ be a $Z_i,\calG,A$-module for $i=1,2,3$. Then
    \[C_\calG^*(E_2,E_3)\circ C_\calG^*(E_1,E_2)\subseteq C_\calG^*(E_1,E_3).\]
    Especially, if $Z$ is a \lch proper $\calG$-space and $(E,\pi)$ is a $Z,\calG,A$-module, then $C_\calG^*(E,\pi)$ is a well-defined closed *-subalgebra of $\calL(E)$.
\end{lem}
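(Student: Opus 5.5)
The plan is to check the inclusion on the dense subspaces and then pass to norm closures. Composition $\Hom_A(E_2,E_3)\times \Hom_A(E_1,E_2)\ra \Hom_A(E_1,E_3)$ is jointly norm continuous, so it suffices to show
\[\mathbb C_\calG[E_2,E_3]\circ \mathbb C_\calG[E_1,E_2]\subseteq \mathbb C_\calG[E_1,E_3].\]
Fix $S\in \mathbb C_\calG[E_2,E_3]$ and $T\in \mathbb C_\calG[E_1,E_2]$; we verify the three defining properties of $ST$ in turn.

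\emph{Equivariance.} $ST$ is $\calG$-equivariant because its fibers are $(ST)_x=S_xT_x$, and the equivariance relations for $S$ and $T$ compose.

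\emph{Local compactness.} For $f\in C_0(Z_1)$, $T\pi_1(f)$ is compact and $S$ is adjointable by proposition \ref{properly supported+locally compact is adjointable}. Hence $ST\pi_1(f)\in\calK(E_1,E_3)$. Symmetrically, $\pi_3(g)S$ is compact and $T$ is adjointable, so $\pi_3(g)ST$ is compact. Here we use that $\calK$ is an ideal with respect to composition with adjointable operators.

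\emph{Control.} This is the only step needing care. By lemma \ref{controlled operator is properly supported}, $T$ is properly supported, so proposition \ref{property of support of operator}(5) gives
\[\supp_{\pi_3,\pi_1}(ST)\subseteq \supp(S)\circ\supp(T).\]
By proposition \ref{summary of controlled and weakly controlled sets}(1), $\supp(S)$ is weakly controlled, so part (5) of the same proposition shows that $\supp(S)\circ\supp(T)$ is controlled. Alternatively, one can write $\supp(S)\subseteq \calG(K_3\times_{\calG\units}K_2)$ and $\supp(T)\subseteq\calG(K_2'\times_{\calG\units}K_1)$ and run the argument of that part directly. Subsets of controlled sets are controlled, so $ST$ is controlled.

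Taking closures gives the first claim. For the special case $Z_1=Z_2=Z_3=Z$, the set $C^*_\calG(E,\pi)$ is a closed subspace of $\calL(E)$ by the preceding lemma and is closed under composition by the first claim. It remains to check closure under $*$. If $T\in\mathbb C_\calG[E,E]$, then $T^*$ is equivariant, since the adjoint commutes with taking fibers and $V_\gamma$ is unitary. Next, $T^*$ is locally compact because $(T\pi(\bar f))^*=\pi(f)T^*$ and $(\pi(\bar g)T)^*=T^*\pi(g)$. Finally, $T^*$ is controlled by proposition \ref{property of support of operator}(3) and proposition \ref{summary of controlled and weakly controlled sets}(2). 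Since the adjoint is isometric, closure under $*$ passes to the norm closure.
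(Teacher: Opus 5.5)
Your proof is correct and is essentially the paper's argument: you reduce to the dense subspaces $\mathbb C_\calG[\cdot,\cdot]$, note that $ST$ is equivariant and locally compact, and obtain control from part (5) of the proposition on supports of operators (using that $T$ is properly supported) together with part (5) of the proposition on controlled and weakly controlled sets. You go slightly beyond the paper in two places, both of which it leaves implicit: you explain that adjointability of $T$ is what makes $\pi_3(g)ST$ compact, and you verify closure under $*$ explicitly.
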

\begin{proof}
    It suffices to prove that, for $T\in \mathbb C_\calG[E_1,E_2]$ and $S\in \mathbb C_\calG[E_2,E_3]$, we have $ST\in \mathbb C_\calG[E_1,E_3]$.

    Clearly, $ST\in \calL(E_1,E_3)$ is $\calG$-equivariant and locally compact. Since $S$ and $T$ are controlled, by proposition \ref{summary of controlled and weakly controlled sets} and proposition \ref{property of support of operator}, $ST$ is also controlled.
\end{proof}

\begin{defn}
    Let $Z$ be a locally compact Hausdorff proper $\calG$-space,  $(E,\pi)$ and $(F,\pi')$ be $Z,\calG,A$-modules. For an operator $T\in \Hom_A(E,F)$, we say that $T$ is pseudolocal, if for any $f\in C_0(Z)$, we have 
    \[T\pi(f)-\pi'(f)T\in \calK(E,F).\]
    
    We define
    \[\mathbb D_\calG^*[(E,\pi),(F,\pi')]:=\{T\in \calL_A(E,F): T \text{ is } \calG\text{-equivariant, controlled and pseudolocal}\}.\]
    We denote the norm closure of $\mathbb D_\calG^*[(E,\pi),(F,\pi')]$ in $\calL_A(E,F)$ by $D^*_\calG((E,\pi),(F,\pi'))$.

    When there is no ambiguity, we write
    \[\mathbb D^*_\calG[E,F]:=\mathbb D^*_\calG[(E,\pi),(F,\pi')],\]
    \[D_\calG^*(E,F):=D_\calG^*((E,\pi),(F,\pi')),\]
    \[D_\calG^*(E,\pi):=D_\calG^*((E,\pi),(E,\pi)).\]
\end{defn}

\begin{lem}
    \begin{enumerate}
        \item Use the same notation as above, $D_\calG^*(E,F)$ is a closed subspace of $\calL(E,F)$.
        \item Let $(E_i,\pi_i)$ be a $Z,\calG,A$-module for $i=1,2,3$. Then
    \[D_\calG^*(E_2,E_3)\circ D_\calG^*(E_1,E_2)\subseteq D_\calG^*(E_1,E_3).\]
    Especially, if $Z$ is a \lch proper $\calG$-space and $(E,\pi)$ is a $Z,\calG,A$-module, $D_\calG^*(E,\pi)$ is a well-defined closed *-subalgebra of $\calL(E)$.
    \end{enumerate}
\end{lem}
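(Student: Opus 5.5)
The statement has two parts, and I would handle them in order: first show that $D^*_\calG(E,F)$ is a closed subspace of $\calL(E,F)$, then show it is closed under composition (and, for $E=F$, under adjoints).

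\textbf{Part (1).} By definition $\mathbb D^*_\calG[E,F]$ already sits inside $\calL_A(E,F)$, so adjointability needs no argument. The task is to show $\mathbb D^*_\calG[E,F]$ is a linear subspace.
\begin{itemize}
\item $\calG$-equivariance is preserved by sums and scalar multiples, since it is checked fibrewise.
\item Pseudolocality is preserved because $\calK(E,F)$ is a linear subspace.
\item Controlledness is preserved by proposition \ref{property of support of operator}(4): the support of a sum lies in the union of the supports. By proposition \ref{summary of controlled and weakly controlled sets}(2), a union of two controlled sets is controlled, and a scalar multiple has no larger support.
\end{itemize}
The norm closure of a linear subspace is then a closed subspace of the Banach space $\calL(E,F)$.

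\textbf{Part (2), composition.} By continuity of composition it suffices to take $T\in \mathbb D^*_\calG[E_1,E_2]$ and $S\in\mathbb D^*_\calG[E_2,E_3]$ and check the three properties of $ST$.
\begin{itemize}
\item $ST$ is adjointable and $\calG$-equivariant, because fibres compose: $(ST)_x=S_xT_x$.
\item For pseudolocality, write
\[ST\pi_1(f)-\pi_3(f)ST=S\bigl(T\pi_1(f)-\pi_2(f)T\bigr)+\bigl(S\pi_2(f)-\pi_3(f)S\bigr)T.\]
Both terms are compact, since $\calK$ is an ideal under composition with adjointable operators.
\item For controlledness, lemma \ref{controlled operator is properly supported} shows that controlled implies weakly controlled, hence properly supported. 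So proposition \ref{property of support of operator}(5) gives $\supp(ST)\subseteq \supp(S)\circ\supp(T)$. This composite is controlled by proposition \ref{summary of controlled and weakly controlled sets}(5), since a controlled set is also weakly controlled.
\end{itemize}

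\textbf{Part (2), the $*$-subalgebra claim.} Here $E_1=E_2=E_3=E$, and I need closure under adjoints. For $T\in \mathbb D^*_\calG[E,E]$:
\begin{itemize}
\item $T^*$ is equivariant, because the fibres satisfy $(T^*)_x=(T_x)^*$ and the $V_\gamma$ are unitaries.
\item $T^*$ is pseudolocal, because $(T\pi(\bar f)-\pi(\bar f)T)^*=\pi(f)T^*-T^*\pi(f)$ is compact.
\item $T^*$ is controlled, because proposition \ref{property of support of operator}(3) gives $\supp(T^*)=\supp(T)^{-1}$, which is controlled by proposition \ref{summary of controlled and weakly controlled sets}(2).
\end{itemize}
Taking norm closures, which commute with the adjoint, finishes the proof.

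The main point needing care is the support estimate for the composite. The cruder statement, proposition \ref{property of support of operator}(6), only gives containment in a closure, and the closure of a controlled set need not be contained in a $\calG$-compact set unless one argues further. I therefore route the argument through proper supportedness, via lemma \ref{controlled operator is properly supported}, so that the sharper inclusion (5) applies.
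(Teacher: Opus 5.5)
Your proof is correct and follows the paper's route. You use the same splitting $S(T\pi_1(f)-\pi_2(f)T)+(S\pi_2(f)-\pi_3(f)S)T$ for pseudolocality, and you handle linearity, equivariance, controlledness and the $*$-closure (which you spell out) via Propositions \ref{property of support of operator} and \ref{summary of controlled and weakly controlled sets}, as in the paper's earlier lemmas for $C^*_\calG$. Your caution about part (6) is not needed: a $\calG$-compact subset of $Z_3\times_{\calG\units}Z_1$ is closed by Lemma \ref{G-compact subsets are closed}, so the closure of a controlled set is again controlled; routing through (5) is equally valid.
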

\begin{proof}
    We prove that the composition of two pseudolocal operators is also pseudolocal. If $T\in \calL(E_1,E_2)$, $S\in \calL(E_2,E_3)$ are pseudolocal, then for any $f\in C_0(Z)$,
    \[ST\pi_1(f)-\pi_3(f)ST=S(T\pi_1(f)-\pi_2(f)T)+(S\pi_2(f)-\pi_3(f)S)T\in \calK(E_1,E_3).\]
    All other details are same as the lemmas above.
\end{proof}

\begin{prop}
    Let $Z$ be a \lch proper $\calG$-space and $(E,\pi)$ be a $Z,\calG,A$-module, then $C^*_\calG(E,\pi)$ is a two-sided ideal of $D^*_\calG(E,\pi)$.
\end{prop}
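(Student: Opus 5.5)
It suffices to work at the level of the dense subalgebras and then pass to closures. By the lemmas above, $C^*_\calG(E,\pi)\subseteq D^*_\calG(E,\pi)$ are closed $*$-subalgebras of $\calL(E)$, and multiplication is jointly norm continuous. So the plan is to show three things: $\mathbb C_\calG[E,E]\subseteq \mathbb D^*_\calG[E,E]$, and for $T\in \mathbb C_\calG[E,E]$ and $S\in \mathbb D^*_\calG[E,E]$ both $ST$ and $TS$ lie in $\mathbb C_\calG[E,E]$. The general ideal property then follows by approximating elements of the two closures by elements of the dense subspaces.

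The inclusion is immediate. If $T$ is locally compact, then $T\pi(f)$ and $\pi(f)T$ are both compact, so $T\pi(f)-\pi(f)T\in\calK(E)$ and $T$ is pseudolocal. Equivariance and controlledness are common to both definitions, and adjointability comes from proposition \ref{properly supported+locally compact is adjointable}.

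For the products, let $T\in\mathbb C_\calG[E,E]$ and $S\in \mathbb D^*_\calG[E,E]$. Both $ST$ and $TS$ are $\calG$-equivariant and adjointable. They are controlled because both factors are controlled: proposition \ref{property of support of operator}(5) applies since controlled operators are properly supported by lemma \ref{controlled operator is properly supported}, and controlled sets are closed under composition by proposition \ref{summary of controlled and weakly controlled sets}. The remaining point is local compactness. Two of the four required terms are direct: $\pi(f)ST$ is $(\pi(f)S)T$ with $T$ locally compact on the right, and more simply $ST\pi(f)=S(T\pi(f))\in\calK(E)$. Similarly $\pi(f)TS=(\pi(f)T)S\in\calK(E)$. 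For the others we use pseudolocality:
\[\pi(f)ST=S\pi(f)T+(\pi(f)S-S\pi(f))T,\]
where $S\pi(f)T=S(\pi(f)T)$ is compact and the bracket is compact, so $\pi(f)ST\in\calK(E)$. Symmetrically,
\[TS\pi(f)=T\pi(f)S+T(S\pi(f)-\pi(f)S)\in\calK(E).\]

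The main point needing care is whether to rely on the support composition estimate, since $S$ is only controlled and not locally compact. Proposition \ref{property of support of operator}(5) only needs the right-hand factor to be properly supported, and both factors are properly supported here, so the controlledness argument goes through. Once the three inclusions are established, we pass to norm closures: if $T_n\to T$ and $S_n\to S$ with $T_n$ and $S_n$ in the dense subspaces, then $S_nT_n\to ST$ and $T_nS_n\to TS$. Since $C^*_\calG(E,\pi)$ is closed, this gives the two-sided ideal property.
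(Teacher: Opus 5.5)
Your proof is correct and follows the paper's argument. Controlledness and equivariance of the products come from the support-composition estimates, and local compactness comes from the splittings $\pi(f)TS=(\pi(f)T)S$ and $TS\pi(f)=(T\pi(f))S+T(S\pi(f)-\pi(f)S)$ (and their mirror images for $ST$). Your explicit checks that $\mathbb C_\calG[E,E]\subseteq\mathbb D^*_\calG[E,E]$ and of the passage to norm closures are steps the paper leaves implicit.

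The only slip is the aside calling $\pi(f)ST=(\pi(f)S)T$ ``direct''. It is not, since $T$ itself need not be compact, but you then treat that term correctly via pseudolocality.
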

\begin{proof}
    Suppose that $T_1,T_2\in \calL(E)$ such that they are $\calG$-equivariant and controlled, $T_1$ is locally compact and $T_2$ is pseudolocal. Then clearly $T_1\circ T_2$ and $T_2\circ T_1$ are $\calG$-equivariant and controlled. Now for any $f\in C_0(Z)$, 
    \[\pi(f)(T_1\circ T_2) = (\pi(f)T_1)\circ T_2\in \calK(E),\]
    \[(T_1\circ T_2)\pi(f) = (T_1\pi(f)) T_2+T_1[T_2,\pi(f)]\in \calK(E).\]
    Hence, $T_1\circ T_2$ is also locally compact. Similarly, $T_2\circ T_1$ is also locally compact. So we prove that $C^*_\calG(E,\pi)$ is an ideal of $D^*_\calG(E,\pi)$.
\end{proof}

\subsection{Weakly controlled isometries and adjoint maps}

Recall that, let $E,F$ be Hilbert $A$-modules and $V:E\ra F$ be an isometry that may be not adjointable, we still have a well-defined adjoint map
\[Ad_V:\calK(E)\ra \calK(F),\quad T\mapsto (V\circ (V\circ T)^*)^*,\]
which maps $\theta_{e_1,e_2}$ to $\theta_{Ve_1, Ve_2}$. It is easy to check that this is an injective *-homomorphism that maps $\calK(E)$ onto a hereditary subalgebra of $\calK(F)$ (c.f. \cite[Lemma 4.1]{meyer2000equivariant}). Now replacing the compact operators by controlled locally compact operators, we will show that weakly controlled isometries can play the same role.

\begin{lem}\label{properly supported operators multiplies Roe algebra}
    Let $Z_1,Z_2,Z_3$ be locally compact Hausdorff proper $\calG$-spaces, $(E_i,\pi_i)$ be a $Z_i,\calG,A$-modules for $i=1,2,3$.
    \begin{enumerate}
        \item If $S\in \Hom_A(E_1,E_2)$ is properly supported, $T\in \Hom_A(E_2,E_3)$ is properly supported and locally compact, then $T\circ S$ is also properly supported and locally compact.
        \item If $S\in \Hom_A(E_1,E_2)$ is properly supported and locally compact, $T\in \Hom_A(E_2,E_3)$ is properly supported, then $T\circ S$ is also properly supported and locally compact.
    \end{enumerate}
\end{lem}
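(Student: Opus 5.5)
Both parts split into two claims: the composite $T\circ S$ is properly supported, and it is locally compact. For proper support I would use proposition \ref{property of support of operator}(5). Whenever the right-hand factor is properly supported we have
\[\supp(T\circ S)\subseteq \supp(T)\circ\supp(S),\]
and in both (1) and (2) $S$ is properly supported. So it suffices to show that the composite of two closed sets $F_2\subseteq Z_3\times_{\calG\units}Z_2$ and $F_1\subseteq Z_2\times_{\calG\units}Z_1$, each with proper coordinate projections, again has proper projections.

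For a compact $K_1\subseteq Z_1$ this follows from
\[(F_2\circ F_1)\circ K_1=F_2\circ(F_1\circ K_1).\]
The set $F_1\circ K_1$ is compact because $F_1$ has proper projections, and $F_2\circ(F_1\circ K_1)$ is then compact for the same reason. The symmetric argument applies to $K_3\circ(F_2\circ F_1)$. A closed subset of $F_2\circ F_1$ therefore has proper projections, since the projection of $F_2\circ F_1$ restricted to $s^{-1}(K_1)$ lands inside a compact image of the compact set $F_2\times_{Z_2}F_1\cap\ldots$. Closedness of the composite needs a short separate check: the fibre product of $F_2$ and $F_1$ over $Z_2$ is closed, and its image is closed because the projection from it is proper.

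For local compactness in case (1), let $f\in C_0(Z_1)$ and $g\in C_0(Z_3)$. By density we may take $f\in C_c(Z_1)$ and $g\in C_c(Z_3)$, with supports in compacts $K_1$ and $K_3$. For the right multiplication, choose $h\in C_c(Z_2)$ with $h=1$ on $\supp(S)\circ K_1$. Lemma \ref{properly supported operator transit compact support}(1) gives $S\pi_1(f)=\pi_2(h)S\pi_1(f)$, hence
\[TS\pi_1(f)=(T\pi_2(h))S\pi_1(f).\]
This is compact because $T\pi_2(h)$ is compact and composing a compact operator on the right with a bounded operator keeps it compact. The left multiplication $\pi_3(g)TS$ is directly the compact operator $\pi_3(g)T$ composed with $S$.

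Case (2) is symmetric. $TS\pi_1(f)$ is $T$ composed with the compact operator $S\pi_1(f)$. For the other side, choose $h\in C_c(Z_2)$ with $h=1$ on $K_3\circ\supp(T)$. Lemma \ref{properly supported operator transit compact support}(2) gives
\[\pi_3(g)TS=\pi_3(g)T\pi_2(h)S,\]
and $\pi_2(h)S$ is compact.

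The main subtlety is that $T$ and $S$ need not be adjointable, so I must make sure that composing a compact operator with a non-adjointable bounded operator keeps it in $\calK$. On the right this is fine: $\theta_{x,y}\circ R$ is the map $e\mapsto x\langle y,Re\rangle$, and $e\mapsto\langle y,Re\rangle$ is a bounded $A$-linear functional, which is adjointable by the standard fact that bounded module maps into $A$ are given by elements of $E^*$. The cleaner fix, though, is to place the bounded operator on the left, since $R\theta_{x,y}=\theta_{Rx,y}$ is always compact. I would therefore structure both cases so that the possibly non-adjointable operator always sits on the left of a compact one. In (1) that means writing $\pi_3(g)TS$ and $TS\pi_1(f)$ as above with $S$ on the right. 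Where $S$ ends up on the right, I would first use proposition \ref{properly supported+locally compact is adjointable}, which makes the factor $T\pi_2(h)$ adjointable, and then the functional argument handles the remaining composition. This right-composition point is the main obstacle.
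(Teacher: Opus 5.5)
Your proper-support argument is fine. Your argument for part (2) is also fine, and it is the argument the paper actually writes down. Both $TS\pi_1(f)=T\circ(S\pi_1(f))$ and $\pi_3(g)TS=(\pi_3(g)T)\circ(\pi_2(h)S)$ place a bounded operator on the \emph{left} of a compact one, which is harmless because $R\theta_{x,y}=\theta_{Rx,y}$. Note that the paper's proof, though labelled as the proof of (1), uses that $S\pi_1(f)$ and $\pi_2(g)S$ are compact, i.e.\ the hypotheses of (2).

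Part (1) has a genuine gap, and it cannot be closed. In both $TS\pi_1(f)=(T\pi_2(h))\circ S\pi_1(f)$ and $\pi_3(g)TS=(\pi_3(g)T)\circ S$, the possibly non-adjointable $S$ sits to the \emph{right} of a compact operator. Your justification of that step is false. The functional $e\mapsto\langle y,Re\rangle$ lies in the dual module of bounded $A$-linear maps $E_1\ra A$. Hilbert modules are not self-dual, so this functional need not be of the form $\langle z,\cdot\rangle$ with $z\in E_1$, and hence $\theta_{x,y}\circ R$ need not be compact. Invoking proposition \ref{properly supported+locally compact is adjointable} does not help. The factor $T\pi_2(h)$ is already compact; the non-adjointable factor is $S$, and $S$ is not locally compact, so the proposition does not apply to it.

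In fact (1) is false as stated. Set up the counterexample as follows:
\begin{itemize}
\item Take $\calG$ trivial, $Z_1=Z_2=Z_3$ a point, $X=\{0\}\cup\{1/n:n\geqslant 1\}$, the separable algebra $A=C(X,\calK(\ell^2(\mathbb N)))$, and $E_i=A_A$.
\item Put $m(1/n)=\theta_{e_1,e_n}$ and $m(0)=0$.
\item For every $b\in A$ we have $\|m(1/n)b(1/n)\|=\|b(1/n)^*e_n\|\ra 0$, since $b(1/n)\ra b(0)$ in norm, $b(0)^*$ is compact and $e_n\ra 0$ weakly. So $S(b)=mb$ is a bounded $A$-linear map $A\ra A$, and it is trivially properly supported.
\item Let $T$ be left multiplication by the constant function $p=\theta_{e_1,e_1}\in A$. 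It is compact, hence properly supported and locally compact.
\end{itemize}
Then $TS$ is left multiplication by $pm$. Since $pm(1/n)=\theta_{e_1,e_n}$ is not norm-Cauchy, $pm\notin A\cong\calK(A_A)$, so $TS$ is not compact, i.e.\ not locally compact.

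So only (2) holds. It is also the only form used later: in $V\circ T$ and $W\circ(V\circ T)^*$ the possibly non-adjointable isometry is always on the left.
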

\begin{proof}
    We only prove (1), the proof of (2) is similar. Since $S$ and $T$ are properly supported, clearly $T\circ S$ is also properly supported by proposition \ref{property of support of operator}. Now for any $f\in C_c(Z_1)$, $S\pi_1(f)\in \calK(E_1,E_2)$, so $TS\pi_1(f)\in \calK(E_1,E_3)$. 
    For any $h\in C_c(Z_3)$, $\supp(h)\circ \supp(T)$ is a compact subset of $Z_2$, so there exists $g\in C_c(Z_2)$ such that $g|_{\supp(h)\circ \supp(T)}=1$. Then by lemma \ref{properly supported operator transit compact support}, $\pi_3(h)T=\pi_3(h)T\pi_2(g)$, hence $\pi_3(h)T\circ S=\pi_3(h)T\pi_2(g)\circ S\in \calK(E_1,E_3)$. We proved that $T\circ S$ is locally compact.
\end{proof}

Notice that, properly supported locally compact operators are adjointable. Therefore, even if the covering isometries are not adjointable, their composition with an element in the Roe algebra is still adjointable, and this will allow us to define the adjoint map by covering isometries between Roe algebras.

\begin{prop}
Let $Z_1,Z_1',Z_2,Z_2'$ be locally compact Hausdorff proper $\calG$-spaces, $(E_i,\pi_i)$ be $Z_i,\calG,A$-modules for $i=1,2$, $(E_i',\pi_i')$ be $Z_i',\calG,A$-modules for $i=1,2$, $V\in \Hom_A(E_2,E_2')$ and $W\in \Hom_A(E_1,E_1')$ be properly supported $\calG$-equivariant isometries. Then
\[Ad_{V,W}:\mathbb C_\calG[E_1,E_2]\ra \mathbb C_\calG[E_1',E_2'],\quad T\mapsto (W\circ (V\circ T)^*)^*\]
is a well-defined bounded linear map. Moreover, $Ad_{V,W}$ extends to a bounded linear map from $C^*_\calG(E_1,E_2)$ to $C^*_\calG(E_1',E_2')$.
\end{prop}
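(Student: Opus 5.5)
The plan is to show, for $T\in \mathbb C_\calG[E_1,E_2]$, that $Ad_{V,W}(T)$ is defined, adjointable, $\calG$-equivariant, controlled and locally compact, and then to get the norm bound $\|Ad_{V,W}(T)\|\leqslant \|T\|$, which yields the extension by continuity.

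\emph{Well-definedness.} By lemma \ref{controlled operator is properly supported}, $V$, $W$ and $T$ are properly supported, and $T$ is locally compact. By lemma \ref{properly supported operators multiplies Roe algebra}(2), $V\circ T\in \Hom_A(E_1,E_2')$ is properly supported and locally compact. Proposition \ref{properly supported+locally compact is adjointable} then makes $V\circ T$ adjointable. By proposition \ref{property of support of operator}(3), $(VT)^*\in\calL(E_2',E_1)$ has support $\supp(VT)\inv$, so it is properly supported. It is also locally compact, since $(VT)^*\pi_2'(g)=(\pi_2'(\bar g)VT)^*$ and similarly on the other side. Applying lemma \ref{properly supported operators multiplies Roe algebra}(2) again, $W\circ (VT)^*$ is properly supported and locally compact, hence adjointable. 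So $Ad_{V,W}(T)=(W(VT)^*)^*\in \calL(E_1',E_2')$ is well defined, and it is locally compact by the same adjoint argument. Linearity is clear from linearity of composition and of taking adjoints.

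\emph{Equivariance and control.} Compositions and adjoints of $\calG$-equivariant operators are fiberwise equivariant: $(VT)^*_x=(V_xT_x)^*$, and the $\calG$-actions are unitary. Hence $Ad_{V,W}(T)$ is $\calG$-equivariant. For supports, proposition \ref{property of support of operator}(5) and (3) give
\[\supp(Ad_{V,W}(T))=\supp(W(VT)^*)\inv\subseteq \big(\supp(W)\circ\supp(T)\inv\circ \supp(V)\inv\big)\inv=\supp(V)\circ\supp(T)\circ\supp(W)\inv.\]
Using (5) requires the right-hand factors to be properly supported, which they are. Here $\supp(T)$ is controlled, while $\supp(V)$ and $\supp(W)\inv$ are weakly controlled by lemma \ref{controlled operator is properly supported} and proposition \ref{summary of controlled and weakly controlled sets}(3). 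Proposition \ref{summary of controlled and weakly controlled sets}(5) therefore shows the composite is controlled. Thus $Ad_{V,W}(T)\in\mathbb C_\calG[E_1',E_2']$.

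\emph{Norm bound (the main point).} I would show $\|Ad_{V,W}(T)\|=\|W(VT)^*\|\leqslant \|(VT)^*\|=\|VT\|=\|T\|$. The first inequality holds because $W$ is an isometry, so $\|W\xi\|=\|\xi\|$. The last equality holds because $V$ is an isometry: $\langle VTe,VTe\rangle=\langle Te,Te\rangle$. The middle equality uses that $VT$ is adjointable, so its adjoint has the same norm. This gives boundedness with norm at most $1$, hence $Ad_{V,W}$ extends uniquely to a bounded linear map between the norm closures $C^*_\calG(E_1,E_2)\to C^*_\calG(E_1',E_2')$.

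The only delicate step is making sure each intermediate operator is properly supported and locally compact, so that proposition \ref{properly supported+locally compact is adjointable} applies before each adjoint is taken. Everything else is formal.
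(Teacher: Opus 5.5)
Your proposal is correct and follows the paper's proof essentially step by step. Like the paper, you use lemma \ref{properly supported operators multiplies Roe algebra} together with proposition \ref{properly supported+locally compact is adjointable} to make $VT$ and then $W(VT)^*$ adjointable. You derive the same support inclusion $\supp(Ad_{V,W}(T))\subseteq \supp(V)\circ\supp(T)\circ\supp(W)\inv$ and conclude control via proposition \ref{summary of controlled and weakly controlled sets}, and you use the norm bound $\|Ad_{V,W}(T)\|\leqslant\|V\|\|W\|\|T\|=\|T\|$ for the extension. You also spell out details the paper calls easy: local compactness of the adjoints, fiberwise equivariance, and the proper-support hypotheses needed for proposition \ref{property of support of operator}(5).
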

\begin{proof}
    Recall that for $\calG$-equivariant operators, being weakly controlled operators is equivalent to being properly supported (lemma \ref{controlled operator is properly supported}). Then by lemma \ref{properly supported operators multiplies Roe algebra}, for any $T\in \mathbb C_\calG[E_1,E_2]$, $V\circ T$ is properly supported and locally compact, and hence adjointable by proposition \ref{properly supported+locally compact is adjointable}. Similarly, $W\circ (V\circ T)^*$ is properly supported, locally compact and hence adjointable. Then it is easy to see that $Ad_{V,W}(T)$ is $\calG$-equivariant and locally compact. And
    \begin{align*}
        \supp_{\pi_2',\pi_1'}(Ad_{V,W}(T)) & \subseteq (\supp_{\pi_1',\pi_2'}(W(VT)^*))\inv\\
        & \subseteq (\supp_{\pi_1',\pi_1}(W)\circ \supp_{\pi_1,\pi_2'}(VT)\inv)\inv\\
        & \subseteq (\supp_{\pi_1',\pi_1}(W)\circ (\supp_{\pi_2',\pi_2}(V)\circ \supp_{\pi_2,\pi_1}(T))\inv)\inv\\
        & = \supp_{\pi_2',\pi_2}(V)\circ \supp_{\pi_2,\pi_1}(T)\circ \supp_{\pi_1',\pi_1}(W)\inv.
    \end{align*}
    Therefore, by proposition \ref{summary of controlled and weakly controlled sets}, $Ad_{V,W}(T)$ is also controlled.

    Clearly, for any $T\in \mathbb C_\calG[E_1,E_2]$, 
    \[\|Ad_{V,W}(T)\|\leqslant \|V\|\|W\|\|T\|.\] So $Ad_{V,W}$ is a bounded linear map.
\end{proof}

\begin{rem}
    Use the same notation as above, we can see that
    \[\supp_{\pi_2',\pi_1'}(Ad_{V,W}(T))\subseteq \supp_{\pi_2',\pi_2}(V)\circ \supp_{\pi_2,\pi_1}(T)\circ \supp_{\pi_1',\pi_1}(W)\inv,\] 
    and if $W$ is adjointable, then $Ad_{V,W}(T)=VTW^*$.
    
    Consider the map $\calK(E_1,E_2)\ra \calK(E_1',E_2'), T\mapsto (W\circ (V\circ T)^*)^*$. This map is a well-defined bounded linear map since compact operators are automatically adjointable. By abuse of notation, we also denote this map by $Ad_{V,W}$. Then for any $e_1\in E_1, e_2\in E_2$, we have $Ad_{V,W}(\theta_{e_2,e_1})=\theta_{Ve_2,We_1}$.
\end{rem}

The following result means that some properties about the map $Ad_{V,W}$ can be locally reduces to the case for compact operators.

\begin{prop}\label{reduce adjoint map to compact case}
    Use the same notation as above, if $e_1\in \pi_1'(C_c(Z_1'))E_1'$, then 
    \[Ad_{V,W}(T)e_1=Ad_{V,W}(T\pi_1(f))e_1,\] 
    where $f\in C_c(Z_1)$ is any function whose restriction on the compact subset $\supp_{\pi_1',\pi_1}(W)\inv\circ \supp_{\pi_1'}(e_1)$ is 1.  
\end{prop}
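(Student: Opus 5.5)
The plan is to test both sides against an arbitrary vector of $E_2'$ and reduce everything to an orthogonality statement for $W$, which we read off from supports.

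\textbf{Reduction to a statement about $W$.} By Lemma \ref{properly supported operators multiplies Roe algebra} and Proposition \ref{properly supported+locally compact is adjointable}, $V\circ T$ is adjointable, and so is $V\circ T\pi_1(f)=(VT)\pi_1(f)$. Its adjoint is $(VT\pi_1(f))^*=\pi_1(\bar f)(VT)^*$. Fix $e_2\in E_2'$ and put $x=(VT)^*e_2\in E_1$. By the definition of $Ad_{V,W}$,
\[\langle Ad_{V,W}(T)e_1,e_2\rangle=\langle e_1,Wx\rangle,\qquad \langle Ad_{V,W}(T\pi_1(f))e_1,e_2\rangle=\langle e_1,W\pi_1(\bar f)x\rangle .\]
It therefore suffices to show that $\langle e_1,W\tilde\pi_1(1-\bar f)x\rangle=0$ for every $x\in E_1$. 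Since $e_2$ is arbitrary, this gives the claimed equality of vectors.

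\textbf{The support argument.} Let $F=\supp_{\pi_1',\pi_1}(W)$ and $K=\supp_{\pi_1'}(e_1)\circ F=F\inv\circ\supp_{\pi_1'}(e_1)$. This set is compact because $W$ is properly supported, and by hypothesis $f=1$ on $K$. Put $u=1-\bar f\in C_b(Z_1)$, which vanishes on $K$.

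The main obstacle is that the closure of $\{u\neq 0\}$ may still touch $K$, so we cannot directly separate supports. To remove this boundary issue, choose continuous $\psi_n:\mathbb C\to\mathbb C$ with $\psi_n=0$ on $\{|t|<1/n\}$ and $|\psi_n(t)-t|\leqslant 1/n$. Then $u_n=\psi_n\circ u\in C_b(Z_1)$ satisfies $\|u_n-u\|_\infty\leqslant 1/n$, and $u_n$ vanishes on the open neighbourhood $\{|u|<1/n\}$ of $K$. Hence $\supp(u_n)\cap K=\emptyset$. Since $\|W\tilde\pi_1(u_n-u)x\|\leqslant\|W\|\,\|x\|/n$, it suffices to prove $\langle e_1,W\tilde\pi_1(u_n)x\rangle=0$ for each $n$.

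\textbf{Orthogonality.} Let $(h_\lambda)$ be an approximate unit of $C_0(Z_1)$ contained in $C_c(Z_1)$. Since $\pi_1(h_\lambda)\tilde\pi_1(u_n)x\to\tilde\pi_1(u_n)x$ in norm and $W$ is bounded, it is enough to treat $y_\lambda=\pi_1(h_\lambda u_n)x$. This vector is compactly supported with $\supp_{\pi_1}(y_\lambda)\subseteq\supp(u_n)$. By Proposition \ref{property of support of operator}(2),
\[\supp_{\pi_1'}(Wy_\lambda)\subseteq F\circ\supp(u_n).\]
If some $z$ lay in $(F\circ\supp(u_n))\cap\supp_{\pi_1'}(e_1)$, there would be $z_1\in\supp(u_n)$ with $(z,z_1)\in F$. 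Then $z_1\in\supp_{\pi_1'}(e_1)\circ F=K$, contradicting $\supp(u_n)\cap K=\emptyset$. Corollary \ref{elements with empty support intersection are orthogono} then gives $\langle e_1,Wy_\lambda\rangle=0$. Passing to the limits in $\lambda$ and then in $n$ yields $\langle e_1,W\tilde\pi_1(1-\bar f)x\rangle=0$, which proves the statement.
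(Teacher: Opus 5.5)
Your proof is correct, and it takes a genuinely different route from the paper's. The paper argues directly on the vector $Ad_{V,W}(T-T\pi_1(f))e_1$. It bounds the support of this vector by $\supp(V)\circ\supp(T)\circ\supp(\mathrm{id}_{E_1}-\pi_1(f))\circ\supp(W)\inv\circ\supp(e_1)$, declares this set empty, and concludes with Lemma~\ref{empty support must be 0}. That route is shorter and stays entirely within the support calculus.

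The emptiness claim, however, needs the projection of $\supp(\mathrm{id}_{E_1}-\pi_1(f))$ to miss $K=\supp(W)\inv\circ\supp(e_1)$, and one only knows that this projection lies in $\overline{\{f\neq 1\}}$. That holds when $f\equiv 1$ on a neighbourhood of $K$, but not in general when $f=1$ only on $K$ itself. The paper's inclusion of this projection in $Z\setminus\supp(f)$ is not correct as written.

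Your truncation $u_n=\psi_n\circ u$ handles exactly this boundary effect. It pushes the support of the error term strictly away from $K$ at the cost of a uniformly small perturbation, so your argument proves the statement under its stated hypothesis.

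The duality step gives a second simplification. After pairing with $e_2$, $V$ and $T$ enter only through the adjointability of $VT$. Everything then reduces to the single-operator fact $e_1\perp W\tilde\pi_1(1-\bar f)E_1$. This uses only Proposition~\ref{property of support of operator}(2) for $W$ and the orthogonality of disjointly supported vectors, rather than a chain of support estimates through $V$, $T$ and the adjoints hidden inside $Ad_{V,W}$.
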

\begin{proof}
    It is easy to see that
    \[s(\supp_{\pi_1,\pi_1}(id_{E_1}-\pi_1(f)))\subseteq Z\setminus \supp(f)\subseteq Z\setminus (\supp_{\pi_1',\pi_1}(W)\inv\circ \supp_{\pi_1'}(e_1)).\]
    Hence,
    \begin{align*}
        & \supp_{\pi_2'}(Ad_{V,W}(T-T\pi_1(f))e_1) \\
        & \subseteq \supp_{\pi_2',\pi_2}(V)\circ \supp_{\pi_2,\pi_1}(T)\circ \supp_{\pi_1,\pi_1}(id_{E_1}-\pi_1(f)) \circ \supp_{\pi_1',\pi_1}(W)\inv \circ \supp_{\pi_1'}(e_1)\\
        & =\emptyset,
    \end{align*}
    which implies that $Ad_{V,W}(T)e_1=Ad_{V,W}(T\pi_1(f))e_1$ by lemma \ref{empty support must be 0}.
\end{proof}

\begin{prop}
    Let $Z_i,Z_i'$ be locally compact Hausdorff proper $\calG$-spaces for $i=1,2,3$, $(E_i,\pi_i)$ and $(E_i',\pi_i')$ be $Z_i,\calG,A$-modules for $i=1,2,3$, $V_i\in \Hom_A(E_i,E_i')$ be properly supported $\calG$-equivariant isometries for $i=1,2,3$. Then for any $T_1\in C_\calG^*(E_1,E_2)$ and $T_2\in C_\calG^*(E_2,E_3)$, we have
\[Ad_{V_3,V_1}(T_2\circ T_1)=Ad_{V_3,V_2}(T_2)\circ Ad_{V_2,V_1}(T_1).\]
\end{prop}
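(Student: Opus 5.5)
We first reduce to the dense subspaces: both sides are bounded and bilinear in $(T_2,T_1)$, since $\|Ad_{V,W}(T)\|\leqslant \|V\|\|W\|\|T\|$ and composition is continuous. Hence it suffices to prove the identity for $T_1\in \mathbb C_\calG[E_1,E_2]$ and $T_2\in \mathbb C_\calG[E_2,E_3]$. Both sides are then bounded operators $E_1'\ra E_3'$. Since $\pi_1'(C_c(Z_1'))E_1'$ is dense in $E_1'$, it suffices to check equality on a compactly supported vector $e_1\in \pi_1'(C_c(Z_1'))E_1'$.

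The plan is to reduce to compact operators via proposition \ref{reduce adjoint map to compact case}, and then use the formula $Ad_{V,W}(\theta_{x,y})=\theta_{Vx,Wy}$. On compact operators this formula gives multiplicativity directly, because $V_2$ is an isometry:
\[\theta_{V_3x,V_2y}\circ\theta_{V_2u,V_1v}=\theta_{V_3x\langle V_2y,V_2u\rangle,V_1v}=\theta_{V_3(x\langle y,u\rangle),V_1v}=Ad_{V_3,V_1}(\theta_{x,y}\theta_{u,v}).\]
By bilinearity and continuity of $Ad$ on $\calK$, this yields $Ad_{V_3,V_2}(K_2)Ad_{V_2,V_1}(K_1)=Ad_{V_3,V_1}(K_2K_1)$ for all compact $K_1\in\calK(E_1,E_2)$ and $K_2\in\calK(E_2,E_3)$.

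Next we localize. Choose $f\in C_c(Z_1)$ with $f=1$ on the compact set $\supp(V_1)\inv\circ\supp(e_1)$. By proposition \ref{reduce adjoint map to compact case},
\[Ad_{V_3,V_1}(T_2T_1)e_1=Ad_{V_3,V_1}(T_2T_1\pi_1(f))e_1 \quad\text{and}\quad Ad_{V_2,V_1}(T_1)e_1=Ad_{V_2,V_1}(T_1\pi_1(f))e_1.\]
The vector $e_2:=Ad_{V_2,V_1}(T_1\pi_1(f))e_1$ is compactly supported, because $Ad_{V_2,V_1}(T_1\pi_1(f))$ is properly supported. This follows from the support inclusion in the remark and proposition \ref{property of support of operator}(2). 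Indeed $\supp(e_2)\subseteq \supp(V_2)\circ\supp(T_1)\circ\supp(f)$, which is compact since all these sets are weakly controlled. Choose $g\in C_c(Z_2)$ with $g=1$ on $\supp(V_2)\inv\circ\supp(e_2)$. Applying proposition \ref{reduce adjoint map to compact case} again gives $Ad_{V_3,V_2}(T_2)e_2=Ad_{V_3,V_2}(T_2\pi_2(g))e_2$. Here $K_2:=T_2\pi_2(g)$ and $K_1:=T_1\pi_1(f)$ are compact by local compactness.

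It remains to replace $T_2T_1\pi_1(f)$ by $K_2K_1=T_2\pi_2(g)T_1\pi_1(f)$ inside $Ad_{V_3,V_1}(\cdot)e_1$. By lemma \ref{properly supported operator transit compact support}(1), $\pi_2(g)T_1\pi_1(f)=T_1\pi_1(f)$ provided $g=1$ on $\supp(T_1)\circ\supp(f)$. So we strengthen the choice of $g$: take $g=1$ on both $\supp(T_1)\circ\supp(f)$ and $\supp(V_2)\inv\circ\supp(e_2)$, which is possible since both sets are compact. With this choice the three steps combine:
\[Ad_{V_3,V_2}(T_2)Ad_{V_2,V_1}(T_1)e_1=Ad_{V_3,V_2}(K_2)Ad_{V_2,V_1}(K_1)e_1=Ad_{V_3,V_1}(K_2K_1)e_1=Ad_{V_3,V_1}(T_2T_1)e_1.\]

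The main obstacle is the bookkeeping of supports. We must verify that the sets on which $f$ and $g$ are required to equal $1$ are compact, and that the vectors fed into proposition \ref{reduce adjoint map to compact case} are compactly supported. Both rely on proper supportedness of the $V_i$ and of $T_1$, through proposition \ref{property of support of operator}(2) and (5) and lemma \ref{controlled operator is properly supported}. A secondary point is that the compact-operator version of $Ad_{V,W}$ agrees with the Roe-algebra version on $T\pi_1(f)$. This holds because both are defined by the same formula $(W(VT)^*)^*$.
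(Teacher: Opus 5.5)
Your proof is correct and follows the paper's argument essentially step for step. It proves multiplicativity on rank-one operators $\theta_{x,y}$, localizes via proposition \ref{reduce adjoint map to compact case} with $f=1$ on $\supp(V_1)^{-1}\circ\supp(e_1)$, and chooses $g=1$ on $\supp(T_1)\circ\supp(f)\cup\supp(V_2)^{-1}\circ\supp(e_2)$ so that lemma \ref{properly supported operator transit compact support} gives $\pi_2(g)T_1\pi_1(f)=T_1\pi_1(f)$. Your preliminary continuity reduction to $T_1,T_2\in\mathbb C_\calG$ is a welcome bit of extra rigor, since the paper leaves it implicit even though it uses that $\supp(T_1)$ is properly supported.
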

\begin{proof}
    Firstly, if $e_1\in E_1, e_2,e_2'\in E_2, e_3\in E_3$, then
    \begin{align*}
        Ad_{V_3,V_2}(\theta_{e_3,e_2})\circ Ad_{V_2,V_1}(\theta_{e_2',e_1}) & = \theta_{V_3e_3, V_2 e_2}\circ \theta_{V_2 e_2', V_1 e_1}\\
        & = \theta_{V_3e_3 \langle V_2 e_2, V_2 e_2'\rangle, V_1 e_1}\\
        & = \theta_{V_3e_3 \langle e_2,e_2'\rangle, V_1 e_1}\\
        & = Ad_{V_3,V_1}(\theta_{e_3,e_2}\circ \theta_{e_2',e_1}).
    \end{align*}
    This proves that for any $T_1'\in \calK(E_1,E_2)$ and $T_2'\in \calK(E_2,E_3)$, we have $Ad_{V_3,V_2}(T_2')\circ Ad_{V_2,V_1}(T_1')=Ad_{V_3,V_1}(T_2'\circ T_1')$.

    Now for $e_1\in \pi_1'(C_c(Z_1'))E_1'$, since $V_1$ is properly supported, $\supp(V_1)\inv \circ \supp(e_1)$ is a compact subset of $Z_1$, we can select $f\in C_c(Z_1)$ such that $f|_{\supp(V_1)\inv \circ \supp(e_1)}=1$. By proposition \ref{reduce adjoint map to compact case}, $Ad_{V_2,V_1}(T_1)e_1=Ad_{V_2,V_1}(T_1\pi_1(f))e_1$ and $Ad_{V_3,V_1}(T_2T_1)e_1=Ad_{V_3,V_1}(T_2T_1\pi_1(f))e_1$.

    Let $K=\supp(T_1)\circ \supp(f)\cup \supp(V_2)\inv\circ\supp(Ad_{V_2,V_1}(T_1\pi_1(f))e_1)$, which is a compact subset of $Z_2$. Let $g\in C_c(Z_2)$ such that $g|_K=1$. By lemma \ref{properly supported operator transit compact support}, we have $\pi_2(g)T_1\pi_1(f)=T_2\pi_1(f)$. And by proposition \ref{reduce adjoint map to compact case}, $Ad_{V_3,V_2}(T_2)Ad_{V_2,V_1}(T_1\pi_1(f))e_1=Ad_{V_3,V_2}(T_2\pi_2(g))Ad_{V_2,V_1}(T_1\pi_1(f))e_1$. Hence,
    \begin{align*}
        Ad_{V_3,V_2}(T_2)\circ Ad_{V_2,V_1}(T_1)e_1 & = Ad_{V_3,V_2}(T_2)Ad_{V_2,V_1}(T_1\pi_1(f))e_1\\
        & = Ad_{V_3,V_2}(T_2\pi_2(g))Ad_{V_2,V_1}(T_1\pi_1(f))e_1\\
        & = Ad_{V_3,V_1}(T_2\pi_2(g)T_1\pi_1(f))e_1\\
        & = Ad_{V_3,V_1}(T_2 T_1\pi_1(f))e_1\\
        & = Ad_{V_3,V_1}(T_2 T_1)e_1.
    \end{align*}
    This equality holds for any $e_1\in \pi_1'(C_c(Z_1'))E_1'$, which is dense in $E_1'$ because of non-degeneracy of $\pi_1'$, so we have
    \[Ad_{V_3,V_2}(T_2)\circ Ad_{V_2,V_1}(T_1) = Ad_{V_3,V_1}(T_2 T_1).\]
\end{proof}

\begin{prop}
    Let $Z_i,Z_i'$ be locally compact Hausdorff proper $\calG$-spaces for $i=1,2$, $(E_i,\pi_i)$ be $Z_i,\calG,A$-modules, $(E_i',\pi_i')$ be $Z_i',\calG,A$-modules, $V\in \Hom_A(E_2,E_2')$, $W\in \Hom_A(E_1,E_1')$ be $\calG$-equivariant properly supported isometries, then $Ad_{V,W}(T)^*=Ad_{W,V}(T^*)$ for any $T\in C^*_\calG(E_1,E_2)$.
\end{prop}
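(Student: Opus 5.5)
The strategy is to reduce to compact operators, exactly as in the proof of the preceding composition formula, and then pass to the general case by density and continuity. Since both sides are bounded and conjugate-linear in $T$, and $T\mapsto T^*$ is isometric from $C^*_\calG(E_1,E_2)$ to $C^*_\calG(E_2,E_1)$, it suffices to treat $T\in \mathbb C_\calG[E_1,E_2]$. Here $T^*\in \mathbb C_\calG[E_2,E_1]$: it is adjointable by proposition \ref{properly supported+locally compact is adjointable}, $\calG$-equivariant, locally compact (its locally compact products are adjoints of compact operators), and controlled by proposition \ref{property of support of operator}(3).

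First I will check the compact case. For $T=\theta_{e_2,e_1}$ we have $Ad_{V,W}(\theta_{e_2,e_1})^*=\theta_{Ve_2,We_1}^*=\theta_{We_1,Ve_2}=Ad_{W,V}(\theta_{e_1,e_2})$. By linearity and continuity this identity extends to all of $\calK(E_1,E_2)$.

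Next, for general $T$, I will test the identity against compactly supported vectors. It is enough to show that
\[\langle Ad_{V,W}(T)e_1,e_2\rangle=\langle e_1,Ad_{W,V}(T^*)e_2\rangle\]
for $e_1\in\pi_1'(C_c(Z_1'))E_1'$ and $e_2\in\pi_2'(C_c(Z_2'))E_2'$, since these sets are dense by non-degeneracy. Because $V$ and $W$ are properly supported, I can choose $f\in C_c(Z_1)$ equal to $1$ on $\supp(W)\inv\circ\supp(e_1)$, and $g\in C_c(Z_2)$ equal to $1$ on $\supp(V)\inv\circ\supp(e_2)$. Proposition \ref{reduce adjoint map to compact case} gives $Ad_{V,W}(T)e_1=Ad_{V,W}(T\pi_1(f))e_1$.

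The remaining task is to insert $\pi_2(g)$ on the left. I will use the analogous left-sided localization: $\langle Ad_{V,W}(S)e_1,e_2\rangle=\langle Ad_{V,W}(\pi_2(g)S)e_1,e_2\rangle$. This holds because $Ad_{V,W}((1-\tilde\pi_2(g))S)e_1$ has support contained in $\supp(V)\circ(\text{complement of }\supp(V)\inv\circ\supp(e_2))\circ\cdots$, which is disjoint from $\supp(e_2)$. Corollary \ref{elements with empty support intersection are orthogono} then shows the pairing with $e_2$ vanishes. This gives
\[\langle Ad_{V,W}(T)e_1,e_2\rangle=\langle Ad_{V,W}(\pi_2(g)T\pi_1(f))e_1,e_2\rangle.\]
The operator $\pi_2(g)T\pi_1(f)$ is compact, so by the compact case this equals $\langle e_1,Ad_{W,V}(\pi_1(f)T^*\pi_2(g))e_2\rangle$. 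Running the same localization in reverse for $T^*$ (now with $e_2$ as the input vector and $e_1$ as the test vector) removes the cutoffs and yields $\langle e_1,Ad_{W,V}(T^*)e_2\rangle$.

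The main obstacle is the left-sided localization step. I must verify it carefully via the support estimates of proposition \ref{property of support of operator}(5), using that $V$ and $W$ are properly supported so that the relevant compositions of supports are closed and compact. An alternative I would try first is to avoid it: take $g$ equal to $1$ on $\supp(T)\circ\supp(f)$, so that $\pi_2(g)T\pi_1(f)=T\pi_1(f)$ by lemma \ref{properly supported operator transit compact support}. The symmetric choice then needs a single $f$ and $g$ valid for both sides simultaneously, which can be obtained by enlarging both to be $1$ on the union of the relevant compact sets.
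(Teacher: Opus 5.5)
Your main line is correct and is essentially the paper's argument. You reduce to $T\in\mathbb C_\calG[E_1,E_2]$, prove the identity on $\calK(E_1,E_2)$ via rank-one operators, and test against compactly supported $e_1,e_2$. The cutoffs are $f\equiv1$ on $\supp(W)\inv\circ\supp(e_1)$ and $g\equiv1$ on $\supp(V)\inv\circ\supp(e_2)$; take them real-valued so that $(\pi_2(g)T\pi_1(f))^*=\pi_1(f)T^*\pi_2(g)$.

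What you should discard is the closing ``alternative''. Take $g\equiv1$ on $\supp(T)\circ\supp(f)$. Then Lemma~\ref{properly supported operator transit compact support} gives $\pi_2(g)T\pi_1(f)=T\pi_1(f)$ and $\pi_1(f)T^*\pi_2(g)=\pi_1(f)T^*$. Dropping the \emph{left} cutoff, i.e. $\pi_1(f)T^*\pi_2(g)=T^*\pi_2(g)$, would need $f\equiv1$ on $\supp(T^*)\circ\supp(g)$. That set contains every point of $\supp(f)$ lying in the projection of $\supp(T)$ to $Z_1$, so $f$ would have to equal $1$ on essentially its own support. Enlarging the sets is therefore circular, and that operator identity is false in general. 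The paper's fifth equality appeals to precisely this identity and has the same circularity. What actually makes that step true is your left-sided localization. It holds only after pairing with $e_1$, and it uses $f\equiv1$ on $\supp(W)\inv\circ\supp(e_1)$.

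You do not need a new support estimate for that localization, because you only apply it to compact operators. For $S\in\calK(E_1,E_2)$, the compact case together with Proposition~\ref{reduce adjoint map to compact case} applied to $S^*$ gives $\lgl Ad_{V,W}(S)e_1,e_2\rgl=\lgl e_1,Ad_{W,V}(S^*)e_2\rgl=\lgl e_1,Ad_{W,V}(S^*\pi_2(g))e_2\rgl=\lgl Ad_{V,W}(\pi_2(g)S)e_1,e_2\rgl$. The paper likewise applies that proposition to non-equivariant compact operators. In this way both $\lgl Ad_{V,W}(T)e_1,e_2\rgl$ and $\lgl e_1,Ad_{W,V}(T^*)e_2\rgl$ reduce to $\lgl Ad_{V,W}(\pi_2(g)T\pi_1(f))e_1,e_2\rgl$, which proves the claim.

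If you prefer the support argument, note that $\supp(\tilde\pi_2(1-g))$ is only known to lie over the closure of $\{g\neq1\}$. So take $g\equiv1$ on a neighbourhood of $\supp(V)\inv\circ\supp(e_2)$ to get genuine disjointness from $\supp(e_2)$.
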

\begin{proof}
    For any $e_1\in E_1, e_2\in E_2$, we have
    \[Ad_{V,W}(\theta_{e_2,e_1})^*=(\theta_{Ve_2,We_1})^*=\theta_{We_1,Ve_2}=Ad_{W,V}(\theta_{e_2,e_1}^*).\]
    Hence, $Ad_{V,W}(T')^*=Ad_{W,V}(T'^*)$ for any $T'\in \calK(E_1,E_2)$.

    Now for $e_1\in \pi_1'(C_c(Z_1'))E_1'$ and $e_2\in \pi_2'(C_c(Z_2'))E_2'$, we select $f\in C_c(Z_1, \mathbb R)$ such that its restriction on $\supp(W)\inv\circ \supp(e_1)$ is 1. Then we select $g\in C_c(Z_2)$ such that its restriction to $\supp(V)\inv\circ \supp(e_2)$ and $\supp(f)\circ \supp(T^*)$ is 1. Now we have
    \begin{align*}
        \langle e_2, Ad_{V,W}(T)e_1\rangle & = \langle e_2, Ad_{V,W}(T\pi_1(f))e_1\rangle\\
        & = \langle Ad_{V,W}(T\pi_1(f))^* e_2, e_1\rangle\\
        & = \langle Ad_{W,V}(\pi_1(f)T^*) e_2, e_1\rangle\\
        & = \langle Ad_{W,V}(\pi_1(f)T^*\pi_2(g))e_2, e_1\rangle\\
        & = \langle Ad_{W,V}(T^*\pi_2(g)) e_2, e_1\rangle\\
        & = \langle Ad_{W,V}(T^*) e_2, e_1\rangle,
    \end{align*}
    where the first, fourth and sixth equalities are by proposition \ref{reduce adjoint map to compact case}, the second equality is because $T\pi_1(f)$ is a compact operator, and the fifth equality is because $\pi_1(f)T^*\pi_2(g)=T^*\pi_2(g)$ by lemma \ref{properly supported operator transit compact support}. Since $\pi_1'(C_c(Z_1'))E_1'$ and $\pi_2'(C_c(Z_2'))E_2'$ are dense in $E_1$ and $E_2$ respectively, we have $Ad_{V,W}(T)^*=Ad_{W,V}(T^*)$.
\end{proof}

Using the above two propositions, we can define the adjoint *-homomorphism by weakly controlled isometries between Roe algebras.

\begin{corr}
    Let $Z_1,Z_2$ be locally compact Hausdorff proper $\calG$-spaces, $(E_i,\pi_i)$ be a $Z_i,\calG,A$-module for $i=1,2$, and let $V\in \Hom_A(E_1,E_2)$ be a properly supported $\calG$-equivariant isometry. Then the map
    \[Ad_V:=Ad_{V,V}:C_\calG^*(E_1,\pi_1)\ra C_\calG^*(E_2,\pi_2), T\mapsto (V\circ (V\circ T)^*)^*.\]
    is a *-homomorphism.
\end{corr}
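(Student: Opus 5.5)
The corollary follows by combining the two preceding propositions with $V_1=V_2=V_3=V$ and $E_1=E_2=E_3$ (resp. $E_i'$), after first making sure that $Ad_V$ is defined on all of $C^*_\calG(E_1,\pi_1)$.

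\emph{Well-definedness and linearity.} The proposition on $Ad_{V,W}$ with $W=V$ shows that $T\mapsto (V\circ(V\circ T)^*)^*$ sends $\mathbb C_\calG[E_1,E_1]$ into $\mathbb C_\calG[E_2,E_2]$. It is bounded with $\|Ad_V(T)\|\leqslant\|T\|$, since $\|V\|\le 1$. Hence it extends continuously to $C^*_\calG(E_1,\pi_1)\ra C^*_\calG(E_2,\pi_2)$. I need to check that this extension agrees with the formula $(V(VT)^*)^*$ on the closure. For $T$ a norm limit of $T_n\in\mathbb C_\calG[E_1,E_1]$:
\begin{enumerate}
\item $VT_n\ra VT$ in norm. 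Each $VT_n$ is adjointable and $\calL$ is norm closed, so $VT$ is adjointable with $(VT_n)^*\ra (VT)^*$.
\item Likewise $V(VT_n)^*\ra V(VT)^*$, so the latter is adjointable and the formula passes to the limit.
\end{enumerate}
Linearity is clear from the formula, since taking adjoints is conjugate-linear and this happens twice.

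\emph{Multiplicativity.} Apply the composition proposition with $Z_i=Z_1$, $E_i=E_1$, $E_i'=E_2$ and $V_i=V$ for $i=1,2,3$. It gives $Ad_V(T_2T_1)=Ad_V(T_2)Ad_V(T_1)$ for all $T_1,T_2\in C^*_\calG(E_1,\pi_1)$. That proposition is stated for elements of the closures, so nothing further is needed; if one prefers, first prove it on the dense subalgebra $\mathbb C_\calG$ and extend by continuity of multiplication and of $Ad_V$.

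\emph{*-preservation.} Apply the adjoint proposition with $W=V$: $Ad_{V,V}(T)^*=Ad_{V,V}(T^*)$, i.e. $Ad_V(T)^*=Ad_V(T^*)$.

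The only mildly delicate point is the first step: making sure the extension by continuity coincides with the explicit formula, so that both propositions, stated via the formula, apply to every element of $C^*_\calG(E_1,\pi_1)$. The norm-closedness of adjointable operators and the continuity of the adjoint settle this. Everything else is a direct specialization of earlier results.
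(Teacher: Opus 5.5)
Your proposal is correct and is essentially the paper's argument: the corollary follows by specializing the composition proposition $Ad_{V_3,V_1}(T_2T_1)=Ad_{V_3,V_2}(T_2)\,Ad_{V_2,V_1}(T_1)$ and the adjoint proposition $Ad_{V,W}(T)^*=Ad_{W,V}(T^*)$ to $V=W$ with equal modules. Your extra check that the continuous extension from the dense subspace agrees with the formula $(V(VT)^*)^*$, via norm-closedness of the adjointable operators, correctly fills in a detail the paper leaves implicit.
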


\begin{prop}\label{functoriality for covering isometries}
    Let $Z_1,Z_2,Z_3, Z_1',Z_2',Z_3'$ be locally compact Hausdorff proper $\calG$-spaces, $(E_i,\pi_i)$ be a $Z_i,\calG,A$-module and $(E_i',\pi_i')$ be a $Z_i',\calG,A$-module for $i=1,2,3$, 
    \[V_1\in \Hom_A(E_1, E_2), W_1\in \Hom_A(E_1',E_2'), V_2\in\Hom_A(E_2,E_3), W_2\in \Hom_A(E_2',E_3')\] 
    are properly supported $\calG$-equivariant isometries. Then for any $T\in C_\calG^*(E_1,E_1')$, we have
    \[Ad_{V_2\circ V_1, W_2\circ W_1}(T) = Ad_{V_2,W_2}(Ad_{V_1,W_1}(T)).\]
\end{prop}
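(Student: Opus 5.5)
The plan is to check the identity first on rank-one operators, where it is a direct computation, and then to pass to general $T$ by the localization technique used in the proofs of the last two propositions.

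\textbf{Step 1 (compact case).} For $T=\theta_{e',e}$ with $e\in E_1$, $e'\in E_1'$, the remark after the definition of $Ad_{V,W}$ gives
\[Ad_{V_1,W_1}(\theta_{e',e})=\theta_{W_1e',V_1e}.\]
Applying it twice yields $\theta_{W_2W_1e',V_2V_1e}$, which is also $Ad_{V_2V_1,W_2W_1}(\theta_{e',e})$. Here I read the first subscript as acting on the range side, in keeping with the roles of $V$ and $W$ in the statement. By linearity and continuity the identity then holds for all compact operators between the relevant modules. Note that $V_2V_1$ and $W_2W_1$ are again properly supported $\calG$-equivariant isometries, by proposition \ref{property of support of operator}(5), so both sides make sense.

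\textbf{Step 2 (localization).} Both sides are bounded linear in $T$, so it suffices to treat $T\in\mathbb C_\calG[E_1,E_1']$ (with the appropriate orientation of source and range). I will test both sides on a vector $e$ in the dense subspace of compactly supported elements of the source module $E_3$. By proposition \ref{reduce adjoint map to compact case} applied to the pair $(V_2V_1,\cdot)$,
\[Ad_{V_2V_1,W_2W_1}(T)e=Ad_{V_2V_1,W_2W_1}(T\pi_1(f))e\]
for any $f\in C_c(Z_1)$ with $f=1$ on $\supp(V_2V_1)^{-1}\circ\supp(e)$. This set is compact, since the support of the composite is contained in $\supp(V_2)\circ\supp(V_1)$ and each factor is properly supported.

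For the right-hand side, first apply the same proposition to $Ad_{V_2,W_2}$. This replaces $Ad_{V_1,W_1}(T)$ by $Ad_{V_1,W_1}(T)\pi_2(g)$, where $g\in C_c(Z_2)$ equals $1$ on $\supp(V_2)^{-1}\circ\supp(e)$. Next I need
\[Ad_{V_1,W_1}(T)\pi_2(g)=Ad_{V_1,W_1}(T\pi_1(f)).\]
I would prove this by evaluating both sides on compactly supported $e_2\in E_2$ and using the same proposition again. Alternatively, I would take adjoints and use $Ad_{V,W}(T)^*=Ad_{W,V}(T^*)$. If $f$ is chosen to equal $1$ on $\supp(V_1)^{-1}\circ\supp(g)$, which contains the earlier compact set, then both expressions agree after a support calculation via lemma \ref{empty support must be 0}.

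After these reductions both sides involve only the compact operator $T\pi_1(f)$, and Step 1 gives equality on $e$. Density of compactly supported vectors, from non-degeneracy, finishes the proof.

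\textbf{Main obstacle.} The delicate point is the second reduction in Step 2. Because $Ad_{V_1,W_1}$ is not given by a plain composition $W_1TV_1^*$ when $V_1$ is non-adjointable, the identity $Ad_{V_1,W_1}(T)\pi_2(g)=Ad_{V_1,W_1}(T\pi_1(f))$ has to be justified through supports rather than by algebra. I expect to get it from the support inclusion
\[\supp(Ad_{V,W}(S))\subseteq\supp(W)\circ\supp(S)\circ\supp(V)^{-1},\]
applied to $S=T-T\pi_1(f)$, together with proposition \ref{property of support of operator}(2) and lemma \ref{empty support must be 0}. This mirrors the proof of proposition \ref{reduce adjoint map to compact case}.
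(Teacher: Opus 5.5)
Your strategy (compact case via rank-one operators, then localization through proposition \ref{reduce adjoint map to compact case} and density of compactly supported vectors) is the paper's. However, the identity you set as the target of the second reduction,
\[Ad_{V_1,W_1}(T)\pi_2(g)=Ad_{V_1,W_1}(T\pi_1(f)),\]
is false in general, so that step cannot be carried out as stated. The left side is cut down by $\pi_2(g)$ on the source side, but nothing forces the right side to be. Its source support is only bounded by $\supp(V_1)\circ\supp(f)$, and this set contains $\supp(V_1)\circ\supp(V_1)^{-1}\circ\supp(g)$, which is typically larger than $\supp(g)$.

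Here is a concrete counterexample. Take $\calG$ a point, $A=\mathbb C$, all $Z_i=Z_i'=\{0,1\}$, and all modules $\mathbb C^2$ with the diagonal representation. Let all $V_i,W_i$ and $T$ be the identity, $e=(1,0)$, $g=\chi_{\{0\}}$ and $f\equiv 1$. Every constraint you impose on $g$ and $f$ holds, yet the claimed identity reads $\pi_1(\chi_{\{0\}})=\mathrm{id}$. Accordingly, the support computation in your last paragraph does not prove your identity. Applied to $S=T-T\pi_1(f)$ and tested on vectors supported in $\supp(g)$, it proves
\[Ad_{V_1,W_1}(T)\pi_2(g)=Ad_{V_1,W_1}(T\pi_1(f))\pi_2(g),\]
which is just proposition \ref{reduce adjoint map to compact case} applied to the vectors $\pi_2(g)x$.

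The missing step is one more application of proposition \ref{reduce adjoint map to compact case}, now to $Ad_{V_2,W_2}$ and the compact operator $S=Ad_{V_1,W_1}(T\pi_1(f))$. Since $g=1$ on $\supp(V_2)^{-1}\circ\supp(e)$, it gives $Ad_{V_2,W_2}(S\pi_2(g))e=Ad_{V_2,W_2}(S)e$. The correct chain is then
\begin{align*}
Ad_{V_2,W_2}(Ad_{V_1,W_1}(T))e
&=Ad_{V_2,W_2}(Ad_{V_1,W_1}(T)\pi_2(g))e
=Ad_{V_2,W_2}(Ad_{V_1,W_1}(T\pi_1(f))\pi_2(g))e\\
&=Ad_{V_2,W_2}(Ad_{V_1,W_1}(T\pi_1(f)))e
=Ad_{V_2V_1,W_2W_1}(T\pi_1(f))e
=Ad_{V_2V_1,W_2W_1}(T)e.
\end{align*}
This is exactly the paper's proof with source and range interchanged. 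The paper tests on $e\in E_3'$ and cuts down with $\pi_1'(h_2)$ and $\pi_2'(h_1)$, because to match $Ad_{V,W}(\theta_{e_2,e_1})=\theta_{Ve_2,We_1}$ the statement's $C^*_\calG(E_1,E_1')$ should read $C^*_\calG(E_1',E_1)$. Your Step 1 in fact uses the opposite convention from the one you announce, which is harmless but should be made consistent.

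Note also that $T\pi_1(f)$ and $S$ are compact but not $\calG$-equivariant, so the reduction proposition is being applied outside $C^*_\calG$. Its proof is pure support calculus and goes through for these compactly supported operators; the paper uses this tacitly as well.
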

\begin{proof}
    Clearly the equality holds for $T\in \calK(E_1,E_1')$. Now for any $e\in \pi_3'(C_c(Z_3'))E_3'$, choose $h_1\in C_c(Z_2')$ such that its restriction on $\supp(W_2)\inv\circ \supp(e)$ is 1, and choose $h_2\in C_c(Z_1')$ such that its restriction on $\supp(W_2\circ W_1)\inv\circ \supp(e)$ and $\supp(W_1)\inv \circ \supp(h_1)$ is 1. Then,
    \begin{align*}
        Ad_{V_2\circ V_1, W_2\circ W_1}(T)e & = Ad_{V_2\circ V_1, W_2\circ W_1}(T\pi_1'(h_2))e\\
        & = Ad_{V_2,W_2}(Ad_{V_1,W_1}(T\pi_1'(h_2)))e\\
        & = Ad_{V_2,W_2}(Ad_{V_1,W_1}(T\pi_1'(h_2))\pi_2'(h_1))e\\
        & = Ad_{V_2,W_2}(Ad_{V_1,W_1}(T)\pi_2'(h_1))e\\
        & = Ad_{V_2,W_2}(Ad_{V_1,W_1}(T))e,
    \end{align*}
    where the second equality is because $T\pi_1'(h_2)$ is compact, and all other equalities are due to proposition \ref{reduce adjoint map to compact case}.
\end{proof}

\begin{corr}\label{functoriality for covering isometries in not general}
    Let $Z_1,Z_2,Z_3$ be locally compact Hausdorff proper $\calG$-spaces, $(E_1,\pi_1)$ be a $Z_1,\calG,A$-module, $(E_2,\pi_2)$ be a $Z_2,\calG,A$-module, $(E_3,\pi_3)$ be a $Z_3,\calG,A$-module, $V_1\in \Hom_A(E_1,E_2)$ and $V_2\in \Hom_A(E_2,E_3)$ be properly supported $\calG$-equivariant isometries. Then
    \[Ad_{V_2}\circ Ad_{V_1}=Ad_{V_2\circ V_1}: C_\calG^*(E_1,\pi_1)\ra C_\calG^*(E_3,\pi_3).\]
\end{corr}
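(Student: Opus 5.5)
This is a direct specialization of proposition \ref{functoriality for covering isometries}. I will apply that proposition with the primed data equal to the unprimed data: take $Z_i'=Z_i$, $(E_i',\pi_i')=(E_i,\pi_i)$ for $i=1,2,3$, and $W_1=V_1$, $W_2=V_2$. The hypotheses hold verbatim, since $V_1,V_2$ are properly supported $\calG$-equivariant isometries. For every $T\in C_\calG^*(E_1,E_1)=C_\calG^*(E_1,\pi_1)$ the proposition then gives
\[Ad_{V_2\circ V_1,V_2\circ V_1}(T)=Ad_{V_2,V_2}(Ad_{V_1,V_1}(T)).\]
Since $Ad_V:=Ad_{V,V}$ by the definition in the preceding corollary, this is exactly $Ad_{V_2\circ V_1}(T)=Ad_{V_2}(Ad_{V_1}(T))$.

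Two small well-definedness checks are needed along the way. First, $V_2\circ V_1$ must be an admissible isometry, so that $Ad_{V_2\circ V_1}$ makes sense. It is an isometry because $\langle V_2V_1e,V_2V_1e'\rangle=\langle V_1e,V_1e'\rangle=\langle e,e'\rangle$, and it is clearly $\calG$-equivariant. It is properly supported because proposition \ref{property of support of operator}(5) gives $\supp(V_2V_1)\subseteq\supp(V_2)\circ\supp(V_1)$, and a composition of supports on which both coordinate projections are proper again has proper projections. Alternatively, one can invoke lemma \ref{controlled operator is properly supported} together with proposition \ref{summary of controlled and weakly controlled sets}(4): each $V_i$ is weakly controlled, and composites of weakly controlled sets are weakly controlled. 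Second, $Ad_{V_1}(T)$ must lie in $C_\calG^*(E_2,\pi_2)$, so that $Ad_{V_2}$ can be applied to it; this is the content of the proposition defining $Ad_{V,W}$ and its extension to the closures.

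There is no genuine obstacle here. The only point to watch is that proposition \ref{functoriality for covering isometries} is stated for $T$ in the closure $C^*_\calG$ and not merely in $\mathbb C_\calG$. If one preferred to prove the identity only on $\mathbb C_\calG[E_1,E_1]$, it would still extend to the closure by continuity, since both sides are bounded linear maps (each has norm at most one, being built from isometries).
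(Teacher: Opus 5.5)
Your proposal is correct and matches the paper: the corollary is stated without separate proof as the specialization $E_i'=E_i$, $W_i=V_i$ of Proposition \ref{functoriality for covering isometries}, with $Ad_V=Ad_{V,V}$. Your side checks, that $V_2\circ V_1$ is again a properly supported $\calG$-equivariant isometry and that $Ad_{V_1}$ lands in $C^*_\calG(E_2,\pi_2)$, are the same facts the paper uses elsewhere (the transitivity of $\preceq$ and the proposition constructing $Ad_{V,W}$).
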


\begin{lem}\label{pushout of repr preserve Roe alg}
    Let $Z'$ be a locally compact Hausdorff proper $\calG$-space, $f:Z\ra Z'$ be a $\calG$-equivariant proper continuous map. Let $(E,\pi)$ be a $Z,\calG,A$-module. Then $C_\calG^*(E,\pi)=C_\calG^*(E,f_*\pi)$.
\end{lem}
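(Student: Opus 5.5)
We show that the two modules $(E,\pi)$ and $(E,f_*\pi)$ have the same generating set: for $T\in\End_A(E)$ that is $\calG$-equivariant, $T\in\mathbb C_\calG[(E,\pi),(E,\pi)]$ if and only if $T\in\mathbb C_\calG[(E,f_*\pi),(E,f_*\pi)]$. Taking norm closures then gives the equality of the Roe algebras. $\calG$-equivariance does not involve the representation, so only local compactness and controlled support have to be compared.

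\emph{Local compactness.} If $T$ is locally compact for $\pi$ and $g\in C_0(Z')$, then $f^*g\in C_0(Z)$ because $f$ is proper. Hence $T(f_*\pi)(g)=T\pi(f^*g)$ is compact, and similarly on the left. Conversely, suppose $T$ is locally compact for $f_*\pi$ and $h\in C_c(Z)$. Then $f(\supp h)$ is compact, so we can pick $g\in C_c(Z')$ with $g=1$ on $f(\supp h)$. This gives $\pi(h)=\pi(h\cdot f^*g)=\pi(h)(f_*\pi)(g)$, and therefore $T\pi(h)=T(f_*\pi)(g)\pi(h)$. But $T(f_*\pi)(g)$ is compact and $\pi(h)$ is adjointable, so $T\pi(h)$ is compact. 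The left-hand version is handled the same way, and density of $C_c(Z)$ finishes this step.

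\emph{Supports.} We compare $S=\supp_{\pi,\pi}(T)$ with $S'=\supp_{f_*\pi,f_*\pi}(T)$, aiming for $S'\subseteq (f\times f)(S)$ and $S\subseteq (f\times f)^{-1}(S')$. For the first inclusion, we regard $\mathrm{id}_E$ as an intertwiner of $\pi$ and $f_*\pi$ through $f$. Proposition \ref{support of intertwiner} then gives $\supp_{f_*\pi,\pi}(\mathrm{id})\subseteq Gr(f)$. This support is weakly controlled by the example following definition \ref{defn controlled and weakly controlled set}, so $\mathrm{id}$ is properly supported. By proposition \ref{property of support of operator}(5),(3),
\[
S'\subseteq Gr(f)\circ S\circ Gr(f)^{-1}=(f\times f)(S).
\]
For the second inclusion, let $(z,w)\notin(f\times f)^{-1}(S')$. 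Choose neighbourhoods $U\ni f(z)$ and $V\ni f(w)$ with $(f_*\pi)(C_0(U))\,T\,(f_*\pi)(C_0(V))=0$. For $h\in C_c(f^{-1}U)$ we can write $h=h\cdot f^*g$ with $g\in C_c(U)$, and similarly on the other side, so $\pi(C_c(f^{-1}U))\,T\,\pi(C_c(f^{-1}V))=0$ and $(z,w)\notin S$.

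\emph{Controlled sets.} It remains to see that these inclusions transfer controlledness in both directions. If $S\subseteq\calG(K\times_{\calG\units}K)$ with $K$ compact, then $(f\times f)(S)\subseteq\calG(f(K)\times_{\calG\units}f(K))$ by equivariance, which is $\calG$-compact. If instead $S'\subseteq\calG(L\times_{\calG\units}L)$, then $S\subseteq\calG(f^{-1}L\times_{\calG\units}f^{-1}L)$. Here $f^{-1}L$ is compact since $f$ is proper, and the equality $(f\times f)^{-1}(\calG(L\times L))=\calG(f^{-1}L\times f^{-1}L)$ uses equivariance once more: if $(f(z),f(w))=\gamma(l,l')$, then $(\gamma^{-1}z,\gamma^{-1}w)\in f^{-1}L\times f^{-1}L$.

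The main obstacle is the support comparison. It depends on proposition \ref{property of support of operator}(5), which requires the factor of the composition to be properly supported; this is exactly where properness of $f$ enters. Everything else is routine.
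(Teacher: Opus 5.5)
Your proof is correct and follows essentially the same route as the paper: the same $h\cdot f^*g$ trick handles local compactness in both directions, and Proposition \ref{support of intertwiner} together with Proposition \ref{property of support of operator} handles the comparison of supports. Your direct neighbourhood argument for $S\subseteq (f\times f)^{-1}(S')$ simply replaces running the composition $\mathrm{id}\circ T\circ\mathrm{id}$ in the other direction. One small precision: in $\mathrm{id}\circ T\circ\mathrm{id}$, part (5) needs the middle factor $T$ (not only the identities) to be properly supported for $\pi$. This holds in the direction where you use it, because controlled operators are properly supported (Lemma \ref{controlled operator is properly supported}); alternatively, use part (6) and note that $(f\times f)(S)$ is closed because $f\times f$ is proper.
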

\begin{proof}
    Since $f$ is proper, for any $T\in \End_A(E)$, by proposition \ref{support of intertwiner} and proposition \ref{property of support of operator}, $\supp_{\pi,\pi}(T)$ is controlled if and only if $\supp_{f_*\pi,f_*\pi}(T)$ is controlled. It suffices to prove that, $T\in \End_A(E)$ is locally compact with respect to $\pi$ if and only if it is locally compact with respect to $f_*\pi$. Obviously, local compactness with respect to $\pi$ implies local compactness with respect to $f_*\pi$ (since $f^*$ is $C_0(Z')\ra C_0(Z)$). Conversely, for any $g\in C_c(Z)$, let $h\in C_c(Z')$ such that $h|_{f(\supp(g))}=1$. Then
    \[T\pi(g)=T\pi(g\cdot h\circ f)=T \circ f_*\pi(h)\circ \pi(g)\in \calK(E),\]
    similarly $\pi(g)T\in \calK(E)$ for any $g\in C_c(Z)$. So the local compactness with respect to $f_*\pi$ implies the local compactness with respect to $\pi$.
\end{proof}

\subsection{K-theory of Roe algebras}

We will build up the framework of K-theory of Roe algebras in our setting.

\begin{prop}
    Let $A$ be a \cst-algebra, $\alpha:A\ra M_2(A), a\mapsto \begin{pmatrix}
a & 0\\
0 & 0  
    \end{pmatrix}$ be the top left corner inclusion, $\beta:A\ra M_2(A), a\mapsto \begin{pmatrix}
        0 & 0 \\ 0 & a
    \end{pmatrix}$ be the bottom right corner inclusion, then $\alpha$ and $\beta$ induce the same isomorphism in K-theory.
\end{prop}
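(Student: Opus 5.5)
The idea is to show that $\alpha$ and $\beta$ are conjugate by a unitary in a matrix algebra over the unitization, and then use that inner automorphisms act trivially on K-theory. Concretely, consider
\[u=\begin{pmatrix}0&1\\1&0\end{pmatrix}\in M_2(\tilde A),\]
where $\tilde A$ is the unitization (or one works in the multiplier algebra $M(M_2(A))$). Then $u$ is a self-adjoint unitary normalizing $M_2(A)$, and for every $a\in A$,
\[u\,\alpha(a)\,u^*=\begin{pmatrix}0&1\\1&0\end{pmatrix}\begin{pmatrix}a&0\\0&0\end{pmatrix}\begin{pmatrix}0&1\\1&0\end{pmatrix}=\begin{pmatrix}0&0\\0&a\end{pmatrix}=\beta(a).\]
So $\beta=\mathrm{Ad}_u\circ\alpha$, and it remains to show that $(\mathrm{Ad}_u)_*=\mathrm{id}$ on $K_*(M_2(A))$.

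The main step is to connect $u$ to the identity by a continuous path of unitaries in $M_2(\mathbb C)\cdot 1\subseteq M(M_2(A))$. One choice is the rotation path
\[R_t=\begin{pmatrix}\cos(\pi t/2)&-\sin(\pi t/2)\\ \sin(\pi t/2)&\cos(\pi t/2)\end{pmatrix},\qquad t\in[0,1],\]
which joins $1$ to $\begin{pmatrix}0&-1\\1&0\end{pmatrix}$. This endpoint differs from $u$ only by the sign of one entry, that is, by the diagonal unitary $\mathrm{diag}(1,-1)$. So it is simpler to use $R_1$ in place of $u$: conjugating $\alpha(a)$ by $R_1$ also gives $\beta(a)$. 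Indeed, $\mathrm{Ad}_{w}$ with $w$ diagonal scalar fixes $\beta(a)$, and in any case $\mathrm{Ad}_{R_1}\circ\alpha=\beta$ by direct computation.

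Then $t\mapsto \mathrm{Ad}_{R_t}\circ\alpha$ is a point-norm continuous path of $*$-homomorphisms $A\to M_2(A)$ from $\alpha$ to $\beta$. By homotopy invariance of K-theory, $\alpha_*=\beta_*$.

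Finally, $\alpha_*$ is an isomorphism by the standard stability of K-theory under the corner embedding, $K_*(A)\cong K_*(M_2(A))$. Hence both maps induce the same isomorphism.

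I expect no real obstacle here. The only care needed is that the conjugating unitaries live in the multiplier algebra, or in $M_2(\tilde A)$, rather than in $M_2(A)$ itself; since $M_2(A)$ is an ideal there, $\mathrm{Ad}_{R_t}$ preserves it and the homotopy is legitimate.
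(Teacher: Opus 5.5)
Your argument is correct and is essentially the paper's. Conjugating $\alpha$ by the rotations $R_t$ gives $a\mapsto p_t\otimes a$, where $p_t$ is the rank-one projection onto $(\cos(\pi t/2),\sin(\pi t/2))$; this is exactly the paper's explicit homotopy $\iota(a)(t)$ up to the reparametrization $t\mapsto\sin(\pi t/2)$. The detour through $u$ is unnecessary, and your remark that $\alpha_*$ is an isomorphism by stability supplies a point the paper leaves implicit.
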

\begin{proof}
    Consider the homotopy $\iota:A\ra M_2(A)[0,1]$,
    \[\iota(a)(t)=\begin{pmatrix}
        (1-t^2)a & t\sqrt{1-t^2}a\\ t\sqrt{1-t^2}a & t^2 a
    \end{pmatrix}.\]
\end{proof}

\begin{prop}\label{two covering isometries induce same map in K}
    Let $Z_1,Z_2$ be locally compact Hausdorff proper $\calG$-spaces, $(E_1,\pi_1)$ be a $Z_1,\calG,A$-module, $(E_2,\pi_2)$ be a $Z_2,\calG,A$-module, and $V_0,V_1\in \Hom_A(E_1,E_2)$ be properly supported $\calG$-equivariant isometries. Then $Ad_{V_0}$ and $Ad_{V_1}$ induce the same map $K_*(C_\calG^*(E_1,\pi_1))\ra K_*(C_\calG^*(E_2,\pi_2))$.
\end{prop}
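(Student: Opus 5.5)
The plan is a rotation homotopy inside the doubled module $(E_2\oplus E_2,\pi_2\oplus\pi_2)$. This module is again a $Z_2,\calG,A$-module: it is countably generated, $\pi_2\oplus\pi_2$ is non-degenerate and $\calG$-equivariant, and the $\calG$-action is diagonal. For $t\in[0,\pi/2]$ I define
\[W_t:E_1\ra E_2\oplus E_2,\quad W_te=(\cos t\, V_0e,\ \sin t\, V_1e).\]
\begin{itemize}
\item $W_t$ is a bounded $A$-linear, $\calG$-equivariant operator.
\item It is an isometry, since $\langle W_te,W_te'\rangle=\cos^2t\langle V_0e,V_0e'\rangle+\sin^2t\langle V_1e,V_1e'\rangle=\langle e,e'\rangle$.
\item By proposition \ref{property of support of operator}(4), applied to the two components, $\supp(W_t)\subseteq\supp(V_0)\cup\supp(V_1)$. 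This set is weakly controlled by proposition \ref{summary of controlled and weakly controlled sets}(3), so $W_t$ is properly supported by lemma \ref{controlled operator is properly supported}.
\end{itemize}
Hence $Ad_{W_t}:C^*_\calG(E_1,\pi_1)\ra C^*_\calG(E_2\oplus E_2,\pi_2\oplus\pi_2)$ is a $*$-homomorphism for each $t$.

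Next I identify $C^*_\calG(E_2\oplus E_2,\pi_2\oplus\pi_2)$ with $M_2(C^*_\calG(E_2,\pi_2))$, via the corner maps. An operator on the direct sum is equivariant, controlled and locally compact if and only if each of its four entries is. I then claim the matrix formula
\[Ad_{W_t}(T)=\begin{pmatrix}\cos^2t\,Ad_{V_0}(T) & \cos t\sin t\,Ad_{V_0,V_1}(T)\\ \cos t\sin t\,Ad_{V_1,V_0}(T) & \sin^2t\,Ad_{V_1}(T)\end{pmatrix}.\]
\begin{itemize}
\item For $T=\theta_{e,e'}$ compact, the formula is immediate from $Ad(\theta_{e,e'})=\theta_{W_te,W_te'}$ and bilinearity of $\theta$.
\item For general $T$, I evaluate both sides on $e\in(\pi_2\oplus\pi_2)(C_c(Z_2))(E_2\oplus E_2)$. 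Proposition \ref{reduce adjoint map to compact case} lets me replace $T$ by $T\pi_1(f)$ on every entry simultaneously, for a single $f\in C_c(Z_1)$ equal to $1$ on $(\supp(V_0)\cup\supp(V_1))\inv\circ\supp(e)$; this $T\pi_1(f)$ is compact.
\item Both sides then agree on a dense subspace, so they are equal.
\end{itemize}
I expect this step to be the main technical point, because the $W_t$ are non-adjointable, so $Ad_{W_t}(T)$ cannot simply be written as $W_tTW_t^*$.

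The matrix formula shows that $t\mapsto Ad_{W_t}(T)$ is norm continuous, with entries bounded by $\|T\|$. So $(Ad_{W_t})_t$ gives a $*$-homomorphism $C^*_\calG(E_1,\pi_1)\ra M_2(C^*_\calG(E_2,\pi_2))[0,\pi/2]$, that is, a homotopy, and $(Ad_{W_0})_*=(Ad_{W_{\pi/2}})_*$. At the endpoints we have $Ad_{W_0}=\alpha\circ Ad_{V_0}$ and $Ad_{W_{\pi/2}}=\beta\circ Ad_{V_1}$, where $\alpha,\beta$ are the top-left and bottom-right corner inclusions. By the preceding proposition, $\alpha_*=\beta_*$ and this common map is an isomorphism. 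Cancelling it gives $(Ad_{V_0})_*=(Ad_{V_1})_*$.
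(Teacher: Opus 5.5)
Your proposal is correct and follows the paper's proof. The paper also passes to the doubled module $(E_2^{\oplus 2},\pi_2^{\oplus 2})\cong M_2(C^*_\calG(E_2,\pi_2))$, joins $V_0\oplus 0$ to $0\oplus V_1$ by the rotation path $\cos(\pi t/2)V_0\oplus\sin(\pi t/2)V_1$, and concludes using homotopy invariance and the corner-inclusion proposition. The only difference is that your explicit matrix formula for $Ad_{W_t}(T)$ (checked on rank-one operators, then extended by localizing to compact operators) justifies the norm continuity of $t\mapsto Ad_{W_t}(T)$, which the paper simply asserts.
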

\begin{proof}
    It is easy to see that we can canonically identify $M_2(C_\calG^*(E_2,\pi_2))$ with $C_\calG^*(E_2^{\oplus 2}, \pi_2^{\oplus 2})$. Let $\tilde V_0=V_0\oplus 0\in \Hom_A(E_1,E_2^{\oplus 2})$ and $\tilde V_1=0\oplus V_1\in \Hom_A(E_1,E_2^{\oplus 2})$, then $\tilde V_0$ and $\tilde V_1$ are two properly supported $\calG$-equivariant isometries.

    Now let $\alpha:C_\calG^*(E_2,\pi_2)\ra C_\calG^*(E_2^{\oplus 2}, \pi_2^{\oplus 2})$ be the top left corner inclusion, $\beta:C_\calG^*(E_2,\pi_2)\ra C_\calG^*(E_2^{\oplus 2}, \pi_2^{\oplus 2})$ be the bottom right corner inclusion, then $Ad_{\tilde V_0}=\alpha\circ Ad_{V_0}$ and $Ad_{\tilde V_1}=\beta\circ Ad_{V_1}$. By the previous corollary, $\alpha$ and $\beta$ induce the same isomorphism in K-theory, so now it suffices to prove that $Ad_{\tilde V_0}$ and $Ad_{\tilde V_1}$ induce the same map in K-theory.

    If we define $\tilde V_t=\cos(\pi t/2)V_0\oplus \sin(\pi t/2)V_1$ for $t\in [0,1]$, $(\tilde V_t)_{t\in [0,1]}$ is a continuous path of properly supported $\calG$-equivariant isometries connecting $\tilde V_0$ and $\tilde V_1$. After replacing $V_0$ and $V_1$ by $\tilde V_0$ and $\tilde V_1$, we can assume that $V_0$ and $V_1$ are homotopic.

    Let $(V_t)_{t\in[0,1]}$ be a family of properly supported $\calG$-equivariant isometries that connects $V_0$ and $V_1$, then for any $T\in C_\calG^*(E_1,\pi_1)$, $t\mapsto Ad_{V_t}(T)$ is continuous in norm. Hence, $(Ad_{V_t})_{t\in [0,1]}$ is a continuous path of *-homomorphisms. Then by homotopy invariance of K-theory,
    \[(Ad_{V_0})_*=(Ad_{V_1})_*:K_*(C_\calG^*(E_1,\pi_1))\ra K_*(C_\calG^*(E_2,\pi_2)).\]
\end{proof}

\begin{defn}
    Let $Z,Z'$ be two \lch proper $\calG$-spaces, $(E,\pi)$ be a $Z,\calG,A$-module, $(E',\pi')$ be a $Z',\calG,A$-module. We say that $(E,\pi)\preceq (E',\pi')$, if there exists a $\calG$-equivariant isometry $V:E\ra E'$ such that $V$ is properly supported with respect to $\pi$ and $\pi'$.

    For a $Z,\calG,A$-module $(E,\pi)$, we say that $(E,\pi)$ is a universal $Z,\calG,A$-module, if it is stable and for any other $Z,\calG,A$-module $(E_1,\pi_1)$, we have $(E_1,\pi_1)\preceq (E,\pi)$.
\end{defn}

\begin{rem}
    The stableness in this definition is not essential. It is just for simplifying notations in the following sections.
\end{rem}

\begin{ex}
    Let $G$ be a countable discrete group, $X$ be a second countable locally compact Hausdorff proper $G$-space, $H$ be a separable infinite-dimensional $G$-Hilbert space, $\pi:C_0(X)\ra B(H)$ be a $G$-equivariant non-degenerate representation. If $(H,\pi)$ is an ample $X,G$-module in the sense of \cite[Definition 4.5.2]{willett2020higher}, then $(H,\pi)$ is a universal $X,G,\mathbb C$-module in our definition by the same argument in \cite[Section 4.5]{willett2020higher}. Such an ample module always exists by \cite[Lemma 4.5.5]{willett2020higher}. Our terminology is closer to \cite{nishikawa2021crossed}, where the term ``universal'' is also used for a similar notion.
\end{ex}

\begin{prop}\label{K-theory of Roe algebras of universal modules are isomorphic}
    Let $Z$ be a locally compact Hausdorff proper $\calG$-space, $(E_1,\pi_1)$ and $(E_2,\pi_2)$ be two universal $Z,\calG,A$-modules. Then
    \[K_*(C_\calG^*(E_1,\pi_1))\cong K_*(C_\calG^*(E_2,\pi_2)).\]
\end{prop}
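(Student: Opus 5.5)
The plan is to show that the two adjoint maps obtained from the universality of each module are mutually inverse on K-theory. Since both $(E_1,\pi_1)$ and $(E_2,\pi_2)$ are universal, we have $(E_1,\pi_1)\preceq(E_2,\pi_2)$ and $(E_2,\pi_2)\preceq(E_1,\pi_1)$. So we can pick $\calG$-equivariant properly supported isometries
\[V:E_1\ra E_2,\qquad W:E_2\ra E_1.\]
By the corollary defining $Ad_V$, they give $*$-homomorphisms
\[Ad_V:C_\calG^*(E_1,\pi_1)\ra C_\calG^*(E_2,\pi_2),\qquad Ad_W:C_\calG^*(E_2,\pi_2)\ra C_\calG^*(E_1,\pi_1).\]
The claim is that $(Ad_V)_*$ and $(Ad_W)_*$ are inverse isomorphisms.

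First, by corollary \ref{functoriality for covering isometries in not general},
\[Ad_W\circ Ad_V=Ad_{W\circ V}.\]
Here $W\circ V:E_1\ra E_1$ is again a $\calG$-equivariant isometry. It is properly supported by proposition \ref{property of support of operator}(5), since $\supp(WV)\subseteq\supp(W)\circ\supp(V)$ and a composition of proper relations is proper (equivalently, one can use lemma \ref{controlled operator is properly supported} together with proposition \ref{summary of controlled and weakly controlled sets}(4)). The identity $id_{E_1}$ is also a $\calG$-equivariant properly supported isometry: its support lies in the diagonal by proposition \ref{support of intertwiner}. Moreover $Ad_{id}$ is the identity map.

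Next, proposition \ref{two covering isometries induce same map in K} applied to $V_0=W\circ V$ and $V_1=id_{E_1}$ gives
\[(Ad_W)_*\circ(Ad_V)_*=(Ad_{WV})_*=(Ad_{id})_*=id.\]
The symmetric argument with $V\circ W$ gives $(Ad_V)_*\circ(Ad_W)_*=id$. Hence $(Ad_V)_*$ is an isomorphism
\[K_*(C_\calG^*(E_1,\pi_1))\cong K_*(C_\calG^*(E_2,\pi_2)),\]
and by the same proposition it is even canonical, i.e.\ independent of the choice of $V$.

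The only step that needs care is checking that the composite $W\circ V$ is properly supported, so that proposition \ref{two covering isometries induce same map in K} applies. This follows directly from the support inclusion for compositions. Stability of the modules is not needed here.
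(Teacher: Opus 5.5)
Your proof is correct and is essentially the paper's argument: take the isometries $V$ and $W$ supplied by universality, use functoriality of $Ad$ to write $Ad_W\circ Ad_V=Ad_{W\circ V}$ and $Ad_V\circ Ad_W=Ad_{V\circ W}$, and compare each composite with the identity via the proposition that two properly supported equivariant isometries induce the same map in $K$-theory. Your explicit check that $W\circ V$ is properly supported, with support contained in $\supp(W)\circ\supp(V)$, is a detail the paper leaves implicit here; it only records it in the proof that $\preceq$ is transitive.
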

\begin{proof}
    By definition of universal modules, assume that $V\in \Hom_A(E_1,E_2)$ and $W\in \Hom_A(E_2,E_1)$ are properly supported $\calG$-equivariant isometry. Then apply proposition \ref{two covering isometries induce same map in K} to the two $\calG$-equivariant properly supported isometries $V\circ W$ and $id_{E_2}$, we have
    \[(Ad_{V\circ W})_*=id:K_*(C_\calG^*(E_1,\pi_1))\ra K_*(C_\calG^*(E_1,\pi_1)).\]
    Similarly, $(Ad_{W\circ V})_*$ is also the identity map. Then using corollary \ref{functoriality for covering isometries in not general}, we have \[(Ad_{V})_*\circ (Ad_{W})_*=(Ad_{V\circ W})_*=id,\]
    \[(Ad_{W})_*\circ (Ad_{V})_*=(Ad_{W\circ V})_*=id.\]
    Hence, $(Ad_{V})_*$ and $(Ad_{W})_*$ are isomorphisms.
\end{proof}

\begin{prop}
    Let $Z$ be a \lch proper $\calG$-space. We denote the set of equivariant unitary equivalent class of $Z,\calG,A$-modules by $M(Z,\calG,A)$. Then $(M(Z,\calG,A),\preceq)$ is a well-defined directed set.
\end{prop}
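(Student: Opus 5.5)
The plan is to verify the three requirements directly: that $\preceq$ is well defined on unitary equivalence classes, that it is a preorder (reflexive and transitive), and that any two classes have a common upper bound. Before that, I will dispose of the set-theoretic point that $M(Z,\calG,A)$ really is a set. Every countably generated Hilbert $A$-module embeds, by Kasparov's stabilization theorem, as a complemented submodule of $\ell^2(\mathbb N)\otimes A$. So every $Z,\calG,A$-module is unitarily equivalent to one whose underlying module is a submodule of this fixed module, equipped with some $\calG$-action and some representation. Such data range over a set, hence so do the equivalence classes.

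For well-definedness and reflexivity, the key observation is that an operator $U$ intertwining $\pi$ and $\pi'$ satisfies $\supp_{\pi',\pi}(U)\subseteq \Delta_Z$ by proposition \ref{support of intertwiner} (take $\phi=id_Z$). Hence equivariant unitary intertwiners, and in particular $id_E$, are properly supported $\calG$-equivariant isometries; this gives reflexivity. Now suppose $(E_1,\pi_1)\cong(E_1',\pi_1')$ via $U_1$, $(E_2,\pi_2)\cong(E_2',\pi_2')$ via $U_2$, and $V:E_1\ra E_2$ is a properly supported equivariant isometry. Then $U_2VU_1^*$ is an equivariant isometry. Since $U_1^*$ and $U_2$ have diagonal supports, proposition \ref{property of support of operator}(5) gives
\[\supp(U_2VU_1^*)\subseteq \Delta\circ\supp(V)\circ\Delta=\supp(V),\]
so $U_2VU_1^*$ is properly supported and $\preceq$ descends to classes.

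Transitivity is the step that needs the most care, since isometries here need not be adjointable. Given properly supported equivariant isometries $V:E_1\ra E_2$ and $W:E_2\ra E_3$, the composite $WV$ is clearly an equivariant isometry. Because $V$ is properly supported, proposition \ref{property of support of operator}(5) applies without any adjointability hypothesis and gives $\supp(WV)\subseteq\supp(W)\circ\supp(V)$. By lemma \ref{controlled operator is properly supported} both supports are weakly controlled, and by proposition \ref{summary of controlled and weakly controlled sets}(4) their composite is weakly controlled. Hence $\supp(WV)$, being a closed subset of a weakly controlled set, has proper coordinate projections, and $WV$ is properly supported.

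For directedness, given $(E_1,\pi_1)$ and $(E_2,\pi_2)$, I take the direct sum $(E_1\oplus E_2,\pi_1\oplus\pi_2)$ with the diagonal $\calG$-action. It is countably generated, and $\pi_1\oplus\pi_2$ is a non-degenerate $\calG$-equivariant $C_0(\calG\units)$-representation, so it is a $Z,\calG,A$-module. The inclusions $E_i\ra E_1\oplus E_2$ are equivariant adjointable isometries intertwining $\pi_i$ with $\pi_1\oplus\pi_2$. So again by proposition \ref{support of intertwiner} their supports lie in the diagonal, they are properly supported, and the direct sum is a common upper bound.
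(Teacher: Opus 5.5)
Your proposal is correct and follows essentially the same route as the paper. It uses Kasparov stabilization to see that $M(Z,\calG,A)$ is a set, $id_E$ for reflexivity, proposition \ref{property of support of operator}(5) for transitivity, and $(E_1\oplus E_2,\pi_1\oplus\pi_2)$ as a common upper bound. You are slightly more thorough than the paper in two places, both of which it leaves implicit: you check that $\preceq$ descends to unitary equivalence classes, and you justify via weakly controlled sets why $\supp(W)\circ\supp(V)$ has proper coordinate projections.
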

\begin{proof}
    By definition of $Z,\calG,A$-modules, if $(E,\pi)$ is a $Z,\calG,A$-module, then $E$ is a countably generated Hilbert $A$-module. Hence, up to unitary isomorphism, $E$ can be seen as a submodule of $A^\infty$ by Kasparov's stabilization theorem. Therefore, $M(Z,\calG,A)$ is a set.

    Now we check that $\preceq$ is a well-defined preorder. For $(E,\pi)\in M(Z,\calG,A)$, $id_E:E\ra E$ is a $\calG$-equivariant properly supported isometry. So $\preceq$ is reflexive. For $(E_1,\pi_1)\preceq (E_2,\pi_2)$ and $(E_2,\pi_2)\preceq (E_3,\pi_3)$, assume that $V:E_1\ra E_2$ and $W:E_2\ra E_3$ are $\calG$-equivariant properly supported isometry. Then $WV:E_1\ra E_3$ is a $\calG$-equivariant isometry. By proposition \ref{property of support of operator}, $WV$ is also properly supported. Therefore, $\preceq$ is also transitive.

    Clearly, for any $(E_1,\pi_1)$ and $(E_2,\pi_2)$, we have $(E_1,\pi_1)\preceq (E_1\oplus E_2,\pi_1\oplus \pi_2)$ and $(E_2,\pi_2)\preceq (E_1\oplus E_2,\pi_1\oplus \pi_2)$. So $(M(Z,\calG,A),\preceq)$ is a directed set.
\end{proof}

By proposition \ref{two covering isometries induce same map in K}, if $(E_1,\pi_1)\preceq (E_2,\pi_2)$ in $M(Z,\calG,A)$, then this relation induces uniquely a homomorphism $K_*(C_\calG^*(E_1,\pi_1))\ra K_*(C_\calG^*(E_2,\pi_2))$. The set $\{K_*(C_\calG^*(E,\pi)):(E,\pi)\in M(Z,\calG,A)\}$ and homomorphisms in this form constitute a small thin category. If there exists a universal $Z,\calG,A$-module $(E,\pi)$, then $K_*(C_\calG^*(E,\pi))$ is a terminal object in this small category.

\begin{defn}
    Let $Z$ be a \lch proper $\calG$-space. We define
    \[KC_*(Z;\calG,A):=\varinjlim_{(E,\pi)\in M(Z,\calG,A)}K_*(C_\calG^*(E,\pi)).\]

    If $f:Z\ra Z'$ is a proper continuous map between two \lch proper $\calG$-spaces, then we define
    \[f_*:KC_*(Z;\calG,A)\ra KC_*(Z';\calG,A)\]
    as the map that sends $K_*(C_\calG^*(E,\pi))$ onto $K_*(C_\calG^*(E,f_*\pi))$ in sense of lemma \ref{pushout of repr preserve Roe alg}.
\end{defn}

\begin{prop}\label{functoriality of KC}
    Use the same notation as above.
    \begin{enumerate}
        \item The map $f_*$ is a well-defined homomorphism.
        \item The assignments above form a functor $KC_*(-;\calG,A)$ from the category of \lch proper $\calG$-spaces and $\calG$-equivariant proper continuous maps to the category of graded abelian groups.
        \item If $(E,\pi)$ is a universal $Z,\calG,A$-module, then $KC_*(Z;\calG,A)\cong K_*(C_\calG^*(E,\pi))$.
    \end{enumerate}
\end{prop}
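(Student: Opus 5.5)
The plan is to reduce everything to compatibility between the adjoint maps $Ad_V$ and the pushforward operation $f_*\pi$. The key observation for (1) concerns a $\calG$-equivariant isometry $V:E_1\ra E_2$ that is properly supported with respect to $\pi_1,\pi_2$ (two $Z,\calG,A$-modules). I claim such a $V$ is also properly supported with respect to $f_*\pi_1,f_*\pi_2$. Indeed, from the definition of the support one gets $\supp_{f_*\pi_2,f_*\pi_1}(V)\subseteq \overline{(f\times f)(\supp_{\pi_2,\pi_1}(V))}$. To see this, take $(z_2',z_1')$ outside the right-hand side, choose neighbourhoods $U_2',U_1'$ with $(U_2'\times U_1')$ disjoint from $(f\times f)(\supp V)$, and apply the definition to $f^*C_0(U_i')$ via strict continuity of $\tilde\pi_i$ and a partition of unity on compact sets. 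Since $f$ is proper and $\supp(V)$ is weakly controlled, its image under $f\times f$ is closed, $\calG$-invariant, and has proper projections, so it is weakly controlled in $Z'\times_{\calG\units}Z'$. Lemma \ref{controlled operator is properly supported} then shows $V$ is properly supported for the pushed-forward modules.

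Next, the map $(E,\pi)\mapsto (E,f_*\pi)$ preserves unitary equivalence and the relation $\preceq$, so it induces an order-preserving map $M(Z,\calG,A)\ra M(Z',\calG,A)$. Lemma \ref{pushout of repr preserve Roe alg} gives $C_\calG^*(E,\pi)=C_\calG^*(E,f_*\pi)$ as the same subalgebra of $\calL(E)$. Moreover, $Ad_V$ is given by the same formula $T\mapsto (V(VT)^*)^*$ in both settings. Hence the square formed by $(Ad_V)_*$ on the two sides commutes tautologically: the identity isomorphisms $K_*(C^*_\calG(E,\pi))=K_*(C^*_\calG(E,f_*\pi))$ form a morphism of directed systems. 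Composing with the canonical maps into $\varinjlim_{M(Z',\calG,A)}$ and using the universal property of the direct limit yields a well-defined homomorphism $f_*$, which proves (1). By Proposition \ref{two covering isometries induce same map in K}, the connecting maps do not depend on the choice of isometry, so the directed system is well-defined.

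For (2), we have $(\mathrm{id}_Z)_*\pi=\pi$, so $\mathrm{id}_*=\mathrm{id}$. Given $g:Z'\ra Z''$, we have $(g\circ f)_*\pi=g_*(f_*\pi)$, because $(g\circ f)^*=f^*\circ g^*$ and the strictly continuous extension of $f_*\pi$ is $\tilde\pi\circ f^*$ on $C_b(Z')$ (uniqueness of strict extensions). Therefore $(g\circ f)_*=g_*\circ f_*$ on each term of the system, and hence on the limit.

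For (3), if $(E,\pi)$ is universal, then it is cofinal (indeed terminal up to $\preceq$) in $M(Z,\calG,A)$. A directed limit over a directed set with a cofinal element $c$ is isomorphic to the group at $c$ provided the endomorphisms induced by $c\preceq c$ are the identity. This holds by Proposition \ref{two covering isometries induce same map in K} applied to any isometry and $\mathrm{id}_E$. Cofinality then gives $KC_*(Z;\calG,A)\cong K_*(C^*_\calG(E,\pi))$, as in Proposition \ref{K-theory of Roe algebras of universal modules are isomorphic}. The main obstacle I expect is the support estimate in the first paragraph, namely showing that proper support survives pushforward along $f$ via the closedness of $(f\times f)(\supp V)$. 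I would handle it with the properness of $f$ restricted to the weakly controlled set.
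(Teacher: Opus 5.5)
Your proposal is correct and follows the paper's proof in all three parts. The only difference is in (1): the paper obtains $\supp_{f_*\pi_2,f_*\pi_1}(V)\subseteq Gr(f)\circ \supp_{\pi_2,\pi_1}(V)\circ Gr(f)^{-1}$ by factoring $V=\mathrm{id}_{E_2}\circ V\circ \mathrm{id}_{E_1}$ and combining Proposition \ref{support of intertwiner} with the composition rules for supports and for weakly controlled sets. That set is exactly your $(f\times f)(\supp_{\pi_2,\pi_1}(V))$, which you control instead by a direct neighbourhood argument and a hands-on check of closedness and properness.
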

\begin{proof}
    (1) Firstly, if $(E_1,\pi_1)\preceq (E_2,\pi_2)$ in $M(Z,\calG,A)$, assume that $V:E_1\ra E_2$ is a $\calG$-equivariant properly supported isometry, then
    \begin{align*}
        \supp_{f_*\pi_2,f_*\pi_1}(V) & \subseteq \supp_{f_*\pi_2,\pi_2}(id_{E_2})\circ \supp_{\pi_2,\pi_1}(V)\circ \supp_{\pi_1,f_*\pi_1}(id_{E_1})\\
        & \subseteq Gr(f)\circ \supp_{\pi_2,\pi_1}(V)\circ Gr(f)\inv
    \end{align*}
    is weakly controlled, by proposition \ref{summary of controlled and weakly controlled sets} proposition \ref{support of intertwiner} and proposition \ref{property of support of operator}. Hence, $(E_1,f_*\pi_1)\preceq (E_2,f_*\pi_2)$. Clearly, the following diagram commutes, where the two equalities are due to lemma \ref{pushout of repr preserve Roe alg}.
    \[
\xymatrix{
K_*(C_{\mathcal G}^*(E_1,\pi_1))
  \ar@{=}[r]
  \ar[d]_{(Ad_V)_*}
&
K_*(C_{\mathcal G}^*(E_1,f_*\pi_1))
  \ar[d]^{(Ad_V)_*}
\\
K_*(C_{\mathcal G}^*(E_2,\pi_2))
  \ar@{=}[r]
&
K_*(C_{\mathcal G}^*(E_2,f_*\pi_2)).
}
\]
Therefore, $f_*$ is a well-defined homomorphism.

(2) For a \lch proper $\calG$-space $Z$, it is clear that $(id_Z)_*$ is the identity map. For two proper continuous maps $f:Z\ra Z'$ and $g:Z'\ra Z''$ between \lch proper $\calG$-spaces, by lemma \ref{pushout of repr preserve Roe alg} we have equalities $C_\calG^*(E,\pi)=C_\calG^*(E,f_*\pi)=C_\calG^*(E,g_*f_*\pi)=C_\calG^*(E,(gf)_*\pi)$ for any $(E,\pi)\in M(Z,\calG,A)$. Hence, $g_*\circ f_*=(gf)_*:KC_*(Z;\calG,A)\ra KC_*(Z'';\calG,A)$.

(3) If $(E,\pi)$ is a universal $Z,\calG,A$-module, for any $(E',\pi')\in M(Z,\calG,A)$ such that $(E,\pi)\preceq (E',\pi')$, we can assume that $V:E\ra E'$ and $W:E'\ra E$ are $\calG$-equivariant properly supported isometries. Use the same argument as in the proof of proposition \ref{K-theory of Roe algebras of universal modules are isomorphic}, $(Ad_V)_*$ and $(Ad_W)_*$ build up the isomorphism $K_*(C_\calG^*(E,\pi))\cong K_*(C_\calG^*(E',\pi'))$. Therefore, $KC_*(Z;\calG,A)\cong K_*(C_\calG^*(E,\pi))$.
\end{proof}

\begin{rem}
For the arguments above, by the proof of proposition \ref{two covering isometries induce same map in K}, we can actually replace the K-theory functor by any functor from the category of \cst-algebras and *-homomorphisms to the category of graded abelian groups that are homotopy invariant and stable under tensor product with $\calK$.
\end{rem}

From the definition, we can see that $KC_*(Z;\calG,A)$ is defined to be an ideal receptacle of some coarse index in form of
\[\tilde \mu: \kk_*^\calG(C_0(Z),A)\ra KC_*(Z;\calG,A).\]
See \cite{Roe96} or \cite{higson2000analytic}, etc. We will introduce a groupoid-equivariant coarse index map in the next paper of this series.

We end this section with an analog of coarse geometrical invariance of the functor $KC_*(-;\calG,A)$.

\begin{defn}
    Let $Z_1,Z_2$ be two locally compact Hausdorff proper $\calG$-spaces and $f,g:Z_1\ra Z_2$ be two $\calG$-equivariant proper continuous maps. We say that $f$ is close to $g$, if
    \[Gr(f)\circ Gr(g)\inv=\{(f(z_1),g(z_1))\in Z_2\times_{\calG\units} Z_2: z_1\in Z_1\}\]
    is weakly controlled.

    We say that $Z_1$ and $Z_2$ are strongly coarsely equivalent, if there exists $\calG$-equivariant proper continuous maps $\phi:Z_1\ra Z_2$ and $\psi:Z_2\ra Z_1$, such that $\phi\circ \psi$ is close to $id_{Z_2}$, $\psi\circ \phi$ is close to $id_{Z_1}$.
\end{defn}

\begin{prop}
    Let $Z_1,Z_2$ be two locally compact Hausdorff proper $\calG$-spaces and $f,g:Z_1\ra Z_2$ be two $\calG$-equivariant proper continuous maps such that $f$ is close to $g$. Then $f_*: KC_*(Z_1;\calG,A)\ra KC_*(Z_2;\calG,A)$ and $g_*: KC_*(Z_1;\calG,A)\ra KC_*(Z_2;\calG,A)$ are the same map.
\end{prop}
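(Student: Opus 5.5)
The plan is to reduce the claim to a comparison of two covering isometries on a single target module. Fix a $Z_1,\calG,A$-module $(E,\pi)$. By definition, $f_*$ and $g_*$ send the class coming from $K_*(C_\calG^*(E,\pi))$ to the classes coming from $K_*(C_\calG^*(E,f_*\pi))$ and $K_*(C_\calG^*(E,g_*\pi))$ respectively. By lemma \ref{pushout of repr preserve Roe alg}, all three algebras coincide as subalgebras of $\calL(E)$:
\[C_\calG^*(E,\pi)=C_\calG^*(E,f_*\pi)=C_\calG^*(E,g_*\pi).\]
So it suffices to show that $(E,f_*\pi)$ and $(E,g_*\pi)$ become identified in the directed system $M(Z_2,\calG,A)$, compatibly with the identity map of the underlying $C^*$-algebra. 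Concretely, I will show that $id_E:(E,f_*\pi)\to(E,g_*\pi)$ is a $\calG$-equivariant properly supported isometry. Its inverse direction is handled symmetrically.

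For the support estimate, I use proposition \ref{property of support of operator} together with proposition \ref{support of intertwiner} applied to $id_E$. The operator $id_E$ intertwines $\pi$ and $g_*\pi$ through $g$, and intertwines $f_*\pi$ and $\pi$ through $f$ in the inverse sense. Hence
\[\supp_{g_*\pi,f_*\pi}(id_E)\subseteq \supp_{g_*\pi,\pi}(id_E)\circ\supp_{\pi,f_*\pi}(id_E)\subseteq Gr(g)\circ Gr(f)\inv.\]
The first inclusion uses part (5), which is legitimate because $\supp_{\pi,f_*\pi}(id_E)\subseteq Gr(f)\inv$ has proper projections, $f$ being proper. The set $Gr(g)\circ Gr(f)\inv$ is the inverse of the set required to be weakly controlled in the definition of closeness. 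By proposition \ref{summary of controlled and weakly controlled sets}, it is therefore weakly controlled. By lemma \ref{controlled operator is properly supported}, the identity is then properly supported, so $(E,f_*\pi)\preceq(E,g_*\pi)$, and the reverse relation holds likewise.

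Next, $Ad_{id_E}$ is the identity map on $C_\calG^*(E,\pi)$, so the structure map of the direct limit from $(E,f_*\pi)$ to $(E,g_*\pi)$ is the identity on $K$-theory. Consequently the image of $x\in K_*(C_\calG^*(E,\pi))$ under $f_*$ equals the image of the same element under $g_*$ in $KC_*(Z_2;\calG,A)$. Since every element of $KC_*(Z_1;\calG,A)$ comes from some such $(E,\pi)$, this gives $f_*=g_*$.

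The main obstacle I expect is the support computation, namely justifying the composition inclusion for supports taken with respect to different representations on the same module, and tracking which closeness set (or its inverse) appears. If part (5) does not apply cleanly, I would fall back on part (6) and use that the closure of a weakly controlled set lies in its closed invariant envelope. This still gives weak control.
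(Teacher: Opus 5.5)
Your proposal is correct and follows the paper's proof. The paper also compares $(E,f_*\pi)$ and $(E,g_*\pi)$ via $id_E$, whose support lies in the closeness set $Gr(f)\circ Gr(g)^{-1}$ or its inverse, and it relies on lemma \ref{pushout of repr preserve Roe alg} so that the connecting map is $Ad_{id_E}=id$; your derivation of the support inclusion through $\pi$, using parts (3) and (5) of proposition \ref{property of support of operator}, simply fills in steps the paper leaves implicit.
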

\begin{proof}
    Clearly, it suffices to show that, for any $(E,\pi)\in M(Z_1,\calG,A)$, we have $(E,f_*\pi)\preceq (E,g_*\pi)$ and $(E,g_*\pi)\preceq (E,f_*\pi)$. This is because that
    \[\supp_{f_*\pi,g_*\pi}(id_{E})\subseteq Gr(f)\circ Gr(g)\inv\]
    is weakly controlled.
\end{proof}
\begin{corr}
If $Z_1$ and $Z_2$ are strongly coarsely equivalent, then $KC_*(Z_1;\calG,A)\cong KC_*(Z_2;\calG,A)$.
\end{corr}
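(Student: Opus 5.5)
By the proposition just proved, $(\phi\circ\psi)_*=(id_{Z_2})_*$ and $(\psi\circ\phi)_*=(id_{Z_1})_*$. This holds because $\phi\circ\psi$ and $\psi\circ\phi$ are $\calG$-equivariant proper continuous maps, and they are close to the identities by hypothesis. Composites of proper maps are proper and composites of equivariant maps are equivariant, so the proposition applies to the pairs $(\phi\circ\psi, id_{Z_2})$ and $(\psi\circ\phi, id_{Z_1})$.

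Next I use proposition \ref{functoriality of KC}. Part (2) gives $(\phi\circ\psi)_*=\phi_*\circ\psi_*$ and $(\psi\circ\phi)_*=\psi_*\circ\phi_*$. Part (2) also shows that the identity map of a space induces the identity on $KC_*$. Combining these with the previous paragraph gives
\[\phi_*\circ\psi_*=id_{KC_*(Z_2;\calG,A)},\qquad \psi_*\circ\phi_*=id_{KC_*(Z_1;\calG,A)}.\]
Hence $\phi_*$ is an isomorphism of graded abelian groups with inverse $\psi_*$.

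No step here is a real obstacle, since the argument is purely formal functoriality. The only thing to check is that the closeness hypothesis matches the definition in the form the proposition uses. The set $Gr(\phi\circ\psi)\circ Gr(id_{Z_2})\inv=\{(\phi\psi(z_2),z_2)\}$ must be weakly controlled, and this is exactly what the definition of strong coarse equivalence asserts.
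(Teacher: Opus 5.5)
Your proof is correct and is the intended argument: the preceding proposition gives $(\phi\circ\psi)_*=(id_{Z_2})_*$ and $(\psi\circ\phi)_*=(id_{Z_1})_*$, and functoriality (proposition \ref{functoriality of KC}) then makes $\phi_*$ and $\psi_*$ mutually inverse. Your check that ``$\phi\circ\psi$ is close to $id_{Z_2}$'' means exactly that $\{(\phi\psi(z_2),z_2)\}$ is weakly controlled, as the proposition requires, is also right.
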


\section{Construction of universal modules}\label{sec construction univ mod}

In this section, we will prove that, there is a sufficiently large family of proper cocompact $\calG$-spaces who admit universal modules.

\subsection{Local structure of proper actions}

Recall that, if $\Gamma$ is a countable discrete group, $X$ be a \lch proper $\Gamma$-space, then $X$ has the following local property: for any $x\in X$ and an open neighborhood $U$ of $x$, let $\Gamma_x=\{g\in \Gamma:gx=x\}$ be the stabilizer of $\Gamma$ at $x$, then there exists an open neighborhood $V$ of $x$ in $U$, such that $\overline{V}\subseteq U$, $V$ is $\Gamma_x$-invariant, and for any $g\in \Gamma\setminus \Gamma_x$, $g \overline{V}\cap \overline{V}=\emptyset$. This implies that the map
\[\Gamma\times_{\Gamma_x}\overline{V}\ra \Gamma \overline{V},\quad [g,x]\mapsto gx\]
is a $\Gamma$-equivariant homeomorphism. See \cite[Lemma A.2.7]{willett2020higher}.

There is a similar result of proper action of \'etale groupoids.

\begin{prop}\label{local structure of proper cocompact action of étale groupoid}
    \textnormal{\cite[Proposition 3.2]{bonicke2024categorical}, \cite[Proposition 2.12]{Mao26}}
    Let $\calG$ be an étale groupoid. $Z$ be a locally compact Hausdorff space with left proper action of $\calG$ with anchor map $\rho: Z\ra \calG\units$. Then for any $z_0\in Z$ and a neighborhood $U$ of $z_0$, there exists an open neighborhood $V$ of $z_0$ in $U$, and a proper open subgroupoid $\calH\subseteq \calG$, such that
    \begin{enumerate}
        \item $V\subseteq \overline{V}\subseteq U$, $\overline{V}$ is compact;
        \item $\rho(V)\subseteq \calH\units$ and $\calH\units$ is a relatively compact open of $\calG\units$;
        \item $V$ is $\calH$-invariant;
        \item For any $\gamma \in\calG\setminus \calH$, $\gamma \overline{V}\cap \overline{V}=\emptyset$.
    \end{enumerate}
    Moreover, $F=\{\gamma\in \calG^{\rho(z_0)}_{\rho(z_0)}:\gamma z_0=z_0\}$ is a finite group, it admits an action on a relatively compact open neighborhood $A$ of $\rho(z_0)$ in $\calG\units$, such that $\calH$ is isomorphic to $F\ltimes A$. We have a $\calG$-equivariant homeomorphism
    \[\calG_{\calH\units}\times_{\calH}V\ra \calG V,\quad [\gamma,z]\mapsto \gamma z.\]
\end{prop}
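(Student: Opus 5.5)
The plan is to reduce to the isotropy data at $z_0$ and then shrink neighbourhoods until the four conditions hold. This follows the group case \cite[Lemma A.2.7]{willett2020higher}.

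\textbf{Step 1: finiteness of the stabilizer.} Put $x_0=\rho(z_0)$. Since $\calG\ltimes Z$ is proper, the set $\{\gamma\in\calG_{x_0}^{x_0}:\gamma z_0=z_0\}$ is compact. Since $\calG$ is étale, $\calG_{x_0}^{x_0}$ is discrete, so $F$ is a finite group.

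\textbf{Step 2: transporting the $F$-action to a piece of the unit space.} Write $F=\{\gamma_1,\dots,\gamma_n\}$.

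\begin{itemize}
\item For each $\gamma_i$ choose an open bisection $B_i\ni\gamma_i$. Shrinking them, we may assume they are pairwise disjoint, with compact closures, and that $s(B_i)=r(B_i)=A$ for one relatively compact open neighbourhood $A$ of $x_0$.
\item Each $B_i$ induces a partial homeomorphism $\alpha_i=r\circ(s|_{B_i})^{-1}$ near $x_0$.
\item Multiplicativity at $x_0$ gives $\gamma_i\gamma_j=\gamma_{k(i,j)}$. By continuity of multiplication, $B_iB_j$ and $B_{k(i,j)}$ agree on a neighbourhood of $(\gamma_i,\gamma_j)$.
\item Shrink $A$ to an $F$-invariant neighbourhood, namely the intersection of the finitely many images $\alpha_i(A)$ after shrinking enough. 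Then $\calH:=\bigcup_i B_i|_A$ is an open subgroupoid isomorphic to $F\ltimes A$.
\item $\calH$ is proper because $F$ is finite and $\overline A$ is compact.
\end{itemize}

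This is routine but fiddly bookkeeping.

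\textbf{Step 3: separating $\overline V$ from its translates outside $\calH$.} This is the main obstacle. I would argue by contradiction using properness.

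\begin{itemize}
\item Take a compact neighbourhood $K\subseteq U\cap\rho^{-1}(A)$ of $z_0$.
\item The set $C=\{\gamma:\gamma K\cap K\ne\emptyset\}$ is compact by properness.
\item Suppose every neighbourhood $W\subseteq K$ of $z_0$ admits some $\gamma_W\in\calG\setminus\calH$ with $\gamma_W\overline W\cap\overline W\neq\emptyset$.
\item Then $\gamma_W\in C$. A subnet converges to some $\gamma$ with $\gamma z_0=z_0$ and $s(\gamma)=x_0$, so $\gamma\in F\subseteq\calH$.
\item But $\calH$ is open, so $\gamma_W\in\calH$ eventually, a contradiction.
\end{itemize}

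Hence there is an open $W\ni z_0$ with compact closure inside $U\cap\rho^{-1}(A)$ such that $\gamma\overline W\cap\overline W=\emptyset$ for all $\gamma\notin\calH$.

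\textbf{Step 4: making $V$ invariant.} Set $V=\bigcap_{h\in F}$ of the partial translates of $W$ by the bisections $B_h$, that is, $V=\{z\in W:\ hz\in W\ \text{for all } h\in\calH^{\rho(z)}\}$. Because $\calH$ is finite-to-one and a finite union of bisections, this set is open. It contains $z_0$, since $F$ fixes $z_0$; shrinking $W$ first to be $F$-adapted if needed ensures this. It is $\calH$-invariant and $\rho(V)\subseteq A=\calH^{(0)}$. Since $\overline V\subseteq\overline W$, condition (4) persists.

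\textbf{Step 5: the homeomorphism.} The map $\calG_{\calH\units}\times_\calH V\to\calG V$, $[\gamma,z]\mapsto\gamma z$, is handled as follows.

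\begin{itemize}
\item \emph{Continuity and surjectivity} are clear.
\item \emph{Injectivity.} If $\gamma z=\gamma'z'$, then $\gamma'^{-1}\gamma$ maps $z$ into $V$, so by (4) it lies in $\calH$.
\item \emph{Openness.} $\calG V$ is open, being a union of bisection translates of the open set $V$. Openness of the map follows since the action map restricted to bisections is open.
\end{itemize}
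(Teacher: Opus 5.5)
Your proof is correct. The paper gives no argument of its own for this proposition; it simply quotes \cite[Proposition 3.2]{bonicke2024categorical} and \cite[Proposition 2.12]{Mao26}. Your reconstruction is the standard proof, the groupoid version of the group lemma \cite[Lemma A.2.7]{willett2020higher} that the paper recalls just before the statement. It has four ingredients:
\begin{itemize}
\item finiteness of $F$, from properness and discreteness of $\calG_{x_0}^{x_0}$;
\item bisections through the elements of $F$, glued into $\calH\cong F\ltimes A$;
\item a compactness argument in which the only property of $\calH$ used is that it is an open set containing $F$, giving (4);
\item openness of the action map of an \'etale groupoid, for the homeomorphism.
\end{itemize}

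The only slip is in Step 4. The saturation should run over $h\in\calH_{\rho(z)}$ (source $\rho(z)$), not $\calH^{\rho(z)}$, so that $hz$ is defined. With that correction, $z_0\in V$ holds with no further shrinking, because $\calH_{x_0}=F$ fixes $z_0$.
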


\begin{defn}\label{defn G-cover}
    Let $\calG$ be an étale groupoid and $Z$ be a locally compact Hausdorff proper $\calG$-space with anchor map $\rho:Z\ra \calG\units$. Assume that $(V_i)_{i\in I}$ is a family of relatively compact open subsets of $Z$, $(\calH_i)_{i\in I}$ is a family of proper open subgroupoids of $\calG$. We say that $(V_i,\calH_i)_{i\in I}$ is a $\calG$-cover of $Z$, if
    \begin{enumerate}
        \item $Z=\cup_{i\in I} \calG V_i$;
        \item $\rho(V_i)\subseteq \calH_i\units$ and $V_i$ is $\calH_i$-invariant for every $i\in I$;
        \item for every $i\in I$, there exists a finite group $F_i$ and an action of $F_i$ on $\calH_i\units$, such that $\calH_i$ is isomorphic to $F_i\ltimes \calH_i\units$;
        \item for any $\gamma\in \calG\setminus \calH_i$, $\gamma \overline{V_i}\cap \overline{V_i}=\emptyset$. 
    \end{enumerate}
\end{defn}

Using this definition, proposition \ref{local structure of proper cocompact action of étale groupoid} can be rephrased as that a $\calG$-cover of $Z$ can be chosen to be arbitrarily fine.

\begin{corr}\label{exist G-cover}
    Let $\calG$ be an étale groupoid and $Z$ be a locally compact Hausdorff proper $\calG$-space with anchor map $\rho:Z\ra \calG\units$, $U$ be a $\calG$-invariant open neighborhood of the diagonal $\Delta_Z=\{(z,z)\in Z\times_{\calG\units}Z:z\in Z\}$ in $Z\times_{\calG\units}Z$. Then there exists a $\calG$-cover $(V_i,\calH_i)_{i\in I}$ of $Z$, such that $\cup_{i\in I}\calG (\overline{V_i}\times_{\calG\units}\overline{V_i})\subseteq U$.
\end{corr}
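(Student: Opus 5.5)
The plan is to apply proposition \ref{local structure of proper cocompact action of étale groupoid} pointwise and then verify the four axioms of definition \ref{defn G-cover}, choosing the neighbourhood $U$ in that proposition so small that the containment in $U$ is automatic.

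Fix $z_0\in Z$. Since $U$ is an open neighbourhood of $(z_0,z_0)$ in $Z\times_{\calG\units}Z$, there is an open neighbourhood $W$ of $z_0$ in $Z$ with $W\times_{\calG\units}W\subseteq U$. Apply proposition \ref{local structure of proper cocompact action of étale groupoid} to $z_0$ and $W$. This gives a relatively compact open $V_{z_0}\ni z_0$ and a proper open subgroupoid $\calH_{z_0}\cong F\ltimes A$, with $F$ finite and $A=\calH_{z_0}\units$, satisfying:
\begin{enumerate}
    \item $\overline{V_{z_0}}\subseteq W$;
    \item $\rho(V_{z_0})\subseteq \calH_{z_0}\units$;
    \item $V_{z_0}$ is $\calH_{z_0}$-invariant;
    \item $\gamma\overline{V_{z_0}}\cap\overline{V_{z_0}}=\emptyset$ for every $\gamma\in\calG\setminus\calH_{z_0}$.
\end{enumerate}
Consequently $\overline{V_{z_0}}\times_{\calG\units}\overline{V_{z_0}}\subseteq W\times_{\calG\units}W\subseteq U$. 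Since $U$ is $\calG$-invariant, we also get $\calG(\overline{V_{z_0}}\times_{\calG\units}\overline{V_{z_0}})\subseteq U$.

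Take $I=Z$ as the index set and the family $(V_z,\calH_z)_{z\in Z}$. Axiom (1) holds because $z\in V_z$, so the $V_z$ already cover $Z$. Axioms (2), (3) and (4) are exactly the conclusions of the proposition, and axiom (3) uses the identification $\calH_z\cong F_z\ltimes\calH_z\units$ stated there. The union condition follows from the previous paragraph.

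The only genuinely delicate point is the very first step: choosing $W$ so that $W\times_{\calG\units}W\subseteq U$. This uses the subspace topology of the fibre product, in which basic open sets have the form $(W_1\times W_2)\cap(Z\times_{\calG\units}Z)$, and then setting $W=W_1\cap W_2$. Note also that definition \ref{defn G-cover} does not require $I$ to be countable or the cover to be locally finite, so no second-countability argument is needed here.
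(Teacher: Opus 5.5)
Your proposal is correct and follows essentially the same route as the paper: pick, around each point, a neighbourhood whose fibre square lies in $U$; apply the local structure proposition inside it; use the $\calG$-invariance of $U$; and index the resulting $\calG$-cover by $Z$ itself. The only cosmetic difference is where the closure control comes from. The paper chooses a relatively compact $W_z$ with $\overline{W_z}\times_{\calG\units}\overline{W_z}\subseteq U$ directly, whereas you obtain it from the clause $\overline{V}\subseteq W$ in the proposition.
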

\begin{proof}
    For every $z\in Z$, there exists a relatively compact open neighborhood $W_z$ of $z$ in $Z$ such that $\overline{W_z}\times_{\calG\units}\overline{W_z}\subseteq U$. The $\calG$-invariance of $U$ implies that $\calG(\overline{W_z}\times_{\calG\units}\overline{W_z})\subseteq U$. By proposition \ref{local structure of proper cocompact action of étale groupoid}, there exists an open neighborhood $V_z$ of $z$ in $W_z$ and a proper open subgroupoid $\calH(z)$ of $\calG$, such that $\rho(V_z)\subseteq \calH(z)\units$, $V_z$ is $\calH(z)$-stable for every $z\in Z$, and for any $\gamma\in \calG\setminus \calH(z)$, $\gamma \overline{V_z}\cap \overline{V_z}=\emptyset$. Then $(V_z,\calH(z))_{z\in Z}$ is the $\calG$-cover we want.
\end{proof}

\begin{rem}
    Clearly, when $Z$ is $\calG$-compact, we can always choose the $\calG$-cover $(V_i,\calH_i)_{i\in I}$ such that the index set $I$ is finite.
\end{rem}

\begin{lem}\label{G-compact subsets are closed}
    Let $\calG$ be a locally compact Hausdorff groupoid such that the source and range maps of $\calG$ are open, $X$ be a locally compact Hausdorff proper $\calG$-space with anchor map $\rho$. If $Z\subseteq X$ be a $\calG$-compact subset, then $Z$ is closed in $X$.
\end{lem}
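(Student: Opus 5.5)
To prove that a $\calG$-compact subset $Z\subseteq X$ is closed, I will show that $Z$ contains all of its limit points. By definition, $\calG$-compactness means $Z=\calG K$ for some compact subset $K\subseteq X$. So it suffices to check the following: if a net $(\gamma_\lambda k_\lambda)_\lambda$, with $k_\lambda\in K$ and $s(\gamma_\lambda)=\rho(k_\lambda)$, converges to some $x\in X$, then $x\in \calG K$.

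Since $K$ is compact, after passing to a subnet I may assume $k_\lambda\to k\in K$. The pairs $(\gamma_\lambda k_\lambda, k_\lambda)$ then converge to $(x,k)$ in $X\times X$. By continuity of $\rho$, this limit lies in $X\times_{\calG\units}X$, which is closed.

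Properness of the action means the map
\[\calG\times_{s,\rho}X\ra X\times X,\qquad (\gamma,y)\mapsto (\gamma y,y)\]
is proper. I take this as the characterization of properness of $\calG\ltimes X$. Choose a compact neighborhood $L_1$ of $x$ and a compact neighborhood $L_2$ of $k$. Eventually the pairs $(\gamma_\lambda k_\lambda,k_\lambda)$ lie in $L_1\times L_2$. Their preimage in $\calG\times_{s,\rho}X$ is compact, so the elements $(\gamma_\lambda,k_\lambda)$ eventually lie in a compact set. Passing to a further subnet, $\gamma_\lambda\to\gamma$ for some $\gamma\in\calG$ with $s(\gamma)=\rho(k)$. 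Continuity of the action then gives $\gamma_\lambda k_\lambda\to\gamma k$. Since $X$ is Hausdorff, limits are unique, so $x=\gamma k\in\calG K$. Hence $Z$ is closed.

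The main step requiring care is the use of properness: extracting a convergent subnet of $(\gamma_\lambda)$ from the compactness of the preimage. This needs the identification of properness of $\calG\ltimes X$ with properness of the map $(\gamma,y)\mapsto(\gamma y,y)$ into $X\times X$, together with the fact that $X$ is locally compact, so that compact neighborhoods $L_1,L_2$ exist. The openness of the source and range maps is only needed to ensure these standard equivalent formulations of properness hold. The rest of the argument is routine net bookkeeping.
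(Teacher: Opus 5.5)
Your proof is correct and is essentially the paper's argument: pass to a subnet with $k_\lambda\to k$, use properness to extract a convergent subnet $\gamma_\lambda\to\gamma$, and conclude $x=\gamma k$ (the paper cites Goehle's Proposition 1.84 for the extraction, whereas you derive it directly from properness of $(\gamma,y)\mapsto(\gamma y,y)$). One slip, which you never use: $(x,k)$ need not lie in $X\times_{\calG\units}X$, because $\rho(\gamma_\lambda k_\lambda)=r(\gamma_\lambda)$ and $\rho(k_\lambda)=s(\gamma_\lambda)$ differ in general, so that sentence should simply be deleted.
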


\begin{proof}
    Assume that $Z=\calG K$, $(\gamma_\lambda k_\lambda)_\lambda$ is a net that converges to $x\in X$, where for each $\lambda$, $s(\gamma_\lambda)=\rho(k_\lambda)$ and $k_\lambda\in K$. Since $K$ is compact, after replacing by a subnet, we can assume that $(k_\lambda)_\lambda$ is a convergent net with limit $k\in K$. By the properness and \cite[Proposition 1.84]{goehle2009groupoid}, $(\gamma_\lambda)_\lambda$ admits also a convergent subnet. Again after replacing by a subnet, we assume that $(\gamma_\lambda)_\lambda$ is a convergent net with limit $\gamma$. Therefore, $x=\gamma k\in Z$. So we proved that $Z$ is closed in $X$.
\end{proof}

\begin{corr}\label{closure of orbit of relative compact set}
    Let $\calG$ be a locally compact Hausdorff groupoid such that the source and range maps of $\calG$ are open, $X$ be a locally compact Hausdorff proper $\calG$-space, $A$ be a subset of $X$ such that $\overline{A}$ is compact, then $\overline{\calG A}\subseteq \calG \overline{A}$.
\end{corr}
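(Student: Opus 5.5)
The plan is a direct consequence of lemma \ref{G-compact subsets are closed}. Since $\overline{A}$ is compact, the set $\calG\overline{A}$ is $\calG$-compact: it has the form $\calG K$ with $K=\overline{A}$ compact. Here $\calG K$ means the set of all $\gamma k$ with $k\in K$ and $s(\gamma)=\rho(k)$. Lemma \ref{G-compact subsets are closed} applies because $\calG$ has open source and range maps and $X$ is a locally compact Hausdorff proper $\calG$-space. It gives that $\calG\overline{A}$ is closed in $X$.

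Next, $A\subseteq\overline{A}$ gives $\calG A\subseteq\calG\overline{A}$. Since $\calG\overline{A}$ is closed, taking closures yields $\overline{\calG A}\subseteq\calG\overline{A}$, which is the claim.

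No step is a real obstacle; the substantive work is already in lemma \ref{G-compact subsets are closed}. The one point to check is that lemma's hypothesis: that $\calG\overline{A}$ is $\calG$-compact in the sense used there, namely the saturation of a compact set under the action, with $\rho(k)=s(\gamma)$ built into the notation. This holds by definition with $K=\overline{A}$. If the lemma were unavailable, I would instead repeat its net argument directly. Given $\gamma_\lambda a_\lambda\to x$, pass to a subnet with $a_\lambda\to k\in\overline{A}$. Properness then gives a convergent subnet $\gamma_\lambda\to\gamma$, so $x=\gamma k\in\calG\overline{A}$.
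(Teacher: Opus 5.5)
Your proof is correct and is the paper's argument: apply lemma \ref{G-compact subsets are closed} to the $\calG$-compact set $\calG\overline{A}$ to see that it is closed. Then take closures in $\calG A\subseteq\calG\overline{A}$.
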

\begin{proof}
    By lemma \ref{G-compact subsets are closed}, $\calG\overline{A}$ is closed, therefore $\overline{\calG A}\subseteq \overline{\calG \overline{A}}=\calG \overline{A}$.
\end{proof}

\begin{lem}\label{orbits of opens are open}
    Let $\calG$ be a \lch groupoid such that the source and range maps of $\calG$ are open and $X$ be a \lch $\calG$-space, $U$ be an open of $X$, then $\calG U$ is open in $X$.
\end{lem}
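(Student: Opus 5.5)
The plan is to write $\calG U$ as an image of an open set under an open map. Set $\calG\times_{s,\calG\units,\rho}X=\{(\gamma,x):s(\gamma)=\rho(x)\}$ and let $a:(\gamma,x)\mapsto \gamma x$ be the action map. Then
\[\calG U=a\bigl(\calG\times_{s,\calG\units,\rho}U\bigr),\]
and $\calG\times_{s,\calG\units,\rho}U$ is open in the fibre product because $U$ is open in $X$. It therefore suffices to show that $a$ is an open map.

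To prove openness, I would use the homeomorphism
\[\Phi:\calG\times_{s,\calG\units,\rho}X\ra \calG\times_{r,\calG\units,\rho}X,\quad (\gamma,x)\mapsto(\gamma,\gamma x),\]
whose inverse is $(\gamma,y)\mapsto(\gamma\inv,y)$ followed by the action and reinsertion of $\gamma$. Both maps are continuous because inversion, multiplication and the action are continuous. Since $a=\mathrm{pr}_2\circ\Phi$, it is enough to show that the projection $\mathrm{pr}_2:\calG\times_{r,\calG\units,\rho}X\ra X$ is open.

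The main point is that $\mathrm{pr}_2$ is open because it is the pullback of the open map $r$ along $\rho$. Concretely, take a basic open set $(W\times O)\cap(\calG\times_{r,\calG\units,\rho}X)$ with $W\subseteq\calG$ and $O\subseteq X$ open. Its image under $\mathrm{pr}_2$ is
\[\{x\in O:\rho(x)\in r(W)\}=O\cap\rho\inv(r(W)).\]
This set is open because $r$ is open and $\rho$ is continuous. Images commute with unions, so $\mathrm{pr}_2$ is open, and hence $\calG U$ is open.

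There is an alternative route: in the étale case one can write $\calG U=\bigcup_B BU$ over open bisections $B$, where each $BU$ is the image of $U\cap\rho\inv(s(B))$ under a local homeomorphism. The general argument above is cleaner and needs only openness of $r$; it does not use the hypotheses on $s$, properness, or the étale condition.
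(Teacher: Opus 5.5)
Your proof is correct, but it follows a different route from the paper's. The paper's proof is a one-liner. It cites \cite[Lemma 2.30]{Tu04} for the fact that the orbit map $q:X\ra X/\calG$ is open, and then notes $\calG U=q\inv(q(U))$. By the definition of the quotient topology, openness of $q$ is exactly equivalent to $\calG U$ being open for every open $U$, so the paper is in effect delegating the whole lemma to Tu.

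Your argument proves that content directly. You write $\calG U$ as the image of the open set $\calG\times_{s,\calG\units,\rho}U$ under the action map. You then show the action map is open by factoring it through the shear homeomorphism $(\gamma,x)\mapsto(\gamma,\gamma x)$ followed by the projection $\calG\times_{r,\calG\units,\rho}X\ra X$. That projection is open by the computation on basic opens, whose image is $O\cap\rho\inv(r(W))$.

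What your version gains is self-containedness and clarity about hypotheses. It uses only continuity of the structure maps and openness of $r$, with no local compactness, Hausdorffness, properness or étaleness. Since $r$ is $s$ composed with inversion, openness of $r$ is equivalent to openness of $s$, so dropping $s$ from the hypotheses is not a real weakening. The paper's version gains brevity and frames the lemma in terms of the orbit space $X/\calG$, which is how it is used later, for instance when refining $\calG$-covers.
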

\begin{proof}
    Let $q:X\ra X/\calG$ be the canonical quotient map. By lemma \cite[Lemma 2.30]{Tu04}, $q$ is an open map, and therefore $\calG U=q\inv(q(U))$ is open.
\end{proof}

\begin{lem}\label{refine G-cover}
    Let $\calG$ be an étale groupoid and $Z$ be a locally compact Hausdorff proper $\calG$-space with anchor map $\rho:Z\ra \calG\units$. If $Z/\calG$ is second countable, then we can choose a $\calG$-cover $(V_i,\calH_i)_{i\in I}$ of $Z$, such that $I$ is countable, and $(\calG V_i)_{i\in I}$ is a locally finite open cover of $Z$.
\end{lem}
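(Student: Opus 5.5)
We will run the classical argument for locally finite refinements on the orbit space, and then pull back along the quotient map. Let $q:Z\ra Z/\calG$ be the quotient map. By lemma \ref{orbits of opens are open}, $q$ is open. Since the action is proper, $Z/\calG$ is Hausdorff, and it is locally compact because it is the open image of a locally compact space. By hypothesis it is also second countable. Consequently $Z/\calG$ is $\sigma$-compact and paracompact, and we can fix an exhaustion $K_1\subseteq \inter(K_2)\subseteq K_2\subseteq \inter(K_3)\subseteq\cdots$ by compact sets with $\bigcup_n K_n=Z/\calG$. Set $K_0=K_{-1}=\emptyset$.

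For each $n\geqslant 1$, consider the compact set $L_n=K_n\setminus \inter(K_{n-1})$ and the open set $O_n=\inter(K_{n+1})\setminus K_{n-2}$, which contains $L_n$. For each $z\in q\inv(L_n)$, apply proposition \ref{local structure of proper cocompact action of étale groupoid} to $z$ and the open neighborhood $q\inv(O_n)$. This gives a pair $(V_z,\calH_z)$ satisfying conditions (2)--(4) of definition \ref{defn G-cover}; in particular $\calH_z\cong F\ltimes A$ with $\calH_z\units=A$, which gives (3). Moreover $\overline{V_z}\subseteq q\inv(O_n)$. The sets $q(V_z)$ are open and cover $L_n$, so finitely many of them already cover $L_n$. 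Collect these pairs over all $n$. The resulting index set $I$ is countable, and $\bigcup_i \calG V_i=q\inv(\bigcup_n L_n)=Z$, so we have a $\calG$-cover.

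It remains to check local finiteness of $(\calG V_i)_{i\in I}$. We have $\calG V_i=q\inv(q(V_i))$, and for an index coming from stage $n$, $q(V_i)\subseteq O_n$. Take $z\in Z$ with $q(z)\in\inter(K_m)$. The open $\calG$-invariant neighborhood $q\inv(\inter(K_m))$ of $z$ meets $q\inv(O_n)$ only when $O_n\cap \inter(K_m)\neq\emptyset$. This forces $n-2<m$, since for $n-2\geqslant m$ we have $O_n\cap K_{n-2}=\emptyset$ while $K_m\subseteq K_{n-2}$. Only finitely many stages $n$ are therefore involved, each contributing finitely many indices, so the family is locally finite. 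The sets $\calG V_i$ are open by lemma \ref{orbits of opens are open}.

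The main obstacle is making sure the topological facts about $Z/\calG$ are justified, in particular Hausdorffness, which is what gives local compactness and paracompactness. Hausdorffness should follow from properness: the orbit relation is the image of the proper map $\calG\ltimes Z\ra Z\times Z$, which is closed. Everything else reduces to the standard exhaustion argument together with openness of $q$.
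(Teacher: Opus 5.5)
Your proof is correct, and it takes a somewhat different route from the paper's.

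\textbf{The paper's route.} It starts from an arbitrary $\calG$-cover $(U_j,\calH_j)_{j\in J}$ given by corollary \ref{exist G-cover}. It invokes paracompactness of $Z/\calG$ to get a locally finite open refinement $(W_k)_k$ of $(q(U_j))_j$. It then shrinks to $V_k=q\inv(W_k)\cap U_{j_k}$, which is still $\calH_{j_k}$-invariant and satisfies $\calG V_k=q\inv(W_k)$. Finally it uses the Lindel\"of property to pass to a countable subfamily.

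\textbf{Your route.} You build the cover from scratch along a compact exhaustion, essentially reproving paracompactness by hand. Countability comes from compactness of the shells $L_n$ rather than from Lindel\"of. Local finiteness comes from the bookkeeping $O_n\cap K_{n-2}=\emptyset$.

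\textbf{What each buys.} Your version is more self-contained. You also justify the facts about $Z/\calG$ that the paper uses without comment: Hausdorffness from closedness of the orbit relation, and local compactness from openness of $q$.

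The paper's version is shorter, and it is a refinement of any given $\calG$-cover. Since $V_k\subseteq U_{j_k}$, a fineness condition such as $\bigcup_j\calG(\overline{U_j}\times_{\calG\units}\overline{U_j})\subseteq U$ is inherited. This is exactly the combination of properties assumed in the notation of the decomposition subsection. Your construction can deliver this too: at stage $n$, apply the local structure proposition to the neighbourhood $q\inv(O_n)\cap W_z$, with $W_z$ as in the proof of corollary \ref{exist G-cover}.

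A minor point: your equality $\bigcup_i\calG V_i=q\inv(\bigcup_n L_n)$ should read $\supseteq$. This suffices, since $\bigcup_n L_n=Z/\calG$.
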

\begin{proof}
    Let $q:Z\ra Z/\calG$ be the quotient map. Since $Z/\calG$ is locally compact Hausdorff and second countable, $Z/\calG$ is Lindel\"of and paracompact. Firstly by corollary \ref{exist G-cover}, there exists a $\calG$-cover $(U_j,\calH_j)_{j\in J}$. By paracompactness, the open cover $(q(U_j))_{j\in J}$ has a locally finite open refinement $(W_k)_{k\in K}$. Assume that for $k\in K$, $j_k\in J$ such that $W_k\subseteq q(U_{j_k})$. So for every $k$, we define $V_{k}=q\inv(W_k)\cap U_{j_k}$. So $(V_k,\calH_{j_k})_{k\in K}$ is a $\calG$-cover of $Z$ such that $(\calG V_k)_{k\in K}$ is a locally finite cover of $Z$.

    Now $(q(V_k))_{k\in K}$ is an open cover of $Z/\calG$. Since $Z/\calG$ is Lindel\"of, there exists a countable subset $I$ of $K$ such that $(q(V_i))_{i\in I}$ is a subcover. Now $(V_i,\calH_i)_{i\in I}$ is a $\calG$-cover, such that $(\calG V_i)_{i\in I}$ is a countable locally finite open cover of $Z$.
\end{proof}

\begin{lem}\label{partition of unity of G-cover}
    Let $\calG$ be an étale groupoid and $Z$ be a locally compact Hausdorff proper  $\calG$-space with anchor map $\rho:Z\ra \calG\units$. Assume that $(V_i,\calH_i)_{i\in I}$ is a $\calG$-cover of $Z$, such that $(\calG V_i)_{i\in I}$ is a locally finite cover of $Z$. Then there exists $f_i\in C(\calG V_i,[0,1])$ for each $i\in I$, such that
    \begin{enumerate}
        \item for any $z\in Z$, $\sum_{i\in I}f_i^2(z)=1$;
        \item for each $i\in I$, $f_i$ is $\calG$-invariant, and $\supp(f_i)$ is $\calG$-compact;
        \item for each $i\in I$, $f_i|_{V_i}\in C_c(V_i)$, and $f_i|_{V_i}$ is $\calH_i$-invariant.
    \end{enumerate}
\end{lem}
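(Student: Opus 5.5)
The plan is to build the $f_i$ in two stages: first a $\calG$-invariant partition of unity subordinate to the locally finite cover $(\calG V_i)_{i\in I}$, then a local averaging over the finite groups $F_i$ to obtain the $\calH_i$-invariance on $V_i$.

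\textbf{Step 1: shrinking.} Let $q:Z\ra Z/\calG$ be the quotient map. It is open by lemma \ref{orbits of opens are open}, and $Z/\calG$ is locally compact Hausdorff since the action is proper. The family $(q(V_i))_{i\in I}$ is a locally finite open cover of $Z/\calG$, because $\calG V_i=q\inv(q(V_i))$ and these sets form a locally finite cover of $Z$. For each $i$ I will choose an open $W_i\subseteq V_i$ with three properties:
\begin{enumerate}
\item $\overline{W_i}\subseteq V_i$ is compact;
\item $W_i$ is $\calH_i$-invariant;
\item $(q(W_i))_{i\in I}$ still covers $Z/\calG$.
\end{enumerate}
To get this, first apply the shrinking lemma to the locally finite cover $(q(V_i))$ of the paracompact space $Z/\calG$. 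This gives opens $O_i$ with $\overline{O_i}\subseteq q(V_i)$ that still cover. Next, $q\inv(\overline{O_i})\cap \overline{V_i}$ is compact, because $\overline{V_i}$ is compact. To get a set strictly inside $V_i$, pick a compact $K_i\subseteq V_i$ with $q(K_i)\supseteq \overline{O_i}$. This is possible because $q|_{V_i}$ is open and $\overline{O_i}\cap q(\overline{V_i})$ is compact, but I must be careful that $\overline{O_i}$ itself is compact; if necessary I first shrink using the relative compactness of $q(V_i)$. Finally replace $K_i$ by $\calH_i K_i$, which is compact because $\calH_i\cong F_i\ltimes \calH_i\units$ with $F_i$ finite, and lies in $V_i$ by $\calH_i$-invariance. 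Then take $W_i$ to be an $\calH_i$-invariant relatively compact open neighborhood of $\calH_i K_i$ inside $V_i$.

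\textbf{Step 2: building $g_i$.} By Urysohn's lemma, take $h_i\in C_c(V_i,[0,1])$ equal to $1$ on $\overline{W_i}$, and average it over $F_i$: set $h_i'(z)=\frac{1}{|F_i|}\sum_{\phi\in F_i}h_i(\phi z)$, where $\phi z$ denotes the action of the element $(\phi,\rho(z))$ of $\calH_i$. The result $h_i'$ is $\calH_i$-invariant, lies in $C_c(V_i)$, and equals $1$ on $\calH_i\overline{W_i}$. Extend it $\calG$-invariantly to $\calG V_i$ by $g_i(\gamma z)=h_i'(z)$ for $z\in V_i$, and by $0$ off $\calG V_i$. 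This is well defined by the homeomorphism $\calG_{\calH_i\units}\times_{\calH_i}V_i\cong \calG V_i$ of proposition \ref{local structure of proper cocompact action of étale groupoid}: if $\gamma z=\gamma' z'$ with $z,z'\in V_i$, then $\gamma'^{-1}\gamma\in\calH_i$ by property (4) of a $\calG$-cover. Continuity holds on $\calG V_i$ through this homeomorphism. Continuity on all of $Z$, and the fact that $\supp(g_i)$ is $\calG$-compact, both follow from $\supp(g_i)\subseteq \calG\supp(h_i')$, which is closed by lemma \ref{G-compact subsets are closed}.

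\textbf{Step 3: normalizing.} Put $S=\sum_i g_i^2$. This sum is locally finite, hence continuous and $\calG$-invariant, and $S\geq 1$ everywhere because the sets $q(W_i)$ cover $Z/\calG$. Set $f_i=g_i/\sqrt{S}$. Each $f_i$ is $\calG$-invariant with $\calG$-compact support and $\sum_i f_i^2=1$. Moreover $f_i|_{V_i}=h_i'/\sqrt{S}$ is in $C_c(V_i)$ and is $\calH_i$-invariant, since $S$ is $\calG$-invariant.

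The main obstacle is Step 1: producing compact $\calH_i$-invariant subsets of $V_i$ whose orbits still cover $Z$. This needs care with relative compactness in $Z/\calG$, and with checking that $g_i$ extends continuously by zero across the boundary of $\calG V_i$. The latter is where the closedness of $\calG$-compact sets from lemma \ref{G-compact subsets are closed} is essential.
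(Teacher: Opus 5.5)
Your proof is correct, but it runs in the opposite direction from the paper's.

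\textbf{The paper's route.} It works on the quotient. It takes a partition of unity $(\psi_i)_{i\in I}$ on $Z/\calG$ subordinate to the locally finite cover $(q(V_i))_{i\in I}$ with $\sum_i\psi_i^2=1$, and sets $f_i=\psi_i\circ q$. Then $\calG$-invariance, continuity, $\sum_i f_i^2=1$ and the $\calH_i$-invariance of $f_i|_{V_i}$ all come for free. The only thing left to check is that $\supp(f_i|_{V_i})$ is compact. This holds because $q|_{V_i}:V_i\ra q(V_i)$ is identified with the quotient map $V_i\ra V_i/\calH_i$ by the finite group $F_i$, and is therefore proper.

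\textbf{Your route.} You build compactly supported bumps upstairs and force $\calH_i$-invariance by averaging over $F_i$. You then extend to $\calG V_i$ via $\calG_{\calH_i\units}\times_{\calH_i}V_i\cong\calG V_i$, using condition (4) of a $\calG$-cover for well-definedness, and to all of $Z$ via the closedness of $\calG$-compact sets. Finally you normalize.

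\textbf{The trade-off.} In your construction, compact support on $V_i$ is built in, while invariance and continuity take work. This is exactly the reverse of the paper. The paper's version is shorter and needs neither the averaging nor the extension argument.

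\textbf{Step 1.} You do not actually avoid the quotient, since Step 1 uses a shrinking on $Z/\calG$. That step is also easier than you fear. First, $\overline{O_i}\subseteq\overline{q(V_i)}\subseteq q(\overline{V_i})$ is automatically compact. Second, by the properness of $q|_{V_i}$ mentioned above, $q|_{V_i}\inv(\overline{O_i})$ is already a compact $\calH_i$-invariant subset of $V_i$. So choosing $K_i$ and saturating it under $\calH_i$ are unnecessary.

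\textbf{A shared point.} Both arguments use paracompactness of $Z/\calG$. This is not among the hypotheses of the lemma. It does follow, because $Z/\calG$ is locally compact Hausdorff and has the locally finite cover $(q(V_i))_{i\in I}$ by relatively compact open sets.
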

\begin{proof}
    Let $q:Z\ra Z/\calG$ be the quotient map. Then $(q(V_i))_{i\in I}$ is a locally finite cover of $Z/\calG$. Then there exists a partition of unity $(\psi_i)_{i\in I}$ subordinate to $(q(V_i))_{i\in I}$, that is, for each $i\in I$, $\psi_i\in C_c(Z/\calG,[0,1])$ is supported in $q(V_i)$, and for any $z\calG\in Z/\calG$, $\sum_{i\in I}\psi^2_i(z\calG)=1$. Now let $f_i\in C(\calG V_i,[0,1])$ be defined as $f_i(z)=\psi_i(q(z))$. Clearly, $f_i$ is $\calG$-invariant and $\sum_{i\in I}f_i^2(z)=\sum_{i\in I}\psi^2_i(q(z))=1$ for any $z\in Z$. The composition of the open inclusion $V_i\hookrightarrow \calG V_i$ and the quotient map $q|_{\calG V_i}:\calG V_i\ra (\calG V_i)/\calG$ is equivalent to the quotient map $q|_{V_i}: V_i\ra V_i/\calH_i$, which is proper since there exists a finite group $F_i$ acting on $\calH_i\units$ such that $\calH_i$ is isomorphic to $F_i\ltimes \calH_i\units$. Hence, $\supp(f_i|_{V_i})$ is compact and $\supp(f_i)$ is $\calG$-compact.
\end{proof}

\subsection{A decomposition technique}

Let $\calG$ be a second countable étale groupoid, $Z$ be a locally compact Hausdorff proper $\calG$-space with anchor map $\rho:Z\ra \calG\units$, $(A,\alpha)$ be a separable $\calG$-\cst-algebra. We are interested in the existence of universal $Z,\calG,A$-modules.

To explain the motivation of the technique developed in this subsection, we recall the classical case, where $Z$ is a proper metric space with coarse structure given by the metric, $\calG$ is one point and $A=\mathbb C$. Assume that $(H,\pi)$ is an ample and stable $Z$-module, The proof that it is universal does not use the full power of the ampleness, but only the fact that $H_{[V]}=\overline{\pi(C_0(V))H}$ is an infinitely dimensional separable Hilbert space for any non-empty open $V$. More precisely, we can choose a locally finite open cover $(V_i)_{i\in \mathbb N}$ of $Z$ consisting of bounded open sets $V_i$. Then for any other $Z$-module $(H',\pi')$, there exists an embedding $T_i:H'_{[V_i]}\ra H_{[V_i]}$. By using a partition of unity, there exists an isometry $H'\ra \oplus_{i\in \mathbb N}H'_{[V_i]}$ that intertwines $\pi'$ and $\oplus_{i}\pi'_{[V_i]}$ (through the map $\sqcup_{i\in \mathbb N}V_i\ra Z$). On another side, there will be an isometry $\oplus_{i\in \mathbb N}H_{[V_i]}\hookrightarrow \oplus_{i\in \mathbb N}H\cong H$. Now put them together, the composite
\[H'\ra \oplus_{i\in \mathbb N}H'_{[V_i]}\xrightarrow{\oplus_i T_i}\oplus_{i\in \mathbb N}H_{[V_i]}\ra \oplus_{i\in \mathbb N}H\cong H\]
is an isometry with support contained in $\cup_{i\in I}\overline{V_i}\times \overline{V_i}$, which is of finite propagation. C.f. \cite[Lemma 12.4.6]{higson2000analytic}.

In this subsection, we will develop a similar technique by replacing the open covers by the $\calG$-covers (definition \ref{defn G-cover}). The argument above will be compared to proposition \ref{key proposition of universal module}. Indeed, these techniques are motivated by the proof of \cite[Theorem 5.2.4]{bessi2023}.

\textbf{Notation}: let $(E,\pi)$ be a $Z,\calG,A$-module, $U$ be a $\calG$-invariant open neighborhood $U$ of the diagonal $\Delta_Z$ in $Z\times_{\calG\units}Z$, and $(V_i,\calH_i)_{i\in I}$ be a $\calG$-cover of $Z$ such that $\cup_{i\in I} \calG(\overline{V_i}\times_{\calG\units}\overline{V_i})\subseteq U$ (whose existence is proved by corollary \ref{exist G-cover}), and $(\calG V_i)_{i\in I}$ is a \textbf{countable locally finite} cover of $Z$. Fix a $(Z,\calG,A)$-module $(E,\pi)$, 
\[E_{[V_i]}=\overline{\pi(C_0(V_i))E}=\overline{\pi(C_0(V_i))E|_{\calH_i\units}}\cong C_0(V_i)\otimes_{\pi}E|_{\calH_i\units}\]
is naturally a $\calH_i$-Hilbert $A|_{\calH_i}$-module. Let $W\in \calL(s^*E,r^*E)$ be the action of $\calG$ on $E$.

By lemma \ref{partition of unity of G-cover}, we can fix a ``partition of unity'' subordinate to the $\calG$-cover $(V_i,\calH_i)_{i\in I}$, that is, we choose $f_i\in C(\calG V_i,[0,1])$ for each $1\leqslant i\leqslant n$, such that $f_i$ is $\calG$-invariant, $\supp(f_i)$ is $\calG$-compact and $\sum_{i\in I} f^2_i(z)=1$ for any $z\in Z$. Let $g_i=f_i|_{V_i}$, which is an element of $C_c(V_i)$. By extension by zero, we also see $g_i$ as an element of $C_c(Z)$.

We denote the open inclusion $V_i\hookrightarrow Z$ by $\iota_i$ and the open inclusion $\calG V_i\hookrightarrow Z$ by $\tilde\iota_i$. Then 
\[(E_{[V_i]},\pi_{[V_i]})\]
is a well-defined $V_i,\calH_i,A|_{\calH_i}$-module (definition \ref{operation braquet}), and 
\[(\infl_{\calH_i}^\calG(E_{[V_i]}),\tilde\iota_{i,*}(\infl_{\calH_i}^\calG(\pi_{[V_i]})))\] 
is a well-defined $Z,\calG,A$-module (definition \ref{defn operation inflation} and definition \ref{operation pushout}). We denote $\calG_{\calH_i\units}$ by $\Omega_i$ for convenience of writing. More precisely, $\tilde\iota_{i,*}(\infl_{\calH_i}^\calG(\pi_{[V_i]}))$ is the composition
\[C_0(Z)\ra C_b(\calG V_i)\ra \calL(\infl_{\calH_i}^\calG(E_{[V_i]})),\]
where the first arrow is restriction map, the second arrow is the strict continuous extension of the representation $\infl_{\calH_i}^\calG(\pi_{[V_i]}): C_0(\calG V_i)\cong \ind_{\Omega_i}C_0(V_i)\ra \calL(\infl^\calG_{\calH_i}(E_{[V_i]}))$.

These are the notations we will use in this subsection. The key result is the following.

\begin{prop}\label{key proposition of universal module}
    Use the same notation as above, let $(E',\pi')$ be another stable $Z,\calG,A$-module. If for every $i\in I$, there exists an $\calH_i$-equivariant isometry $T_i\in \Hom_A(E_{[V_i]}, E'_{[V'_i]})$, then there exists a $\calG$-equivariant weakly controlled isometry $V\in \Hom_A(E,E')$ such that $\supp_{\pi',\pi}(V)\subseteq U$.
\end{prop}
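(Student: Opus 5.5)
The isometry $V$ will be a composite of three $\calG$-equivariant isometries, mimicking the classical argument recalled above:
\[E\xrightarrow{\;\Phi\;}\bigoplus_{i\in I}\infl^\calG_{\calH_i}(E_{[V_i]})\xrightarrow{\;\oplus_i \infl(T_i)\;}\bigoplus_{i\in I}\infl^\calG_{\calH_i}(E'_{[V_i]})\xrightarrow{\;\Psi\;}E'.\]
Each arrow is equipped with the module structure $\tilde\iota_{i,*}\infl^\calG_{\calH_i}(\pi_{[V_i]})$ (respectively the primed version). I will then check that the composite is properly supported with support in $U$. Equivariance and the isometry property of each piece are checked fiberwise. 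Proper support comes from proposition \ref{property of support of operator}(5) and lemma \ref{controlled operator is properly supported}, once each factor is shown to be properly supported.

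\emph{Step 1 (the map $\Phi$).} For $e\in E$, define the component $\Phi_i e\in\infl^\calG_{\calH_i}(E_{[V_i]})$ as the section $\omega\mapsto W_{\omega^{-1}}\big(\pi(g_i\circ \omega^{-1})e\big)(r(\omega))$ over $\Omega_i=\calG_{\calH_i\units}$. Equivalently, it is the element obtained by transporting $\tilde\pi(f_i)e$, supported in $\calG V_i\cong\Omega_i\times_{\calH_i}V_i$, back to $V_i$. The $\calH_i$-invariance of $g_i$ gives the equivariance condition $\eta(\omega h)=V_{h^{-1}}\eta(\omega)$. The formula for $\langle\cdot,\cdot\rangle$ in definition \ref{defn of inflation} then yields
\[\langle\Phi_i e,\Phi_i e'\rangle=\langle \tilde\pi(f_i)e,\tilde\pi(f_i)e'\rangle.\]
Summing over $i$ and using $\sum f_i^2=1$ with local finiteness shows that $\Phi$ is an isometry; convergence holds strictly on compactly supported vectors. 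By construction $\Phi$ intertwines $\pi$ with $\oplus_i\tilde\iota_{i,*}\infl(\pi_{[V_i]})$ up to multiplication by $f_i$, so proposition \ref{support of intertwiner} places its support in the diagonal. Moreover $\Phi$ is adjointable, with adjoint $\oplus_i\eta_i\mapsto\sum_i\tilde\pi(f_i)\,\text{(reassembled }\eta_i)$. Here local finiteness of $(\calG V_i)$ is what makes the sum converge.

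\emph{Step 2 (the middle map).} Each $\infl^\calG_{\calH_i}(T_i)$ (definition \ref{defn inflation of representation}) is a $\calG$-equivariant isometry, since the inner product is computed fiberwise from $\langle T_i\cdot,T_i\cdot\rangle$. By proposition \ref{support of inflation}, its support lies in $\Omega_i\times_{\calH_i}\supp(T_i)$. Because $T_i$ maps between modules over $V_i$, that support is contained in $\calG(\overline{V_i}\times_{\calG\units}\overline{V_i})\subseteq U$. The direct sum is then an isometry because the summands are orthogonal.

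\emph{Step 3 (the map $\Psi$), which I expect to be the main obstacle.} The difficulty is that the summands $\infl(E'_{[V_i]})$ are not orthogonal inside $E'$, so the naive reassembly map is not isometric. I would use stability: fix $E'\cong E'^{\infty}=E'\otimes\ell^2(\mathbb N)$ and an injection $i\mapsto n_i$ of $I$ into $\mathbb N$, which is possible since $I$ is countable. Define $\Psi$ on the $i$-th summand as the natural embedding $\infl^\calG_{\calH_i}(E'_{[V_i]})\cong E'_{[\calG V_i]}\hookrightarrow E'$, followed by $e\mapsto e\otimes\delta_{n_i}$. The identification $\infl^\calG_{\calH_i}(E'_{[V_i]})\cong E'_{[\calG V_i]}$ is the compression isomorphism, and it comes from $\calG V_i\cong\Omega_i\times_{\calH_i}V_i$ (proposition \ref{local structure of proper cocompact action of étale groupoid}). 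I would verify it by comparing the inner products of definition \ref{defn of inflation} with those of $E'$ on the dense set $\ind_{\Omega_i,c}$, using the disjointness $\gamma\overline{V_i}\cap\overline{V_i}=\emptyset$ for $\gamma\notin\calH_i$. With distinct $\delta_{n_i}$ the images are orthogonal, so $\Psi$ is an isometry whose support is diagonal.

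The remaining point is the total support and properness. Composing via proposition \ref{property of support of operator} gives
\[\supp(V)\subseteq\bigcup_i\calG(\overline{V_i}\times_{\calG\units}\overline{V_i})\subseteq U.\]
This union is closed and $\calG$-invariant, and its projections are proper by local finiteness together with the $\calG$-compactness of each $\calG\overline{V_i}$. Hence $V$ is weakly controlled. The delicate verifications will be the convergence of the infinite direct sums in Step 1 and the compression isomorphism in Step 3.
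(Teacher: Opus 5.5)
Your proposal is correct and is essentially the paper's proof. The paper also takes $V=U\inv\circ(\oplus_{i\in I}\Psi_i)\circ(\oplus_{i\in I}\infl^\calG_{\calH_i}(T_i))\circ\Phi$, with $\Phi_i(e)(\gamma)=\pi_{s(\gamma)}(g_i|_{V_{i,s(\gamma)}})W_\gamma^*(e(r(\gamma)))$, the reassembly isometry $\Psi_i(\xi)(x)=\sum_{\gamma\calH_i\in\Omega_i^x/\calH_i}W_\gamma\xi(\gamma)$ (isometric because distinct translates of $\overline{V_i}$ are disjoint), and stability of $E'$ exactly as you do; your injection $I\hookrightarrow\mathbb N$ into $E'\otimes\ell^2(\mathbb N)$ just repackages the unitary $E'\cong\oplus_{i\in I}E'$.

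The extra facts you assert (adjointability of $\Phi$ for infinite $I$, surjectivity of the compression map onto $E'_{[\calG V_i]}$) are true but never used; only the isometry property of $\Psi_i$ is needed. Norm convergence of $\sum_i\tilde\pi(f_i)\Psi_i(\xi_i)$ comes from the row estimate $\|\sum_{i\in F}\tilde\pi(f_i)\eta_i\|\leqslant\|(\eta_i)_{i\in F}\|$, not from local finiteness. Your explicit check that $\cup_i\calG(\overline{V_i}\times_{\calG\units}\overline{V_i})$ is closed with proper projections fills in what the paper leaves implicit.
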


We will divide the construction in several lemmas.

\begin{lem}\label{construction of Phi_i}
    Use the same notation as above, we define $\Phi_i:E\ra \infl_{\calH_i}^\calG (E_{[V_i]})$ as the linear map such that for any $e\in \pi(C_c(Z))E$,
    \[\Phi_i(e)(\gamma)=\pi_{s(\gamma)}(g_i|_{V_{i,s(\gamma)}}) W_\gamma^*(e(r(\gamma))).\]
    Then for each $1\leqslant i\leqslant n$, $\Phi_i:E\ra \infl_{\calH_i}^\calG (E_{[V_i]})$ is a well-defined $\calG$-equivariant operator.
\end{lem}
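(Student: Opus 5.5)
The plan is to check in turn that $\Phi_i(e)$ is an element of $\ind_{\Omega_i,c}(E_{[V_i]})\subseteq \infl_{\calH_i}^\calG(E_{[V_i]})$, that $\Phi_i$ is bounded and $A$-linear on the dense subspace $\pi(C_c(Z))E$, and that $\Phi_i$ intertwines the $\calG$-actions.

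\textbf{Step 1: $\Phi_i(e)$ is a compactly supported element of the induced module.} Fix $e\in \pi(C_c(Z))E$.

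\begin{enumerate}
\item \emph{Values land in the right fibre.} For $\gamma\in\Omega_i$ we have $s(\gamma)\in\calH_i\units$. Since $g_i|_{V_{i,s(\gamma)}}\in C_c(V_{i,s(\gamma)})$, the vector $\Phi_i(e)(\gamma)$ lies in $(E_{[V_i]})_{s(\gamma)}$. Here we use the identification of fibres of $E_{[V_i]}$ with $\overline{\pi_x(C_0(V_{i,x}))E_x}$, given by proposition \ref{Hilbert bundle of submodule}.
\item \emph{Continuity.} The map $\gamma\mapsto W_\gamma^*(e(r(\gamma)))=W_{\gamma\inv}(e(s(\gamma\inv)))$ is a continuous section of $s^*\ca E$, by proposition \ref{determine continuity of groupoid action}. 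Applying the fibrewise operators $\pi_{s(\gamma)}(g_i)$ gives a continuous section, because $s^*\pi(s^*g_i)$ is an adjointable operator on $s^*E$ and we may use proposition \ref{contiuous action of adjointable operator bundle}.
\item \emph{Equivariance condition.} For $h\in\calH_i$ with $r(h)=s(\gamma)$ we compute
\[\Phi_i(e)(\gamma h)=\pi_{s(h)}(g_i)W_h^*W_\gamma^*e(r(\gamma)).\]
Equivariance of $\pi$ gives $\pi_{s(h)}(g_i)W_{h\inv}=W_{h\inv}\pi_{r(h)}(g_i(h-))$, and $g_i(h-)=g_i$ because $g_i$ is $\calH_i$-invariant (lemma \ref{partition of unity of G-cover}). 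Hence $\Phi_i(e)(\gamma h)=V_{h\inv}\Phi_i(e)(\gamma)$, which is condition (1) in the definition of $\ind_{\Omega_i}$.
\item \emph{Compact support in $\Omega_i/\calH_i$.} The value $\Phi_i(e)(\gamma)$ vanishes unless $\gamma^{-1}\supp(e)$ meets $\supp(g_i)\subseteq \overline{V_i}$. By the properness of the action, the set of such $\gamma$ is compact modulo $\calH_i$.
\end{enumerate}

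\textbf{Step 2: isometric-type bound, hence boundedness.} Using the formula for the inner product in definition \ref{defn of inflation}, we get, for $x\in\calG\units$,
\[\langle\Phi_i(e),\Phi_i(e)\rangle(x)=\sum_{\gamma\calH_i\in\Omega_i^x/\calH_i}\alpha_\gamma\big(\langle \pi_{s(\gamma)}(g_i^2)W_\gamma^*e(x),W_\gamma^*e(x)\rangle\big)=\sum_{\gamma\calH_i}\langle\pi_x(g_i^2(\gamma\inv-))e(x),e(x)\rangle.\]
The last equality uses equivariance of $\pi$ together with the formula relating $W_\gamma$ and $\alpha_\gamma$.

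Next, condition (4) of a $\calG$-cover says that the translates $\gamma V_i$, for distinct cosets $\gamma\calH_i$, are disjoint. Therefore
\[\sum_{\gamma\calH_i}g_i^2(\gamma\inv-)=f_i^2|_{Z_x}\le 1.\]
This sum is locally finite: on compact sets only finitely many terms are nonzero, by lemma \ref{uniformly finite fiber}. So we obtain
\[\langle\Phi_i(e),\Phi_i(e)\rangle=\langle\tilde\pi(f_i^2)e,e\rangle\le\langle e,e\rangle,\]
which shows that $\Phi_i$ is bounded by $1$ and extends to all of $E$.

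I expect this identification to be the main obstacle. One has to justify rigorously the interchange between the fibrewise sum and the strict extension $\tilde\pi(f_i^2)$. The plan is to check the identity fibrewise, using $\pi_x$ and the local finiteness.

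\textbf{Step 3: $A$-linearity and $\calG$-equivariance.} $A$-linearity follows from $(e a)(r(\gamma))=e(r(\gamma))a(r(\gamma))$ together with the module structure $(\eta\cdot a)(\gamma)=\eta(\gamma)\alpha_{\gamma\inv}(a(r(\gamma)))$. For equivariance, the action $V^{\infl}_\gamma$ on the inflated module is $\eta\mapsto\eta(\gamma\inv-)$. We then compute
\[\Phi_i(W_\gamma e_{s(\gamma)})(\omega)=\pi(g_i)W_\omega^*W_\gamma e=\pi(g_i)W_{\gamma\inv\omega}^*e=\Phi_i(e)(\gamma\inv\omega),\]
which gives $V^\infl_\gamma\circ(\Phi_i)_{s(\gamma)}=(\Phi_i)_{r(\gamma)}\circ W_\gamma$. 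This computation is carried out on fibres, using the fibre description of $\infl$ given after proposition \ref{inflation is induction inner tensor product a module}.
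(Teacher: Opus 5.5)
Your proposal is correct. Steps 1 and 3 follow the paper's proof essentially verbatim: fibre values, continuity via propositions~\ref{determine continuity of groupoid action} and~\ref{contiuous action of adjointable operator bundle}, the $\calH_i$-equivariance computation, compact support via properness, $A$-linearity, and the fibrewise check $V^{\infl}_\gamma\circ\Phi_{i,s(\gamma)}=\Phi_{i,r(\gamma)}\circ W_\gamma$.

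The real difference is the boundedness estimate. The paper bounds each term $\|\pi_x(g_i(\gamma\inv-))e(x)\|^2$ by $\|e(x)\|^2$ and counts the contributing cosets, which gives $\|\Phi_i(e)\|\le\sqrt{C_i}\,\|e\|$. However, $C_i$ is defined through $K_i=\{\gamma:\gamma\supp(g_i)\cap\supp_\pi(e)\neq\emptyset\}$. So $C_i$ depends on $e$ and grows with $\supp_\pi(e)$; this already happens for $\mathbb Z$ acting on $\mathbb R$. That estimate alone therefore does not give a uniform bound.

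Your route gives the uniform bound. You move $W_\gamma$ across the inner product using equivariance of $\pi$ and the relation between $W_\gamma$ and $\alpha_\gamma$. You then use condition (4) of the $\calG$-cover: at each point of $Z_x$ at most one translate $g_i(\gamma\inv-)$ is nonzero. This yields $\langle\Phi_i(e),\Phi_i(e)\rangle\le\langle e,e\rangle$, hence $\|\Phi_i\|\le 1$, which is what extending to all of $E$ requires. It is the same mechanism the paper uses afterwards in lemma~\ref{Phi is adjointable isometry}.

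The obstacle you flag can be bypassed. For fixed $x$ only finitely many cosets contribute. The finite sum $S=\sum g_i^2(\gamma\inv-)$ satisfies $0\le S\le 1$ on $Z_x$. Positivity then gives $\langle\pi_x(S)e(x),e(x)\rangle\le\langle e(x),e(x)\rangle$ in $A_x$, so you never need to identify the sum with $\tilde\pi(f_i^2)$.

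A minor point: in Step 1.3 the translate should read $g_i(h\inv-)$, not $g_i(h-)$. This is harmless because $g_i$ is $\calH_i$-invariant.
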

\begin{proof}
    Firstly we need to check that for every $e\in \pi(C_c(Z))E$, $\Phi_i(e)$ is a well-defined element of $\ind_{\Omega_i,c}(E_{[V_i]})$. For $x\in \calH_i\units$, we can canonically identify $E_{[V_i],x}$ with $\overline{\pi_x(C_0(V_{i,x}))E_x}$. Let $\ca E_i=\sqcup_{x\in \calH_i\units}E_{[V_i],x}$ be the associated Hilbert bundle over $\calH_i\units$. We know that by conditions in lemma \ref{partition of unity of G-cover}, $g_i$ is an element of $C_c(V_i)$, hence for every $\gamma\in \Omega_i$, $\Phi_i(e)(\gamma)\in E_{[V_i],s(\gamma)}$. The map $\gamma\mapsto \pi_{s(\gamma)}(g_i|_{V_{i,s(\gamma)}})$ is an element of $\Gamma_b(\Omega_i, s^*\calL(\ca E))$, and $\gamma\mapsto W_\gamma^*(e(r(\gamma)))$ is an element of $\Gamma_b(\Omega_i,s^*\ca E)$ by proposition \ref{determine continuity of groupoid action}, hence $\Phi_i(e)$ is an element of $\Gamma_b(\Omega_i, s^*\ca E)$ by proposition \ref{contiuous action of adjointable operator bundle}. Apply corollary \ref{Hilbert bundle of submodule} to $s|_{\Omega_i}^*E_{[V_i]}$ and $s|_{\Omega_i}^*E$, we can therefore see $\Phi_i(e)$ as an element of $\Gamma_b(\Omega_i,s^*\ca E_i)$.

    For any $(\gamma,h)\in \Omega_i\times_{s,\calH_i\units,r}\calH_i$, we have
    \begin{align*}
        \Phi_i(e)(\gamma h) & = \pi_{s(h)}(g_i|_{V_i,s(h)}) W_{\gamma h}^*(e(r(\gamma)))\\
        & = \pi_{s(h)}(g_i|_{V_i,s(h)}) W_h^* W_{\gamma}^*(e(r(\gamma)))\\
        & = W_h^* \pi_{s(\gamma)}(g_i|_{V_i,s(\gamma)})W_{\gamma}^*(e(r(\gamma)))\\
        & = W_h^*\Phi_i(e)(\gamma),
    \end{align*}
    where the second equality is because that $g_i$ is $\calH_i$-equivariant.

    Now we need to show that the map $\Omega_i/\calH_i\ra \mathbb R,\gamma\calH_i\mapsto \|\Phi_i(e)(\gamma)\|_{E_{s(\gamma)}}$ is compactly supported. Let $K_i=\{\gamma\in \calG:\gamma \supp(g_i)\cap \supp_{\pi}(e)\neq \emptyset\}$, which is a compact subset of $\calG$ since the action of $\calG$ on $Z$ is proper. If $\gamma\in K_i$, then $\gamma\supp(g_i)\neq \emptyset$, so $s(\gamma)\in \rho(\supp(g_i))\subseteq \calH_i\units$. That is, we have $K_i\subseteq \Omega_i=\calG_{\calH_i\units}$. Since $\|\Phi_i(e)(\gamma)\|=\|\pi_{r(\gamma)}(g_i|_{V_{i,s(\gamma)}}(\gamma\inv-))e(r(\gamma))\|$, $\|\Phi_i(e)(\gamma)\|\neq 0$ only if $\gamma\supp(g_i)\cap \supp_{\pi}(e)\neq \emptyset$, i.e. $\gamma\in K_i$.  Now let $q_i:\Omega_i\ra \Omega_i/\calH_i$ be the quotient map, the map $\Omega_i/\calH_i\ra \mathbb R,\gamma\calH_i\mapsto \|\Phi_i(e)(\gamma)\|_{E_{s(\gamma)}}$ is supported in $q(K_i)$, which is compact.

    So $\Phi_i(e)$ is a well-defined element in $\ind_{\Omega_i,c}(E_{[V_i]})$. Next, we will show that $\Phi_i$ is bounded $A$-linear map. For any $e\in \pi(C_c(Z))E$ and $x\in \calG\units$, 
    \begin{align*}
        \|\Phi_i(e)\|^2 & = \sup_{x\in \calG\units}\|\langle \Phi_i(e),\Phi_i(e)\rangle_{\infl_{\calH_i}^\calG(E_{[V_i]})}(x)\|_{A_x}\\ 
        & = \sup_{x\in \calG\units}\|\sum_{\gamma\calH_i\in \Omega_i^x/\calH}\langle \Phi_i(e)(\gamma), \Phi_i(e)(\gamma)\rangle_{E_{s(\gamma)}}\|_{A_x}\\
        & \leqslant \sup_{x\in \calG\units}\sum_{\gamma\calH_i\in \Omega_i^x/\calH}\| \pi_{x}(g_i|_{V_{i,s(\gamma)}}(\gamma\inv-))e(x)\|^2\\
        & \leqslant \sup_{x\in \calG\units}\#(q(K_i)\cap \tilde r_i\inv(x))\cdot \sup_{x\in \calG\units}\|e(x)\|^2,
    \end{align*}
    where map $\tilde r_i:\Omega_i/\calH_i\ra \calG\units, \gamma\calH_i\mapsto r(\gamma)$ is a local homeomorphism, by lemma \ref{uniformly finite fiber}, $C_i:=\sup_{x\in \calG\units}\#(q(K_i)\cap \tilde r_i\inv(x))<\infty$. The above inequality implies that $\|\Phi_i(e)\|\leqslant \sqrt{C_i}\|e\|$ for any $e\in \pi(C_c(Z))E$. So $\Phi_i$ is a bounded map.

    For any $e\in \pi(C_c(Z))E$ and $a\in A$, $\gamma\in \Omega_i$,
    \begin{align*}
        \Phi_i(ea)(\gamma) & = \pi_{s(\gamma)}(g_i|_{V_{i,s(\gamma)}})W_\gamma^*(e(r(\gamma))a(r(\gamma)))\\
        & = \Phi_i(e)(\gamma)\alpha_{\gamma}\inv(a(r(\gamma)))\\
        & = (\Phi_i(e)\cdot a)(\gamma).
    \end{align*}
    (See definition \ref{defn of inflation} for the last equality.) So $\Phi_i$ is a bounded $A$-linear map.

    Clearly, the fiber of $\Phi_i$ at $x\in \calG\units$ is
    \[\Phi_{i,x}:E_x\ra (\infl^\calG_{\calH_i}(E_{[V_i]}))_x,\]
    such that for any $e\in \pi_x(C_c(Z_x))E_x$, $\Phi_{i,x}(e)\in \ind_{\Omega_i^x,c}(E_{[V_i],x})$ that
    \[\Phi_{i,x}(e)(\gamma)=\pi_{s(\gamma)}(g_i|_{V_{i,s(\gamma)}})W_\gamma^*e.\]

    For any $\gamma'\in \calG$, $\gamma\in \Omega_i^{r(\gamma')}$, $e\in \pi_{s(\gamma')}(C_c(Z_{s(\gamma')}))E_{s(\gamma')}$,
    \begin{align*}
        V_{\gamma'}^{\infl}(\Phi_{i,s(\gamma')}(e)|_{\Omega_i^{r(\gamma')}})(\gamma) & = \Phi_{i,s(\gamma')}(e)({\gamma'}\inv \gamma)\\
        & = \pi_{s(\gamma)}(g_i|_{V_{i,s(\gamma)}})W_\gamma^* W_{\gamma'}e\\
        & = \Phi_{i,r(\gamma')}(W_{\gamma'}e)(\gamma).
    \end{align*}
    That is, $V_{\gamma'}^{\infl}\circ \Phi_{i,s(\gamma')}=\Phi_{i,r(\gamma')}\circ W_{\gamma'}$. The operator $\Phi_i$ is therefore $\calG$-equivariant.
\end{proof}

\begin{lem}\label{construction of isometry Psi_i}
    Use the same notation as above, for each $1\leqslant i\leqslant n$, we define the map $\Psi_i: \infl_{\calH_i}^\calG (E[V_i])\ra E$ as, for any $\xi\in \ind_{\Omega_i,c}(E[V_i])$, $x\in \calG\units$,
    \[\Psi_i(\xi)(x)=\sum_{\gamma\calH_i\in \Omega_i^x/\calH_i} W_\gamma\xi(\gamma).\]
    Then $\Psi_i$ is a well-defined $\calG$-equivariant isometry.
\end{lem}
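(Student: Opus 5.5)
We first check that $\Psi_i(\xi)$ is a well-defined element of $E$ for $\xi\in \ind_{\Omega_i,c}(E_{[V_i]})$. For $\gamma\in\Omega_i^x$ and $h\in \calH_i$ with $s(\gamma)=r(h)$, we have $\xi(\gamma h)=W_{h\inv}\xi(\gamma)$. Hence $W_{\gamma h}\xi(\gamma h)=W_\gamma W_hW_h^*\xi(\gamma)=W_\gamma\xi(\gamma)$, so each summand depends only on the class $\gamma\calH_i$. The section $\gamma\mapsto W_\gamma\xi(\gamma)$ is a continuous bounded section of $r^*\ca E$ over $\Omega_i$ by proposition \ref{contiuous action of adjointable operator bundle} and proposition \ref{determine continuity of groupoid action}, and it descends to $\Omega_i/\calH_i$. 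Its support there is compact, because $\xi$ has compact support in $\Omega_i/\calH_i$. The map $\tilde r_i:\Omega_i/\calH_i\ra\calG\units$ is a local homeomorphism. Lemma \ref{sum of fiber}, applied to the Banach bundle $\ca E$ and $\tilde r_i$, then shows that $x\mapsto\sum_{\gamma\calH_i\in\Omega_i^x/\calH_i}W_\gamma\xi(\gamma)$ is a compactly supported continuous section of $\ca E$. Via $E\cong\Gamma_0(\calG\units,\ca E)$ it is an element of $E$.

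Next we check $A$-linearity. By definition \ref{defn of inflation}, $(\xi\cdot b)(\gamma)=\xi(\gamma)\alpha_{\gamma\inv}(b(r(\gamma)))$. Since $W_\gamma$ is $\alpha_\gamma$-twisted linear, $W_\gamma(\xi(\gamma)\alpha_{\gamma\inv}(b(x)))=W_\gamma(\xi(\gamma))\,b(x)$, which gives $\Psi_i(\xi b)=\Psi_i(\xi)b$.

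The central step is the isometry property on the dense subspace, namely
\[\langle\Psi_i\xi_1,\Psi_i\xi_2\rangle(x)=\sum_{\gamma\calH_i,\gamma'\calH_i}\langle W_\gamma\xi_1(\gamma),W_{\gamma'}\xi_2(\gamma')\rangle_{E_x}.\]
We show that the off-diagonal terms vanish. Since $\xi_j(\gamma)\in E_{[V_i],s(\gamma)}$, its $\pi_{s(\gamma)}$-support lies in $\overline{V_{i}}_{s(\gamma)}$. By the equivariance of $\pi$, $\supp_{\pi_x}(W_\gamma\xi_1(\gamma))\subseteq\gamma\overline{V_i}$. Suppose $\gamma\calH_i\neq\gamma'\calH_i$. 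Then $\gamma'^{-1}\gamma\notin\calH_i$, and condition (4) of definition \ref{defn G-cover} gives $\gamma'^{-1}\gamma\overline{V_i}\cap\overline{V_i}=\emptyset$, so $\gamma\overline{V_i}\cap\gamma'\overline{V_i}=\emptyset$. Corollary \ref{elements with empty support intersection are orthogono} then forces the off-diagonal terms to vanish. That corollary is stated for modules over $C_0(Z)$, so we apply it fiberwise to $(E_x,\pi_x)$. The support sets are compact, since the fibers of $\overline{V_i}$ are compact, and this lets the Urysohn argument go through. The diagonal terms equal $\alpha_\gamma(\langle\xi_1(\gamma),\xi_2(\gamma)\rangle)$, which is exactly the inner product of $\infl^\calG_{\calH_i}E_{[V_i]}$ from definition \ref{defn of inflation}. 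It follows that $\Psi_i$ preserves inner products on $\ind_{\Omega_i,c}$, hence is bounded and extends to an isometry on the completion. We expect this fiberwise support argument to be the main point needing care, in particular justifying $\supp_{\pi_x}(W_\gamma e)=\gamma\,\supp_{\pi_{s(\gamma)}}(e)$ from the equivariance identity $V_\gamma\pi_{s(\gamma)}(f)=\pi_{r(\gamma)}(f(\gamma\inv-))V_\gamma$.

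Finally we prove equivariance fiberwise. For $\gamma'\in\calG$ and $\xi\in\ind_{\Omega_i^{s(\gamma')},c}$, we have $V^{\infl}_{\gamma'}\xi=\xi(\gamma'^{-1}-)$. Then
\[\Psi_{i,r(\gamma')}(V^{\infl}_{\gamma'}\xi)=\sum_{\gamma\calH_i\in\Omega_i^{r(\gamma')}/\calH_i}W_\gamma\xi(\gamma'^{-1}\gamma).\]
We reindex by $\delta=\gamma'^{-1}\gamma$, which is a bijection $\Omega_i^{r(\gamma')}/\calH_i\ra\Omega_i^{s(\gamma')}/\calH_i$, and use $W_{\gamma'\delta}=W_{\gamma'}W_\delta$. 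This gives $W_{\gamma'}\Psi_{i,s(\gamma')}(\xi)$, so $\Psi_i$ is $\calG$-equivariant.
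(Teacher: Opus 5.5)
Your proposal is correct and follows the same route as the paper: well-definedness via Lemma~\ref{sum of fiber} applied to $\gamma\calH_i\mapsto W_\gamma\xi(\gamma)$, vanishing of the off-diagonal terms from condition (4) of the $\calG$-cover together with Corollary~\ref{elements with empty support intersection are orthogono}, and equivariance by reindexing $\gamma\mapsto\gamma'^{-1}\gamma$. Your version is slightly more careful than the written proof. You keep the twist $\alpha_\gamma(\langle\xi_1(\gamma),\xi_2(\gamma)\rangle)$ in the diagonal terms, which is exactly what the inflation inner product requires and which the paper's display drops.
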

\begin{proof}
    Since $\xi\in \ind_{\Omega_i,c}(E[V_i])$ is $\calH_i$-equivariant, the map $\gamma\calH_i\mapsto W_{\gamma}\xi(\gamma)$ is well-defined, and is an element of $\Gamma_c(\Omega_i/\calH_i,\tilde r^*\ca E)$. (Here $\tilde r$ is the map $\Omega_i/\calH_i\ra \calG\units, \gamma\calH_i\mapsto r(\gamma)$, which is a local homeomorphism.) Then by lemma \ref{sum of fiber}, $\Psi_i(\xi)$ is a well-defined element of $\Gamma_c(\calG\units, \ca E)\subseteq E$.

    For any $\xi\in \ind_{\Omega_i,c}(E[V_i])$, $a\in A$, $x\in \calG\units$,
    \begin{align*}
        \Psi_i(\xi\cdot a)(x) & = \sum_{\gamma\calH_i\in \Omega_i^x/\calH_i} W_\gamma(\xi(\gamma)\alpha_{\gamma\inv}(a(r(\gamma))))\\
        & = \sum_{\gamma\calH_i\in \Omega_i^x/\calH_i} W_\gamma(\xi(\gamma))a(x)\\
        & = \Psi_i(\xi)(x)a(x).
    \end{align*}
    So $\Psi_i$ is $A$-linear.

    Notice that $(V_i,\calH_i)_i$ is a $\calG$-cover, so if $\gamma\calH_i,\gamma'\calH_i$ are two different elements of $\Omega_i^x/\calH_i$, we have $\overline{V_{i,s(\gamma')}}\cap \overline{{\gamma'}\inv \gamma V_{i,s(\gamma)}}=\emptyset$. And for $\xi\in \ind_{\Omega_i,c}E_{[V_i]}$, we will have
    \[\supp_{\pi_{2,s(\gamma')}}(\xi(\gamma'))\subseteq \overline{V_{i,s(\gamma')}},\]\[\supp_{\pi_{2,s(\gamma')}}(W_{\gamma'}^*W_\gamma\xi(\gamma))\subseteq \overline{{\gamma'}\inv \gamma V_{i,s(\gamma)}}.\]
    Therefore, by corollary \ref{elements with empty support intersection are orthogono},
    \[\langle W_\gamma\xi(\gamma), W_{\gamma'}\xi(\gamma')\rangle = \langle W_{\gamma'}^*W_\gamma\xi(\gamma), \xi(\gamma')\rangle=0.\]
    This implies that,
    \begin{align*}
        \langle \Psi_i(\xi),\Psi_i(\xi)\rangle(x) & = \sum_{\gamma\calH_i\in \Omega_i^x/\calH_i}\sum_{\gamma'\in \Omega_i^x/\calH_i}\langle W_\gamma\xi(\gamma), W_{\gamma'}\xi(\gamma')\rangle\\
        & = \sum_{\gamma\calH_i\in \Omega_i^x/\calH_i}\langle W_\gamma\xi(\gamma),W_\gamma\xi(\gamma)\rangle\\
        & = \sum_{\gamma\calH_i\in \Omega_i^x/\calH_i}\langle \xi(\gamma),\xi(\gamma)\rangle\\
        & = \langle \xi,\xi\rangle_{\infl^\calG_{\calH_i}(E_{[V_i]})_x}.
    \end{align*}
    Hence, $\Psi_i$ is isometric. At $x\in \calG\units$, the fiber of $\Psi_i$ is
    \[\Psi_{i,x}: \infl_{\calH_i}^\calG(E_{[V_i]})_x\ra E_x,\]
    such that for any $\xi\in \ind_{\Omega^x_i,c}(E_{[V_i]})\subseteq \infl_{\calH_i}^\calG(E_{[V_i]})_x$,
    \[\Psi_{i,x}(\xi) = \sum_{\gamma\calH_i\in \Omega_i^x/\calH_i} W_\gamma \xi(\gamma).\]

    Now for any $\gamma'\in \calG$, $\xi\in \ind_{\Omega^{s(\gamma')},c}(E_{[V_i]})\subseteq \infl_{\calH_i}^\calG(E_{[V_i]})_{s(\gamma')}$,
    \begin{align*}
        \Psi_{i,r(\gamma')}(V_{\gamma'}^{\infl}(\xi)) & = \sum_{\gamma\calH_i\in \Omega_i^{r(\gamma')}/\calH_i} W_\gamma\xi({\gamma'}\inv \gamma)\\
        & = \sum_{\gamma\calH_i\in \Omega_i^{r(\gamma')}/\calH_i} W_{\gamma'}W_{{\gamma'}\inv\gamma}\xi({\gamma'}\inv \gamma)\\
        & = W_{\gamma'}(\sum_{\gamma\calH_i\in \Omega_i^{s(\gamma')}/\calH_i}W_\gamma\xi(\gamma))\\
        & = W_{\gamma'}\Psi_{i,s(\gamma')}(\xi).
    \end{align*}
    That is, $\Psi_{i,r(\gamma')}\circ V_{\gamma'}^{\infl}=W_{\gamma'}\Psi_{i,s(\gamma')}$, so $\Psi_i$ is $\calG$-equivariant.
\end{proof}

\begin{rem}
    The map $\Psi_i$ here is not necessarily adjointable.
\end{rem}

Since $\sum_{i\in I} f_i^2(z)=1$ for any $z\in Z$, we can see that, for any $x\in \calG\units$ and $z\in Z_x$, 
\[\sum_{i\in I} \sum_{\gamma\calH_i\in \Omega_i^x/\calH_i} g_i^2|_{V_{i,s(\gamma)}}(\gamma\inv z)\]
is a finite sum and equal to 1. Hence, we can write
\[\sum_{i\in I} \sum_{\gamma\calH_i\in \Omega_i^x/\calH_i} \pi_x(g_i^2|_{V_{i,s(\gamma)}}(\gamma\inv-))=id_{E_x}\]
by seeing the left-hand side as a net in $\calL(E_x)$ (directed by all finite subsets of the set $\{(i,\gamma\calH_i):i\in I,\gamma\calH_i\in \Omega^x_i/\calH_i\}$ that converges to $id_{E_x}$ in strict topology.

\begin{lem}\label{Phi is adjointable isometry}
    Use the same notation as above, we define the map
    \[\Phi:E\ra \oplus_{i\in I} \infl^\calG_{\calH_i}(E_{[V_i]}), e\mapsto (\Phi_i(e))_{i\in I}, \]
    Then $\Phi$ is a well-defined $\calG$-equivariant isometry.
\end{lem}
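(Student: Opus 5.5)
The plan is to show that $\Phi$ preserves inner products on the dense subspace $\pi(C_c(Z))E$. Boundedness, $A$-linearity, $\calG$-equivariance and extension to all of $E$ then follow from the corresponding properties of each $\Phi_i$ in lemma \ref{construction of Phi_i}. The one subtlety is that $I$ is countable, so the direct sum $\oplus_{i\in I}\infl^\calG_{\calH_i}(E_{[V_i]})$ is the Hilbert module direct sum. We must check that $(\Phi_i(e))_{i\in I}$ actually lies in it, i.e. that $\sum_i\langle\Phi_i(e),\Phi_i(e)\rangle$ converges in norm in $A$. This comes for free once the isometry identity is established with a finite sum.

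\textbf{Step 1 (reduction to finitely many $i$).} Fix $e\in\pi(C_c(Z))E$ with $K=\supp_\pi(e)$ compact. Since $(\calG V_i)_{i\in I}$ is locally finite and $K$ is compact, only finitely many $\calG V_i$ meet $K$; call this finite set $I_K$. For $i\notin I_K$, the proof of lemma \ref{construction of Phi_i} gives $\Phi_i(e)(\gamma)\neq0$ only if $\gamma\supp(g_i)\cap K\neq\emptyset$. This forces $\calG V_i\cap K\neq\emptyset$, so $\Phi_i(e)=0$ for $i\notin I_K$. Hence $\Phi(e)$ has finitely many nonzero components and lies in the algebraic direct sum.

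\textbf{Step 2 (fiberwise computation).} Fix $x\in\calG\units$ and compute
\[\sum_{i\in I}\langle\Phi_i(e),\Phi_i(e)\rangle(x)=\sum_{i\in I}\sum_{\gamma\calH_i\in\Omega_i^x/\calH_i}\alpha_\gamma\big(\langle \pi_{s(\gamma)}(g_i)W_\gamma^*e(x),\pi_{s(\gamma)}(g_i)W_\gamma^*e(x)\rangle\big).\]
Use $\langle W_\gamma a,W_\gamma b\rangle=\alpha_\gamma\langle a,b\rangle$ together with the equivariance $W_\gamma\pi_{s(\gamma)}(g)W_\gamma^*=\pi_x(g(\gamma^{-1}-))$. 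Each summand then becomes $\langle e(x),\pi_x(g_i^2|_{V_{i,s(\gamma)}}(\gamma^{-1}-))e(x)\rangle$. By the preceding remark, the sum $\sum_i\sum_{\gamma\calH_i}\pi_x(g_i^2(\gamma^{-1}-))$ converges strictly to $\mathrm{id}_{E_x}$. Applied to $e(x)$, whose support is the compact set $K_x$, only finitely many terms are nonzero: $g_i(\gamma^{-1}-)$ must meet $K$, and there are finitely many such $(i,\gamma\calH_i)$ by the compactness of $K_i$ from lemma \ref{construction of Phi_i}. So the sum equals exactly $\langle e(x),e(x)\rangle$ by corollary \ref{no intersection between supports of vector and operator}. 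Polarization then gives $\langle\Phi(e),\Phi(e')\rangle=\langle e,e'\rangle$ for compactly supported $e,e'$.

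\textbf{Step 3 (extension and equivariance).} Since $\Phi$ is isometric on the dense subspace $\pi(C_c(Z))E$, it extends uniquely to an isometry on $E$, and $A$-linearity passes to the limit. $\calG$-equivariance holds componentwise by lemma \ref{construction of Phi_i}, since the $\calG$-action on the direct sum is the diagonal one.

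The main obstacle is Step 2: rigorously justifying the passage from the strict-sum identity to an identity of inner products in $A_x$. I would handle it by noting that, for $e(x)$ compactly supported, the net is eventually constant. Alternatively, one can choose $h\in C_c(Z_x)$ equal to $1$ on $K_x$ and argue that only finitely many terms act nontrivially on $\pi_x(h)$.
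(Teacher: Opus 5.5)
Your proposal is correct and follows essentially the same route as the paper: finiteness of the nonzero components from local finiteness of $(\calG V_i)_{i\in I}$ and compactness of $\supp_\pi(e)$, a fiberwise computation that reduces $\sum_i\langle\Phi_i(e),\Phi_i(e)\rangle(x)$ to $\langle e(x),\sum_{i}\sum_{\gamma\calH_i}\pi_x(g_i^2(\gamma^{-1}-))e(x)\rangle$ via the equivariance of $\pi$, and then density together with componentwise equivariance. Your version is in fact slightly more careful than the paper's, because you keep the twist $\alpha_\gamma$ from the inflated inner product and replace the appeal to strict convergence by an exact finite-sum identity justified through corollary \ref{no intersection between supports of vector and operator}.
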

\begin{proof}
    For any $e\in \pi(C_c(Z))E$, there is only finitely many $i\in I$ such that $\calG V_i\cap \supp_\pi(e)\neq \emptyset$. While if $\calG V_i\cap \supp_{\pi}(e)=\emptyset$, then $\gamma\inv V_{i,s(\gamma)}\cap \supp_{\pi}(e(r(\gamma)))=\emptyset$ for any $\gamma\in \Omega_i$, which induces that $\Phi_i(e)=0$. Hence, any $e\in \pi(C_c(Z))E$, there is only finitely many $i\in I$ such that $\Phi_i(e)\neq 0$. This makes $\Phi$ a well-defined map on $\pi(C_c(Z))E$, which is clearly $A$-linear.

    For $e_1,e_2\in \pi(C_c(Z))E$, $x\in \calG\units$,
    \begin{align*}
        \langle \Phi(e_1),\Phi(e_2)\rangle_{\oplus_i \infl^\calG_{\calH_i}E_{[V_i]}}(x) & = \sum_{i\in I}\sum_{\gamma\calH_i\in \Omega_i^x/\calH_i}\langle \pi_{s(\gamma)}(g_i|_{V_{i,s(\gamma)}})W_\gamma^*e_1(x),\pi_{s(\gamma)}(g_i|_{V_{i,s(\gamma)}})W_\gamma^*e_2(x)\rangle_{E_x}\\
        & = \langle e_1(x), \sum_{i\in I} \sum_{\gamma\calH_i\in \Omega_i^x/\calH_i} \pi_x(g_i^2|_{V_{i,s(\gamma)}}(\gamma\inv-))e_2(x)\rangle_{E_x}\\
        & = \langle e_1,e_2\rangle_E(x).
    \end{align*}
    
    So $\Phi$ extends to an isometry, which is obviously $\calG$-equivariant by lemma \ref{construction of Phi_i}.
\end{proof}

\begin{rem}
If $I$ is finite (for example, $Z$ is $\calG$-compact), we can construct a well-defined map
\[S:\oplus_{i\in I} \infl^\calG_{\calH_i}(E_{[V_i]})\ra E,\]
\[S((\xi_i)_i)(x)=\sum_{i\in I}\sum_{\gamma\calH_i\in \Omega_i^x/\calH_i} W_\gamma(\pi_{s(\gamma)}(g_i|_{V_{i,s(\gamma)}})\xi_i(\gamma))\]
for $(\xi_i)_i\in \oplus_{i\in I} \ind_{\Omega_i,c}(E_{[V_i]})\subseteq \infl^\calG_{\calH_i}(E_{[V_i]})$, $x\in \calG\units$. In this case $\Phi$ and $S$ are adjointable and $\Phi^*=S$.
\end{rem}

\begin{lem}\label{Psi, Phi are intertwiners}
    Use the same notation as above, let $\iota: \sqcup_{i\in I} \calG V_i\ra Z$ be defined by disjoint union of the open inclusions $\tilde\iota_i:\calG V_i\ra Z$. Let $\pi_\oplus:=\iota_*(\oplus_{i\in I} \infl^\calG_{\calH_i}(\pi_{[V_i]}))$. That is, $\pi_\oplus$ is the following composition
    \[C_0(Z)\xrightarrow{\iota^*} \oplus_{i\in I} C_b(\calG V_i)\ra \oplus_{i\in I} \calL(\infl^\calG_{\calH_i}(E_{[V_i]})),\]
    where the second arrow is direct sum of the strict continuous extensions of the inflation representations $\infl^\calG_{\calH_i}(\pi_{[V_i]}): C_0(\calG V_i)\cong \ind_{\Omega_i}C_0(V_i)\ra \calL(\infl^\calG_{\calH_i}(E_{[V_i]}))$.
    \begin{enumerate}
        \item The operator $\Phi_i$ intertwines $\pi$ and $\tilde\iota_{i,*}(\infl^\calG_{\calH_i}(\pi_{[V_i]}))$.
        \item The isometry $\Phi$ intertwines $\pi$ and $\pi_\oplus$.
        \item The isometry $\Psi_i$ intertwines $\tilde\iota_{i,*}(\infl^\calG_{\calH_i}(\pi_{[V_i]}))$ and $\pi$.
    \end{enumerate}
\end{lem}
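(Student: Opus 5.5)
The plan is to check all three intertwining relations fiberwise, on the dense subspaces of compactly supported vectors, and then extend by continuity and strict continuity. By Proposition \ref{contiuous action of adjointable operator bundle} and the fiber descriptions of $\Phi_i$ and $\Psi_i$ given in Lemmas \ref{construction of Phi_i} and \ref{construction of isometry Psi_i}, it suffices to verify, for each $x\in\calG\units$ and each $h\in C_0(Z)$, the identity $\Phi_{i,x}\circ\pi_x(h|_{Z_x})=(\tilde\iota_{i,*}\infl^\calG_{\calH_i}\pi_{[V_i]})_x(h)\circ\Phi_{i,x}$. It is enough to test this on $e\in\pi_x(C_c(Z_x))E_x$. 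First I make the fiber of the inflated representation explicit. Under the identification $C_0(\calG V_i)\cong\ind_{\Omega_i}C_0(V_i)$, a function $h$ corresponds to $\omega\mapsto h(\omega\,\cdot)|_{V_{i,s(\omega)}}$. Extending strictly to $C_b(\calG V_i)$ and precomposing with restriction, I get
\[[(\tilde\iota_{i,*}\infl^\calG_{\calH_i}\pi_{[V_i]})_x(h)\xi](\omega)=\pi_{s(\omega)}\big(h(\omega\,\cdot)|_{V_{i,s(\omega)}}\big)\xi(\omega).\]
This holds because, for $\xi\in\ind_{\Omega_i^x,c}(E_{[V_i]})$, the vectors $\xi(\omega)$ lie in $E_{[V_i],s(\omega)}$, so the strict extension acts pointwise through the restriction to $V_i$. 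I would justify this with an approximate unit of $C_0(\calG V_i)$, using Corollary \ref{no intersection between supports of vector and operator}.

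For (1), I compute directly with $\Phi_{i,x}(e)(\gamma)=\pi_{s(\gamma)}(g_i)W_\gamma^*e$. Equivariance of $\pi$ gives $W_\gamma^*\pi_x(h)=\pi_{s(\gamma)}(h(\gamma\,\cdot))W_\gamma^*$. Since $\pi_{s(\gamma)}$ is commutative, $g_i$ commutes with $h(\gamma\,\cdot)$. Because $g_i$ is supported in $V_i$, the product $g_i\cdot h(\gamma\,\cdot)$ equals $h(\gamma\,\cdot)|_{V_i}\cdot g_i$, where the restriction acts on $E_{[V_i]}$. This yields exactly the formula above.

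For (2), I note that by construction the direct sum representation $\pi_\oplus(h)$ acts coordinatewise as $(\tilde\iota_{i,*}\infl\pi_{[V_i]})(h)$ on the $i$-th summand. So (2) follows from (1) applied to each coordinate $\Phi_i(e)$. Only finitely many coordinates are nonzero on compactly supported $e$, which is the finiteness observed in Lemma \ref{Phi is adjointable isometry}. Density and boundedness then finish the argument.

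For (3), I compute
\[\Psi_{i,x}\big((\cdots)(h)\xi\big)=\sum_{\gamma\calH_i}W_\gamma\pi_{s(\gamma)}\big(h(\gamma\,\cdot)|_{V_i}\big)\xi(\gamma)=\sum_{\gamma\calH_i}\pi_x(h)\,W_\gamma\xi(\gamma),\]
again using equivariance of $\pi$. The one point needing care is that $\pi_{s(\gamma)}(h(\gamma\,\cdot))$ and its restriction to $V_i$ agree on $\xi(\gamma)\in E_{[V_i]}$. Both sides are independent of the representative $\gamma$ of the class, by $\calH_i$-equivariance. The sum is finite for compactly supported $\xi$, so $\pi_x(h)$ can be pulled out.

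I expect the main obstacle to be the first step. It has to be rigorous that the strict extension of $\infl^\calG_{\calH_i}(\pi_{[V_i]})$, evaluated on a function $h|_{\calG V_i}$ that need not vanish at infinity, still acts by the pointwise formula. My first approach would be to check this on $\ind_{\Omega_i,c}$ vectors by multiplying by a compactly supported cutoff that equals $1$ on the relevant compact region, and then pass to the strict limit.
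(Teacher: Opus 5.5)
Your proposal is correct and follows essentially the paper's proof. It uses the same pointwise computation: equivariance in the form $W_\gamma^*\pi_{r(\gamma)}(h)=\pi_{s(\gamma)}(h(\gamma\,\cdot))W_\gamma^*$, together with the fact that $g_i$ (respectively $\xi(\gamma)$) lives over $V_i$, and (2) is read off coordinatewise from (1). The differences are cosmetic. The paper works with global sections $e\in\pi(C_c(Z))E$ evaluated at $\gamma\in\Omega_i$ rather than fiber by fiber. It also justifies the pointwise formula for the strict extension on $\phi|_{\calG V_i}$ by citing Proposition \ref{operators on induced modules}, where you use a cutoff/approximate-unit argument.
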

\begin{proof}
    (1) Recall that we have the isomorphism
    \[C_0(\calG V_i)\cong \ind_{\Omega_i}C_0(V_i), f\mapsto [\Omega_i\ni \gamma\mapsto f|_{\gamma V_{i,s(\gamma)}}(\gamma-)\in C_0(V_{i,s(\gamma)})]\in \ind_{\Omega_i}C_0(V_i).\]
    
    Then a function $\psi\in C_0(\calG V_i)$ is seen as the map $\Omega_i\ra s^*(\sqcup_{\calH_i\units}C_0(V_{i,x})), \gamma\mapsto \psi|_{\gamma V_{i,s(\gamma)}}(\gamma-)\in C_0(V_{i,s(\gamma)})$. So for any $\xi_i\in \infl^\calG_{\calH_i}(E_{[V_i]})$, by definition \ref{defn inflation of representation}
    \begin{align*}
        [(\infl^\calG_{\calH_i}\pi_{[V_i]})(\psi)\xi_i](\gamma) & =\pi_{[V_i],s(\gamma)}(\psi|_{\gamma V_{i,s(\gamma)}}(\gamma-))\xi_i(\gamma)
    \end{align*}
    for any $\gamma\in \Omega_i$.

    Let $\sqcup_{x\in \calH_i\units}C_b(V_{i,x})$ be equipped with topology of bundle of adjointable operators associated to $C_b(V_i)=M(C_0(V_i))$. For any $\phi\in C_0(Z)$, by proposition \ref{operators on induced modules},  the function $\phi|_{\calG V_i}\in C_b(\calG V_i)$ is then identified with the $\calH_i$-equivariant section $[\gamma\mapsto \phi|_{\gamma V_{i,s(\gamma)}}(\gamma-)]\in \Gamma_b(\Omega_i, s^*(\sqcup_{x\in \calH_i\units}C_b(V_{i,x})))$. Notice that here by $\phi|_{\gamma V_{i,s(\gamma)}}(\gamma-)$ we mean an element of $C_b(V_{i,s(\gamma)})$.

    Let $\widetilde{\infl^\calG_{\calH_i}\pi_{[V_i]}}: C_b(\calG V_i)\ra \calL(\infl^\calG_{\calH_i}E_{[V_i]})$ be the strictly continuous extension of $\infl^\calG_{\calH_i}\pi_{[V_i]}$. Then we have
    \begin{align*}
        [\big(\tilde\iota_{i,*}(\infl^\calG_{\calH_i}(\pi_{[V_i]}))(\phi)\big)\Phi_i (e)](\gamma) & = [\big(\widetilde{\infl^\calG_{\calH_i}(\pi_{[V_i]})}(\phi|_{\calG V_i})\big)\Phi_i (e)](\gamma) \\
        & = \widetilde {\pi_{[V_i]}}_{s(\gamma)}(\phi|_{\gamma V_{i,s(\gamma)}}(\gamma-))\Phi_i(e)(\gamma)\\
        & = \widetilde {\pi_{[V_i]}}_{s(\gamma)}(\phi|_{\gamma V_{i,s(\gamma)}}(\gamma-))\pi_{s(\gamma)}(g_i|_{V_{i,s(\gamma)}})W_\gamma^*(e(r(\gamma)))\\
        & = \widetilde {\pi_{[V_i]}}_{s(\gamma)}(\phi|_{\gamma V_{i,s(\gamma)}}(\gamma-))\pi_{[V_i],s(\gamma)}(g_i|_{V_{i,s(\gamma)}})W_\gamma^*(e(r(\gamma)))\\
        & = \pi_{[V_i],s(\gamma)}(g_i|_{V_{i,s(\gamma)}}\cdot \phi|_{\gamma V_{i,s(\gamma)}}(\gamma-))W_\gamma^*(e(r(\gamma)))\\
        & = \pi_{s(\gamma)}(g_i|_{V_{i,s(\gamma)}}\cdot \phi|_{\gamma V_{i,s(\gamma)}}(\gamma-))W_\gamma^*(e(r(\gamma)))\\
        & = \pi_{s(\gamma)}(g_i|_{V_{i,s(\gamma)}}\cdot \phi|_{Z_{r(\gamma)}}(\gamma-))W_\gamma^*(e(r(\gamma)))\\
        & = \pi_{s(\gamma)}(g_i|_{V_{i,s(\gamma)}}) W_\gamma^* \pi_{r(\gamma)}(\phi|_{Z_{r(\gamma)}})e(r(\gamma))\\
        & = \Phi_i(\pi(\phi)e)(\gamma),
    \end{align*}
    where $\gamma \in \Omega_i$, $e\in \pi(C_c(Z))E$, $\phi\in C_0(Z)$, and the seventh equality is because $\phi|_{Z_{r(\gamma)}}(\gamma-)\cdot g_i|_{V_{i,s(\gamma)}}= \phi|_{\gamma V_{i,s(\gamma)}}(\gamma-)\cdot g_i|_{V_{i,s(\gamma)}}$ as elements of $C_0(Z_{r(\gamma)})$. Hence, we can conclude that $\Phi_i$ intertwines $\pi$ and $\tilde\iota_{i,*}(\infl^\calG_{\calH_i}(\pi_{[V_i]}))$.

    (2) follows directly from (1).

    (3) For any $\phi\in C_0(Z)$, $\xi\in \infl^\calG_{\calH_i}(E_{[V_i]})$, $x\in \calG\units$, 
    \begin{align*}
        \Psi_i(\tilde\iota_{i,*}(\infl^\calG_{\calH_i}(\pi_{[V_i]}))(\phi)\xi)(x) & = \sum_{\gamma\calH_i\in \Omega_i^x/\calH_i} W_\gamma [\widetilde{\infl^\calG_{\calH_i}(\pi_{[V_i]})}(\phi|_{\calG V_i})\xi](\gamma)\\
        & = \sum_{\gamma\calH_i\in \Omega_i^x/\calH_i} W_\gamma \widetilde{\pi_{[V_i]}}_{s(\gamma)}(\phi|_{\gamma V_{i,s(\gamma)}}(\gamma-))\xi(\gamma)\\
        & = \sum_{\gamma\calH_i\in \Omega_i^x/\calH_i} W_\gamma \pi_{s(\gamma)}(\phi|_{Z_{r(\gamma)}}(\gamma-))\xi(\gamma)\\
        & = \sum_{\gamma\calH_i\in \Omega_i^x/\calH_i} \pi_{x}(\phi|_{Z_{x}})W_\gamma \xi(\gamma)\\
        & = \pi_{x}(\phi|_{Z_{x}}) \Psi_i(\xi)(x)\\
        & = [\pi(\phi)\Psi_i(\xi)](x),
    \end{align*}
    where the third line is because $\xi(\gamma)\in \overline{\pi_{s(\gamma)}(C_0(V_{i,s(\gamma)})E_{s(\gamma)}}$. Hence, $\Psi_i$ intertwines $\tilde\iota_{i,*}(\infl^\calG_{\calH_i}(\pi_{[V_i]}))$ and $\pi$.
\end{proof}

\begin{lem}\label{direct sum of inflation of isometry}
    Use the same notation as above, let $(E',\pi')$ be another $Z,\calG,A$-module. For each $1\leqslant i\leqslant n$, let $T_i\in \calL(E_{[V_i]},E'_{[V_i]})$ be a $\calH_i$-equivariant isometry. Let 
    \[T=\oplus_{i\in I} \infl_{\calH_i}^\calG(T_i): \oplus_{i\in I} \infl_{\calH_i}^\calG (E_{[V_i]})\ra \oplus_{i\in I} \infl_{\calH_i}^\calG (E'_{[V_i]}),\]
    then $T$ is a $\calG$-equivariant isometry, and the support of $T$ with respect to $\pi_\oplus=\iota_*(\oplus_{i\in I} \infl^\calG_{\calH_i}(\pi_{[V_i]}))$ and $\pi'_\oplus=\iota_*(\oplus_{i\in I} \infl^\calG_{\calH_i}(\pi'_{[V_i]}))$ is contained in $\cup_{i\in I} \calG(\overline{V_i}\times_{\calG\units}\overline{V_i})$. Moreover, $T$ is weakly controlled.
\end{lem}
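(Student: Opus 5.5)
The proof splits into three independent checks: that $T$ is a $\calG$-equivariant isometry, that its support lies in $\cup_{i\in I}\calG(\overline{V_i}\times_{\calG\units}\overline{V_i})$, and that this set is weakly controlled.

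\emph{Isometry and equivariance.} I will treat each summand $\infl^\calG_{\calH_i}(T_i)$ separately. For $\xi\in\ind_{\Omega_i,c}(E_{[V_i]})$, the section $\gamma\mapsto T_{i,s(\gamma)}\xi(\gamma)$ is again $\calH_i$-equivariant, because $T_i$ is $\calH_i$-equivariant. It is also compactly supported modulo $\calH_i$, so it lies in $\ind_{\Omega_i,c}(E'_{[V_i]})$. The inner product formula in definition \ref{defn of inflation} gives
\[\langle \infl(T_i)\xi,\infl(T_i)\eta\rangle(x)=\sum_{\omega\calH_i}\beta_\omega\langle T_i\xi(\omega),T_i\eta(\omega)\rangle=\langle\xi,\eta\rangle(x),\]
so $\infl^\calG_{\calH_i}(T_i)$ extends to an isometry. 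Equivariance follows because $V^{\infl}_\gamma$ acts by translation $\xi\mapsto\xi(\gamma\inv-)$, and this commutes with applying $T_{i,s(\cdot)}$ pointwise. A direct sum of isometries (and of equivariant maps) is again an isometry (and equivariant), so $T$ is a $\calG$-equivariant isometry.

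\emph{Support.} With respect to $\pi_\oplus$ and $\pi'_\oplus$, the map $T$ is block diagonal, and $\pi_\oplus$, $\pi'_\oplus$ act diagonally on the summands. So if $\pi'_\oplus(f)T\pi_\oplus(g)$ is nonzero, it is nonzero on some summand $i$. This gives $\supp(T)\subseteq\overline{\cup_i\supp(\infl(T_i))}$, where the $i$-th support is computed for the pushed-forward representations $\tilde\iota_{i,*}$.

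For one summand, I apply proposition \ref{support of inflation} with $Z=V_i$. It gives $\supp(\infl(T_i))\subseteq\Omega_i\times_{\calH_i}\supp(T_i)$ inside $\calG V_i\times_{\calG\units}\calG V_i$. Here $\supp(T_i)\subseteq V_i\times_{\calH_i\units}V_i$. Under the homeomorphism $\Omega_i\times_{\calH_i}V_i\cong\calG V_i$ of proposition \ref{local structure of proper cocompact action of étale groupoid}, this set is $\calG(V_i\times_{\calG\units}V_i)$.

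To pass from $\calG V_i$ to $Z$, I use the pushforward along the open inclusion $\tilde\iota_i$, together with proposition \ref{support of intertwiner} and proposition \ref{property of support of operator}. These show that the support in $Z\times_{\calG\units}Z$ lies in the closure of $\calG(V_i\times_{\calG\units}V_i)$. By corollary \ref{closure of orbit of relative compact set}, applied to the relatively compact set $V_i\times_{\calG\units}V_i$ in the proper $\calG$-space $Z\times_{\calG\units}Z$, this closure is contained in $\calG(\overline{V_i}\times_{\calG\units}\overline{V_i})$.

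\emph{Weak control, and the expected obstacle.} Set $F=\cup_i\calG(\overline{V_i}\times_{\calG\units}\overline{V_i})$. It is $\calG$-invariant. Each term is closed by lemma \ref{G-compact subsets are closed}, and the family is locally finite because $(\calG V_i)_i$ is locally finite, so $F$ is closed. This also removes the closure in the support bound above, giving $\supp(T)\subseteq F$.

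I expect properness of $r|_F$ and $s|_F$ to be the main obstacle. Take a compact $K\subseteq Z$. By local finiteness, $K$ meets only finitely many of the sets $\calG\overline{V_i}$; a compactness argument (or shrinking the $V_i$) should be needed to ensure this for the closures and not just for the $\calG V_i$. Then $F\cap r\inv(K)$ lies in a finite union of the $\calG$-compact sets $\calG(\overline{V_i}\times_{\calG\units}\overline{V_i})$. By lemma \ref{maps between G-compact sets are proper}, the projections restricted to each such set are proper, and a finite union of proper restrictions is proper, so $F\cap r\inv(K)$ is compact. The same argument applies to $s$. Hence $F$ is weakly controlled and so is $T$.
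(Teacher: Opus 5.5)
Your proposal is correct and follows essentially the paper's route. It uses the fiberwise inner-product computation for the isometry, Proposition \ref{support of inflation} on each summand, the graph-of-inclusion argument via Propositions \ref{support of intertwiner} and \ref{property of support of operator}, and Corollary \ref{closure of orbit of relative compact set} to land in $\cup_{i\in I}\calG(\overline{V_i}\times_{\calG\units}\overline{V_i})$; your explicit proof that this union is closed with proper projections is more detailed than the paper's ``easy to check''. The extra compactness or shrinking step you anticipate is unnecessary: closures of a locally finite family are locally finite, and $\calG\overline{V_i}\subseteq\overline{\calG V_i}$.
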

\begin{proof}
    For each $1\leqslant i \leqslant n$, for any $\xi_1,\xi_2\in \ind_{\Omega_i,c}(E_{[V_i]})\subseteq \infl^\calG_{\calH_i}(E_{[V_i]})$, for any $x\in \calG\units$,
    \begin{align*}
        & \langle \infl_{\calH_i}^\calG(T_i)\xi, \infl_{\calH_i}^\calG(T_i)\xi'\rangle_{\infl^\calG_{\calH_i}(E'_{[V_i]})}(x)\\
        & = \sum_{\gamma\calH_i\in \Omega_i/\calH_i} \alpha_\gamma(\langle (\infl_{\calH_i}^\calG(T_i)\xi_1)(\gamma),(\infl_{\calH_i}^\calG(T_i)\xi_2)(\gamma)\rangle_{E_{s(\gamma)}})\\
        & = \sum_{\gamma\calH_i\in \Omega_i/\calH_i} \alpha_\gamma(\langle T_{s(\gamma)}\xi_1(\gamma),T_{s(\gamma)}\xi_2(\gamma)\rangle_{E_{s(\gamma)}})\\
        & = \sum_{\gamma\calH_i\in \Omega_i/\calH_i} \alpha_\gamma(\langle \xi_1(\gamma),\xi_2(\gamma)\rangle_{E_{s(\gamma)}})\\
        & = \langle \xi_1,\xi_2\rangle_{\infl^\calG_{\calH_i}(E'_{[V_i]})}(x).
    \end{align*}
    Therefore, $\infl_{\calH_i}^\calG(T_i)$ is also an isometry, and $T$ is also an isometry. It is easy to check that $T$ is $\calG$-equivariant.
    
    By proposition \ref{support of inflation}, for $1\leqslant i\leqslant n$,
    \[\supp_{\infl^\calG_{\calH_i}(\pi'_{[V_i]}),\infl^\calG_{\calH_i}(\pi_{[V_i]})}(\infl_{\calH_i}^\calG(T_i))\subseteq \calG\supp_{\pi',\pi}(T)\subseteq \calG(V_i\times_{\calG\units}V_i).\]

    For convenience, we write
    \[\pi_{\boxplus}:=\oplus_{i\in I} \infl^\calG_{\calH_i}(\pi_{[V_i]}): C_0(\sqcup_{i\in I} \calG V_i)\ra \calL(\oplus_{i\in I} \infl_{\calH_i}^\calG (E_{[V_i]})),\]
    \[\pi'_{\boxplus}:=\oplus_{i\in I} \infl^\calG_{\calH_i}(\pi'_{[V_i]}): C_0(\sqcup_{i\in I} \calG V_i)\ra \calL(\oplus_{i\in I} \infl_{\calH_i}^\calG (E'_{[V_i]})),\]
    then clearly
    \[\supp_{\pi'_\boxplus, \pi_\boxplus}(T)\subseteq \sqcup_{i\in I} \calG(V_i\times_{\calG\units}V_i)\subseteq \sqcup_{i\in I} (\calG V_i)\times_{\calG\units}(\calG V_i).\]

    By proposition \ref{support of intertwiner} and proposition \ref{property of support of operator},
    \begin{align*}
        \supp_{\pi'_\oplus, \pi_\oplus}(T) & \subseteq \overline{\supp_{\pi'_\oplus, \pi'_\boxplus}(id)\circ \supp_{\pi'_\boxplus, \pi_\boxplus}(T) \circ \supp_{\pi_\boxplus,\pi_\oplus}(id)}\\
        & \subseteq \overline{Gr(\iota)\inv \circ \supp_{\pi'_\boxplus, \pi_\boxplus}(T)\circ Gr(\iota)}\\
        & = \overline{\cup_{i\in I} \calG (V_i\times_{\calG\units}V_i)}\\
        & = \cup_{i\in I} \overline{\calG (V_i\times_{\calG\units}V_i)}\\
        & \subseteq \cup_{i\in I} \calG (\overline{ V_i\times_{\calG\units}V_i})\\
        & \subseteq \cup_{i\in I} \calG (\overline{ V_i}\times_{\calG\units}\overline{V_i}),
    \end{align*}
    where the fifth line is because of corollary \ref{closure of orbit of relative compact set}. Since $V_i$ is relatively compact for each $i$, it is easy to check that $T$ is properly supported with respect to $\pi_\oplus$ and $\pi'_\oplus$.
\end{proof}

\begin{proof}[Proof of proposition \ref{key proposition of universal module}]
    Let $T=\oplus_{i\in I} \infl_{\calH_i}^\calG(T_i)$ be defined as in lemma \ref{direct sum of inflation of isometry}, $U: E'\ra \oplus_{i\in I}{E'}$ be a $\calG$-equivariant unitary isomorphism that intertwines $\oplus_{i\in I}{\pi'}$ and $\pi'$, and let $V$ be the following composite
    \[E\xrightarrow{\Phi}\oplus_{i\in I} \infl^\calG_{\calH_i}(E_{[V_i]})\xrightarrow{T} \oplus_{i\in I} \infl^\calG_{\calH_i}(E'_{[V_i]})\xrightarrow{\oplus_{i\in I} \Psi_i} \oplus_{i\in I}{E'}\xrightarrow{U\inv} E',\]
    by lemma \ref{construction of isometry Psi_i}, lemma \ref{Phi is adjointable isometry} and lemma \ref{direct sum of inflation of isometry}, $V$ is a $\calG$-equivariant isometry. By lemma \ref{Psi, Phi are intertwiners}, $\Phi$ intertwines $\pi$ and $\pi_\oplus$, $\Psi$ intertwines $\pi'_\oplus$ and $\oplus_{i\in I}{\pi'}$, and $U\inv$ intertwines $\oplus_{i\in I}{\pi'}$ and $\pi$, so by proposition \ref{support of intertwiner}, proposition \ref{property of support of operator} and lemma \ref{direct sum of inflation of isometry},
    \begin{align*}
        \supp_{\pi',\pi}(V) & = \supp_{\pi',\pi}(U\inv \circ (\oplus_{i\in I} \Psi_i)\circ T\circ \Phi)\\
        & \subseteq \Delta_Z\circ \Delta_Z\circ \supp_{\pi'_{\oplus},\pi_\oplus}(T)\circ \Delta_Z\\
        & \subseteq \cup_{i\in I} \calG (\overline{V_i}\times_{\calG\units}\overline{V_i})\\
        & \subseteq U.
    \end{align*}
\end{proof}

With this proposition proved, we reduce the problem of existence of universal module to the local cases. Recall that, use the same notation as above, for each $i$, there exists a finite group $F_i$ acting on $\calH_i\units$ such that $\calH_i\cong F_i\ltimes \calH_i\units$. So an $F_i$-Hilbert $A|_{\calH_i}$-module is canonically an $\calH_i$-Hilbert $A|_{\calH_i}$-module and vice versa. By proposition \ref{key proposition of universal module}, for a stable $Z,\calG,A$-module $(E,\pi)$, if $E_{[V_i]}$ can equivariantly absorb every countable generated $F_i$-Hilbert $A|_{\calH_i}$-module for each $i$, then $(E,\pi)$ will be universal. Hence, it is time for the Kasparov stabilization theorem to work. We will refer to \cite{mingo1984equivariant}. For the case $A=C_0(\calG\units)$ we can refer to \cite{Paterson2009TheST}.

\begin{prop}\label{universal module of etale spaces}
    Let $\calG$ be a second countable étale groupoid, $(Y,\rho)$ be a second countable locally compact Hausdorff proper \'etale $\calG$-space, $A$ be a separable $\calG$-\cst-algebra. We define $\pi:C_0(Y)\ra \calL(L^2(Y,\rho,L^2(\calG,A^\infty)))$ as
    \[[\pi(f)\eta](y)=f(y)\eta(y),\quad \forall y\in Y, \eta\in \Gamma_c(Y,\rho^*\ca E'),\]
    where $\ca E'=\sqcup_{x\in \calG\units}\ell^2(\calG_x,A_x^\infty)$ is the associated Hilbert bundle over $\calG\units$ of $L^2(\calG,A^\infty)$. Then 
    \[(L^2(Y,\rho,L^2(\calG,A^\infty)),\pi)\] is a universal $Y,\calG,A$-module.
\end{prop}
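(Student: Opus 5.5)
First I will check that $(L^2(Y,\rho,L^2(\calG,A^\infty)),\pi)$ is a $Y,\calG,A$-module. Proposition \ref{L^2 has continuous G-action}, applied twice, gives the $\calG$-Hilbert structure; here $\calG$ acts on $Y$ on the left, and the left action $f\mapsto V_\gamma(f(\gamma\inv-))$ is handled the same way. Countable generation follows from second countability and separability of $A$. The representation $\pi$ is pointwise multiplication, so it is $C_0(\calG\units)$-linear, $\calG$-equivariant and non-degenerate, since $\Gamma_c$ is dense. Stability is clear because $A^\infty\otimes\ell^2(\mathbb N)\cong A^\infty$ equivariantly, which gives $E\cong E^\infty$ intertwining $\pi$ and $\pi^\infty$.

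For universality, let $(E',\pi')$ be any $Y,\calG,A$-module. Since $Y$ is a proper étale $\calG$-space, I will choose a $\calG$-cover $(V_i,\calH_i)_{i\in I}$ as in Lemma \ref{refine G-cover}, with $I$ countable and $(\calG V_i)$ locally finite. I will also take each $V_i$ small enough that $\rho|_{\overline{V_i}}$ is a homeomorphism onto its image; this is possible by Proposition \ref{local structure of proper cocompact action of étale groupoid}. Then I apply Proposition \ref{key proposition of universal module} (Theorem \ref{thm main 1.4}) with $E'$ as the source and $E$ as the stable target. This produces a $\calG$-equivariant properly supported isometry $E'\ra E$, provided that for each $i$ I can find an $\calH_i$-equivariant isometry $T_i:E'_{[V_i]}\ra E_{[V_i]}$.

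The local problem is to build $T_i$. Since $\rho$ identifies $V_i$ with an open subset $\rho(V_i)\subseteq\calH_i\units$, I will show
\[E_{[V_i]}\cong C_0(\rho(V_i))\cdot L^2(\calG,A^\infty)|_{\calH_i\units}\]
as $\calH_i$-modules. Every element of $V_i$ is the unique point of $Y$ over its base point inside $V_i$, and $V_i$ is $\calH_i$-invariant, so $\calH_i$ acts compatibly. Next I restrict $L^2(\calG,A^\infty)$ to $\calH_i$. Because $\calH_i\subseteq\calG_{\calH_i\units}$ is clopen, $L^2(\calH_i,A^\infty|_{\calH_i})$ is an $\calH_i$-equivariant orthogonal direct summand of $L^2(\calG,A^\infty)|_{\calH_i\units}$. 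Using $\calH_i\cong F_i\ltimes\calH_i\units$, this summand is $\ell^2(F_i)\otimes A^\infty|_{\calH_i}$, which is the regular $F_i$-module tensored with $A^\infty$.

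By the equivariant Kasparov stabilization theorem of Mingo--Phillips \cite{mingo1984equivariant} for the finite group $F_i$, every countably generated $F_i$-Hilbert $A|_{\calH_i}$-module $M$ embeds equivariantly as a complemented submodule of $\ell^2(F_i)\otimes (A|_{\calH_i})^\infty$. Here I use that $\calH_i$-modules are the same as $F_i$-modules. I then take $M=E'_{[V_i]}$, which is countably generated. To land inside $C_0(\rho(V_i))\cdot(\cdots)$, I note that $M$ is a module over the ideal $A|_{\rho(V_i)}$ by Proposition \ref{C_0(X)-linearity force base to shrink}. So I apply stabilization over the $F_i$-algebra $A|_{\rho(V_i)}$, whose standard module is exactly the required summand. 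Composing gives $T_i$.

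The map $T_i$ is $A$-linear and isometric, so Proposition \ref{key proposition of universal module} then applies and gives the universal property. The main obstacle is the identification of $E_{[V_i]}$ with the restricted $L^2(\calG,A^\infty)$ and the equivariant stabilization over the non-unital $C_0(\rho(V_i))$-algebra. I will handle that step by fibrewise reasoning over $\calH_i\units$ together with the structure $\calH_i\cong F_i\ltimes\calH_i\units$.
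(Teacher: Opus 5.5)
Your proposal is correct and follows the paper's proof. Like the paper, you take a countable locally finite $\calG$-cover with $\rho|_{V_i}$ injective, identify $E_{[V_i]}$ with the restriction of $L^2(\calG,A^\infty)$ over $\rho(V_i)$, split off $L^2(\calH_i,A^\infty|_{\calH_i})$ using that $\calH_i$ is relatively clopen, and conclude with the equivariant Kasparov stabilization theorem and Proposition \ref{key proposition of universal module}.

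The only difference is cosmetic. The paper shrinks $\calH_i$ to $(\calH_i)^{\rho(V_i)}_{\rho(V_i)}$ so that $\rho(V_i)=\calH_i\units$, whereas you keep $\calH_i$ and stabilize over the ideal $C_0(\rho(V_i))A$. For that ideal, Proposition \ref{C_0(X)-linearity force base to shrink} only gives the possibly larger ideal attached to $\overline{\rho(V_i)}$. The inclusion $\langle E'_{[V_i]},E'_{[V_i]}\rangle\subseteq C_0(\rho(V_i))A$ instead follows directly: write $\pi'(f)e=\phi\cdot\pi'(f)e$ with $\phi\in C_c(\rho(V_i))$ equal to $1$ on $\rho(\supp f)$.
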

\begin{proof}
    By corollary \ref{exist G-cover} and lemma \ref{refine G-cover}, we have a $\calG$-cover $(V_i,\calH_i)_{i\in I}$ of $Y$, such that $I$ is countable, $(\calG V_i)_{i\in I}$ is a locally finite open cover of $Y$, and for every $i\in I$, $\rho|_{V_i}$ is a homeomorphism onto some open of $\calG\units$. After replacing $\calH_i$ by $(\calH_i)^{\rho(V_i)}_{\rho(V_i)}$, we can assume that $\rho(V_i)=\calH_i\units$.

    Now for every $i\in I$, we can check easily that
    \[L^2(Y,\rho,L^2(\calG,A^\infty))_{[V_i]}\cong L^2(\calG_{\calH_i\units},s, A|_{\calH_i\units}^\infty),\]
    as $\calH_i$-Hilbert $A|_{\calH_i}$-module. And since $\calH_i$ is relatively clopen in $\calG$, $L^2(\calH_i,A|_{\calH_i}^\infty)$ is a direct summand of $L^2(\calG_{{\calH_i}\units},s, A|_{{\calH_i}\units}^\infty)$. By Kasparov's stabilization theorem and proposition \ref{key proposition of universal module}, $(L^2(Y,\rho,A)^\infty,\pi^\infty)$ is a universal $Y,\calG,A$-module.
\end{proof}

\begin{lem}\label{direct summand of inflation}
    Let $\calG$ be an étale groupoid, $\calH$ be a relatively clopen subgroupoid of $\calG$, $\Omega=\calG_{\calH\units}$, $Y$ be a locally compact Hausdorff $\calG$-space. We denote the open subset
    \[\{[h,y]\in \Omega\times_\calH Y: h\in \calH, y\in Y_{s(h)}\}\]
    of $\Omega\times_\calH Y$ by $[\calH,Y]$. Let $A$ be an $\calH$-\cst-algebra, $(E,\pi)$ be a $Y,\calH,A$-module, then 
    \[\infl^\calG_\calH(E)_{[[\calH,Y]]}=\overline{\infl^\calG_\calH(\pi)(C_0([\calH,Y]))(\infl^\calG_\calH(E))}\]
    is isomorphic to $E$ as Hilbert $A$-module.
\end{lem}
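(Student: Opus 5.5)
The plan is to write down an explicit isometry $J:E\ra \infl^\calG_\calH(E)$ and show that its range is exactly $\infl^\calG_\calH(E)_{[[\calH,Y]]}$. The key topological input is that $\calH$ is clopen in $\Omega=\calG_{\calH\units}$. Hence $\calH$ is a union of $\calH$-orbits, and its image $\calH/\calH\cong \calH\units$ is a clopen subset of $\Omega/\calH$. Correspondingly, $[\calH,Y]$ is clopen in $\Omega\times_\calH Y$ and is homeomorphic to $Y$ via $[h,y]\mapsto hy$, with inverse $y\mapsto [\rho(y),y]$.

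First I define $J$ on the dense submodule $\pi(C_c(Y))E$. For such $e$, set
\[J(e)(\omega)=V_{\omega\inv}\bigl(e(r(\omega))\bigr)\ \text{ if }\omega\in\calH,\qquad J(e)(\omega)=0\ \text{ if }\omega\in \Omega\setminus\calH .\]
Since $\calH$ is clopen in $\Omega$, $J(e)$ is a continuous section of $s^*\ca E$, by Proposition \ref{determine continuity of groupoid action}. It satisfies $J(e)(\omega h)=V_{h\inv}J(e)(\omega)$ by the cocycle identity. Its support in $\Omega/\calH$ is contained in the compact set $\rho(\supp_\pi(e))\subseteq\calH\units\cong\calH/\calH$, so $J(e)\in\ind_{\Omega,c}E$.

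Next I compute the inflated inner product. For $x\in\calH\units$, the only class in $\Omega^x/\calH$ on which $J(e)$ is nonzero is $x\calH$, so
\[\langle J(e_1),J(e_2)\rangle(x)=\langle e_1(x),e_2(x)\rangle .\]
For $x\notin\calH\units$ the inner product vanishes. Hence $J$ is an $A$-linear isometry; the module actions match because $\beta_{x}=\mathrm{id}$ at units. Thus $J$ extends to all of $E$. Using the formula in Definition \ref{defn inflation of representation}, I then check that $J$ intertwines $\pi$ with $\infl^\calG_\calH(\pi)$ through the identification $Y\cong[\calH,Y]$. It follows that $J(\pi(C_c(Y))E)\subseteq \infl^\calG_\calH(\pi)(C_c([\calH,Y]))\infl^\calG_\calH(E)$. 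Non-degeneracy of $\pi$ then gives that $J(E)$ lies in $\infl^\calG_\calH(E)_{[[\calH,Y]]}$.

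For the reverse inclusion, take $\phi\in C_c([\calH,Y])$ and $\xi\in\ind_{\Omega,c}E$. The section $\eta=\infl^\calG_\calH(\pi)(\phi)\xi$ vanishes at every $\omega\notin\calH$, since $\phi([\omega,-])=0$ there. By $\calH$-equivariance, $\eta$ is therefore determined by its restriction to $\calH\units$. This restriction is a compactly supported continuous section over $\calH\units$, which I extend by zero to a section of $\ca E$ over $\calG\units$; this is legitimate because $\calH\units$ is clopen in the relevant region. Its norm vanishes at infinity, so it defines an element $e\in E$ with $J(e)=\eta$. Hence the range of $J$ is dense in, and being closed equals, $\infl^\calG_\calH(E)_{[[\calH,Y]]}$, and $J$ is a unitary onto it.

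I expect the main obstacle to be this last step. The delicate points are checking that the restriction of $\eta$ to units is a genuine element of $E$ — via $E\cong\Gamma_0(\calG\units,\ca E)$ and the fact that $\calH\units$ is open with $\calH/\calH$ clopen in $\Omega/\calH$ — and verifying that the closures taken in the inflated norm agree with those in $E$. The isometry computation above is what settles the second point.
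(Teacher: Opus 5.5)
Your proposal is correct and is essentially the paper's proof. Your $J$ is exactly the paper's map $T$, and your reverse inclusion is the paper's inverse $S$ (restriction of a section to $\calH\units$), combined with the observation that $\infl^\calG_\calH(\pi)(C_0([\calH,Y]))$ produces sections vanishing off $\calH$. The one difference is that you obtain $J(E)\subseteq \infl^\calG_\calH(E)_{[[\calH,Y]]}$ from the intertwining property of $J$ and non-degeneracy of $\pi$, whereas the paper uses an approximate unit of $C_0([\calH,Y])$. Your route makes the needed norm convergence immediate, since it reduces to non-degeneracy of $\pi$ on $E$ and the isometry of $J$; the paper only checks convergence pointwise.
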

\begin{proof}
    Let $F_c$ be $\{\xi\in \ind_{\Omega,c}E: \xi|_{\Omega\setminus \calH}=0\}$ and let $F$ be the closure of $F_c$ in $\infl^\calG_\calH(E)$. We will prove that, firstly $\infl^\calG_\calH(E)_{[[\calH,Y]]}=F$, and secondly $F$ is isomorphic to $E$.

    For any $f\in C_0([\calH,Y])$ and $\xi\in \ind_{\Omega,c}E\subseteq \infl^\calG_\calH(E)$, if $\gamma\in \Omega$ but $\gamma\not\in \calH$, we have $f([\gamma,-])=0\in C_0(Y_{s(\gamma)})$, and therefore
    \[[\infl^\calG_\calH(\pi)(f)\xi](\gamma)=\pi_{s(\gamma)}(f([\gamma,-]))\xi(\gamma)=0.\]
    That is,
    \[span\{\infl^\calG_\calH(\pi)(C_0([\calH,Y]))(\ind_{\Omega,c}(E))\}\subseteq F_c,\]
    and hence $\infl^\calG_\calH(E)_{[[\calH,Y]]}\subseteq F$.

    If $\xi\in F_c$, that is $\xi$ is an element of $\ind_{\Omega,c}E$ that vanishes outside $\calH$ (notice that $\calH$ is a clopen subset of $\Omega$). Let $(h_\lambda)_\lambda$ be an approximate unit of $C_0([\calH,Y])$. Then for any $\gamma\in \Omega$,
    \[[\infl^\calG_\calH(\pi)(h_\lambda)\xi](\gamma)=\pi_{s(\gamma)}(h_\lambda([\gamma,-]))\xi(\gamma).\]
    If $\gamma\not\in \calH$, then $h_\lambda([\gamma,-])=0$ and therefore $[\infl^\calG_\calH(\pi)(h_\lambda)\xi](\gamma)=0$. If $\gamma\in \calH$, $(h_\lambda([\gamma,-])_\lambda$ will be an approximate unit of $C_0(Y_{s(\gamma)})$, hence $ [\infl^\calG_\calH(\pi)(h_\lambda)\xi](\gamma)\ra \xi(\gamma)$. In conclusion $\xi=\lim_\lambda \infl^\calG_\calH(\pi)(h_\lambda)\xi$. So we proved that $F\subseteq \infl^\calG_\calH(E)_{[[\calH,Y]]}$.

    Let $W\in \calL(s_\calH^*E, r_\calH^*E)$ be the action of $\calH$ on $E$, we define
    \[T:E\ra F,\quad (Te)(\gamma)=\begin{cases}
        W_\gamma^*e(r(\gamma)) & \gamma\in \calH;\\
        0_{s(\gamma)} & \gamma \in \Omega\setminus \calH
    \end{cases}\]
    for all $e\in \Gamma_c(\calH\units,\ca E)\subseteq E$, and define
    \[S:F\ra E, \quad \xi\mapsto [x\mapsto \xi(x)]\in \Gamma_0(\calH\units, \ca E)=E,\]
    it is easy to check that $T$ and $S$ are well-defined and inverse to each other. Hence, $F$ is isomorphic to $E$.
\end{proof}

We will show that, if for every $i\in I$, $C_0(V_i)$ can act correctly on some specific Hilbert modules, then we can achieve a construction of a universal module.

\begin{lem}\label{local universal module exists}
    Let $F$ be a finite group, $X$ be a locally compact $F$-space, $V$ be a locally compact Hausdorff $\calH$-space, $A$ be a separable $\calH$-\cst-algebra. Let $C:=\cl_X(\rho(V))$, which is an $F$-invariant closed subset of $X$. Following definition \ref{defn A_C}, let $A_{\langle C\rangle}=\{a\in A:C_0(X\setminus C)\cdot a=0\}$, which is an $F\ltimes X$-invariant closed two-sided ideal of $A$. We define the $F\ltimes X$-Hilbert $A_{\langle C\rangle}$-module $E$ (which is hence a Hilbert $A$-module) as
    \[E=\ell^2(F,A_{\langle C\rangle})^\infty\cong \ell^2(F)\otimes A_{\langle C\rangle}\otimes \ell^2(\mathbb N),\]
    then for any $V,F\ltimes X,A$-module $(E_0,\pi_0)$, there exists an $F\ltimes X$-invariant isometry $V\in \calL(E_0,E)$. 
\end{lem}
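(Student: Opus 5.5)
The plan is to reduce to an equivariant Kasparov stabilization theorem for the finite group $F$ (as in \cite{mingo1984equivariant}), after first showing that $E_0$ is really a Hilbert $A_{\lgl C\rgl}$-module. Throughout, $\calH=F\ltimes X$; the statement does not say so explicitly, but this is the only reading under which its hypotheses make sense.

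First, by proposition \ref{C_0(X)-linearity force base to shrink} applied with the closed set $C=\cl_X(\rho(V))\supseteq\rho(V)$, the inner products satisfy $\lgl E_0,E_0\rgl\subseteq A_{\lgl C\rgl}$. Writing $e=e'\lgl e',e'\rgl$ (using \cite[Lemma 1.3]{blanchard1996deformations}) shows $E_0=E_0A_{\lgl C\rgl}$. Hence $E_0$ is canonically a countably generated Hilbert $A_{\lgl C\rgl}$-module. Since $A_{\lgl C\rgl}$ is an $F\ltimes X$-invariant ideal, the $\calH$-action on $E_0$ is an $\calH$-Hilbert $A_{\lgl C\rgl}$-module structure.

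Next, because $\calH=F\ltimes X$ with $F$ finite, an $\calH$-Hilbert $A_{\lgl C\rgl}$-module is the same as a Hilbert $A_{\lgl C\rgl}$-module carrying a compatible action of the finite group $F$ (the remark preceding proposition \ref{universal module of etale spaces}). Here $A_{\lgl C\rgl}$ is viewed as an $F$-\cst-algebra via the $F\ltimes X$-action. Note also that $A_{\lgl C\rgl}$ is separable, being an ideal of the separable algebra $A$.

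I will then invoke the equivariant Kasparov stabilization theorem for compact groups \cite{mingo1984equivariant}. It states that every countably generated $F$-Hilbert $B$-module $E_0$, here with $B=A_{\lgl C\rgl}$, satisfies
\[E_0\oplus (\ell^2(F)\otimes B\otimes\ell^2(\mathbb N))\cong \ell^2(F)\otimes B\otimes \ell^2(\mathbb N)\]
$F$-equivariantly. Composing the inclusion of the first summand with this unitary gives an $F$-equivariant adjointable isometry $E_0\to E$. The proof then ends by checking that equivariance for $F$ translates back into $F\ltimes X$-equivariance, fiberwise over $X$, and that adjointability over $A_{\lgl C\rgl}$ gives adjointability as Hilbert $A$-modules, since both inner products take values in $A_{\lgl C\rgl}$.

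The main obstacle is the identification between $F\ltimes X$-structures and $F$-structures. One must verify that the $C_0(X)$-linear $F\ltimes X$-action on $E_0$ corresponds to an honest $F$-action by $\alpha$-semilinear unitaries, and conversely that the $F$-action on $\ell^2(F)\otimes A_{\lgl C\rgl}\otimes\ell^2(\mathbb N)$ used in stabilization is exactly the $F\ltimes X$-structure defining $E$. Since $F$ is discrete and finite, I expect to do this by gluing fibers via proposition \ref{determine continuity of groupoid action}: continuity is automatic because $\gamma=(g,x)$ depends continuously on $x$ for fixed $g$.
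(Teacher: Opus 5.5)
Your proposal is correct and is essentially the paper's proof. The paper also uses Proposition~\ref{C_0(X)-linearity force base to shrink} to see that $E_0$ is a countably generated $\calH$-Hilbert $A_{\lgl C\rgl}$-module, applies the Mingo--Phillips equivariant stabilization theorem for the finite group $F$ to get an $F$-equivariant isometry into $E$, and notes that $F$-equivariance together with the automatic $C_0(X)$-linearity of module maps gives $F\ltimes X$-equivariance. Your extra checks — that $E_0=E_0A_{\lgl C\rgl}$, and the translation between $F\ltimes X$-structures and $C_0(X)$-covariant $F$-structures, including the structure on $E$ itself — are details the paper leaves implicit, and they go through as you describe.
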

\begin{proof}
    By proposition \ref{C_0(X)-linearity force base to shrink}, for any $V,\calH,A$-module $(E_0,\pi_0)$, $\langle E_0,E_0\rangle\subseteq A_{\langle C\rangle}$, hence $E_0$ is a countably generated $\calH$-Hilbert $A_{\langle C\rangle}$-module. By the group-equivariant Kasparov stabilization theorem (see \cite[Theorem 2.5]{mingo1984equivariant}), there exists an $F$-equivariant isometry $V\in \calL(E_0,E)$. The $F$-equivariance and $C_0(X)$-linearity of $V$ implies that $V$ is $F\ltimes X$-equivariant.
\end{proof}

Recall that in above notation, $L^2(F\ltimes X,A_{\langle C\rangle})\cong \ell^2(F,A_{\langle C\rangle})$. Since $A_{\langle C\rangle}$ is a closed two-sided ideal of $A$, a Hilbert $A_{\langle C\rangle}$-module is naturally seen as a Hilbert $A$-module.

\begin{prop}\label{universal modules exist}
    Let $\calG$ be a second countable étale groupoid, $Z$ be a second countable locally compact Hausdorff proper $\calG$-compact $\calG$-space with anchor map $\rho:Z\ra \calG\units$, $(A,\alpha)$ be a separable $\calG$-\cst-algebra, let $U$ be a $\calG$-invariant open neighborhood of the diagonal $\Delta_Z$ in $Z\times_{\calG\units}Z$, $(V_i,\calH_i)_{i=1}^n$ be a $\calG$-cover of $Z$ such that $\cup_{i=1}^n \calG(\overline{V_i}\times_{\calG\units}\overline{V_i})\subseteq U$. If for every $1\leqslant i\leqslant n$, there exists a non-degenerate stable $\calH_i$-equivariant representation
    \[\pi_i: C_0(V_i)\ra \calL(L^2(\calH_i,A_i)^\infty),\]
    where $A_i=(A|_{\calH_i})_{\langle C_i\rangle}$, $C_i=\cl_{\calH\units}(\rho(V_i))$ is $\calH_i$-invariant closed subset of $\calH_i\units$, then there exists a universal $Z,\calG,A$-module $(E,\pi)$, such that for any $Z,\calG,A$-module $(E',\pi')$, there exists a $\calG$-equivariant controlled isometry $V\in \Hom_A(E',E)$ such that $\supp_{\pi',\pi}(V)\subseteq U$.
\end{prop}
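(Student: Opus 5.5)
We build the universal module by inflating the given local representations and then use Proposition~\ref{key proposition of universal module} together with Lemma~\ref{local universal module exists}.

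\textbf{Construction of $(E,\pi)$.} For each $1\leqslant i\leqslant n$ write $E_i:=L^2(\calH_i,A_i)^\infty$; with $\pi_i$ it is a $V_i,\calH_i,A|_{\calH_i}$-module, since $A_i$ is an ideal of $A|_{\calH_i}$. Its inflation $(\infl^\calG_{\calH_i}E_i,\infl^\calG_{\calH_i}\pi_i)$ is a $\calG_{\calH_i\units}\times_{\calH_i}V_i,\calG,A$-module (Definition~\ref{defn operation inflation}, Remark~\ref{details on inflation map}). By Proposition~\ref{local structure of proper cocompact action of étale groupoid} we identify $\calG_{\calH_i\units}\times_{\calH_i}V_i$ with the open set $\calG V_i\subseteq Z$. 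Pushing forward along the open inclusion $\tilde\iota_i:\calG V_i\hookrightarrow Z$ gives a $Z,\calG,A$-module. Set
\[(E,\pi):=\Big(\bigoplus_{i=1}^n\infl^\calG_{\calH_i}E_i,\ \bigoplus_{i=1}^n\tilde\iota_{i,*}\infl^\calG_{\calH_i}\pi_i\Big)^\infty .\]
Non-degeneracy needs checking. The $C_0(Z)$-action on each summand is the strictly continuous extension of a non-degenerate representation of $C_0(\calG V_i)$, and $C_0(\calG V_i)\subseteq C_0(Z)$ acts non-degenerately, so each summand is non-degenerate. $E$ is countably generated because $I$ is finite, and it is stable by construction.

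\textbf{Local absorption.} For each $i$, Lemma~\ref{direct summand of inflation} identifies $(\infl^\calG_{\calH_i}E_i)_{[V_i]}$ with $E_i$ as an $\calH_i$-Hilbert module. Here $V_i$ corresponds to $[\calH_i,V_i]$ in the balanced product. I expect the identification to intertwine $\pi_i$, because the map $T$ in that proof is built from $W_\gamma^*$ on $\calH_i$ and $\pi_i$ is $\calH_i$-equivariant. Hence $E_{[V_i]}$ contains $E_i=L^2(\calH_i,A_i)^\infty\cong\ell^2(F_i,A_i)^\infty$ as an $\calH_i$-equivariant direct summand, using $\calH_i\cong F_i\ltimes\calH_i\units$.

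Now let $(E',\pi')$ be any $Z,\calG,A$-module. Then $(E'_{[V_i]},\pi'_{[V_i]})$ is a $V_i,\calH_i,A|_{\calH_i}$-module. Lemma~\ref{local universal module exists} (the equivariant Kasparov stabilization, with $X=\calH_i\units$ and $C=C_i$) gives an $\calH_i$-equivariant isometry $T_i:E'_{[V_i]}\to\ell^2(F_i,A_i)^\infty\subseteq E_{[V_i]}$. Proposition~\ref{key proposition of universal module} is stated with the source module $(E',\pi')$ arbitrary and the target stable; its proof only uses stability of the target through $U:E'\cong\oplus E'$. Applying it with the target $(E,\pi)$ yields a $\calG$-equivariant weakly controlled isometry $V:E'\to E$ with $\supp(V)\subseteq\cup_i\calG(\overline{V_i}\times_{\calG\units}\overline{V_i})\subseteq U$. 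Since $Z$ is $\calG$-compact, Proposition~\ref{summary of controlled and weakly controlled sets}(1) upgrades weakly controlled to controlled. So $(E',\pi')\preceq(E,\pi)$, and $(E,\pi)$ is universal.

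\textbf{Main obstacle.} The delicate point is the identification $E_{[V_i]}\supseteq E_i$ compatibly with the representations. Two things must be checked. First, the bracket at $V_i$ of the full direct sum also picks up contributions from the other summands $j\neq i$; these only enlarge $E_{[V_i]}$, which is harmless, but one must check that $E_i$ sits as an orthogonal, equivariant summand. Second, the intertwining in Lemma~\ref{direct summand of inflation} must be verified. I would handle both fibrewise: for $\gamma\in\calH_i$, compute $\infl\pi_i(f)$ on $F_c$ via $f([\gamma,-])=f(\gamma\,\cdot)$, and compare with $W_\gamma^*\pi_{i}(f)W_\gamma$ using equivariance.
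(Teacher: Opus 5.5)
Your proposal is correct and follows the paper's proof essentially step for step. You inflate the local modules $L^2(\calH_i,A_i)^\infty$, push them forward to $Z$ and sum, use Lemma~\ref{direct summand of inflation} to realize $L^2(\calH_i,A_i)^\infty$ as an equivariant summand of $E_{[V_i]}$, absorb $E'_{[V_i]}$ via Lemma~\ref{local universal module exists}, and conclude with Proposition~\ref{key proposition of universal module}; your extra stabilization and your explicit upgrade from weakly controlled to controlled via $\calG$-compactness are harmless additions. The intertwining of $\pi_i$ that you flag as the main obstacle is not actually needed: Proposition~\ref{key proposition of universal module} only asks for $\calH_i$-equivariant isometries $T_i$, and the support bound follows automatically because $T_i$ is an operator between $V_i$-modules.
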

\begin{proof}
    Use similar notations to the last section, let $\iota: \sqcup_{i=1}^n \calG V_i\ra Z$ be defined by disjoint union of the open inclusions $\tilde\iota_i:\calG V_i\ra Z$. We define $E=\oplus_{i=1}^n \infl^\calG_{\calH_i}(L^2(\calH_i,A_i)^\infty)$, and $\pi:C_0(Z)\ra \calL(E)$ as $\pi=\iota_*(\oplus_{i=1}^n \infl^\calG_{\calH_i}(\pi_i))$. Then $(E,\pi)$ is a stable $Z,\calG,A$-module. Let $(E',\pi')$ be any other $Z,\calG,A$-module.

    Let $\Omega_i$ be $\calG_{\calH_i\units}$. Notice that, we used implicitly the $\calG$-equivariant homeomorphism
    \[\Omega_i\times_{\calH_i}V_i\cong \calG V_i,\quad [\gamma,v]\mapsto \gamma v.\]
    Under this map, we identify the open subset $[\calH_i,V_i]$ of the left-hand side with the open subset $V_i$ of the right-hand side.

    By lemma \ref{direct summand of inflation}, for each $i$, $E_{[V_i]}$ has an orthogonal direct summand which is isomorphic to $L^2(\calH_i,A_i)^\infty$. Apply lemma \ref{local universal module exists} to the $V_i,\calH_i,A|_{\calH_i}$-module $(E'_{[V_i]},\pi'_{[V_i]})$, there exists an $\calH_i$-equivariant isometry $T_i\in \calL(E'_{[V_i]},L^2(\calH_i,A_i)^\infty)$. After composing with the inclusion \[L^2(\calH_i,A_i)^\infty\ra E_{[V_i]},\] there exists an isometry $T_i'\in \calL(E'_{[V_i]},E_{[V_i]})$ which is $\calH_i$-equivariant.

    Then applying proposition \ref{key proposition of universal module}, there exists a $\calG$-equivariant isometry $V\in \Hom_A(E',E)$ such that $\supp_{\pi',\pi}(V)\subseteq U$.
\end{proof}

\subsection{Local structure of groupoid simplicial complexes}

Our aim is to prove the existence of universal modules for a family of proper $\calG$-compact $\calG$-spaces called Rips complexes. Therefore, by the result of the last subsection, we should be clear about the local structure of groupoid simplicial complexes.

We recall the definition of groupoid simplicial complexes from \cite{Mao26}.

\begin{defn}\label{defn groupoid simplicial complex}
    Let $X$ be a locally compact Hausdorff space. An $X$-simplicial complex of dimension less than $n$ is a pair $(Y,\Delta)$, where $Y$ is a locally compact space with a structure map $\rho: Y\ra X$, $\Delta$ is a family of non-empty finite subsets of $Y$ of cardinality at most $n+1$ such that
    \begin{enumerate}
        \item for any $\delta\in \Delta$, there exists $x\in X$ such that $\delta\subseteq \rho\inv(x)$;
        \item $\rho: Y\ra X$ is local homeomorphism;
        \item $\delta\in \Delta$, $\delta'$ is a non-empty subset of $\delta$, then $\delta'\in \Delta$.
    \end{enumerate}

    For any $0\leqslant m\leqslant n$, we define the $m$-skeleton $\Delta^m=\{\delta\in \Delta: \#\delta\leqslant m+1\}$.

    Let $\calG$ be an étale groupoid. A (left) $\calG$-simplicial complex of dimension less than $n$ is an $\calG\units$-simplicial complex $(Y,\Delta)$ of dimension less than $n$, such that $Y$ is a left $\calG$-space, and if $\delta=\{y_1,\cdots, y_m\}\in \Delta$ is contained in some fiber $\rho\inv(x)$, then for any $\gamma\in \calG_x$, $\gamma\delta:=\{\gamma y_1, \cdots, \gamma y_m\}\in\Delta$.
\end{defn}

\begin{defn}
    Let $\rho:Y\ra X$ be a local homeomorphism between \lch spaces. We denote the linear space of Radon measures on $Y$ by $R(Y)$, and let
\[P(Y):=\{\mu\in R(Y): \exists x\in X, \supp(\mu)\subseteq \rho\inv(x), \mu(Y)=1, \mu \text{ is positive}\},\]
be equipped with weak-*-topology defined by $C_c(Y,\mathbb R)$, that is a net $(\mu_\lambda)_\lambda$ in $P(Y)$ converges to $\mu$ if and only if for any $f\in C_c(Y,\mathbb R)$, $(\mu_\lambda(f))_\lambda$ converges to $\mu(f)$.

For any $\mu\in P(Y)$, we denote the point $x\in X$ such that $\supp(\mu)\subseteq \rho\inv(x)$ by $\tilde\rho(\mu)$.

For an $X$-simplicial complex $(Y,\Delta)$, the geometric realization $|\Delta|$ of $(Y,\Delta)$ is the subspace $\{\mu\in P(Y): \supp(\mu)\in \Delta\}$ of $P(Y)$.

For any $\mu_0\in |\Delta|$, $f\in C_c(Y,\mathbb R)$ and $\epsilon>0$, we define the set
\[W(\mu,f,\epsilon):=\{\mu\in |\Delta|:|\mu(f)-\mu_0(f)|<\epsilon\}.\]
\end{defn}

We can prove that $\tilde\rho$ is continuous (see \cite[Proposition 3.3]{Mao26}). Clearly, if $(Y,\Delta)$ is a $\calG$-simmplicial complex, then $|\Delta|$ is a $\calG$-space (see \cite[Proposition 3.4]{Mao26}): for any $\gamma\in \calG$ and $\mu=c_0\delta_{y_0}+\cdots+c_m\delta_{y_m} \in |\Delta|$ such that $\rho(y_0)=\cdots=\rho(y_m)$, we have
\[\gamma.\mu=c_0\delta_{\gamma y_0}+\cdots + c_m\delta_{\gamma y_m}.\]
The sets like $W(\mu_0,f,\epsilon)$ are open subsets of $|\Delta|$, and indeed form a topological basis of $|\Delta|$. When $Y$ is second countable, the unit ball of $R(Y)$ is compact and metrizable by Banach-Alaoglu's theorem and \cite[13.4.2]{Dieudonne1970treatisevol2}, hence second countable. As a subspace, $|\Delta|$ is also second countable.

\begin{defn}
    Let $\calG$ be an \'etale groupoid and $(Y,\Delta)$ be a $\calG$-simplicial complex. We say that $(Y,\Delta)$ is proper, if the action of $\calG$ on $Y$ is proper. We say that $(Y,\Delta)$ is $\calG$-compact, if $|\Delta^0|\cong \{y\in Y:\{y\}\in \Delta\}$ is $\calG$-compact.
\end{defn}

\begin{defn}\label{H_1 H_2}
Let $X$ be a locally compact Hausdorff space, $(Y,\Delta)$ be an $X$-simplicial complex.
    \begin{enumerate}
        \item We say that $(Y,\Delta)$ has hypothesis $(H_1)$, if for any compact subset $K$ of $Y$,
        \[C_K:=\{y\in Y: \exists y'\in K,\{y,y'\}\in \Delta\}\]
        is compact subset of $Y$.
        \item We say that $(Y,\Delta)$ has hypothesis $(H_2)$, if $(y^{(0)}_\lambda)_\lambda, \cdots, (y^{(m)}_\lambda)_\lambda$ are convergent nets, such that $\lim_\lambda y^{(i)}_\lambda=y^{(i)}\in Y$, and for any $\lambda$, $\{y^{(0)}_\lambda, \cdots, y^{(m)}_\lambda\}\in \Delta$, then $\{y^{(0)},\cdots, y^{(m)}\}\in \Delta$.
    \end{enumerate}
\end{defn}

It turns out that the two hypotheses are important to ensure that $|\Delta|$ has good topological properties.

\begin{prop}
    \textnormal{\cite[Proposition 3.19, Proposition 3.22, Proposition 3.24]{Mao26}}
    \begin{enumerate}
        \item Let $X$ be a \lch space and $(Y,\Delta)$ be a $X$-simplicial complex with hypotheses $(H_1)$ and $(H_2)$, then $|\Delta|$ is a locally compact Hausdorff space.
        \item Let $\calG$ be an \'etale groupoid and $(Y,\Delta)$ be a $\calG$-simplicial complex with hypotheses $(H_1)$ and $(H_2)$. If $Y$ is a proper $\calG$-sapce, then so is $|\Delta|$. If $(Y,\Delta)$ is $\calG$-compact, then $|\Delta|$ is a $\calG$-compact $\calG$-space.
    \end{enumerate}
\end{prop}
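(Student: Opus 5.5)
The plan is to prove the three items in order, relying on the measure-theoretic description of $|\Delta|$ as a subspace of $P(Y)$ with the weak-* topology defined by $C_c(Y,\mathbb R)$.

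For (1), Hausdorffness is immediate, since $C_c(Y,\mathbb R)$ separates Radon measures. For local compactness, fix $\mu_0\in|\Delta|$ with $\supp(\mu_0)=\delta_0$. Choose relatively compact open neighbourhoods $U_y$ of the points $y\in\delta_0$ on which $\rho$ is injective, and set $K=\overline{\cup_{y\in\delta_0}U_y}$. By $(H_1)$, the set $C_K$ of vertices adjacent to $K$ is compact. Take $f\in C_c(Y,[0,1])$ with $f=1$ on $K$. Then $W(\mu_0,f,1/2)$ is an open neighbourhood of $\mu_0$: every $\mu$ in it gives mass at least $1/2$ to $\supp f$, so $\supp(\mu)$ meets a compact set. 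Because $\mu$ is a simplex measure, its whole support then lies in the compact set $C_{\supp f}$, again by $(H_1)$.

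The key step is the following claim: for a compact $L\subseteq Y$, the set $\{\mu\in|\Delta|:\supp(\mu)\subseteq L\}$ is compact. I would prove it with nets. Write $\mu_\lambda=\sum_{j}c^{(j)}_\lambda\delta_{y^{(j)}_\lambda}$ using at most $n+1$ atoms, with repetitions allowed. Pass to subnets so that each $y^{(j)}_\lambda\to y^{(j)}\in L$ and each $c^{(j)}_\lambda\to c^{(j)}$. Then $(H_2)$ gives $\{y^{(j)}\}\in\Delta$. Continuity of $\rho$ puts all the $y^{(j)}$ in one fibre, and weak-* convergence to $\sum_j c^{(j)}\delta_{y^{(j)}}$ is clear, since each test function is continuous. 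The limit has mass one, so it lies in $P(Y)$. Its support is a subset of $\{y^{(j)}\}$, hence in $\Delta$ by the face axiom. So the closure of $W(\mu_0,f,1/2)$ sits inside a compact set, and local compactness follows.

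For (2), properness is the part I expect to be the main obstacle. Suppose $\mu_\lambda\to\mu$ and $\gamma_\lambda\mu_\lambda\to\nu$ in $|\Delta|$; I want a convergent subnet of $(\gamma_\lambda)$. Using the neighbourhoods built in (1), the supports of $\mu_\lambda$ eventually lie in a compact $L_1$, and those of $\gamma_\lambda\mu_\lambda$ in a compact $L_2$. Choose atoms $y_\lambda\in\supp(\mu_\lambda)$. Since $\gamma_\lambda y_\lambda\in\supp(\gamma_\lambda\mu_\lambda)$, we get $y_\lambda\in L_1$ and $\gamma_\lambda y_\lambda\in L_2$. Passing to subnets makes both converge, and properness of the action on $Y$ (via \cite[Proposition 1.84]{goehle2009groupoid}) gives a convergent subnet of $\gamma_\lambda$. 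Continuity of the action on $|\Delta|$ is \cite[Proposition 3.4]{Mao26}; I would just check that $(\gamma,\mu)\mapsto\gamma\mu$ is weak-* continuous by pairing with test functions over a bisection. The delicate points are making the choice of atom uniform along the net and verifying that the compact neighbourhoods from (1) really control the supports.

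For $\calG$-compactness, write $|\Delta^0|=\calG K$ with $K$ compact in $Y$. Every $\mu\in|\Delta|$ has an atom of the form $\gamma k$ with $k\in K$, so $\gamma^{-1}\mu$ has support inside $C_K$, which is compact by $(H_1)$. Hence $|\Delta|=\calG\cdot\{\mu:\supp(\mu)\subseteq C_K\}$, and the set on the right is compact by the key claim.
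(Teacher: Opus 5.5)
Your proof is correct. The paper gives no argument of its own here; it simply cites \cite{Mao26}. Its later use of \cite[Proposition 3.19]{Mao26}, to get relative compactness of the basic neighbourhoods in Lemma~\ref{first glimpse of W}, suggests the cited proof uses the same mechanism as yours: $(H_1)$ together with the face axiom confines supports to a compact set, and $(H_2)$ makes the simplex measures supported there a closed, hence compact, family. Two small simplifications. In (1), the choice of the $U_y$ and of $C_K$ is unnecessary, since any $f\in C_c(Y,[0,1])$ with $f=1$ on the finite set $\supp(\mu_0)$ will do. In (2), the steps you flag as delicate in the properness argument are not: any choice of atoms $y_\lambda\in\supp(\mu_\lambda)$ works, and the control of supports is exactly what $W(\mu,f,1/2)$ and $(H_1)$ already give.
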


As we expected, the Rips complexes have all these properties above.

\begin{prop}\label{Rips complex}
    \textnormal{\cite[Proposition 3.26]{Mao26}}
    Let $\calG$ be an étale groupoid and $K$ be a compact subset of $\calG$. See $\calG$ as a left $\calG$-space with action defined by multiplication. Define
\[\Delta_K(\calG):=\{\delta\subseteq \calG: \forall \gamma_1,\gamma_2\in \delta, r(\gamma_1)=r(\gamma_2), \gamma_1\inv \gamma_2\in K\}.\]
\begin{enumerate}
    \item $(\calG,\Delta_K(\calG))$ is a finite dimensional $\calG$-simplicial complex (called Rips complex).
    \item The $\calG$-simplicial complex $(\calG,\Delta_K(\calG))$ has hypotheses $(H_1)$ and $(H_2)$.
    \item The $\calG$-simplicial complex $(\calG,\Delta_K(\calG))$ is proper and $\calG$-compact.
\end{enumerate}
\end{prop}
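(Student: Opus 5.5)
The plan is to verify the three claims in turn, reducing each to a fiberwise or pointwise check over $\calG\units$ and using the structure of $K$.

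\textbf{Claim (1): $(\calG,\Delta_K(\calG))$ is a $\calG$-simplicial complex of finite dimension.} The anchor map is $r:\calG\ra\calG\units$, which is a local homeomorphism because $\calG$ is étale. Each $\delta\in\Delta_K(\calG)$ lies in a single fiber $r\inv(x)$ by definition, and the family is clearly closed under taking non-empty subsets. For invariance, take $\gamma\in\calG_x$ and $\delta\subseteq \calG^x$. Then $(\gamma\gamma_1)\inv(\gamma\gamma_2)=\gamma_1\inv\gamma_2\in K$, so $\gamma\delta\in\Delta_K(\calG)$.

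For finite dimension, fix $\gamma_1\in\delta$. Every other $\gamma_2\in\delta$ satisfies $\gamma_1\inv\gamma_2\in K\cap \calG^{s(\gamma_1)}=K\cap r\inv(s(\gamma_1))$. By lemma \ref{uniformly finite fiber} applied to $r$, these fibers have cardinality at most some $M<\infty$. Since $\gamma_2\mapsto\gamma_1\inv\gamma_2$ is injective, $\#\delta\le M$ uniformly.

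\textbf{Claim (2): hypotheses $(H_1)$ and $(H_2)$.} For $(H_1)$, if $L\subseteq\calG$ is compact and $\{\gamma,\gamma'\}\in\Delta_K$ with $\gamma'\in L$, then $\gamma'\inv\gamma\in K$, so $\gamma\in L\cdot K$. The set $L\cdot K=m\big((L\times K)\cap\calG^{(2)}\big)$ is compact, since $\calG^{(2)}$ is closed in $\calG\times\calG$. So $C_L$ lies in a compact set. To see $C_L$ itself is closed (hence compact), note it is the image under $m$ of the compact set $\{(\gamma',k)\in L\times K: s(\gamma')=r(k)\}$; indeed $\gamma\in C_L$ exactly when $\gamma=\gamma' k$ with $r(\gamma)=r(\gamma')$, and the latter holds automatically.

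For $(H_2)$, take nets $\gamma^{(i)}_\lambda\ra\gamma^{(i)}$ with $\{\gamma^{(0)}_\lambda,\dots,\gamma^{(m)}_\lambda\}\in\Delta_K$. Continuity of $r$ gives $r(\gamma^{(i)})=r(\gamma^{(j)})$. Continuity of $(\alpha,\beta)\mapsto\alpha\inv\beta$ on $\calG\times_{r,r}\calG$, together with closedness of $K$, gives $(\gamma^{(i)})\inv\gamma^{(j)}\in K$.

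The one subtlety is that limits of distinct points may coincide, so the limit set may have fewer elements. It is still a non-empty finite subset satisfying the defining condition, provided we also know $\gamma\inv\gamma=s(\gamma)\in K$ for each $\gamma$ involved. I expect this to be the main obstacle. I would handle it by noting that the definition requires $\gamma_1\inv\gamma_2\in K$ for all pairs in $\delta$, including $\gamma_1=\gamma_2$. Hence any non-empty $\delta$ with $\gamma\in\delta$ forces $s(\gamma)\in K$, and this property passes to limits because $K$ is closed. So the limit set is in $\Delta_K$.

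\textbf{Claim (3): properness and $\calG$-compactness.} Left multiplication on $\calG$ is proper: the map $\calG\ltimes\calG\ra\calG\times\calG$, $(\gamma,\eta)\mapsto(\gamma\eta,\eta)$, is a homeomorphism onto the closed set $\calG\times_{s,s}\calG$, with inverse $(\alpha,\eta)\mapsto(\alpha\eta\inv,\eta)$.

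For $\calG$-compactness, the vertex set is $|\Delta^0|=\{\gamma:\{\gamma\}\in\Delta_K\}=\{\gamma: s(\gamma)\in K\}=s\inv(K\cap\calG\units)$. Every such $\gamma$ equals $\gamma\cdot s(\gamma)$, with $s(\gamma)\in K\cap\calG\units$, which is compact because $\calG\units$ is clopen in an étale groupoid. Therefore $|\Delta^0|=\calG(K\cap\calG\units)$ is $\calG$-compact.
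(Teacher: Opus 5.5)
\textbf{Assessment.} Most of your verification is correct: the simplicial-complex axioms and equivariance, the uniform bound on $\#\delta$ from lemma \ref{uniformly finite fiber}, $(H_2)$, properness of left translation, and the identification $|\Delta^0|=s\inv(K\cap\calG\units)=\calG(K\cap\calG\units)$. In $(H_2)$, the pairwise conditions, including the case $i=j$, all pass to the limit. The paper gives no proof of its own here; it quotes the statement from its companion reference, so a direct check like yours is what is needed.

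\textbf{The gap in $(H_1)$.} The step that fails is the closedness of $C_L$. You claim that $\gamma\in C_L$ exactly when $\gamma=\gamma'k$ with $\gamma'\in L$ and $k\in K$, i.e.\ that $C_L=m((L\times K)\cap\calG^{(2)})$. Only the inclusion $\subseteq$ is true. Membership $\{\gamma,\gamma'\}\in\Delta_K(\calG)$ also requires $\gamma\inv\gamma'\in K$, $s(\gamma)\in K$ and $s(\gamma')\in K$. These are exactly the reversed-pair and diagonal conditions you stress yourself in the $(H_2)$ step.

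For a counterexample, let $\calG$ be a discrete group, $K=\{e,g\}$ with $g^2\neq e$, and $L=\{e\}$. Every simplex is then a singleton, so $C_L=\{e\}$, while $LK=\{e,g\}$. So closedness of $C_L$ is not established by your argument.

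\textbf{The fix.} Write $k=\gamma'\inv\gamma$. The missing conditions then say $k\inv\in K$, $r(k)\in K$ and $s(k)\in K$, so
\[C_L=m\bigl((L\times K')\cap\calG^{(2)}\bigr),\qquad K'=K\cap K\inv\cap r\inv(K)\cap s\inv(K).\]
Here $K'$ is closed in $K$, hence compact, and $C_L$ is compact as the continuous image of a compact set.

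Alternatively, keep your correct inclusion $C_L\subseteq LK$ and obtain closedness from $(H_2)$. If $\gamma_\lambda\ra\gamma$ with witnesses $\gamma'_\lambda\in L$, pass to a subnet with $\gamma'_\lambda\ra\gamma'\in L$. Then $(H_2)$ gives $\{\gamma,\gamma'\}\in\Delta_K(\calG)$, so $\gamma\in C_L$.
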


\begin{defn}\label{defn typed}
   A $\calG$-simplicial complex $(Y,\Delta)$ is typed, if there exists a finite discrete space $T$ and a continuous map $\tau: Y\ra T$, such that
    \begin{enumerate}
        \item the map $\tau$ is $\calG$-invariant, that is for any $(\gamma,y)\in \calG\times_{s,\calG\units,\rho}Y$, $\tau(\gamma y)=\tau(y)$;
        \item for any $\delta\in \Delta$, $\tau|_{\delta}$ is injective.
    \end{enumerate}
\end{defn}

A typed $\calG$-simplicial complexes has a stratification where each stratum is equivariantly homeomorphic to the product of some closed subset of an \'etale $\calG$-space and a Euclidean space.

\begin{prop}\label{typed sim com}
    \textnormal{\cite[Proposition 3.32]{Mao26}}
    Let $(Y,\Delta)$ be a typed $\calG$-simplicial complex of dimension less than $n$ with the type function $\tau:Y\ra \{1,\cdots, N\}$. Then for any $1\leqslant m\leqslant n$, the map
    \[|\Delta^m|\setminus |\Delta^{m-1}|\cong center(m,\Delta)\times \sigma_m,\]
    \[c_0\delta_{y_0}+\cdots+c_{m}\delta_{y_m}\mapsto (\frac{\delta_{y_0}+\cdots+\delta_{y_m}}{m+1}, (c_0,\cdots,c_m))\]
    is a $\calG$-equivariant homeomorphism, where $\tau(y_0)<\tau(y_1)<\cdots<\tau(y_m)$, $\sigma_m:=\{(t_0,\cdots, t_m)\in \mathbb R^{m+1}:\forall i, t_i>0;\sum_{i=0}^m t_i=1\}$ is the interior of a standard $m$-simplex, $\calG$ acts on $center(m,\Delta)\times \sigma_m$ by \[\gamma.(\mu,(t_0,\cdots, t_m))=(\gamma\mu,(t_0,\cdots, t_m))\] when $s(\gamma)=\tilde\rho(\mu)$.
\end{prop}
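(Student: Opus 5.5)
The plan is to write down the inverse map explicitly and then check bijectivity, equivariance and continuity in both directions, with continuity resting on a single convergence lemma about supports. For $\mu=c_0\delta_{y_0}+\cdots+c_m\delta_{y_m}\in|\Delta^m|\setminus|\Delta^{m-1}|$ the support $\{y_0,\ldots,y_m\}$ is a simplex with exactly $m+1$ points. Since $\tau$ is injective on simplices, there is a unique labelling with $\tau(y_0)<\cdots<\tau(y_m)$, so the coefficients $(c_0,\ldots,c_m)\in\sigma_m$ are well defined. The barycenter $\frac1{m+1}\sum\delta_{y_i}$ lies in $center(m,\Delta)$, which I read as the set of such barycenters of $m$-simplices. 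The inverse sends $(\nu,(t_0,\ldots,t_m))$, with $\supp(\nu)=\{y_0,\ldots,y_m\}$ ordered by type, to $\sum t_i\delta_{y_i}$. Both composites are visibly the identity, so the map is a bijection.

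For equivariance, $\gamma$ acts on supports pointwise and $\tau(\gamma y)=\tau(y)$. Hence the type ordering of $\gamma\mu$ is $\gamma y_0,\ldots,\gamma y_m$, and both the barycenter and the coefficient vector transform as claimed.

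The key step, and the expected main obstacle, is the following convergence lemma. Let $\mu_\lambda\to\mu$ in the weak-$*$ topology, where all measures are in the stratum (exactly $m+1$ support points with positive coefficients), $\mu=\sum c_i\delta_{y_i}$ and $\mu_\lambda=\sum c_i^\lambda\delta_{y_i^\lambda}$. Then, after ordering by type, $y_i^\lambda\to y_i$ and $c_i^\lambda\to c_i$.

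To prove the lemma, I would first choose pairwise disjoint relatively compact open neighborhoods $U_i\ni y_i$ on which $\rho$ is injective and $\tau$ is constant; this is possible because $\rho$ is a local homeomorphism and $\tau$ is locally constant. Next I would take Urysohn functions $f_i\in C_c(U_i,[0,1])$ with $f_i=1$ near $y_i$. Then $\mu_\lambda(f_i)\to c_i>0$, so eventually each $U_i$ carries positive $\mu_\lambda$-mass and therefore contains some $y_j^\lambda$. Since there are only $m+1$ support points and the $U_i$ are disjoint, each $U_i$ eventually contains exactly one of them. Because $\tau$ is constant on $U_i$ and $\tau$ is injective on the support of $\mu_\lambda$, that point is $y_i^\lambda$ in the type ordering. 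Shrinking $U_i$ to arbitrary smaller neighborhoods gives $y_i^\lambda\to y_i$. Finally, $c_i^\lambda=\mu_\lambda(f_i)$ eventually, since $y_i^\lambda$ is the only support point in $U_i$ and $f_i=1$ near $y_i$ (so $f_i(y_i^\lambda)=1$ eventually), and therefore $c_i^\lambda\to c_i$.

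With the lemma in hand, continuity of the forward map follows. Indeed, $y_i^\lambda\to y_i$ implies $\frac1{m+1}\sum\delta_{y_i^\lambda}\to\frac1{m+1}\sum\delta_{y_i}$ weak-$*$, since for $f\in C_c(Y,\mathbb R)$ each term $f(y_i^\lambda)$ converges; together with $c_i^\lambda\to c_i$ this gives convergence of the image. For continuity of the inverse, the barycenters $\nu_\lambda\to\nu$ also satisfy the hypotheses of the lemma (all weights equal to $\frac1{m+1}$), so their type-ordered support points converge. Together with $t^\lambda\to t$ in $\sigma_m$, this yields $\sum t_i^\lambda\delta_{y_i^\lambda}(f)\to\sum t_i\delta_{y_i}(f)$ for every $f\in C_c(Y,\mathbb R)$. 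Hence both maps are continuous, and the map is a $\calG$-equivariant homeomorphism.
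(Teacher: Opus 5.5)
Your proof is correct. The counting argument in your convergence lemma is sound: each of the pairwise disjoint, type-constant $U_i$ carries limiting mass $c_i>0$, and $\mu_\lambda$ has exactly $m+1$ support points, so eventually each $U_i$ contains exactly one support point of $\mu_\lambda$, necessarily the one of type $\tau(y_i)$. Applying the lemma to the barycentre net then gives continuity of the inverse, as you say.

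The paper does not reproduce a proof of this statement; it defers to \cite[Proposition 3.32]{Mao26}. The same mechanism does appear, in neighbourhood form, in its proof of Proposition~\ref{property of locally symmetric subbase}. There the estimate $|\mu(V_{y_i})-c_i|<\epsilon<c_i$ pins down the unique support point of type $i$ in $U_i$, and this shows that basic neighbourhoods meet the stratum in products $Q\times R$.

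Your argument uses only Hausdorffness of $Y$ and local constancy of $\tau$. The injectivity of $\rho$ on $U_i$ and the relative compactness you impose are never needed, which is consistent with the statement not assuming $(H_1)$ or $(H_2)$.
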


\begin{defn}[Barycentric subdivision]
    Let $\calG$ be an étale groupoid and $(Y,\Delta)$ be a $\calG$-simplicial complex of dimension less than $n$. We define 
    \[A^i:=\{\frac{1}{i+1}(\delta_{y_0}+\cdots+\delta_{y_i})\in P(Y):(y_0,\cdots, y_i)\in Y^{\times_{\calG\units} (i+1)}\}, \]
    where $Y^{\times_{\calG\units}(i+1)}$ is the fiber product of $i+1$ copies of $Y$ over $\calG\units$, $Y':=\sqcup_{i=1}^n A^i\subseteq P(Y)$, and define $\rho':Y'\ra X$ to be the restriction of $\tilde\rho$ on $Y'$. We define $\Delta'$ to be the family of finite subsets of $Y'\cap|\Delta|$ consisting of elements in form of $\{\mu_1,\mu_2,\cdots,\mu_k\}$ such that $\supp(\mu_1)\subset \supp(\mu_2)\subset\cdots\subset \supp(\mu_k)$. We call $(Y',\Delta')$ the barycentric subdivision of $(Y,\Delta)$.
\end{defn}

\begin{prop}\label{barycentric subdivision keeps realization}
    \textnormal{\cite[Proposition 3.33, Proposition 3.35]{Mao26}}
    Use the same notation as above, $Y'$ is a well-defined étale $\calG$-space, $(Y',\Delta')$ is a well-defined $\calG$-simplicial complex and $(Y',\Delta')$ is typed. Moreover, the geometric realization $|\Delta'|$ is $\calG$-equivariantly homeomorphic to $|\Delta|$.
\end{prop}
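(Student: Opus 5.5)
The plan is to verify the four claims in order: the étale structure on $Y'$, the simplicial axioms, typedness, and the homeomorphism. Throughout, I read $A^i$ as coming from tuples $(y_0,\dots,y_i)$ of \emph{pairwise distinct} points, which is what "barycentre of an $i$-simplex" requires.

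\textbf{Étale structure on $Y'$.} Let $Y^{[i+1]}\subseteq Y^{\times_{\calG\units}(i+1)}$ be the set of tuples with pairwise distinct entries. It is open because $Y$ is Hausdorff. Since $\rho$ is a local homeomorphism, the iterated fibre product is étale over $\calG\units$, and so is $Y^{[i+1]}$. The symmetric group $S_{i+1}$ acts freely on $Y^{[i+1]}$, and the map
\[b_i:Y^{[i+1]}\ra P(Y),\quad (y_0,\dots,y_i)\mapsto \tfrac1{i+1}(\delta_{y_0}+\cdots+\delta_{y_i})\]
factors through the quotient $Y^{[i+1]}/S_{i+1}$. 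I would show that $b_i$ is a homeomorphism from this quotient onto $A^i$ with the weak-$*$ topology, in three steps.
\begin{enumerate}
\item Choose pairwise disjoint open neighbourhoods $U_k\ni y_k$ on which $\rho$ is injective.
\item Test against bump functions $f_k\in C_c(U_k)$. A net $b_i(\underline y_\lambda)\ra b_i(\underline y)$ must eventually put mass $\tfrac1{i+1}$ in each $U_k$.
\item Because each fibre meets $U_k$ in at most one point, a suitable relabelling of $\underline y_\lambda$ converges to $\underline y$.
\end{enumerate}
The same bump-function argument shows that $\#\supp(\mu)$ is locally constant on $Y'$. Hence the $A^i$ are clopen in $Y'$ and $Y'=\sqcup_i A^i$ topologically, so $\rho'=\tilde\rho|_{Y'}$ is a local homeomorphism. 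The $\calG$-action $\gamma.\mu$ is the one induced by the diagonal action on $Y^{[i+1]}$, which commutes with $S_{i+1}$, so it is continuous.

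\textbf{Simplicial axioms and typedness.} Every element of $\Delta'$ is a chain of measures supported in a single fibre of $\tilde\rho$. A strict chain of subsets of a simplex with at most $n+1$ vertices has length at most $n+1$. Subchains are chains, and $\gamma$ preserves inclusions of supports, so $\Delta'$ is closed under subsets and under the $\calG$-action. For the type function I would take
\[\tau:Y'\ra\{1,\dots,n+1\},\quad \tau(\mu)=\#\supp(\mu).\]
It is continuous because it is locally constant, it is $\calG$-invariant, and it is injective on each chain because the inclusions of supports are strict.

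\textbf{The homeomorphism $|\Delta'|\cong|\Delta|$.} Define
\[\Theta:|\Delta'|\ra|\Delta|,\quad \textstyle\sum_k t_k\delta_{\mu_k}\mapsto \sum_k t_k\mu_k,\]
i.e.\ evaluate the measure on $Y'$ at the measures it is supported on. This map is clearly $\calG$-equivariant. It is bijective by the classical inverse. Given $\mu=\sum c_y\delta_y$, order the distinct values of the coefficients $c_y$ decreasingly, take $\mu_k$ to be the barycentre of the set where $c_y$ is at least the $k$-th value, and take weights $t_k$ equal to (consecutive differences of the ordered values) $\times\#\supp(\mu_k)$. Both $\Theta$ and this inverse are continuous on simplices, since they are piecewise-linear in the coordinates.

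\textbf{Main obstacle: continuity across varying base points.} The difficulty is continuity of $\Theta$ and $\Theta\inv$ for nets whose base point in $\calG\units$ moves and whose support sizes change. For $\Theta$, I would test against $f\in C_c(Y,\mathbb R)$ and write $\Theta(\nu)(f)=\nu(F)$ with $F(\mu)=\mu(f)$. The function $F$ is continuous on $Y'$, but it need not be compactly supported, so the argument must be localized near a limit point. Near $\nu_0$ only finitely many local sheets of $Y'$ carry nearly all of the mass, and I would use the local product description of $Y'$ from the first step to handle this. For $\Theta\inv$, the local homeomorphism neighbourhoods $U_k$ around $\supp(\mu_0)$ give local coordinates in which nearby $\mu$ have coefficient vectors close to those of $\mu_0$, possibly with extra small coefficients. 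The inverse formula is continuous in these coefficients; in particular it is continuous at ties between coefficients, where the chain changes. I expect this uniformity, including the small new vertices appearing near the limit, to be the delicate point.
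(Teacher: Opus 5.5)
The paper does not prove this statement itself; it is imported from \cite[Proposition 3.33, Proposition 3.35]{Mao26}, so there is no in-text argument to compare with. Your proposal is a correct, self-contained route. It uses neither $(H_1)$, $(H_2)$ nor properness, which matches the statement as given.

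Your reading of $A^i$ via pairwise distinct tuples is not just convenient but forced. The same goes for letting $i$ start at $0$, which your type function with values in $\{1,\dots,n+1\}$ presupposes. If repeated entries were allowed, $\frac{1}{3}(2\delta_a+\delta_b)$ would lie in $Y'\cap|\Delta|$ whenever $\{a,b\}\in\Delta$. Then $\Theta$ would send both $\frac12\delta_{\delta_a}+\frac12\delta_{\frac13(2\delta_a+\delta_b)}$ and $\frac23\delta_{\delta_a}+\frac13\delta_{\frac12(\delta_a+\delta_b)}$ to $\frac56\delta_a+\frac16\delta_b$, so $\Theta$ would not be injective.

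The step you flag as the main obstacle closes quickly, and you do not need the local product description.

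\textbf{Continuity of $\Theta$.} Suppose $\nu_\lambda\ra\nu_0$ in $|\Delta'|$. Choose $g\in C_c(Y',[0,1])$ with $g=1$ on the finite set $\supp(\nu_0)$. Then $\nu_\lambda(1-g)\ra 0$. Since $F(\mu)=\mu(f)$ is continuous on $Y'$ with $|F|\leqslant\|f\|_\infty$, you get
\[|\nu_\lambda(F)-\nu_0(F)|\leqslant|\nu_\lambda(Fg)-\nu_0(Fg)|+\|f\|_\infty\,\nu_\lambda(1-g)\ra 0 .\]

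\textbf{Continuity of $\Theta\inv$.} Write the weight of the barycentre $b(T)$ of a nonempty $T\subseteq\supp(\mu)$ in $\Theta\inv(\mu)$ in closed form:
\[w_T(\mu)=\#T\cdot\max\Bigl\{0,\ \min_{y\in T}\mu(\{y\})-\max_{y\in\supp(\mu)\setminus T}\mu(\{y\})\Bigr\},\]
with $\max\emptyset=0$. This is continuous in the coefficients and absorbs the tie-breaking issue. Now suppose $\mu_\lambda\ra\mu_0$.
\begin{enumerate}
\item Your bump functions give, for each $y\in\supp(\mu_0)$, a unique $y_\lambda\in\supp(\mu_\lambda)\cap U_y$ with $y_\lambda\ra y$ and $\mu_\lambda(\{y_\lambda\})\ra\mu_0(\{y\})$. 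Hence the mass $\eta_\lambda$ of $\mu_\lambda$ outside $\bigcup_y U_y$ tends to $0$.
\item For $T\subseteq\supp(\mu_0)$ and $T_\lambda=\{y_\lambda:y\in T\}$, you get $w_{T_\lambda}(\mu_\lambda)\ra w_T(\mu_0)$, and $b(T_\lambda)\ra b(T)$ in $Y'$ by your first step.
\item The barycentres containing an extra point form the bottom of the chain. Their weights telescope to a total of at most $(n+1)\eta_\lambda$.
\end{enumerate}
Testing against $G\in C_c(Y')$ then gives $\Theta\inv(\mu_\lambda)\ra\Theta\inv(\mu_0)$.
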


We need a family of opens that are in ``good shapes". Therefore, we will construct a topological basis as described below.

\begin{defn}
Let $\rho:Y\ra X$ be a local homeomorphism between locally compact Hausdorff spaces, $(Y,\Delta)$ be an $X$-simplicial complex satisfying hypotheses $(H_1)$ and $(H_2)$. We call a tuple $((V_y)_{y\in \supp(\mu)},V,U,\epsilon)$ a compatible datum of $\mu$, if
    \begin{enumerate}
        \item for every $y\in \supp(\mu)$, $V_y$ is an open neighborhood of $y$ in $Y$ such $\rho|_{V_y}$ is a homeomorphism onto an open of $X$;
        \item the set $V$ is an open subset of $X$, such that for every $y\in \supp(\mu)$, $\rho(V_y)=V$;
        \item the set $U$ is a relatively compact open neighborhood of $\tilde\rho(\mu)$ in $V$;
        \item the real number $\epsilon$ is positive and for every $y\in \supp(\mu)$, $\epsilon<\min \{\mu(\{y\}), 1-\mu(\{y\}), \frac{1}{3\#\supp(\mu)}\}$.
    \end{enumerate}
    Let $CD(\mu)$ be the set of all compatible data of $\mu$. For any $((V_y)_{y},V,U,\epsilon)\in CD(\mu)$ and for any $y\in \supp(\mu)$, we write $U_y=\rho|_{V_y}\inv(U)$.

    For a compatible datum $((V_y)_{y},V,U,\epsilon)$, we define the associated subset of $|\Delta|$ as
    \[W((V_y)_y,V,U,\epsilon)=\{\mu'\in |\Delta|:\tilde\rho(\mu')\in U, |\mu'(V_y)-\mu(\{y\})|<\epsilon,\forall y\in \supp(\mu)\}.\]
\end{defn}

\begin{lem}\label{first glimpse of W}
Use the same notation as above, the subset $W((V_y)_y,V,U,\epsilon)$ is a relatively compact open of $|\Delta|$. And $\mu'(\cup_{y\in \supp(\mu)}V_y)>\frac{2}{3}$ for any $\mu'\in W((V_y)_y,V,U,\epsilon)$.
\end{lem}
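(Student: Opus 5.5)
The argument has three parts: openness, relative compactness, and the mass estimate. Throughout, write $W:=W((V_y)_y,V,U,\epsilon)$, $S:=\supp(\mu)$, $k:=\#S$.

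\emph{Openness.} Fix $\mu_1\in W$ and write it as a finite convex combination of Dirac masses. Set $\eta:=\min_{y\in S}\bigl(\epsilon-|\mu_1(V_y)-\mu(\{y\})|\bigr)>0$. The quantity $\mu'(V_y)$ is not weak-* continuous in general, so for each $y\in S$ I first handle the points of $\supp(\mu_1)$ that lie in $V_y$. Since $\supp(\mu_1)$ is finite and $V_y$ is open, there is $h_y\in C_c(V_y,[0,1])$ equal to $1$ on $\supp(\mu_1)\cap V_y$. I also choose $k_y\in C_c(Y,[0,1])$ equal to $1$ on $\supp(\mu_1)\setminus V_y$, with support small enough that $\rho$ restricted to it is injective near each of these finitely many points, and disjoint from $\overline{V_y}$ wherever possible.

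The key input is then that $\rho$ is a local homeomorphism and each $\mu'$ is supported in a single fiber. Hence a neighbourhood of the form $\bigcap W(\mu_1,h_y,\eta/2)\cap\bigcap W(\mu_1,k_y,\eta/2)\cap\tilde\rho\inv(U)$ controls $\mu'(V_y)$ from both sides:
\begin{enumerate}
\item $\mu'(V_y)\geq\mu'(h_y)>\mu_1(V_y)-\eta/2$;
\item $\mu'(V_y)\leq 1-\mu'(\text{mass near }\supp(\mu_1)\setminus V_y)$.
\end{enumerate}
For the upper bound I also need all of the mass of $\mu'$ to be near $\supp(\mu_1)$. This follows by taking a function $\ell\in C_c(Y,[0,1])$ equal to $1$ on $\supp(\mu_1)$ and requiring $\mu'(\ell)>1-\eta/2$. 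Points near $\supp(\mu_1)\setminus V_y$ but inside $V_y$ could still cause trouble. I expect this step to be the main obstacle: a point of $\supp(\mu_1)$ on the boundary of $V_y$ is approximated from inside $V_y$. To rule this out, I would use the simplicial structure together with hypothesis $(H_2)$, and the fact that $\rho|_{V_y}$ is injective, to show that such points cannot carry more than $\eta/2$ of mass. Failing that, I would shrink the $k_y$ as above.

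\emph{Relative compactness.} Let $K:=\bigcup_{y\in S}\rho|_{V_y}\inv(\overline U)$. This set is compact, because $\overline U\subseteq V$ is compact and each $\rho|_{V_y}$ is a homeomorphism onto $V$. Every $\mu'\in W$ puts mass $>2/3$ on $\bigcup_y V_y$ (this is the second claim below), and in the fiber over $\tilde\rho(\mu')\in U$ the only points of $\bigcup_y V_y$ are the points $\rho|_{V_y}\inv(\tilde\rho(\mu'))\in K$. So $\supp(\mu')$ meets $K$, and, being a simplex, lies in $C_K$, which is compact by $(H_1)$. The set of probability measures supported in $C_K$ is weak-* compact. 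Its intersection with $|\Delta|$ is closed by $(H_2)$, as in \cite[Proposition 3.19]{Mao26}. Therefore $\overline W$ is compact.

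\emph{Mass estimate.} Finally, for $\mu'\in W$ I would show
\[
\mu'\Bigl(\bigcup_{y\in S}V_y\Bigr)\geq\max_{y\in S}\mu'(V_y),
\]
and argue more sharply as follows. Within one fiber over $U$, the sets $V_y\cap\rho\inv(x)$ are singletons, so the measures of their union add up over distinct points. This gives
\[
\mu'\Bigl(\bigcup_{y\in S}V_y\Bigr)\geq\sum_{y\in S}\mu(\{y\})-k\epsilon>1-\tfrac13=\tfrac23.
\]
Here I use $\epsilon<1/(3k)$ and the fact that distinct $y\in S$ give distinct points in the fiber. Those points are distinct provided the $V_y$ are pairwise disjoint, and I would first arrange this by shrinking them, since $S$ is a finite set of distinct points in one fiber.
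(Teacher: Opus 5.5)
\textbf{Gap in the openness step.} You leave this step unresolved. You name boundary points of $V_y$ in $\supp(\mu_1)$ as the obstacle and suggest handling them with $(H_2)$ and the simplicial structure, or by shrinking $k_y$. Neither works. If a point of $\supp(\mu_1)\setminus V_y$ lay in $\overline{V_y}$, every neighbourhood of it would meet $V_y$, whatever $k_y$ you chose, and $(H_2)$ plays no role in openness.

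The missing observation is purely point-set: if $z\in\overline{V_y}$ and $\rho(z)\in V$, then $z\in V_y$. To see this, take a net $z_\lambda\in V_y$ with $z_\lambda\to z$. Then $\rho(z_\lambda)\to\rho(z)\in V$, so $z_\lambda=(\rho|_{V_y})^{-1}(\rho(z_\lambda))\to(\rho|_{V_y})^{-1}(\rho(z))\in V_y$. Since $Y$ is Hausdorff, $z$ equals this point. Hence $V_y$ is clopen in $\rho^{-1}(V)$, and your boundary case never occurs for measures over $U$. So $\mu'\mapsto\mu'(V_y)$, though not weak-* continuous on all of $|\Delta|$, is continuous on the open set $\tilde\rho^{-1}(U)$.

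The paper makes this explicit. Choose $f\in C_c(X)$ with $f|_U=1$ and $\supp(f)\subseteq V$, and let $f_y\in C_c(Y)$ be $f\circ\rho$ on $V_y$ and $0$ elsewhere. If $\tilde\rho(\mu')\in U$, the only point of $V_y$ in that fibre is $(\rho|_{V_y})^{-1}(\tilde\rho(\mu'))$, where $f_y=1$; so $\mu'(f_y)=\mu'(V_y)$. Likewise $\mu(f_y)=\mu(\{y\})$. Therefore
\[
W((V_y)_y,V,U,\epsilon)=|\Delta|\cap\tilde\rho^{-1}(U)\cap\bigcap_{y\in\supp(\mu)}W(\mu,f_y,\epsilon),
\]
which is open. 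The same observation also completes your own scheme. Since $\supp(\mu_1)\setminus V_y$ misses $\overline{V_y}$, you can choose $k_y$ with support disjoint from $V_y$. Then $\mu'(V_y)\le 1-\mu'(k_y)$ gives the upper bound directly, and the auxiliary function $\ell$ is unnecessary.

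\textbf{The other two parts.} Your relative-compactness argument via $C_K$, $(H_1)$ and $(H_2)$ is fine; it unpacks what the paper simply cites from \cite[Proposition 3.19]{Mao26}.

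For the mass estimate, you are right that the points $(\rho|_{V_y})^{-1}(x)$, $y\in\supp(\mu)$, must be distinct for $x\in U$. The paper's equality $\mu'(\cup_y V_y)=\sum_y\mu'(V_y)$ relies on this too; it treats pairwise disjointness of the $V_y$ as part of a compatible datum and says so explicitly in the next lemma. However, you cannot arrange disjointness by shrinking inside the proof. The statement concerns the given $W$, and forcing disjointness may require shrinking $U$, which changes $W$. Without disjointness over $U$ the bound $>\tfrac23$ can genuinely fail: two sections of $\rho$ over $V$ through distinct points of $\supp(\mu)$ may coincide on a component of a disconnected $U$. So disjointness must be taken as a hypothesis, as the paper does.
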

\begin{proof}
    By Urysohn's lemma, we can choose $f\in C_c(X)$ such that $f|_U=1$ and $\supp(f)\subseteq V$. For each $y\in \supp(\mu)$, we define $f_y\in C_c(Y)$ as the function
    \[f_y(y')=\begin{cases}
        f(\rho(y')) & y'\in V_y\\
        0 & \text{else} 
    \end{cases}.\]
    Then $W((V_y)_y,V,U,\epsilon)=|\Delta|\cap \tilde\rho\inv(U)\cap(\cap_{y\in \supp(\mu)}W(\mu,f_y,\epsilon))$, which is open. \cite[Proposition 3.19]{Mao26} implies that it is relatively compact. And by the inequality
    \[\mu'(\cup_{y\in \supp(\mu)}V_y)=\sum_{y\in \supp(\mu)}\mu'(V_y)>\sum_{y\in \supp(\mu)}(\mu(\{y\})-\epsilon)=1-\#\supp(\mu)\cdot \epsilon>\frac{2}{3},\]
    every element of $W((V_y)_y,V,U,\epsilon)$ has over 2/3 of mass on $\cup_{y\in \supp(\mu)}V_y$.
\end{proof}

To have an instinct view, if $\mu=c_0\delta_{y_0}+\cdots+c_m\delta_{y_m}$, then $W((V_{y})_{y},V,U,\epsilon)$ should be constituted by elements of $|\Delta|$ in form of
\[t_0\delta_{\rho|_{V_{y_0}}\inv(x)}+\cdots+ t_m\delta_{\rho|_{V_{y_m}}\inv(x)}+\mu'',\]
where $x\in U$, $t_i\in (c_i-\epsilon,c_i+\epsilon)$ for $i=0,\cdots, m$, and $\mu''$ is an element of $R(Y_x)$ which is not supported on $\{\rho|_{V_{y_i}}\inv(x):i=0,\cdots, m\}$.

\begin{defn}\label{defn local basis S'}
We define the following families of open neighborhoods of $\mu$ in $|\Delta|$
    \[\ca S'(\mu)=\{W((V_y)_{y\in \supp(\mu)},V,U,f,\epsilon): ((V_y)_{y\in \supp(\mu)},V,U,f,\epsilon)\in CD(\mu)\}.\]
We define $\ca S'=\cup_{\mu\in |\Delta|}\ca S'(\mu)$.
\end{defn}

\begin{prop}\label{S,S'' are subbases}
    Use the same notation as above, for $\mu_0\in |\Delta|$, $\ca S'(\mu_0)$ is a fundamental system of neighborhoods of $\mu_0$ in $|\Delta|$. Moreover, $\ca S'$ is a topological basis. 
\end{prop}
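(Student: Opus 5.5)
The plan has two parts. First, every set in $\ca S'(\mu_0)$ is an open neighborhood of $\mu_0$; this is already given by lemma \ref{first glimpse of W}, since $\mu_0$ itself satisfies $\tilde\rho(\mu_0)\in U$ and $|\mu_0(V_y)-\mu_0(\{y\})|=0<\epsilon$. Second, $\ca S'(\mu_0)$ must be cofinal among neighborhoods of $\mu_0$. Once this holds, $\ca S'$ is automatically a topological basis, because a family that contains a neighborhood base at every point is a basis. The work is therefore entirely in the second part.

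\textbf{Reduction to a subbasic neighborhood.} The sets $W(\mu_0,f,\eta)$ form a basis, and $\ca S'(\mu_0)$ is closed under finite intersections after refining: intersect the $V_y$'s, intersect $V$ and $U$, and take the minimum of the $\epsilon$'s. So it suffices to fix one $f\in C_c(Y,\mathbb R)$ and one $\eta>0$, and to find a compatible datum with $W((V_y)_y,V,U,\epsilon)\subseteq W(\mu_0,f,\eta)$. Write $\mu_0=\sum_{i} c_i\delta_{y_i}$ and $x_0=\tilde\rho(\mu_0)$.

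\textbf{Choice of the datum.} Let $L=\rho(\supp f)$, which is compact. The fiber $\rho\inv(x_0)\cap\supp f$ is finite, say $\{y_0,\dots,y_m\}\cup\{w_1,\dots,w_k\}$ with the $w_j\notin\supp(\mu_0)$. Choose disjoint local-homeomorphism neighborhoods $V_{y_i}$ and neighborhoods $O_{w_j}$, all mapping onto a common small open set $V\ni x_0$. Using uniform continuity of $f$ composed with the local sections, shrink these so that $|f-f(y_i)|<\eta/4$ on $V_{y_i}$ and $|f|$ stays small on $O_{w_j}$. Near $x_0$, the compact set $\supp f\cap\rho\inv(\overline U)$ is then covered by $\bigcup V_{y_i}\cup\bigcup O_{w_j}$, since otherwise a limit point in the fiber over $x_0$ would be missed. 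Finally take $U\ni x_0$ relatively compact in $V$ and $\epsilon$ small, in particular $\epsilon\ll\eta/(\|f\|_\infty(m+1))$.

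\textbf{Estimate.} For $\mu'\in W$, split
\[
\mu'(f)=\sum_i\mu'(f\chi_{V_{y_i}})+\mu'(f\chi_{\bigcup_j O_{w_j}}).
\]
The first sum is within $\sum_i\bigl(|f(y_i)|\epsilon+\eta/4\bigr)$ of $\sum_i c_if(y_i)=\mu_0(f)$. The remaining mass of $\mu'$ is at most $(m+1)\epsilon$, so the second term is at most $(m+1)\epsilon\|f\|_\infty$. This is the main obstacle. Mass off the $V_{y_i}$ could sit on points $w_j$ where $f$ is large, so the $O_{w_j}$ must be handled by the total-mass bound on that complement, as above, rather than by continuity of $f$. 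Choosing $\epsilon$ small relative to $\|f\|_\infty$ resolves it, and hypothesis $(H_2)$ is not needed beyond the relative compactness already supplied by lemma \ref{first glimpse of W}.
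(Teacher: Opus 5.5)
Your proof is correct in substance, but it takes a different route from the paper. The paper never handles an arbitrary test function. It cites \cite[Proposition 3.18]{Mao26} to reduce to subbasic sets $W(\mu_0,\phi,\epsilon)$ where $\phi\in C_c(Y,[0,1])$ is supported in a single sheet $V_0$ around one point $y_0\in\supp(\mu_0)$ and $0<\epsilon<\mu_0(\phi)$. It then replaces $\phi$ by $g=\phi(y_0)\,f\circ\rho|_{V_0}$, with $f\equiv 1$ on $U$ and $\|g-\phi\|_\infty<\epsilon/2$. This gives $\mu(g)=\phi(y_0)\mu(V_{y_0})$ whenever $\tilde\rho(\mu)\in U$, so only the sheet $V_{y_0}$ is ever constrained and no estimate on the remaining mass is needed.

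You instead work directly with an arbitrary $f\in C_c(Y,\mathbb R)$ and constrain all sheets $V_{y_i}$ at once. You then use the mass-concentration consequence of membership in $W$, namely $\sum_i\mu'(V_{y_i})>1-(m+1)\epsilon$, to bound whatever $\mu'$ places off the sheets by $(m+1)\epsilon\|f\|_\infty$. This makes your argument self-contained; it is essentially the mechanism behind the single-sheet reduction the paper imports. The cost is slightly more bookkeeping, whereas the paper's version is shorter but rests on the cited subbase result.

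Two small repairs are needed.
\begin{enumerate}
\item As written, your bound for the first sum is $\epsilon\sum_i|f(y_i)|+(m+1)\eta/4$, which is not below $\eta$ once $m\geq 3$. Either take the continuity tolerance to be $\eta/(4(m+1))$, or note that the $i$-th error is at most $\mu'(V_{y_i})\,\eta/4+\epsilon|f(y_i)|$ and that $\sum_i\mu'(V_{y_i})\leq 1$.
\item The sets $O_{w_j}$ and the covering step are superfluous. Also, $|f|$ need not be small on $O_{w_j}$, since $f(w_j)$ may be large. The bound $|\mu'(f\chi_{Y\setminus\bigcup_iV_{y_i}})|\leq\|f\|_\infty\,\mu'(Y\setminus\bigcup_iV_{y_i})<(m+1)\epsilon\|f\|_\infty$ already does the job, using only that the $V_{y_i}$ are disjoint and $\mu'$ has total mass $1$.
\end{enumerate}

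Finally, both your argument and the paper's require $\epsilon<1-\mu_0(\{y\})$, which is impossible when $\mu_0$ is a vertex. That clause of the compatible-datum definition has to be read as void when $\#\supp(\mu_0)=1$.
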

\begin{proof}
    Firstly, we will show that, for any $\mu_0\in |\Delta|$, $S'(\mu_0)$ is stable under finite intersection. If 
    $$((V_y)_{y\in \supp(\mu_0)},V,U,\epsilon),\quad ((V_y')_{y\in \supp(\mu_0)},V',U',\epsilon')$$
    are two compatible data of $\mu$, then
    \[((V_y\cap V_y')_{y\in \supp(\mu_0)},V\cap V',U\cap U',\min(\epsilon,\epsilon'))\]
    is also a compatible datum, and
    \begin{align*}
        & W((V_y)_{y\in \supp(\mu_0)},V,U,\epsilon)\cap W((V_y')_{y\in \supp(\mu_0)},V',U',\epsilon')\\
        & = \{\mu\in |\Delta|:\tilde\rho(\mu)\in U\cap U', |\mu(V_y)-\mu_0(\{y\})|<\epsilon, |\mu(V_y')-\mu_0(\{y\})|<\epsilon', \forall y\in \supp(\mu_0)\}\\
        & = \{\mu\in |\Delta|:\tilde\rho(\mu)\in U\cap U', |\mu(V_y\cap V_y')-\mu_0(\{y\})|<\min(\epsilon,\epsilon'), \forall y\in \supp(\mu_0)\}\\
        & = W((V_y\cap V_y')_{y\in \supp(\mu_0)},V\cap V',U\cap U',\min(\epsilon,\epsilon')).
    \end{align*}

    By \cite[Proposition 3.18]{Mao26}, the opens of type $W(\mu_0,\phi,\epsilon)$ form a subbase, where $\mu_0\in |\Delta|$, $\phi\in C_c(Y,[0,1])$ and $\epsilon>0$, such that
    \begin{enumerate}
        \item there exists $y_0\in \supp(\mu_0)\cap \{y\in Y:\phi(y)\neq 0\}$ and a relatively compact open neighborhood $V_0$ of $y_0$, such that $\rho|_{V_0}:V_0\ra \rho(V_0)$ is a homeomorphism onto an open of $\calG\units$, and $\phi$ is supported in $V_0$;
        \item $0<\epsilon<\mu_0(\phi)$.
    \end{enumerate}
    
    So in order to show that $\ca S'$ is a topological basis, it suffices to prove that, for any $\mu_0\in |\Delta|$ and an open neighborhood of $\mu_0$ in form of $W_0:=W(\mu_0,\phi,\epsilon)$ that satisfies the condition above, there exists $W_1\in S'(\mu_0)$, such that $W_1\subseteq W_0$.

    For $y\in \supp(\mu_0)\setminus\{y_0\}$, we choose $V_y'$ as an open neighborhood of $y$ such that $\rho|_{V'_y}$ is a homeomorphism onto an open of $X$. Then we define $V'=(\cap_{y\in \supp(\mu_0)\setminus\{y_0\}}\rho(V'_y))\cap \rho(V_0)$. Let $$V_{y_0}=\rho|_{V_0}\inv(V')\cap \{y\in V_0:\phi(y_0)-\frac{\epsilon}{2}<\phi(y)<\phi(y_0)+\frac{\epsilon}{2}\},$$ $V''=\rho(V_{y_0})$, and for any $y\in \supp(\mu)\setminus \{y_0\}$, define $V_y=\rho|_{V'_y}\inv(V'')$.
    Let $U$ be a relatively compact open neighborhood of $\tilde\rho(\mu_0)$ in $V''$. By Urysohn's lemma, we choose $\psi\in C_c(X,[0,1])$ such that $\supp(\psi)\subseteq V''$ and $\psi|_{\overline{U}}=1$. Finally, we choose $\epsilon'\in \mathbb R_+$ such that
    \[\epsilon'<\min\{\frac{\epsilon}{2},\min_{y\in \supp(\mu_0)}\mu_0(\{y\}),\min_{y\in \supp(\mu_0)}(1-\mu_0(\{y\})\}.\]

    We define $f\in C_c(X)$ as 
    \[f(x)=\begin{cases}
    (1-\psi(x))\frac{\phi(\rho|_{V_0}\inv(x))}{\phi(y_0)}+\psi(x) & x\in \rho(V_0)\\
    0 & else
    \end{cases}.\]
    So $f$ is supported in $\rho(V_0)$ and equals to 1 on $U$. And for any $y\in V_0$,
    \[|\phi(y)-\phi(y_0)f(\rho(y))|=|\psi(\rho(y))(\phi(y)-\phi(y_0))|<\frac{\epsilon}{2}.\]

    Let $f_{y_0}\in C_c(V_0)$ be $f\circ \rho|_{V_0}$ and $g\in C_c(Y)$ be $\phi(y_0)f_{y_0}$. So we have $\|g-\phi\|_\infty<\frac{\epsilon}{2}$. Claim: $\tilde\rho\inv(U)\cap W(\mu_0,g,\epsilon')\subseteq W_0$. If $\mu\in \tilde\rho\inv(U)\cap W(\mu_0,g,\epsilon')$, then $\tilde\rho(\mu)\in U$, and $|\mu(g)-\mu_0(g)|=|\mu(g)-\mu_0(\phi)|<\epsilon'$, and hence
    \[|\mu(\phi)-\mu_0(\phi)|\leqslant |\mu(\phi-g)|+|\mu(g)-\mu_0(\phi)|<\frac{\epsilon}{2}+\epsilon'<\epsilon,\]
    which implies that $\mu\in W(\mu,\phi,\epsilon)$.

    Claim: $W((V_y)_y,V'',U,\epsilon')\subseteq \tilde\rho\inv(U)\cap W(\mu_0,g,\epsilon')$. If $\mu\in W((V_y)_y,V'',U,\epsilon')$, then $|\mu(V_{y_0})-\mu_0(\{y_0\})|<\epsilon'$, which implies that 
    \[|\mu(g)-\mu_0(g)|=\phi(y_0)|\mu(f_{y_0})-\mu_0(f_{y_0})|=\phi(y_0)|\mu(V_{y_0})-\mu_0(\{y_0\})|<\phi(y_0)\epsilon'\leqslant \epsilon',\]
    and hence $\mu\in \tilde\rho\inv(U)\cap W(\mu_0,g,\epsilon')$.
    
    In conclusion, $W((V_y)_y,V'',U,\epsilon')$ is an element of $\ca S'(\mu)$ that is contained in $W_0$. We finished the proof.
\end{proof}

\begin{lem}
    Let $W\in \ca S'(\mu)$. Then for any $\mu'\in W$, $\#\supp(\mu)\leqslant \#\supp(\mu')$. Moreover, let $m=\#\supp(\mu)-1$, $W\cap (|\Delta^m|\setminus |\Delta^{m-1}|)$ is a closed subset of $W$.
\end{lem}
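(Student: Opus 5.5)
The plan is to use the definition of $W=W((V_y)_y,V,U,\epsilon)$ directly. Write $\mu=\sum_{y\in\supp(\mu)}\mu(\{y\})\delta_y$ and let $\mu'\in W$ with $x=\tilde\rho(\mu')\in U$.

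\emph{Cardinality bound.} For each $y\in\supp(\mu)$ we have
\[\mu'(V_y)>\mu(\{y\})-\epsilon>0,\]
since $\epsilon<\mu(\{y\})$ by the definition of a compatible datum. Hence $\supp(\mu')$ meets every $V_y$. Since $\supp(\mu')\subseteq\rho\inv(x)$ and $\rho|_{V_y}$ is injective, each $V_y\cap\supp(\mu')$ is the single point $\rho|_{V_y}\inv(x)$.

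I next check that these points are pairwise distinct for distinct $y$. If $V_y\cap V_{y'}\cap\rho\inv(x)\neq\emptyset$ for some $y\neq y'$, the two sets $V_y,V_{y'}$ would share the point $\rho|_{V_y}\inv(x)=\rho|_{V_{y'}}\inv(x)$. Then
\[\mu'(V_y)=\mu'(V_{y'})=\mu'(\{p\}),\]
where $p$ is that common point. The sum over $y\in\supp(\mu)$ of $\mu'(V_y)$ would then double-count $p$, so the bound
\[\mu'\Big(\bigcup_y V_y\Big)\geq\sum_y\mu'(V_y)-\mu'(\{p\})\]
is the only one available, and I must rule this case out. I expect this step to be the main obstacle, since the definition of compatible data does not explicitly require the $V_y$ to be disjoint.

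My first attempt is to add disjointness of the $V_y$: shrink them, using that the finitely many points of $\supp(\mu)$ are distinct in a Hausdorff space. This is harmless, because by the computation in Proposition \ref{S,S'' are subbases} intersections still lie in $\ca S'(\mu)$, and disjointness seems to be the intended reading, as implicitly used in Lemma \ref{first glimpse of W}. Failing that, I would argue with masses. A shared point would carry mass $\mu'(\{p\})>\mu(\{y\})-\epsilon$, and also $\mu'(\{p\})<\mu(\{y'\})+\epsilon$. Combined with $\epsilon<\frac{1}{3\#\supp(\mu)}$ and the total mass $1$, this should give a contradiction, at least once $\#\supp(\mu)\geq2$. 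With distinctness established, $\#\supp(\mu')\geq\#\supp(\mu)$.

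\emph{Closedness of the stratum.} Put $m=\#\supp(\mu)-1$. By the first part, every $\mu'\in W\cap|\Delta^m|$ has exactly $m+1$ support points. Therefore
\[W\cap(|\Delta^m|\setminus|\Delta^{m-1}|)=W\cap|\Delta^m|,\]
and it suffices to show that $W\cap|\Delta^m|$ is closed in $W$. Take a net $\mu_\lambda\to\mu'$ in $W$ with $\#\supp(\mu_\lambda)=m+1$.

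Each $\mu_\lambda$ is then supported exactly on the points $\rho|_{V_y}\inv(x_\lambda)$, $y\in\supp(\mu)$, where $x_\lambda=\tilde\rho(\mu_\lambda)$. So
\[\mu_\lambda(V_y)=\mu_\lambda\big(\{\rho|_{V_y}\inv(x_\lambda)\}\big).\]
Since $\tilde\rho$ is continuous, $x_\lambda\to x=\tilde\rho(\mu')$. Passing to a subnet, the weights $c_{y,\lambda}$ converge to some $c_y$ with $\sum_y c_y=1$.

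Testing against functions $f\in C_c(Y)$ and using weak-$*$ convergence, I get
\[\mu'=\sum_y c_y\,\delta_{\rho|_{V_y}\inv(x)},\]
restricted to the points where $c_y>0$. Hence $\supp(\mu')$ has at most $m+1$ points, i.e.\ $\mu'\in|\Delta^m|$. The fact that $\mu'\in|\Delta|$ is given by hypothesis, and in any case follows from $(H_2)$.
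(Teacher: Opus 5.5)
The gap is exactly where you expected it, and neither of your two repairs closes it.

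\textbf{Why the repairs fail.} The mass argument is false, and in fact the statement itself fails for compatible data as literally defined. Take $X=\{a,b\}$ and $Y=\{y_1,y_2,y_3,p,q\}$, both discrete, with $\rho(y_i)=a$ and $\rho(p)=\rho(q)=b$. Let $\Delta$ consist of the nonempty subsets of $\{y_1,y_2,y_3\}$ and of $\{p,q\}$; then $(H_1)$ and $(H_2)$ hold trivially. Set $\mu=\frac{1}{3}(\delta_{y_1}+\delta_{y_2}+\delta_{y_3})$, $V_{y_i}=\{y_i,p\}$, $V=U=X$ and $\epsilon=\frac{1}{10}<\frac{1}{9}$. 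This is a compatible datum. The measure $\mu'=\frac{1}{3}\delta_p+\frac{2}{3}\delta_q$ lies in $W$, since $\tilde\rho(\mu')=b\in U$ and $\mu'(V_{y_i})=\frac{1}{3}$ for each $i$. Yet $\#\supp(\mu')=2<3=\#\supp(\mu)$.

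Already with two support points of weight $\frac{1}{2}$ each, a shared point $p$ can carry mass $\frac{1}{2}$ with nothing contradicted. So no argument from the masses can produce distinctness. Your shrinking argument does not help either. The lemma is a claim about a given $W\in\ca S'(\mu)$, and replacing $W$ by a smaller member of $\ca S'(\mu)$ tells you nothing about the points of $W$ you discarded.

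\textbf{What the paper does.} It builds pairwise disjointness of the $V_y$ into the definition: its proof opens with ``By definition, all the opens $V_y$ are disjoint from each other,'' although this is not among the listed conditions. The correct fix is to add that condition to the definition of a compatible datum. The neighbourhood-base property of Proposition \ref{S,S'' are subbases} survives: choose pairwise disjoint neighbourhoods $N_y$ of the points of $\supp(\mu)$, set $V^*=\bigcap_y\rho(V_y\cap N_y)$ and $V_y^*=V_y\cap N_y\cap\rho\inv(V^*)$, and shrink $U$ into $V^*$. This gives a member of $\ca S'(\mu)$ inside the original one. For the locally symmetric data used later, disjointness is automatic, because the $V_y$ lie in the distinct type classes $\tau\inv(\tau(y))$.

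With that amendment your cardinality argument is exactly the paper's. Your net argument for closedness is then a correct and more explicit version of a step the paper only asserts. Alternatively, $|\Delta^m|$ is closed in $|\Delta|$ directly: $m+2$ support points of a limit measure can be separated by disjoint bump functions, and each of these is eventually charged by the net.
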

\begin{proof}
    Let $((V_y)_{y\in \supp(\mu)},V,U,\epsilon)$ be a compatible datum associated to $W$. By definition, all the opens $V_y$ are disjoint from each other. Since $\mu'\in W$, for every $y\in \supp(\mu)$, $|\mu'(V_y)-\mu(\{y\})|<\epsilon<\mu(\{y\})$, which implies that $\mu'(V_y)\neq 0$. Hence, $\supp(\mu')\cap V_y=\{\rho|_{V_y}\inv(\tilde\rho(\mu'))\}$ is a singleton. We have therefore $\#\supp(\mu')\geqslant \#\supp(\mu)$. Moreover, $W\cap (|\Delta^m|\setminus |\Delta^{m-1}|)=W\cap |\Delta^m|$ is closed in $W$.
\end{proof}

For geometrical realizations of typed groupoid simplicial complexes, we need to put more requirements on the choice of opens to form a topological basis.

\begin{defn}
    Let $(Y,\rho)$ be a proper $\calG$-space, $(Y,\Delta)$ be a typed proper $\calG$-simplicial complex satisfying hypotheses $(H_1)$ and $(H_2)$. Let $\tau:Y\ra \{1,\cdots, N\}$ be the type function. For $\mu_0\in |\Delta|$, we say that $((V_y)_{y\in \supp(\mu_0)},V,U,\epsilon)\in CD(\mu_0)$ is a locally symmetric compatible datum of $\mu_0$, if
    \begin{enumerate}
        \item for every $y\in \supp(\mu_0)$, $\tau|_{V_y}$ is constant;
        \item there exist actions of $F_{\mu_0}:=\{\gamma\in\calG_{\tilde\rho(\mu_0)}^{\tilde\rho(\mu_0)}:\gamma\mu_0=\mu_0\}$ on $V$ and a proper open subgroupoid $\calH$ of $\calG$, such that $\calH\units=V$ and  $\calH$ is isomorphic to $F_{\mu_0}\ltimes V$;
        \item $U$ is an $F_{\mu_0}$-invariant open of $V$;
        \item for every $y\in \supp(\mu_0)$, $V_y$ is an $\calH$-invariant relatively compact open of $Y$, and for every $\gamma\in \calG\setminus\calH$, we have $\gamma \overline{V_y}\cap \overline{V_y}=\emptyset$.
    \end{enumerate}
    We denote the set of all locally symmetric compatible data of $\mu_0$ by $CD_{ls}(\mu_0)$. We define
    \[\ca S'_{ls}(\mu_0)=\{W((V_y)_{y\in C_{\supp(\mu_0)}},V,U,\epsilon): ((V_y)_{y\in C_{\supp(\mu_0)}},V,U,\epsilon)\in CD_{ls}(\mu_0))\},\]
    \[\ca S'_{ls}=\cup_{\mu_0\in |\Delta|} S''_{ls}(\mu_0).\]
\end{defn}

\begin{prop}\label{locally symmetric subbase}
    Use the same notation as above. 
    \begin{enumerate}
        \item For $((V_y)_{y\in \supp(\mu_0)},V,U,\epsilon)\in CD_{ls}(\mu_0)$ and $W=W((V_y)_{y\in \supp(\mu_0)},V,U,\epsilon)\in \ca S'_{ls}(\mu_0)$, let $\calH\cong F_{\mu_0}\ltimes V$ be the associated proper open subgroupoid of $\calG$, then $W$ is an $\calH$-invariant open neighborhood of $\mu_0$ in $|\Delta|$, and for any $\gamma\not\in \calH$, $\gamma \overline{W}\cap \overline{W}=\emptyset$.
        \item The family $\ca S'_{ls}(\mu_0)$ is a fundamental system of neighborhoods of $\mu_0$ in $|\Delta|$. Moreover, $\ca S'_{ls}$ is a topological basis.
    \end{enumerate}
\end{prop}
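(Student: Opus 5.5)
The plan is to prove the two parts in order, reducing (2) to (1) together with Proposition \ref{S,S'' are subbases}.

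For (1), first check that $W$ is $\calH$-invariant. Take $h\in\calH$ with $s(h)=\tilde\rho(\mu')$, where $\mu'\in W$. Since $\calH\units=V$ and $U$ is $F_{\mu_0}$-invariant (hence $\calH$-invariant under $\calH\cong F_{\mu_0}\ltimes V$), we get $\tilde\rho(h\mu')=r(h)\in U$. Since each $V_y$ is $\calH$-invariant, $(h\mu')(V_y)=\mu'(h^{-1}V_y)=\mu'(V_y)$; here we use that $h$ maps the fiber $Y_{s(h)}$ bijectively to $Y_{r(h)}$ and $hV_y\cap Y_{r(h)}=V_y\cap Y_{r(h)}$. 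So $h\mu'\in W$.

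Next, disjointness of translates of $\overline W$. Let $\gamma\notin\calH$ and suppose $\mu'\in\overline W$ and $\gamma\mu''=\mu'$ for some $\mu''\in\overline W$. By Lemma \ref{first glimpse of W}, every element of $W$ has mass $>2/3$ on $\cup_y V_y$. Passing to the closure (test against functions in $C_c$ supported in the $V_y$, or use weak-* upper semicontinuity on compact sets), elements of $\overline W$ have mass $\geq 2/3$ on $\cup_y\overline{V_y}$. Each point has at most one preimage in each $\overline{V_y}$ over a given base point. So the support of $\mu''$ meets some $\overline{V_y}$, and $\gamma$ must send a point of $\cup\overline{V_{y}}$ into $\cup\overline{V_{y'}}$, because $\mu'$ and $\gamma\mu''$ both carry more than $1/3$ of their mass there.

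The main obstacle is to exclude $y\neq y'$ and conclude $\gamma\overline{V_y}\cap\overline{V_y}\neq\emptyset$, which contradicts condition (4).
\begin{itemize}
\item First exploit the type function: $\gamma$ preserves $\tau$, and $\tau$ is constant on each $V_y$ with distinct values on $\supp(\mu_0)$. Hence a point of $\overline{V_y}$ can only be sent into $\overline{V_{y'}}$ with $\tau(y)=\tau(y')$, i.e.\ $y=y'$.
\item Here I would need $\tau$ constant on $\overline{V_y}$, which follows by continuity since $\tau$ is locally constant.
\end{itemize}

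So the key is a pigeonhole count on masses. For each $y$, $\mu''(\overline{V_y})\ge\mu_0(\{y\})-\epsilon>0$, and similarly for $\mu'$. The mass of $\gamma\mu''$ on $\overline{V_y}$ equals $\mu''(\gamma^{-1}\overline{V_y})$, and $\tau$-compatibility forces the $\mu''$-mass in $\overline{V_y}$ to land in $\overline{V_y}$ or outside $\cup\overline{V_{y'}}$. The latter set has mass $\le 1/3$ in $\mu'$, while $\mu''(\overline{V_y})$ summed is $\ge 2/3$, so some point of $\overline{V_y}$ is mapped into $\overline{V_y}$. If the boundary issue for closures causes trouble, I would instead shrink $\epsilon$ and replace $\overline W$ by the closed set defined with non-strict inequalities; this contains $\overline W$ by continuity of $\mu\mapsto\mu(f)$.

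For (2), given a neighborhood of $\mu_0$, use Proposition \ref{S,S'' are subbases} to find a compatible datum $((V'_y),V',U',\epsilon')$ with associated open set inside it. Then refine it as follows.
\begin{itemize}
\item Apply Proposition \ref{local structure of proper cocompact action of étale groupoid} to the proper étale $\calG$-space $Y$ (in fact to the finite set $\supp(\mu_0)$, using that $F_{\mu_0}$ permutes $\supp(\mu_0)$ preserving $\tau$, hence fixes each point). This gives a proper open subgroupoid $\calH\cong F_{\mu_0}\ltimes V$ and $\calH$-invariant relatively compact $V_y\subseteq V'_y$ on which $\tau$ is constant, satisfying the disjointness condition (4).
\item Shrink further so that $\rho(V_y)=V$ for all $y$, intersecting the $\calH$-saturations.
\item Choose $U\subseteq U'\cap V$ to be $F_{\mu_0}$-invariant by intersecting over the finite group, and keep $\epsilon=\epsilon'$.
\end{itemize}
The result is a locally symmetric datum whose $W$ lies in the given set, and stability under intersection shows $\ca S'_{ls}$ is a basis.
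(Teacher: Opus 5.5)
Your part (1) is essentially the paper's argument. The paper phrases the pigeonhole via an Urysohn function $\phi\equiv 1$ on $\bigcup_y\overline{U_y}$, comparing $\mu(\phi)$ and $\mu(\phi(\gamma^{-1}\cdot))$ with $\|\phi+\phi(\gamma^{-1}\cdot)\|_\infty\le 1$; you use masses and types, which is the same idea.

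The genuine gap is in (2). There you claim that the local structure proposition, applied ``to the finite set $\supp(\mu_0)$'', produces a single $\calH\cong F_{\mu_0}\ltimes V$ together with vertex neighbourhoods $V_y$ satisfying (4) for each $y$ separately. That proposition concerns a single point. Applied at a vertex $y$, it yields a subgroupoid whose isotropy is $\mathrm{Stab}(y)$, which can be strictly larger than $F_{\mu_0}=\bigcap_y\mathrm{Stab}(y)$.

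Worse, the output you claim cannot exist in general. Take $\calG=\mathbb Z/2=\{e,g\}$ and $Y=\{a,b,c\}$ discrete, with $g$ swapping $a,b$ and fixing $c$. Let the simplices be $\{a,c\},\{b,c\}$ and their faces, with $\tau(a)=\tau(b)=0$ and $\tau(c)=1$; this is the barycentric subdivision of the Rips complex of $\mathbb Z/2$. For $\mu_0=\frac12\delta_a+\frac12\delta_c$ you have $F_{\mu_0}=\{e\}$, so $\calH=\{e\}$. Yet $g\notin\calH$ and $c\in g\overline{V_c}\cap\overline{V_c}$ for every neighbourhood $V_c$ of $c$. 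So with (4) read per vertex, $CD_{ls}(\mu_0)$ is empty, even though the conclusion of (1) plainly holds for a small interval $W$ around $\mu_0$.

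The missing idea, which the paper uses, is to apply the local structure result to the tuple $(y_0,\dots,y_m)$ in the proper $\calG$-space $Y_0\times_{\calG\units}\cdots\times_{\calG\units}Y_m$, where $Y_i=\tau^{-1}(\tau(y_i))$. The stabilizer of this tuple is exactly $F_{\mu_0}$. Since $\rho$ is a local homeomorphism, a small $\calH$-invariant neighbourhood $N$ of the tuple is automatically of the form $V_{y_0}\times_{\calG\units}\cdots\times_{\calG\units}V_{y_m}$, with $\calH$-invariant, type-constant coordinate sets.

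What this gives is $\gamma\overline N\cap\overline N=\emptyset$ for $\gamma\notin\calH$, \emph{not} per-vertex disjointness. The paper asserts the per-vertex version at this point as well, but the example shows it does not follow. So condition (4) has to be replaced by this tuple condition, and part (1) rerun with it.

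Your type-based argument adapts well. Suppose $\mu\in\overline W$ and $\gamma^{-1}\mu\in\overline W$. For each $y$, the unique type-$\tau(y)$ point $z_y$ of $\supp(\mu)$ lies in $\overline{V_y}$, because $\mu(\overline{V_y})\ge\mu_0(\{y\})-\epsilon>0$ by upper semicontinuity on compact sets and $\overline{V_y}\subseteq\tau^{-1}(\tau(y))$. Likewise $\gamma^{-1}z_y\in\overline{V_y}$. Hence $(z_y)_y$ and $(\gamma^{-1}z_y)_y$ are $\gamma$-related points of $\overline N$, a contradiction. For this, choose the $V_y$ inside slightly larger local sections of $\rho$, so that $\overline N$ is the fibre product of the $\overline{V_y}$.

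Note that it is the types that do this work. Your appeal to ``at most one preimage in each $\overline{V_y}$'' is unnecessary, and in fact can fail, since $\rho$ need not be injective on $\overline{V_y}$.
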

\begin{proof}
    (1) Let $\mu\in W$, then for $h\in \calH_{\tilde\rho(\mu)}$, $\tilde\rho(h\mu)=r(h)\in U$ since $U$ is $\calH$-invariant, and for any $y\in \supp(\mu_0)$,
    \[|(h\mu)(V_y)-\mu_0(\{y\})|=|\mu(V_y)-\mu_0(\{y\})|<\epsilon.\]
    Hence, $h\mu\in W$, and therefore $W$ is $\calH$-invariant.

    Now for $\gamma\in \calG\setminus \calH$, for every $y\in \supp(\mu_0)$, $\gamma V_y\cap V_y=\emptyset$ by assumption of locally symmetric compactible datum, and $\gamma V_y\cap V_{y'}=\emptyset$ for any other $y'\in \supp(\mu_0)$ because they have different type, therefore $(\cup_{y\in \supp(\mu_0)}V_y)\cap (\cup_{y\in \supp(\mu_0)}\gamma V_y)=\emptyset$. Now by Urysohn's lemma, there exists $\phi\in C_c(Y,\mathbb R)$ such that $\phi|_{\cup_{y\in \supp(\mu_0)}\overline{U_y}}=1$ and $\supp(\phi)\subseteq \cup_{y\in \supp(\mu_0)}V_y$. Lemma \ref{first glimpse of W} shows that every element of $W$ has more than $\frac{2}{3}$ of mass on $\cup_{y\in \supp(\mu_0)}V_y$, hence $\mu(\phi)>\frac{2}{3}$ for all $\mu\in W$, and therefore $\mu(\phi)\geqslant \frac{2}{3}$ for all $\mu\in \overline{W}$. Now if $\mu\in\overline{W}$ such that $\mu\in \gamma\overline{W}\cap \overline{W}$, we will have $\|\phi|_{Y_{r(\gamma)}}+\phi|_{Y_{s(\gamma)}}(\gamma\inv-)\|_{\infty}\leqslant 1$ but $\mu(\phi|_{Y_{r(\gamma)}}+\phi|_{Y_{s(\gamma)}}(\gamma\inv-))\geqslant
    \frac{2}{3}+\frac{2}{3}>1$, which is a contradiction. So $\gamma\overline{W}\cap \overline{W}=\emptyset$.
    
    (2) Let $W_0$ be any open neighborhood of $\mu_0$. By proposition \ref{S,S'' are subbases}, there exists $((V_y)_{y},V,U,\epsilon)\in CD(\mu_0)$ and $W_1=W((V_y)_{y},V,U,\epsilon)\in \ca S'(\mu_0)$, such that $W_1\subseteq W_0$.

    Now, assume that $\mu_0=c_0\delta_{y_0}+\cdots+c_m\delta_{y_m}$, where $c_i\in (0,1]$ for $0\leqslant i\leqslant m$. Since $|\Delta|$ is typed, the finite group $F_{\mu_0}$ is exactly the intersection of the stabilizers of points in $\supp(\mu_0)$. Without loss of generality, assume $Y_i=\{y\in Y:\tau(y)=\tau(y_i)\}$ for $0\leqslant i\leqslant m$, which are $m$ different clopen of $Y$. Apply proposition \ref{local structure of proper cocompact action of étale groupoid} to $(y_0,\cdots, y_m)$ of the proper $\calG$-space $Y_0\times_{\calG\units}\cdots\times_{\calG\units}Y_m$, then there exists
    \begin{enumerate}
        \item a relatively compact open neighborhood $V_{y_i}'$ of $y_i$ in $V_{y_i}\cap Y_i$ for each $0\leqslant i\leqslant m$;
        \item a proper open subgroupoid $\calH$ of $\calG$;
        \item an action of $F_{\mu_0}$ on $\calH\units$,
    \end{enumerate}
    such that
    \begin{enumerate}
        \item there is an isomorphism $F_{\mu_0}\ltimes \calH\units\cong \calH$;
        \item for every $0\leqslant i\leqslant m$, $\rho(V_{y_i}')\subseteq \calH\units$, and $V_{y_i}'$ is an $\calH$-invariant open, such that $\gamma V_{y_i}'\cap V_{y_i}'=\emptyset$ for any $\gamma\in \calG\setminus \calH$.
    \end{enumerate}
    Let $V'=\cap_{i=0}^m \rho(V_{y_i}')$. After replacing $V_{y_i}'$ by $V_{y_i}'\cap \rho\inv(V')$, we can assume that $\rho(V_{y_i}')=V'$ for every $0\leqslant i\leqslant m$. Let $U'$ be an $F_{\mu_0}$-invariant open neighborhood of $\tilde\rho(\mu_0)$ in $U\cap V'$.

    Let $\calH'=\calH^{V'}_{V'}$. Then $((V_y')_{y},V',U',\epsilon)$ is a locally symmetric compatible datum. Claim: the associated open $W_2=W((V_y')_{y},V',U',\epsilon)$ is contained in $W_1$. If $\mu\in W_2$, then $\tilde\rho(\mu)\in U'$ and for every $0\leqslant i\leqslant m$, $|\mu(V_{y_i}')-c_i|<\epsilon$. Since $\tilde\rho(\mu)\in U'$, we have $\mu(V_{y_i}')=\mu(V_{y_i})$, and hence $|\mu(V_{y_i})-c_i|<\epsilon$, which implies that $\mu\in W_1=W((V_y)_{y},V,U,\epsilon)$. We proved that $W_2\subseteq W_1$ and finished the proof.
\end{proof}

\subsection{Construction of universal modules}

We have now reduced the question about existence of universal $Z,\calG,A$-modules to the question about existence of the representation in the statement of proposition \ref{universal modules exist}. However, their existences are not very clear, and the principal difficulty is the $C_0(\calG\units)$-linearity. 

Let $\rho:Y\ra X$ be a continuous map, $\pi:C_0(Y)\ra \calL_{C_0(X)}(E)$ be a $C_0(X)$-linear non-degenerate representation over a Hilbert $C_0(X)$-module. By proposition \ref{C_0(X)-linearity force base to shrink}, the $C_0(X)$-linearity forces $\langle E,E\rangle\subseteq \{f\in C_0(X):C_0(X\setminus \overline{\rho(Y)})\cdot f=0\}=C_0(\inter_X(\overline{\rho(Y)}))$, and therefore $E$ is a direct summand of $\ell^2(\mathbb N)\otimes C_0(\inter_X(\overline{\rho(Y)}))$. What is difficult is the following question.

\begin{quest}
Use the same notation as above, does there always exist a $C_0(X)$-linear non-degenerate representation
    \[C_0(Y)\ra \calL(\ell^2(\mathbb N)\otimes C_0(\inter_X(\overline{\rho(Y)}))) ?\]
\end{quest}

When $Y$ is a closed subset of $X$ and $\rho$ is the closed inclusion, the answer is yes, since in this case $\inter_X(Y)$ is an open of $Y$ and therefore $C_0(\inter_X(Y))$ is a $C_0(Y)$-algebra.

\begin{prop}\label{property of locally symmetric subbase}
    Let $\calG$ be an \'etale groupoid, $(Y,\Delta)$ be a typed proper $\calG$-simplicial complex. Let $\mu_0\in |\Delta|$, $m=\#\supp(\mu_0)-1$, and
    \[\phi: |\Delta^m|\setminus|\Delta^{m-1}|\ra center(m,\Delta)\times\sigma_m\]
    be the homeomorphism as described in proposition \ref{typed sim com}. Then for $((V_y)_{y\in \supp(\mu_0)},V,U,\epsilon)\in CD_{ls}(\mu_0)$ and $W=W((V_y)_{y\in \supp(\mu_0)},V,U,\epsilon)\in \ca S'_{ls}(\mu_0)$, let $\calH'\cong F_{\mu_0}\ltimes V$ be the associated proper open subgroupoid and $\calH=(\calH')_U^U$, then there exists a clopen subset $Q$ of $A^m\cap\tilde\rho\inv(U)$ and an open subset $R$ of $\sigma_m$, such that $\tilde\rho|_Q$ is homeomorphism onto $U$, $Q$ is $\calH$-invariant, and
    \[\phi(W\cap (|\Delta^m|\setminus|\Delta^{m-1}|))=(Q\cap center(m,\Delta))\times R\subseteq center(m,\Delta)\times \sigma_m.\]
\end{prop}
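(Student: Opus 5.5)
The plan is to make the homeomorphism $\phi$ of proposition \ref{typed sim com} explicit on $W\cap(|\Delta^m|\setminus|\Delta^{m-1}|)$. Write $\mu_0=c_0\delta_{y_0}+\cdots+c_m\delta_{y_m}$ with $\tau(y_0)<\cdots<\tau(y_m)$. For $x\in U$ put $y_i(x)=\rho|_{V_{y_i}}\inv(x)$, so that $y_i(x)\in U_{y_i}$.

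First I will show that any $\mu\in W$ with $\#\supp(\mu)=m+1$ has the form
\[\mu=\sum_i t_i\delta_{y_i(x)},\qquad x=\tilde\rho(\mu)\in U.\]
Indeed, by the lemma following proposition \ref{S,S'' are subbases}, each $V_{y_i}$ meets $\supp(\mu)$ in exactly the point $y_i(x)$. These are $m+1$ distinct points because their types are distinct, so they exhaust $\supp(\mu)$, and $t_i=\mu(V_{y_i})$.

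Conversely, such a $\mu$ lies in $|\Delta^m|$ exactly when $\{y_0(x),\dots,y_m(x)\}\in\Delta$. Membership in $W$ then reduces to the conditions $x\in U$ and $|t_i-c_i|<\epsilon$ for all $i$.

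Accordingly, I define
\[Q=\Big\{\tfrac{1}{m+1}\textstyle\sum_i\delta_{y_i(x)}:x\in U\Big\}\subseteq A^m\cap\tilde\rho\inv(U)\]
and
\[R=\{(t_0,\dots,t_m)\in\sigma_m:|t_i-c_i|<\epsilon\ \forall i\},\]
which is visibly open in $\sigma_m$. With these choices the displayed equality $\phi(W\cap(|\Delta^m|\setminus|\Delta^{m-1}|))=(Q\cap center(m,\Delta))\times R$ is immediate. The point is that the barycenter of a simplex in $W$ lies in $Q\cap center(m,\Delta)$. Conversely, given a barycenter $b\in Q\cap center(m,\Delta)$ and coefficients in $R$, the resulting measure lies in $W$, since $W$ imposes only conditions on $\tilde\rho$ and on the masses $\mu(V_{y_i})$.

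Next come the topological properties of $Q$. The map $\tilde\rho|_Q$ is a bijection onto $U$, and its inverse $x\mapsto\frac1{m+1}\sum_i\delta_{y_i(x)}$ is weak-$*$ continuous, because each $y_i(x)$ depends continuously on $x$. So $\tilde\rho|_Q$ is a homeomorphism.

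For $\calH$-invariance, take $h\in\calH$ with $s(h)\in U$. Each $V_{y_i}$ is $\calH$-invariant and $\rho|_{V_{y_i}}$ is injective, so $hy_i(x)=y_i(r(h))$. This gives $hQ\subseteq Q$, where we use that $U$ is $F_{\mu_0}$-invariant, hence $\calH$-invariant.

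The main obstacle is showing that $Q$ is clopen in $A^m\cap\tilde\rho\inv(U)$. For openness, a nearby barycenter $\nu\in A^m$ with $\tilde\rho(\nu)\in U$ must put mass close to $\frac1{m+1}$ on each $V_{y_i}$, using test functions $f\circ\rho$ supported in $V_{y_i}$ as in lemma \ref{first glimpse of W}. Since $\nu$ has exactly $m+1$ atoms, each of mass $\frac1{m+1}$, it must have one atom in each $V_{y_i}$, and therefore $\nu\in Q$.

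For closedness, suppose a net in $Q$ converges in $A^m\cap\tilde\rho\inv(U)$ to some $\nu$ with $\tilde\rho(\nu)=x\in U$. By continuity of $\tilde\rho$, the base points $x_\lambda$ converge to $x$. Then the atoms $y_i(x_\lambda)$ converge to $y_i(x)$ inside $V_{y_i}$, so the net converges to $\frac1{m+1}\sum_i\delta_{y_i(x)}\in Q$. By Hausdorffness of $P(Y)$, $\nu$ equals this limit and lies in $Q$.

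I expect the openness step to need the most care. It requires quantifying weak-$*$ closeness of measures with a fixed number of equal atoms, and I would handle it by choosing $\epsilon$-sized neighbourhoods built from the functions $f_{y_i}$ of lemma \ref{first glimpse of W}.
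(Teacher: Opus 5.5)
Your proposal is correct and follows the paper's proof essentially step by step. The paper uses the same $Q$, written as barycenters of tuples in $U_0\times_U\cdots\times_U U_m$ with $U_i=\rho|_{V_{y_i}}^{-1}(U)$, the same $R$, and the same type-based argument that a measure in $W$ with $m+1$ atoms has exactly one atom $y_i(x)$ in each $V_{y_i}$. The only difference is that you check clopenness of $Q$ directly, using continuity of $\nu\mapsto\nu(V_{y_i})$ on $\tilde\rho^{-1}(U)$ and nets, whereas the paper reads it off from the decomposition of $A^m\cap\tilde\rho^{-1}(U)$ by types together with the clopenness of $U_i$ in $Y_i|_U$.
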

\begin{proof}
    Assume that $\mu_0=\sum_{i=0}^m c_i\delta_{y_i}$ such that $c_i\in (0,1]$ for every $0\leqslant i\leqslant m$. Without loss of generality, we assume that the type function is $\tau:Y\ra \{0,\cdots, N\}$ and $\tau(y_i)=i$ for $0\leqslant i\leqslant m$. Let $Y_i=\tau\inv(i)$, which is clopen in $Y$.
    So $A^m\cap \tilde\rho\inv(U)$ is
    \[\{\frac{\sum_{i=0}^m \delta_{y_i'}}{m+1}:(y'_0,\cdots, y'_m)\in \bigsqcup_{0\leqslant k_0\leqslant\cdots\leqslant k_m\leqslant N} Y_{k_0}|_U\times_U\cdots \times_U Y_{k_m}|_U\}.\]
    Let $U_i=\rho|_{V_{y_i}}\inv(U)\subseteq V_{y_i}$, which is $\calH$-invariant clopen in $Y_i|_{U}$. So we construct $Q$ as
    \[\{\frac{\sum_{i=0}^m \delta_{y_i'}}{m+1}:(y'_0,\cdots, y'_m)\in U_0\times_U U_1\times_U\cdots \times_U U_m\},\]
    and $Q$ is clopen in $A^m\cap \tilde\rho\inv(U)$. For $0\leqslant i\leqslant m$, $\rho|_{U_i}$ is a homeomorphism onto $U$, so $\tilde\rho|_Q$ is a homeomorphism onto $U$, and clearly $Q$ is $\calH$-invariant.

    Recall that $\sigma_m=\{(t_0,\cdots, t_m)\in \mathbb R_+^{m+1}:\sum_{i=0}^m t_i=1\}$. We construct an open neighborhood $R$ of $(c_0,\cdots, c_m)$ in $\sigma_m$ as
    \[\{(t_0,\cdots, t_m)\in \sigma_m: \forall 0\leqslant i\leqslant m, t_i\in (c_i-\epsilon, c_i+\epsilon)\}.\]
    By our condition on $\epsilon$, we have $0<c_i-\epsilon<c_i+\epsilon<1$ for all $0\leqslant i\leqslant m$.

    Claim: $\phi(W\cap (|\Delta^m|\setminus|\Delta^{m-1}|))=(Q\cap |\Delta|)\times R$. If $(y_0',\cdots, y_m')\in Q$, $(t_i)_{i=0}^m\in R$, $\phi\inv((y'_i),(t_i))=\sum_{i=0}^mt_i\delta_{y_i'}$. Then for every $0\leqslant i\leqslant m$,
    \[|\mu(V_{y_i})-\mu_0(f_{y_i})|=|t_i-c_i|<\epsilon.\]
    Hence, $\mu\in W$. Conversely, if $\mu=\sum_{i=0}^m c_i'\delta_{y_i'}\in W$ with $\tau(y_0')<\cdots<\tau(y_m')$, $\tilde\rho(\mu)\in U$. Since for $0\leqslant i\leqslant m$, $|\mu(V_{y_i})-c_i|<\epsilon<c_i$, so $\supp(\mu)\cap U_i$ has exactly one point, and for $0\leqslant i\leqslant l$, $\tau(y_i')=i$, hence $y_i'$ is exactly the point of $\supp(\mu)\cap U_i$. Hence, $\frac{\sum_{i=0}^m y_i'}{m+1}\in Q$. And for every $0\leqslant i\leqslant l$, $|c_i'-c_i|<\epsilon$, hence $(t_i')_i\in R$. Therefore,
    \[\phi(\mu)=(\frac{\sum_{i=0}^m y_i'}{m+1},(c'_i)_i)\in (Q\cap center(m,\Delta))\times R.\]
\end{proof}

\begin{prop}\label{locally exists universal module for simp complex}
    Let $\calG$ be a second countable étale groupoid, $A$ be a separable $\calG$-\cst-algebra. Let $Y$ be a second countable locally compact Hausdorff proper étale $\calG$-space, $(Y,\Delta)$ be a proper $\calG$-compact typed $\calG$-simplicial complex, $Z=|\Delta|$, $U_{\Delta_Z}$ be a $\calG$-invariant open neighborhood of the diagonal $\Delta_Z$ in $Z\times_{\calG\units}Z$. Then there exists a $\calG$-cover $(W_i,\calH_i)_{i=1}^n$ of $|\Delta|$, such that $\cup_{i=1}^n \calG(\overline{W_i}\times_{\calG\units}\overline{W_i})\subseteq U_{\Delta_Z}$, and for every $1\leqslant i\leqslant n$, there exists a stable non-degenerate $\calH_i$-equivariant representation
    \[\pi_i:C_0(W_i)\ra \calL(L^2(\calH_i,A_i)^\infty)\]
    where $A_i=(A|_{\calH_i})_{\langle C_i\rangle}$, $C_i=\tilde\rho(W_i)$ is $\calH_i$-invariant closed subset of $\calH_i\units$.
\end{prop}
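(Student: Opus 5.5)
We build the $\calG$-cover from the locally symmetric basis $\ca S'_{ls}$ and then construct each $\pi_i$ explicitly, using the product structure of the open strata.

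\textbf{Step 1: the cover.} For each $\mu_0\in|\Delta|$, Proposition \ref{locally symmetric subbase} gives a neighbourhood $W_{\mu_0}\in\ca S'_{ls}(\mu_0)$ with the following properties:
\begin{enumerate}
\item $W_{\mu_0}$ is $\calH_{\mu_0}$-invariant, where $\calH_{\mu_0}\cong F_{\mu_0}\ltimes U$;
\item $\gamma\overline{W_{\mu_0}}\cap\overline{W_{\mu_0}}=\emptyset$ for every $\gamma\notin\calH_{\mu_0}$;
\item $\overline{W_{\mu_0}}\times_{\calG\units}\overline{W_{\mu_0}}\subseteq U_{\Delta_Z}$, which we can arrange by shrinking, since $\ca S'_{ls}(\mu_0)$ is a fundamental system of neighbourhoods.
\end{enumerate}
Here we replace the groupoid by its restriction to $U$, as in Proposition \ref{property of locally symmetric subbase}. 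Since $|\Delta|$ is $\calG$-compact and the orbits $\calG W_{\mu_0}$ are open (Lemma \ref{orbits of opens are open}), finitely many of them cover $|\Delta|$. This gives $(W_i,\calH_i)_{i=1}^n$, and $\calG$-invariance of $U_{\Delta_Z}$ yields the required inclusion.

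\textbf{Step 2: the base of the representation.} Fix $i$ and write $W=W_i$, $\calH=\calH_i$, $m=\#\supp(\mu_0)-1$. By Proposition \ref{property of locally symmetric subbase}, the lowest stratum satisfies
\[
S:=W\cap(|\Delta^m|\setminus|\Delta^{m-1}|)\cong (Q\cap center(m,\Delta))\times R .
\]
Here $\tilde\rho|_Q$ is an $\calH$-equivariant homeomorphism onto $U$. Hence $Q\cap center(m,\Delta)$ is identified with a closed $\calH$-invariant subset of $U$, namely $C':=\tilde\rho(S)$. By the lemma preceding Proposition \ref{property of locally symmetric subbase}, $S$ is closed in $W$.

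We also need the image of all of $W$. Any $\mu\in W$ has mass more than $2/3$ on the sheets $V_y$ (Lemma \ref{first glimpse of W}), so the point of $Q$ obtained by keeping only the sheet components of $\mu$ lies in $\tilde\rho(W)=C_i$. It follows that $C_i$ is the closure of $\tilde\rho(S)$ intersected with $U$. I expect $C_i=\tilde\rho(S)$ after checking that the face of $\mu$ spanned by the sheet points belongs to $\Delta$, which uses $(H_2)$ and the fact that $\Delta$ is closed under faces.

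\textbf{Step 3: the representation.} Let $j:S\hookrightarrow W$ be the closed inclusion. Restriction gives a surjective equivariant homomorphism $C_0(W)\to C_0(S)\cong C_0(C')\otimes C_0(R)$. For the $C_0(R)$ factor, pick a faithful nondegenerate representation $C_0(R)\to B(\ell^2(\mathbb N))$ with the trivial $\calH$-action; it is nondegenerate. For the $C_0(C')$ factor, the remark after the Question applies: $C'$ is closed in the base $U$, so $C_0(\inter_U C')\subseteq A_i$-multipliers gives a $C_0(U)$-linear action of $C_0(C')$ on $L^2(\calH,A_i)$ by multiplication through $r$. Since $A_i=(A|_\calH)_{\langle C'\rangle}$, this action is nondegenerate on $L^2(\calH,A_i)$: by Proposition \ref{C_0(X)-linearity force base to shrink} and the definition of $A_{\langle C\rangle}$, every element of $A_i$ is approximated by $C_0(C')$-translates. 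Tensoring the two representations and composing with $j^*$ gives an $\calH$-equivariant, $C_0(\calH\units)$-linear, nondegenerate representation of $C_0(W)$ on $L^2(\calH,A_i)\otimes\ell^2(\mathbb N)$. Taking $\infty$ copies makes it stable.

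\textbf{Main obstacle.} The delicate point is exactly the Question: nondegeneracy of the $C_0(C')$-action on $A_i$ when $C'$ has empty interior. If $C'$ has empty interior, then $C_0(\inter C')=0$. We must instead use that $A_{\langle C'\rangle}$ is itself a $C_0(C')$-algebra. Indeed, $C_0(U\setminus C')$ annihilates it, so the structure map factors through $C_0(U)/C_0(U\setminus C')=C_0(C')$. Therefore $C_0(C')$ acts nondegenerately on $A_i$, and hence on $L^2(\calH,A_i)$, pointwise along $s$ and compatibly with the $\calH$-action. Checking the $\calH$-equivariance of this action via $\alpha$ is routine.
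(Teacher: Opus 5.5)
Your proposal is essentially the paper's proof. It takes a neighbourhood from $\ca S'_{ls}(\mu_0)$ and restricts to the closed lowest stratum $S\cong (Q\cap center(m,\Delta))\times R$. It then uses the $C_0(C)$-algebra structure of $(A|_{\calH})_{\langle C\rangle}$ as in Example~\ref{canonical module for closed inclusion}, tensors with a stable Hilbert-space representation of $C_0(R)$, and pulls back along $S\hookrightarrow W$. Your ``main obstacle'' paragraph is exactly this argument and should simply replace the $C_0(\inter_U C')$ detour in Step~3.

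The equality $\tilde\rho(W_i)=\tilde\rho(S)$, which you leave as an expectation and the paper merely asserts, needs only closure of $\Delta$ under faces, not $(H_2)$. For $\mu\in W_i$, take the points $y_j'\in\supp(\mu)\cap U_{y_j}$ lying on the sheets $V_{y_j}$. They span a face of $\supp(\mu)\in\Delta$, so $\sum_j\mu_0(\{y_j\})\,\delta_{y_j'}$ lies in $S$ and has the same $\tilde\rho$ as $\mu$. What $(H_2)$ is actually needed for is making $Q\cap center(m,\Delta)$ closed in $Q$, hence $C_i$ closed in $U$.
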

\begin{proof}
    It suffices to prove that, for any $\mu_0\in |\Delta|$ and an open neighborhood $W_0$ of $\mu_0$, let $F=\{\gamma\in \calG:\gamma \mu_0=\mu_0\}$ be the stabilizer of $\mu_0$, there exists an open neighborhood $W_1$ of $\mu_0$ in $W_0$, an open neighborhood $U$ of $\tilde\rho(\mu_0)$, an action of $F$ on $U$ and a proper open subgroupoid $\calH\cong F\ltimes U$, such that
    \begin{enumerate}
        \item $W_1$ is $\calH$-invariant, and $\gamma W_1\cap W_1=\emptyset$ for any $\gamma\in \calG\setminus\calH$;
        \item there exists a stable $W_1,\calH,A|_{\calH}$-module in form of $(L^2(\calH,(A|_\calH)_{\langle C\rangle})^\infty,\pi)$, where $C=\tilde\rho(V)$ is $\calH$-invariant closed subset of $U=\calH\units$.
    \end{enumerate}

    By proposition \ref{locally symmetric subbase}, $\ca S_{ls}'(\mu_0)$ is a fundamental system of neighborhoods of $\mu_0$ in $|\Delta|$. So there exists a locally symmetric compatible datum $((V_y)_{y\in \supp(\mu_0)},V,U,\epsilon)$ of $\mu_0$, and for which $\calH'$ is an associated proper open subgroupoids with $\calH'\cong F_{\mu_0}\ltimes V$ such that $W_1=W((V_y)_{y\in \supp(\mu_0)},V,U,\epsilon)\subseteq W_0$, and by proposition \ref{property of locally symmetric subbase}, $W_1$ is $\calH=(\calH')^U_U\cong F\ltimes U$-invariant, and for any $\gamma\in \calG\setminus \calH$, $\gamma W_1\cap W_1=\emptyset$.

    Assume that $m=\#\supp(\mu_0)-1$, let $T=W_1\cap(|\Delta^m|\setminus|\Delta^{m-1}|)$, then $T$ is $\calH$-invariant closed subset of $W_1$ and $\tilde\rho(W_1)=\tilde\rho(T)$. By proposition \ref{property of locally symmetric subbase}, there exists an $\calH$-invariant clopen subset $Q$ of $A^m\cap \tilde\rho\inv(U_i)$ and an open subset $R$ of $\sigma_m$, such that $\tilde\rho|_{Q}$ is homeomorphism onto $U$, and
    \[\phi(T)=(Q\cap center(m,\Delta))\times R,\]
    Since $\tilde\rho|_Q$ is a homeomorphism onto $U$, $\tilde\rho(Q\cap center(m,\Delta))$ is an $F$-invariant closed subset of $U$, and hence $C=\tilde\rho(W_1)$ is an $F$-invariant closed subset of $U$.

    As seen in example \ref{canonical module for closed inclusion}, the structure map of $(A|_{\calH})_{\lgl C\rgl}$ is a $C_0(C)$-\cst-algebra. Then since $L^2(\calH,A_{\lgl C\rgl})$ is a $\calH$-Hilbert $(A|_\calH)_{\lgl C\rgl}$-module, the structure map
    \[\phi: C_0(C)\ra Z\calL(L^2(\calH,(A|_\calH)_{\lgl C\rgl}))\]
    is a non-degenerate $\calH$-equivariant representation. After identifying $Q\cap center(m,\Delta)$ with $C$ through $\tilde\rho$, the canonical composition
    \[\pi_1: C_0(Q\cap center(m,\Delta))\cong C_0(C)\ra \calL(L^2(\calH,(A|_\calH)_{\lgl C\rgl}))\]
    is non-degenerate and $\calH$-equivariant.
    
    And $R$ is an open of $\sigma_m$, there exists a stable non-degenerate representation
    \[\pi_2:C_0(R)\ra B(H),\]
    where $H$ is an infinitely dimensional separable Hilbert space. After identifying $L^2(\calH,A_{\lgl C\rgl})\otimes H$ with $L^2(\calH,A_{\langle C\rgl})^\infty$, we can define an $\calH$-equivariant representation
    \[\pi_3:=\pi_1\otimes\pi_2: C_0((Q\cap center(m,\Delta))\times R)\ra \calL(L^2(\calH,(A|_\calH)_{\lgl C\rgl})^\infty).\]
    Now define $g:\phi(T)\ra W_1, t\mapsto \phi\inv(t)$, then $g$ is an $\calH$-equivariant closed inclusion, we define
    \[\pi: C_0(W_1)\xrightarrow{g^*} C_0(\phi(T))\xrightarrow{\pi_3} \calL(L^2(\calH,(A|_\calH)_{\lgl C\rgl})^\infty),\]
    this is the stable $W_1,\calH,A|_{\calH}$-module we want.
\end{proof}

\begin{thm}\label{universal module exists for simp complex}
    Let $\calG$ be a second countable étale groupoid, $A$ be a separable $\calG$-\cst-algebra, $Y$ be a second countable locally compact Hausdorff proper étale $\calG$-space, $(Y,\Delta)$ be a proper $\calG$-compact $\calG$-simplicial complex, $Z=|\Delta|$. Then for any $\calG$-invariant open neighborhood $U$ of the diagonal $\Delta_Z$ in $Z\times_{\calG\units}Z$, there exists a universal $|\Delta|,\calG,A$-module $(E,\pi)$, such that for any $Z,\calG,A$-module $(E',\pi')$, there exists a $\calG$-equivariant controlled isometry $V\in \Hom_A(E',E)$ such that $\supp_{\pi',\pi}(V)\subseteq U$.
\end{thm}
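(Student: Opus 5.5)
The plan is to reduce to the typed case and then combine proposition \ref{locally exists universal module for simp complex} with proposition \ref{universal modules exist}. The theorem as stated does not assume $(Y,\Delta)$ is typed. So the first step is to pass to the barycentric subdivision $(Y',\Delta')$. By proposition \ref{barycentric subdivision keeps realization}, $Y'$ is an étale $\calG$-space, $(Y',\Delta')$ is a typed $\calG$-simplicial complex, and there is a $\calG$-equivariant homeomorphism $|\Delta'|\cong|\Delta|$. Transporting modules, supports and the neighborhood $U$ along this homeomorphism, it suffices to prove the statement for $(Y',\Delta')$.

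For this we must check that $(Y',\Delta')$ is again proper, $\calG$-compact and second countable, and satisfies $(H_1)$ and $(H_2)$; the latter are needed for $|\Delta'|$ to be a locally compact Hausdorff proper $\calG$-compact space. Second countability of $Y'\subseteq P(Y)$ follows from that of $Y$, as noted after the definition of geometric realization. Properness of the action on $Y'=\sqcup_i A^i$ follows because $A^i$ is the image of the proper $\calG$-space $Y^{\times_{\calG\units}(i+1)}$ under the finite-to-one quotient by permutations. $\calG$-compactness follows because the vertices of $\Delta'$ are barycenters of simplices of $\Delta$, hence lie in the $\calG$-compact set $|\Delta|$ intersected with the closed set $Y'$. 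This verification of the hypotheses for the subdivision is where I expect the real work to lie. If it cannot be extracted from \cite{Mao26}, one alternative is to note that the proof of proposition \ref{locally exists universal module for simp complex} only uses the local description of $|\Delta'|=|\Delta|$, so one could work directly on the space $Z=|\Delta|$, which is already known to be a proper $\calG$-compact locally compact Hausdorff space.

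Once in the typed setting, apply proposition \ref{locally exists universal module for simp complex} to $Z$ and the given $U$. This yields a finite $\calG$-cover $(W_i,\calH_i)_{i=1}^n$ of $Z$ with $\cup_i\calG(\overline{W_i}\times_{\calG\units}\overline{W_i})\subseteq U$, together with stable non-degenerate $\calH_i$-equivariant representations
\[\pi_i:C_0(W_i)\ra\calL(L^2(\calH_i,A_i)^\infty).\]
These are exactly the hypotheses of proposition \ref{universal modules exist}. That proposition uses $Z$ second countable and $\calG$-compact, so $I$ is finite and the family $(\calG W_i)$ is locally finite. There is one minor point to reconcile: the closed set $C_i$ appears as $\tilde\rho(W_i)$ in one proposition and as $\cl(\rho(W_i))$ in the other. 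Proposition \ref{locally exists universal module for simp complex} shows $\tilde\rho(W_i)$ is closed in $\calH_i\units$, so the two agree.

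Proposition \ref{universal modules exist} then produces the module $(E,\pi)=\bigl(\oplus_i\infl^\calG_{\calH_i}(L^2(\calH_i,A_i)^\infty),\iota_*(\oplus_i\infl^\calG_{\calH_i}\pi_i)\bigr)$. It is stable, and every $Z,\calG,A$-module $(E',\pi')$ admits a $\calG$-equivariant isometry $V$ with $\supp(V)\subseteq\cup_i\calG(\overline{W_i}\times_{\calG\units}\overline{W_i})\subseteq U$. This support set is controlled, since it is $\calG$-compact as a finite union of $\calG$-saturations of compact sets. So $V$ is controlled, and by proposition \ref{summary of controlled and weakly controlled sets} it is weakly controlled, hence properly supported by lemma \ref{controlled operator is properly supported}. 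Thus $(E',\pi')\preceq(E,\pi)$ for every $(E',\pi')$, and $(E,\pi)$ is universal with the required support bound.
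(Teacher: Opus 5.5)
Your proposal is correct and follows the paper's proof: reduce to the typed case via the barycentric subdivision (proposition \ref{barycentric subdivision keeps realization}), then feed the local stable representations from proposition \ref{locally exists universal module for simp complex} into proposition \ref{universal modules exist}. The verifications you flag for $(Y',\Delta')$ (properness, $\calG$-compactness, second countability, and $(H_1)$ and $(H_2)$) are not carried out in the paper either. The paper delegates them to \cite{Mao26} and tacitly assumes $(H_1)$ and $(H_2)$ for $(Y,\Delta)$, as in theorem \ref{main thm existence of universal module}.
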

\begin{proof}
    By proposition \ref{barycentric subdivision keeps realization}, without loss of generality we can assume that $(Y,\Delta)$ is typed. Then this is the consequence of proposition \ref{universal modules exist} and proposition \ref{locally exists universal module for simp complex}.
\end{proof}

\begin{rem}
It is easy to see that we can remove the cocompactness and require for weakly controlled equivariant isometries, but we do not pursue it.
\end{rem}

\addcontentsline{toc}{section}{References}
\bibliography{bibref}{}
\bibliographystyle{acm}

\end{document}